\documentclass[a4paper, oneside, 10pt]{amsart}
\usepackage{graphicx}
\usepackage{amsmath}
\usepackage{amssymb}
\usepackage{geometry}
\usepackage{tikz}
\usepackage{amsthm}
\usepackage{amsfonts}
\usepackage{mathrsfs}
\usepackage{ upgreek }

\usepackage[backref]{hyperref}  

\usepackage{comment}
\newcommand{\Hit}{\operatorname{Hit}}
\newcommand{\PSL}{\operatorname{PSL}}
\newcommand{\PSO}{\operatorname{PSO}}
\newcommand{\bR}{\mathbb{R}}

\newcommand{\sign}{\operatorname{sign}}
\newcommand{\Ad}{\operatorname{Ad}}
\newcommand{\Hom}{\operatorname{Hom}}

\newcommand{\kf}{\mathbf{\mathsf{B}}}
\newcommand{\HD}{\mathcal{H}^{\mathrm{tw}}}
\newcommand{\WT}{\mathcal{WT}^{\mathrm{tw}}}

\newcommand{\RP}{\mathbb{PR}}
\newcommand{\dr}{\mathfrak{r}}
\newtheorem{lemma}{Lemma}[section]
\newtheorem{theorem}[lemma]{Theorem}
\newtheorem{corollary}[lemma]{Corollary}

\theoremstyle{definition}
\newtheorem{definition}[lemma]{Definition}
\newtheorem{question}[lemma]{Question}
\newtheorem{remark}[lemma]{Remark}
\newtheorem{setup}[lemma]{Set-up}
\newtheorem{example}[lemma]{Example}
\newtheorem{claim}[lemma]{Claim}

\newcommand{\ui}{\mathrm{i}}
\newcommand{\ML}{\mathcal{ML}}
\newcommand{\der}{\mathrm{d}}
\newcommand{\PT}{\mathbb{P}\mathrm{T}}
\newcommand{\UT}{\mathrm{T}^1}
\newcommand{\dG}{\mathrm{d}_\mathsf{G}}
\newcommand{\dUT}{\mathrm{d}_{\mathrm{T}^1}}
\newcommand{\dPT}{\mathrm{d}_{\mathbb{P}\mathrm{T}}}

\newcommand{\dirac}{\updelta}
\newcommand{\e}{\mathrm{e}}
\newcommand{\supp}{\mathrm{supp}}
\title[Convergence of cataclysm deformations]{Convergence of cataclysm deformations on Anosov representations and applications}
\author{Hongtaek Jung}
\address{School of Mathematics\\ KIAS \\ Seoul} 
\begin{document}

\begin{abstract}
A cataclysm deformation, that shears and twists a given Anosov representation according to data known as a twisted transverse cocycle, is an intuitive and powerful tool for studying Anosov representations. We show that if a sequence of twisted measured laminations converges weakly, the sequence of corresponding cataclysm deformations on the space of Anosov representations converges uniformly on compact sets. 

This result leads to two applications. First, we obtain an extension of the Goldman product formula. Second, we consider strongly dense representations, introduced by Breuillard--Green--Guralnick--Tao and Long--Reid. Using cataclysm deformations, we show that, for a split real form $\mathsf{G}$ whose Weyl group contains $-1$, the set of strongly dense $\mathsf{G}$-Hitchin representations is not open in the $\mathsf{G}$-Hitchin component. 
\end{abstract}

\maketitle

\section{Introduction}

Let $S$ be a closed orientable surface of genus at least two equipped with a hyperbolic structure $X_0$. One intuitive way to obtain a new hyperbolic structure from $X_0$ is to perform a twist along a simple closed geodesic. If we do this continuously, we obtain a 1-parameter family of hyperbolic structures inducing a flow on the Teichm\"uller space of $S$. Such flows are not just elementary but fundamental as well. Wolpert \cite{wolpert}, for example, showed that they are Hamiltonian flows with respect to the Weil--Petersson symplectic form. 

Now, instead of twisting $X_0$ along a single simple closed geodesic, we apply left-twists along infinitely many disjoint simple geodesics. This procedure is much more delicate than twisting along a single geodesic; we have to control the amount of twisting along each geodesic carefully so that the total twisting across infinitely many geodesics does not diverge. This infinite twisting deformation is called an earthquake and its twisting information is packaged into an object known as a measured lamination. Just as twisting deformations induce flows on the Teichm\"uller space, earthquake deformations also give rise to flows on the Teichm\"uller space. 

Thurston first considered earthquakes and showed that any two points in the Teichm\"uller space can be joined by a flow line of an earthquake flow. Kerckhoff \cite{Kerckhoff,kerckhoff_analytic} further studied earthquake flows and showed that the hyperbolic length function is convex along an earthquake flow. This result enabled him to resolve the Nielsen realization problem. 

A cataclysm is a further generalization of an earthquake. Unlike an earthquake, a cataclysm allows both left- and right-twisting along leaves of a geodesic lamination. Now the twisting information involves negative values that represent right-twists. We will record such left- and right-twisting data as a so-called $\bR$-valued transverse cocycle.  

We can view the Teichm\"uller space as the space of Fuchsian  representations of $\pi_1(S)$ into $\PSL_2(\bR)$. For a general semisimple noncompact Lie group $\mathsf{G}$, $\Theta$-Anosov representations are representations of $\pi_1(S)$ into $\mathsf{G}$ that have dynamical and geometric properties analogous to those of Fuchsian representations.  $\mathsf{G}$-Hitchin representations for the split real form $\mathsf{G}$ of a simple complex adjoint group are the most well-known examples of Anosov representations. It is known that the space of $\Theta$-Anosov representations is open and that each $\Theta$-Anosov representation admits a well-behaved limit map from the Gromov boundary $\partial_\infty\pi_1(S)$ into a flag manifold $\mathcal{F}_\Theta$.  It turns out that these two properties are essentially sufficient to generalize cataclysm deformations to the space of $\Theta$-Anosov representations. 

Indeed, Bonahon \cite{bonahon96} considered cataclysm deformations on the space of representations of $\pi_1(S)$ into $\PSL_2(\mathbb{C})$. He introduced $\bR\times S^1$-valued transverse cocycles that encode twisting, and bending data of a quasi-Fuchsian representation and used them to give a local parametrization of the space of representations of a surface group into $\PSL_2(\mathbb{C})$. Also, McMullen \cite{mcmullen} studied complex earthquake on the space of quasi-Fuchsian representations and derived several interesting results. This theory has been successfully adapted to higher-rank Lie groups, for example, by Pfeil \cite{pfeil}, and Bonahon--Dreyer \cite{BD}. 

In this paper, we consider cataclysm deformations on the space of $\Theta$-Anosov representations into real and complex semisimple Lie groups $\mathsf{G}$. More precisely, we will deal with cataclysm deformations associated to a special class of twisted cocycles, namely twisted measured laminations with values in the Cartan subalgebra of $\mathsf{G}$.  For quasi-Fuchsian representations, it is known that the map $\mathsf{c} \mapsto \Lambda^{\mathsf{c}}$, which assigns to each complex valued measured lamination $\mathsf{c}$ the cataclysm deformation $\Lambda^\mathsf{c}$ is continuous \cite{EM06,kourouniotis}. Our main theorem generalizes this result to Anosov representations. 

\begin{theorem}\label{thm:continuity}
    Let $\mathsf{G}$ be a semisimple complex Lie group with the Cartan subalgebra $\mathfrak{h}$.  Let $\{\mathsf{c}_n\}$ be a sequence of twisted $\mathfrak{h}$-valued measured laminations that converges weakly to a twisted $\mathfrak{h}$-valued measured lamination $\mathsf{c}$. Then the sequence of cataclysm deformations $\{\Lambda^{\mathsf{c}_n}\}$ on the space of Borel Anosov representations converges to $\Lambda^\mathsf{c}$ uniformly on compact sets.
\end{theorem}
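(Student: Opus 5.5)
The plan is to work with the explicit formula for the cataclysm. Fix a Borel Anosov representation $\rho$ with limit map $\xi_\rho\colon\partial_\infty\pi_1(S)\to\mathcal{F}$. For a twisted $\mathfrak{h}$-valued measured lamination $\mathsf{c}$ with support $\lambda$ and two components $P,Q$ of $\widetilde S\setminus\widetilde\lambda$, the cataclysm is determined by the shearing map
\[
\varphi^{\mathsf{c}}_{PQ}=\lim_{\mathcal P}\ \prod_{R\in\mathcal P}\exp\bigl(\mathsf{c}(R)\text{-shear along }\xi_\rho(g_R)\bigr),
\]
where the product is over the finitely many gaps $R$ in a finite set $\mathcal P$ separating $P$ from $Q$, and each factor is the element of $\mathsf{G}$ acting as $\exp$ of $\mathsf{c}$ evaluated on the arc between consecutive gaps. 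It is diagonalized in the flag pair $\xi_\rho(g_R^\pm)$ attached to a boundary geodesic $g_R$. Then $\Lambda^{\mathsf{c}}\rho(\gamma)=\varphi^{\mathsf{c}}_{P\gamma P}\rho(\gamma)$. So it suffices to show that, for each fixed $\gamma$ in a finite generating set, $(\mathsf{c},\rho)\mapsto\varphi^{\mathsf{c}}_{P,\gamma P}(\rho)$ is continuous at $(\mathsf{c},\rho)$ with $\mathsf{c}_n\to\mathsf{c}$ weakly and $\rho$ ranging in a compact set $K$. Since $\mathsf{c}_n$ and $\mathsf{c}$ may have different supports, I would drop the gaps $P,Q$ and use instead a transverse arc $k$ from a point $x$ to $\gamma x$. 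I would choose it generic, meaning that its endpoints miss the support of $\mathsf{c}$ and it is transverse to all supports involved. I would then define $\varphi^{\mathsf{c}}_k$ directly as a limit of products over subdivisions of $k$.

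\textbf{Step 1 (uniform exponential estimates).} Using the uniform Anosov property on $K$, I would obtain constants $C,A>0$ valid for all $\rho\in K$. They should give $d(\xi_\rho(g),\xi_\rho(g'))\le C e^{-A\,r(g,g')}$ for geodesics $g,g'$ crossing $k$, where $r$ counts the depth, i.e.\ how long the two geodesics fellow travel. Such a bound follows from uniform Hölder continuity of limit maps on compact sets of Anosov representations. Combined with the standard bound on the mass of a measured lamination on subarcs of depth $r$, this gives Bonahon-type convergence. The difference between the product over a subdivision and over its refinement is bounded by $\sum_r C' e^{-Ar}\|\mathsf{c}\|(k)$, uniformly in $\rho\in K$. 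Weak convergence bounds $\|\mathsf{c}_n\|(k)$ uniformly in $n$, and the twisting parts of the twisted cocycles are also bounded in terms of the measure. So this tail estimate is uniform in $n$ as well.

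\textbf{Step 2 (finite approximation).} Fix $\epsilon>0$ and choose a fine subdivision $x=x_0<x_1<\dots<x_N=\gamma x$ of $k$ into subarcs. The endpoints should avoid the supports, and the measure of each boundary point should be zero for $\mathsf{c}$. By Step 1 the true $\varphi^{\mathsf{c}_n}_k$ is then $\epsilon$-close, uniformly in $n$ and $\rho\in K$, to the finite product of factors $\exp(\mathsf{c}_n([x_{i-1},x_i]))$. Each factor is diagonalized in a flag pair determined by a leaf of $\mathsf{c}_n$ crossing $[x_{i-1},x_i]$. Weak convergence gives $\mathsf{c}_n([x_{i-1},x_i])\to\mathsf{c}([x_{i-1},x_i])$, by the portmanteau property at points of zero mass. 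The chosen leaves crossing each short subarc lie, for $n$ large, within a small neighbourhood (in the space of geodesics) of a leaf of $\mathsf{c}$, because supports of weak limits are contained in Hausdorff limits. Hence their images under the jointly continuous map $(\rho,g)\mapsto\xi_\rho(g^\pm)$ are close. The finite products therefore converge uniformly on $K$, and an $\epsilon/3$ argument gives the claim.

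\textbf{Main obstacle.} I expect the main obstacle to be in Step 2: controlling subarcs where $\mathsf{c}_n$ has mass but the Hausdorff-limit leaves of $\mathsf{c}_n$ are not leaves of $\mathsf{c}$. This happens because the supports need not converge, and $\mathsf{c}_n$ may contain isolated leaves of vanishing mass or spiraling parts. On such subarcs the flag pairs need not converge, so I would handle them by showing their contribution vanishes. The factor is $\exp$ of a quantity tending to $\mathsf{c}$'s mass there (possibly zero), conjugated by flags ranging in a compact set, and a shear of small parameter is $O(|\cdot|)$-close to the identity uniformly. When $\mathsf{c}$ charges the subarc, the mass concentrates near the leaves of $\mathsf{c}$, and the previous argument applies. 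The twisted (sign/Weyl-twist) data is locally constant along transverse arcs, so it matches for $n$ large on each subarc. Finally, convergence on generators $\gamma$ together with independence of $k$ up to conjugation gives uniform convergence of $\Lambda^{\mathsf{c}_n}$ on $K$ in the character variety.
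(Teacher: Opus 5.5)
Your argument is correct in outline, but it follows a genuinely different route from the paper.

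\textbf{How the two routes differ.} The paper keeps the cumulative Bonahon--Pfeil form $\prod_J\mathrm{T}^{\mathsf c}_{J^0}\mathrm{T}^{-\mathsf c}_{J^1}$. Each gap factor is bounded by $A\,\ell(J)^\nu$, using only the bound $\|\mathsf c_n([I,J])\|<L$ on partial sums. The sums $\sum\ell(J)^\nu$ are converted into $(\sum\ell(J))^{\nu'}$ by divergence-radius estimates made uniform on an admissible neighbourhood (Lemmas~\ref{lem:divrad} and~\ref{lem:exponentcase}). A thin train track carrying all $|\mathsf c_n|$ then cuts $\widetilde\gamma$ into pieces $U_i$ of small total length. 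The pieces missing $\supp(\mathsf c)$ are discarded, and the surviving boundary factors are matched via train-track weights and Lemma~\ref{lem:approx}.

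You instead use a product integral with \emph{local} masses, $\prod_i\mathrm{T}^{\mathsf c([x_{i-1},x_i])}_{\ell_i}$. This buys three things.
\begin{itemize}
\item The error from refining a subarc is at most its total variation times the spread of the flags of \emph{disjoint} leaves crossing it. That spread is $O(\delta^\nu)$ uniformly, by Lemma~\ref{lem:parallel} and uniform H\"older continuity.
\item Partial products have operator norm at most $e^{C\|\mathsf c_n\|(k)}$, so the analogue of the paper's $M$ comes for free.
\item No uniform divergence-radius control is needed, since errors are weighted by mass rather than by $\ell(J)^\nu$.
\end{itemize}
Your dichotomy of subarcs meeting or missing $\supp(\mathsf c)$ plays the role of the paper's $\mathfrak W/\mathfrak V$ split. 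This is the Epstein--Marden/Kerckhoff picture and is simpler for measures. The paper's estimates, by contrast, stay inside the cocycle framework in which $\Lambda^{\mathsf c}$ is actually defined, and they need only bounded cumulative shears.

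\textbf{Points you still need to supply.}

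(1) Your product is not the paper's definition, so you must show they agree. Regrouping Pfeil's product over chosen gaps gives factors $\mathrm{T}^{-\mathsf c(J_i)}_{J_i^1}\mathrm{T}^{\mathsf c(J_{i+1})}_{J_{i+1}^0}$. By Lemma~\ref{lem:TTestparallel} and left-invariance, these differ from $\mathrm{T}^{\mathsf c(J_{i+1})-\mathsf c(J_i)}_{J_{i+1}^0}$ by $O(\der(J_i^1,J_{i+1}^0)^\nu)$. These errors sum to $o(1)$ by Lemma~\ref{lem:exponentcase}(2). This is needed only for each fixed $\mathsf c_n$, with no uniformity in $n$.

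(2) In Step~1, index by a metric mesh rather than by depth. As written, $\sum_r C'e^{-Ar}\|\mathsf c\|(k)$ is just a constant; you need the tail. Moreover, a depth-indexed bound that is uniform in $n$ would require precisely the uniform divergence-radius control of Lemma~\ref{lem:divrad}.

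(3) On a subarc meeting $\supp(\mathsf c)$, the flags match for a different reason than concentration of mass. Pass to a subsequence with $|\mathsf c_n|\to\tau\supset\supp(\mathsf c)$ (Lemma~\ref{lem:limit}). Leaves of $\mathsf c_n$ crossing the subarc are then close to leaves of $\tau$ crossing it (Lemma~\ref{lem:approx}). Those $\tau$-leaves are mutually $O(\delta)$-close because they are disjoint. The resulting error is $O(\delta^\nu\|\mathsf c\|(k))+o(1)$, not $o(1)$; your $\epsilon/3$ argument absorbs this.

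(4) A uniform bound on $\|\mathsf c_n\|(k)$ does not follow from weak convergence of signed or complex $\mathfrak h$-valued measures, because of cancellation. It does hold for $\mu_n^X$ with $\mu_n\geq 0$, which is the case in the applications. The paper's Set-up likewise takes the uniform constants $L$ and $M$ for granted.
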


A more precise statement can be found in Theorem~\ref{thm:convergence}. Indeed, our actual main theorem deals with general $\Theta$-Anosov representations.  

Now, let us focus on  $\PSL_n(\bR)$-Hitchin representations. On the space of Hitchin representations, Goldman flows are elementary examples of cataclysm deformations, in the sense that they are the cataclysms associated with transverse cocycles supported on simple closed curves. More precisely, let  $\mathbf{a}$  be a linear functional on the vector space of the traceless real diagonal matrices. We can realize $\mathbf{a}$ as the dual of some trace-less diagonal matrix $\mathbf{a}^\vee$, that is, $\mathbf{a}(Y) = \mathrm{tr}(\mathbf{a}^\vee Y)$ for all diagonal matrices $Y$. We  assume that $\mathbf{a}^\vee=\mathrm{diag}(x_1, \cdots, x_n)$ satisfies  $x_i = -x_{n+1-i}$. Then the Goldman flow along a simple closed geodesic $\gamma$ is the cataclysm $\Lambda^{ \mathbf{a}^\vee \gamma}$, where $ \mathbf{a}^\vee \gamma$ is a transverse cocycle such that $(\mathbf{a}^\vee \gamma)(k)=n\mathbf{a}^\vee$ if a geodesic arc $k$ intersects $\gamma$ $n$-times. 

It is known that Goldman flows are Hamiltonian flows with respect to the Atiyah--Bott--Goldman symplectic form $\omega_\mathrm{ABG}$. The Hamiltonian function associated with the Goldman flow $\Lambda^{\mathbf{a}^\vee\gamma}$ is given by the length function $\ell^\mathbf{a}_\gamma:\rho\mapsto \mathbf{a}( \lambda(\rho(\gamma)))$, where $\lambda$ is the Jordan projection.  In other words, the vector field $\mathcal{H}^\mathbf{a}_\gamma = \frac{\der}{\der t}\Lambda^{t\mathbf{a}^\vee \gamma}$ satisfies 
\[
\der \ell^\mathbf{a}_\gamma = \omega_\mathrm{ABG} (\mathcal{H}_\gamma ^\mathbf{a}, -).
\]
The Poisson bracket of length functions in this particular case can be computed using the Goldman product formula \cite{Goldman86}. We paraphrase it as 
\[
\{\ell^\mathbf{a} _\gamma, \ell^\mathbf{b} _\eta\} (\rho)=\omega_\mathrm{ABG}(\mathcal{H}^\mathbf{a} _\gamma, \mathcal{H}^\mathbf{b} _\eta)|_\rho = \sum_{p\in \gamma\sharp \eta}  \mathrm{tr}^{\mathbf{a},\mathbf{b}}_\rho(\gamma_p,\eta_p),
\]
where $\gamma_p$ and $\eta_p$ are lifts of $\gamma$ and $\eta$ whose intersection descends to the intersection point $p$ and $\mathrm{tr}^{\mathbf{a},\mathbf{b}}_\rho(\gamma_p,\eta_p)$ is some function on the space $\mathcal{G}^\times$ of pairs of intersecting geodesics in $\mathbb{H}^2$. The definition of $\mathrm{tr}^{\mathbf{a},\mathbf{b}}_\rho$ can be found in Section~\ref{sec:app}.  The above formula can be written more neatly using geodesic currents:
\[
\{\ell^\mathbf{a} _\gamma, \ell^\mathbf{b} _\eta\} (\rho)= \int_{\mathcal{G}^\times(S)} \mathrm{tr}^{\mathbf{a},\mathbf{b}}_\rho \;\der(\dirac_\gamma \times \dirac_\eta)
\]
where $\dirac_\gamma$ and $\dirac_\eta$ are the Dirac delta measures on the lifts of $\gamma$ and $\eta$ respectively and $\mathcal{G}^\times(S)=\mathcal{G}^\times/\pi_1(S)$. 

Consider a measured lamination $\mu$. Then, the association $k\mapsto \mathbf{a}^\vee\mu(k)$ gives rise to a measured lamination valued in the vector space of trace-less diagonal matrices. As an application of Theorem~\ref{thm:continuity}, we know that the cataclysm flow $\Lambda^{t\mathbf{a}^\vee\mu}$ is also a Hamiltonian flow. If we denote by $\ell^\mathbf{a}_\mu$ a Hamiltonian function of the cataclysm $\Lambda^{t\mathbf{a}^\vee \mu}$, we can further obtain the following generalization of the Goldman product formula:

\begin{theorem}\label{thm:poisson} Let $S$ be a closed oriented surface of genus at least two. Let $\mathsf{G}$ be the split real form of a complex simple adjoint group with the invariant bilinear form $\kf$. Let $\mathbf{a},\mathbf{b}$ be non-trivial elements of $\mathfrak{a}^*$ that are fixed by the opposite involution. Then, for measured laminations $\mu$ and $\tau$ on $S$, and for $\rho\in \Hit_{\mathsf{G}}(S)$, we have  
\[
\{\ell^\mathbf{a}_\mu, \ell^\mathbf{b}_{\tau}\}(\rho)=\int_{\mathcal{G}^\times (S)}\kf^{\mathbf{a},\mathbf{b}}_\rho \;\der(\mu \times \tau).
\]    
\end{theorem}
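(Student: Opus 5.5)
The proof is an approximation argument. First prove the formula for weighted multicurves, then pass to the limit using Theorem~\ref{thm:continuity} together with weak convergence of measures on $\mathcal{G}^\times(S)$.

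\emph{Step 1: multicurves.} Take $\mu=\sum_i s_i\gamma_i$ and $\tau=\sum_j t_j\eta_j$ with positive weights. The cataclysm $\Lambda^{t\mathbf{a}^\vee\mu}$ is then the composition of the commuting Goldman flows along the disjoint curves $\gamma_i$, each run for time $ts_i$. Its generator is therefore $\mathcal{H}^\mathbf{a}_\mu=\sum_i s_i\mathcal{H}^\mathbf{a}_{\gamma_i}$, and we may take the Hamiltonian to be $\ell^\mathbf{a}_\mu=\sum_i s_i\ell^\mathbf{a}_{\gamma_i}$. Bilinearity of the bracket, combined with the Goldman product formula for $\mathsf{G}$-Hitchin representations (the $\kf$ version stated above), gives
\[
\{\ell^\mathbf{a}_\mu,\ell^\mathbf{b}_\tau\}(\rho)=\sum_{i,j}s_it_j\sum_{p\in\gamma_i\sharp\eta_j}\kf^{\mathbf{a},\mathbf{b}}_\rho(\gamma_{i,p},\eta_{j,p})=\int_{\mathcal{G}^\times(S)}\kf^{\mathbf{a},\mathbf{b}}_\rho\,\der(\mu\times\tau).
\]
The fixed-by-opposite-involution hypothesis is used here, to identify $\mathbf{a}^\vee\mu$ as a valid twisted cocycle and to make the Goldman flows genuine cataclysms.

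\emph{Step 2: approximation.} Weighted multicurves are dense in $\ML(S)$. Choose $\mu_n\to\mu$ and $\tau_n\to\tau$ weakly. By Theorem~\ref{thm:continuity}, the flows $\Lambda^{t\mathbf{a}^\vee\mu_n}$ converge to $\Lambda^{t\mathbf{a}^\vee\mu}$ uniformly on compact subsets of $\Hit_\mathsf{G}(S)\times\bR$. We need convergence of the generators $\mathcal{H}^\mathbf{a}_{\mu_n}\to\mathcal{H}^\mathbf{a}_\mu$, and C$^0$ convergence of flows does not give this directly. Instead, we use the Hamiltonians. Fix the normalization $\ell^\mathbf{a}_\mu(\rho)=\mathbf{a}$ applied to the $\mathfrak{a}$-valued length of $\mu$, defined by continuous extension of $\gamma\mapsto\lambda(\rho(\gamma))$ to currents via Bonahon-type intersection. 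Then $\ell^\mathbf{a}_{\mu_n}\to\ell^\mathbf{a}_\mu$ locally uniformly.

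The bracket is $\{\ell^\mathbf{a}_\mu,\ell^\mathbf{b}_\tau\}(\rho)=\frac{\der}{\der t}\big|_0\ell^\mathbf{b}_\tau(\Lambda^{t\mathbf{a}^\vee\mu}\rho)$. Rather than differentiating at $0$, integrate in $t$. For multicurves,
\[
\ell^\mathbf{b}_{\tau_n}(\Lambda^{T\mathbf{a}^\vee\mu_n}\rho)-\ell^\mathbf{b}_{\tau_n}(\rho)=\int_0^T\int\kf^{\mathbf{a},\mathbf{b}}_{\Lambda^{t\mathbf{a}^\vee\mu_n}\rho}\,\der(\mu_n\times\tau_n)\,\der t.
\]
Now let $n\to\infty$. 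The left side converges by Theorem~\ref{thm:continuity} and continuity of length. On the right, the measures $\mu_n\times\tau_n$ converge weakly on $\mathcal{G}^\times(S)$, and $\kf^{\mathbf{a},\mathbf{b}}_{\rho_n}\to\kf^{\mathbf{a},\mathbf{b}}_\rho$ uniformly on compacta whenever $\rho_n\to\rho$. Dividing by $T$ and letting $T\to0$ yields the formula. The integrand is continuous in $t$ because the flow and the function $\kf_\rho$ are continuous.

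\emph{Main obstacle.} The hard step is the weak convergence of $\mu_n\times\tau_n$ as measures on the noncompact space $\mathcal{G}^\times(S)$. Mass could escape toward pairs of geodesics whose intersection angle degenerates, where the two geodesics become asymptotic or coincide; for example, $\mu$ and $\tau$ may share leaves. I would handle this by showing that $\kf^{\mathbf{a},\mathbf{b}}_\rho$ extends continuously, and in fact tends to $0$, as the angle goes to $0$. This uses the transversality and antipodality properties of the Hitchin limit curves: when the endpoints merge, the relevant flags coincide and the cross-ratio-type expression degenerates to zero. Combined with the uniform mass bound coming from convergence of intersection numbers $i(\mu_n,\tau_n)\to i(\mu,\tau)$, this gives tightness and justifies the limit. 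A secondary point is to check that the limit Hamiltonian is independent of the approximating sequence, which follows from uniqueness of weak limits.
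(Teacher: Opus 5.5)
The genuine gap is in Step 2, where you ``fix the normalization'' of $\ell^\mathbf{a}_\mu$ as the continuously extended $\mathbf{a}$-length. You correctly note that $C^0$ convergence of the flows says nothing about the generators, but integrating in $t$ does not get around this. Your argument shows something narrower. For the \emph{specific} function $\ell$ given by the extended $\mathbf{b}$-length of $\tau$, the map $t\mapsto\ell(\Lambda^{t\mu^{\mathbf{a}}}\rho)$ is differentiable at $t=0$, with derivative $\pm\int\kf^{\mathbf{a},\mathbf{b}}_\rho\,\der(\mu\times\tau)$. That number equals $\omega_{\mathrm{ABG}}(\mathcal{H}^\mathbf{a}_\mu,\mathcal{H}^\mathbf{b}_\tau)$ only if two things hold. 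First, the limiting flow must have a generator $\mathcal{H}^\mathbf{a}_\mu$ at all. Second, the extended length must be $C^1$ with $\der\ell=\omega_{\mathrm{ABG}}(\mathcal{H}^\mathbf{b}_\tau,-)$.

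Neither is proved. The second is exactly the assertion that a locally uniform limit of Hamiltonians is a Hamiltonian of the limiting field, which is the $C^1$ control you set out to avoid. Note also that the existence of a Hamiltonian $\ell^\mathbf{a}_\mu$ is itself part of what the theorem requires.

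The missing idea is holomorphicity. The paper works with complex $\mathsf{G}$ and complex multiples $z\mathsf{c}$, so $(z,\rho)\mapsto\Lambda^{z\mathsf{c}_n}(\rho)$ is holomorphic (Corollary~\ref{cor:holomorphic}). The locally uniform convergence of Theorem~\ref{thm:convergence} then upgrades, via Cauchy estimates, to locally uniform convergence of derivatives. Hence $\mathcal{H}^\mathbf{a}_{\mu_i}\to\mathcal{H}^\mathbf{a}_\mu$, so $\omega_{\mathrm{ABG}}(\mathcal{H}^\mathbf{a}_\mu,-)$ is a limit of exact forms and is exact on the contractible Hitchin component. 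It also gives $\{\ell^\mathbf{a}_\mu,\ell^\mathbf{b}_\tau\}=\lim_i\{\ell^\mathbf{a}_{\mu_i},\ell^\mathbf{b}_\tau\}$, with no normalization and no convergence of the Hamiltonians needed. The same device gives Lemma~\ref{lem:derivative}, which supplies $\{\ell^\mathbf{a}_{\mu_i},\ell^\mathbf{b}_\tau\}=\int\kf^{\mathbf{a},\mathbf{b}}_\rho\,\der(\mu_i\times\tau)$ for a multicurve $\mu_i$ against an arbitrary $\tau$. With that in hand your normalization becomes legitimate, but it is no longer needed.

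Your treatment of the ``main obstacle'' also rests on a false claim: $\kf^{\mathbf{a},\mathbf{b}}_\rho$ does not tend to $0$ as the crossing angle degenerates. Orient the geodesics so that the intersection is positive.
\begin{itemize}
\item As the counter-clockwise angle tends to $0$, the flag pair of the second geodesic converges to that of the first, and $\kf^{\mathbf{a},\mathbf{b}}_\rho\to\kf(\mathbf{a}^\vee,\mathbf{b}^\vee)$.
\item As the angle tends to $\pi$, the pair converges to the swapped one, and $\kf^{\mathbf{a},\mathbf{b}}_\rho\to\kf(\mathbf{a}^\vee,w_0\mathbf{b}^\vee)=-\kf(\mathbf{a}^\vee,\mathbf{b}^\vee)$.
\end{itemize}
Compare Example~\ref{example}, where $\Theta_X\to\mathrm{Id}$, and Lemma~\ref{lem:goodhyperbolicstr}, which depends on this limit being nonzero.

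So the integrand is merely bounded near the diagonal, and a uniform mass bound does not give tightness. What you need is that $\mu_n\times\tau_n$ puts asymptotically no mass near the diagonal. That does follow from $i(\mu_n,\tau_n)\to i(\mu,\tau)$ together with vague convergence on $\mathcal{G}^\times(S)$.

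The paper sidesteps the joint limit altogether. Lemma~\ref{lem:derivative} lets only one lamination vary against a fixed closed curve. The final step then lets only $\mu_n$ vary, with $\tau$ and $\rho$ fixed, using a fundamental domain, Fubini and dominated convergence.
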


Bridgeman--Labourie \cite{bridgeman2025} also obtained the same formula on the space of $\Theta$-Anosov representations when  $\mathbf{a}=\mathbf{b}=\sum_{\theta\in \Theta}\theta$. 

The second application of Theorem~\ref{thm:continuity} concerns  strongly dense representations. This notion originated from the work of Breuillard--Green--Guralnick--Tao \cite{tao} where the authors defined a subgroup $\mathsf{H}$ of an algebraic group $\mathsf{G}$ to be \emph{strongly dense} if every subgroup $\langle x,y\rangle$ generated by a non-commuting pair of elements $x,y\in \mathsf{H}$ is Zariski dense in $\mathsf{G}$. Following this definition, we say that a representation $\rho:\Gamma\to \mathsf{G}$ is strongly dense if for every non-commuting pair $x,y\in \Gamma$, the subgroup $\langle \rho(x),\rho(y)\rangle$ generated by $\rho(x)$ and $\rho(y)$ is Zariski dense in $\mathsf{G}$. This is much stronger than a representation being Zariski dense, which only requires the image $\rho(\Gamma)$ to be Zariski dense. 

The existence of strongly dense free groups \cite{tao}, surface groups \cite{long,breuillard}, some hyperbolic 3-manifold groups \cite{lee}, and 2-orbifold groups \cite{jung2025} in $\mathsf{G}$ has been established under certain conditions on $\mathsf{G}$. 

When $\mathsf{G}$ is a split real form, strongly dense surface groups in $\mathsf{G}$ are abundant in the sense that generic $\mathsf{G}$-Hitchin representations are strongly dense. On the other hand,  it is not even known whether  this space is open in the $\mathsf{G}$-Hitchin component. 

As an application of Theorem~\ref{thm:continuity}, we show that, unlike the Zariski dense condition, being strongly dense is not an open condition. 

\begin{theorem}\label{thm:main} Let $\mathsf{G}$ be a split real form of a complex simple Lie group of adjoint type. Assume that the longest element of the Weyl group of $\mathsf{G}$ is $-1$.  Fix a symmetric generating set for $\pi_1(S)$ and let $|\cdot|$ be the associated word metric. There exist a sequence of $\mathsf{G}$-Hitchin representations $\{\rho_n\}$ and constants $A,B>0$ that satisfy the following properties: 
 \begin{enumerate}
      \item each $\rho_n$ is not strongly dense,
     \item  $\langle\rho_n(x),\rho_n(y)\rangle$ is Zariski dense for all non-commuting  $x,y\in \pi_1(S)$ with $|x|,|y|<An-B$,
     \item $\{\rho_n\}$ converges to a strongly dense $\mathsf{G}$-Hitchin representation.
 \end{enumerate}
\end{theorem}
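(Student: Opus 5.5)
We will construct $\rho_n$ by applying cataclysm deformations along a simple closed curve to a base representation, so that the limit is controlled by Theorem~\ref{thm:continuity}. The mechanism that destroys strong density is the hypothesis $w_0=-1$. Under this hypothesis the opposite involution is trivial, and a $\mathsf{G}$-Hitchin representation restricted to a pair of pants or to a one-holed torus subgroup can, after suitable twisting, be made to land in a proper algebraic subgroup. The natural candidate is the image of the principal $\PSL_2(\bR)$, or a subgroup of the form $\mathsf{H}\subset\mathsf{G}$ fixed by an involution.

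The concrete plan is as follows. First, fix a strongly dense Hitchin representation $\rho_\infty$ that is generic, for instance a very general point of $\Hit_\mathsf{G}(S)$; such points exist because generic Hitchin representations are strongly dense. Second, choose a simple closed geodesic $\gamma$ and a sequence of measured laminations $\mu_n$ of the form $\mu_n=\frac{1}{n}\gamma_n$. Here $\gamma_n$ are simple closed curves (for example $\gamma_n = T_\delta^n(\gamma)$ for a Dehn twist $T_\delta$) whose projective classes converge, so that $\mu_n\to\mu$ weakly. Third, choose twisted cocycles $\mathsf{c}_n=\mathbf{a}^\vee\mu_n$ supported on $\gamma_n$, and define $\rho_n=\Lambda^{\mathsf{c}_n}(\rho_n')$ with $\rho_n'\to\rho_\infty$.

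The key point is that, for the simple closed curve $\gamma_n$ and a curve $\eta_n$ meeting it once, the pair $\rho_n(\gamma_n),\rho_n(\eta_n)$ can be forced to generate a non-Zariski-dense group. The approach is to pick $\rho_n'$ close to $\rho_\infty$ so that on the one-holed torus containing $\gamma_n,\eta_n$ the restriction is conjugate into a proper subgroup. Since $w_0=-1$, eigenvalue data are symmetric, so the Goldman twist flow along $\gamma_n$ preserves a one-parameter family of such reducible configurations. Hence a small twist (of size $O(1/n)$ in the cocycle) realises the degenerate configuration. Theorem~\ref{thm:continuity} then gives $\rho_n\to\Lambda^{0}(\rho_\infty)=\rho_\infty$, uniformly on compacts, which yields (3).

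For (2), we argue by Zariski density in a finite-dimensional family. The set of representations for which some pair $x,y$ with $|x|,|y|\le L$ generates a non-dense group is a finite union of proper real-algebraic subsets. Since $\gamma_n,\eta_n$ have word length at least linear in $n$, of order $An$, we can arrange that $\rho_n$ avoids all these subsets for pairs of word length below $An-B$. We do this by perturbing within the degenerate locus, using that this locus is not contained in the bad subsets for short words, which follows because short words lie in subsurfaces disjoint from the twisting region up to bounded overlap. The constant $B$ absorbs the choice of generating set, and $A$ comes from the linear growth of $|T_\delta^n(\gamma)|$.

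The main obstacle is the second step: showing that one can exactly hit a non-strongly-dense representation via a cataclysm of small size near a generic point. This requires an explicit computation in a one-holed torus showing that the twist parameter along $\gamma_n$ moves $\rho(\eta_n)$ through a configuration where $\langle\rho(\gamma_n),\rho(\eta_n)\rangle$ lies in a proper subgroup (for example, the flags of $\rho(\eta_n)$ become those of an element in the centralizer-twisted principal $\PSL_2$). We would first try to use the symmetry $w_0=-1$ to produce an involution of $\mathsf{G}$ commuting with the twist flow and reversing $\eta_n$. An intermediate-value argument on the real twist parameter would then locate a fixed configuration, whose fixed group is a proper subgroup.
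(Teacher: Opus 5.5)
Your overall architecture matches the paper's: cataclysms along weighted simple closed curves that converge weakly to a lamination, fed into Theorem~\ref{thm:continuity}. But there is a genuine gap exactly at the step you flag as the main obstacle, and that step is the whole content of (i) together with (iii).

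Choosing $\rho_n'$ near a generic strongly dense $\rho_\infty$ whose restriction to $\pi_1(T_n)$ lies in a proper algebraic subgroup simply assumes that non-strongly-dense representations accumulate at $\rho_\infty$. Twisting cannot supply this. A twist along $\partial T_n$, or along curves disjoint from $T_n$, does not change $\rho|_{\pi_1(T_n)}$ up to conjugacy. A twist along $\gamma_n\subset T_n$ moves the restriction along a single real curve. The locus of restrictions landing in a proper subgroup, however, has codimension greater than one in the character variety of the free group $\pi_1(T_n)$; for the principal $\PSL_2(\bR)$ it is $\dim\mathsf{G}-3$. An intermediate-value argument only detects sign changes of one real function, and you never produce the involution commuting with the flow. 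Nor does $w_0=-1$ ``destroy strong density'' in any sense.

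There is also an inconsistency in your third step. If $i(\gamma,\delta)\neq0$, then $\frac1n T_\delta^n(\gamma)\to\mu=i(\gamma,\delta)\,\delta\neq0$. Theorem~\ref{thm:continuity} then gives $\rho_n\to\Lambda^{\mathbf{a}^\vee\mu}(\rho_\infty)$, not $\rho_\infty$, and you do not address strong density of that limit. If $i(\gamma,\delta)=0$, then $\gamma_n=\gamma$ and nothing grows. Your argument for (ii) likewise presupposes the degenerate locus. Growth of the single word $T_\delta^n(\gamma)$ also does not control all elements of the degenerate subgroup.

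The missing idea is to reverse the roles: start from a representation that is already degenerate on subsurfaces, and let the continuity theorem produce the strongly dense limit.

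\begin{itemize}
\item \textbf{Construction.} The paper takes a Fuchsian $\rho_0$, a pseudo-Anosov $\phi$ with stable lamination $\mu^{\mathrm{st}}$, and a \emph{separating} curve $c$ with $k_n\phi^n(c)\to\epsilon\mu^{\mathrm{st}}$. It sets $\eta_n=\Lambda^{(k_n\phi^nc)^{\mathbf{b}}}(\rho_0)$.
\item \textbf{Property (i).} A cataclysm along a separating curve leaves the restriction to each complementary subsurface conjugate to the original. So $\eta_n|_{\phi^n_*\pi_1(S_1)}$ is Fuchsian, and (i) holds with no search at all.
\item \textbf{Convergence.} Lemma~\ref{lem:convergetoo} and Theorem~\ref{thm:continuity} give $\eta_n\to\rho_\epsilon=\Lambda^{(\epsilon\mu^{\mathrm{st}})^{\mathbf{b}}}(\rho_0)$.
\item \textbf{Strong density of the limit.} This is proved separately. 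Sambarino's finitely many hyperplanes $\ker\mathbf{a}_i$ reduce it to $\ell^{\mathbf{a}_i}_\gamma(\rho_\epsilon)\neq0$ for all $\gamma$ and $i$. Lemma~\ref{lem:goodhyperbolicstr} chooses $\rho_0$ so that each real-analytic function $t\mapsto\ell^{\mathbf{a}_i}_\gamma(\rho_t)$, with $\rho_t=\Lambda^{t(\mu^{\mathrm{st}})^{\mathbf{b}}}(\rho_0)$, has nonzero derivative at $0$. Hence each has discrete zeros, and $\epsilon$ is taken off this countable union.
\item \textbf{Role of $w_0=-1$.} It enters only to make every $\mathbf{a}_i$ and $\mathbf{b}$ fixed by the opposite involution. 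That is what makes $\mu\mapsto\mu^{\mathbf{b}}$ weakly continuous; Example~\ref{ex:counter} shows the failure otherwise.
\item \textbf{Property (ii).} This comes from the pseudo-Anosov. The Masur--Minsky bound $\der_{\mathscr{C}}(c,\phi^nc)\ge C_4n$ forces every element disjoint from $\phi^n(c)$ to have word length at least linear in $n$. This curve-complex mechanism is unavailable for iterated Dehn twists, since $\der_{\mathscr{C}}(\gamma,T_\delta^n\gamma)\le2\der_{\mathscr{C}}(\gamma,\delta)$ stays bounded.
\end{itemize}
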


In particular, we can immediately conclude the following:

\begin{corollary}
    Let $\mathsf{G}$ be as in Theorem~\ref{thm:main}. The set of strongly dense $\mathsf{G}$-Hitchin representations is not open.
\end{corollary}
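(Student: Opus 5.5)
The corollary follows at once from Theorem~\ref{thm:main}, so the plan is a short contradiction argument.

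Suppose the set $\mathcal{SD}\subset\Hit_{\mathsf{G}}(S)$ of strongly dense $\mathsf{G}$-Hitchin representations were open. Theorem~\ref{thm:main} supplies a sequence $\{\rho_n\}$ of $\mathsf{G}$-Hitchin representations with two relevant properties:
\begin{itemize}
\item by item (3), the sequence converges to a representation $\rho_\infty\in\mathcal{SD}$;
\item by item (1), no $\rho_n$ lies in $\mathcal{SD}$.
\end{itemize}
Openness would give a neighborhood $U$ of $\rho_\infty$ in $\Hit_{\mathsf{G}}(S)$ with $U\subset\mathcal{SD}$. Since $\rho_n\to\rho_\infty$, we have $\rho_n\in U$ for all $n$ large enough. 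This contradicts the fact that no $\rho_n$ is strongly dense. Hence $\mathcal{SD}$ is not open.

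The only point to check is that the convergence in item (3) takes place in the topology of the Hitchin component. This is the subspace topology from the character variety, i.e.\ convergence of $\rho_n(\gamma)$ for each generator after conjugation, and it is the topology in which openness is being asked. Item (2) of Theorem~\ref{thm:main} is not needed for the corollary; it only says that the failure of strong density for $\rho_n$ is witnessed by pairs of increasing word length. The real work sits in Theorem~\ref{thm:main} itself, which I take as given here, so I expect no obstacle in this step.
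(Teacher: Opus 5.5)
Your argument is correct and is exactly the immediate deduction the paper intends: Theorem~\ref{thm:main} gives non-strongly-dense Hitchin representations $\rho_n$ converging to a strongly dense one, so no neighborhood of the limit is contained in the strongly dense set. You are also right that item (2) plays no role in this corollary.
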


We remark that Theorem~\ref{thm:main} applies to the Lie groups $\mathsf{PSO}(n,n+1)$, $\mathsf{PSp}(2n)$, $\mathsf{PSO}(2n,2n)$, $\mathsf{E}_7$, $\mathsf{E}_8$, $\mathsf{F}_4$, and $\mathsf{G}_2$. Although we believe that Theorem~\ref{thm:main} holds for other Lie groups such as $\mathsf{PSL}(n,\bR)$ and $\mathsf{E}_6$, we could not cover these cases with our current techniques.

Theorem~\ref{thm:main} also shows that it is impossible to decide whether a given representation is strongly dense by testing Zariski density of some finite numbers of subgroups.

Let $\mathcal{SD}$ be the set of strongly dense $\mathsf{G}$-Hitchin representations. Our proof of Theorem~\ref{thm:main} actually shows that there exists a 1-parameter family $l$ of Hitchin representations contained in the topological boundary of $\mathcal{SD}$ such that  $l\cap \mathcal{SD}$ is dense in $l$.

\subsection*{Organization} In Section~\ref{sec:lie}, we quickly review relevant Lie theory, Anosov representations and Hitchin components.

We introduce twisted cocycles and measured laminations in Section~\ref{sec:cocycle} and explore their elementary properties. Most estimates in this section will be used in the proof of Theorem~\ref{thm:continuity}. 

Theorem~\ref{thm:continuity} will be shown in Section~\ref{sec:cataclysm}.  We also present the construction of cataclysm deformations on the space of Anosov representation into a complex Lie group. 

In Section~\ref{sec:app} we prove Theorem~\ref{thm:poisson} and Theorem~\ref{thm:main}. 

\subsection*{Acknowledgement} The author was supported by a KIAS Individual Grant (MG105301) at Korea Institute for Advanced Study.

\section{Hitchin and Anosov representations}
\subsection{Lie theory}\label{sec:lie}
Let $\mathfrak{g}$ be a real semisimple Lie algebra. For such a Lie algebra, the Killing form $\kf$ is nondegenerate and there exists a Cartan involution $\tau:\mathfrak{g}\to \mathfrak{g}$ characterized by the property that $\kf_\tau(\cdot,\cdot):=-\kf(\cdot,\tau(\cdot))$ is positive definite. The Lie algebra $\mathfrak{g}$ can be decomposed into the $\pm 1$-eigenspaces of the Cartan involution $\tau$, giving rise to a Cartan decomposition $\mathfrak{g} = \mathfrak{k}\oplus \mathfrak{p}$. A maximal abelian subalgebra $\mathfrak{a}$ of $\mathfrak{p}$ together with the set $\Sigma$ of restricted roots forms a root system. Additionally, $\mathfrak{g}$ admits the restricted root space decomposition
\[
\mathfrak{g} = \mathfrak{g}_0 \oplus \bigoplus_{\alpha\in \Sigma}\mathfrak{g}_\alpha,
\]
where $\mathfrak{g}_\alpha = \{X\in \mathfrak{g}\mid  [A,X] = \alpha(A)X\text{ for all }A\in \mathfrak{a}\}$ is the restricted root space. We fix an ordering on $\Sigma$ and denote by $\Sigma^+$ the set of positive roots. Let $\Delta\subset \Sigma^+$ be the set of simple roots. Then the \emph{positive Weyl chamber} $\mathfrak{a}^+$ is given by $\{X\in \mathfrak{a}\mid \alpha(X)>0\text{ for all }\alpha\in \Delta\}$. 

The Weyl group of $\mathfrak{g}$ is a finite group generated by root reflections. This group contains the longest element $w_0$ whose action on $\mathfrak{a}$ is an involution. Define the \emph{opposite involution} $\iota:\mathfrak{a}\to \mathfrak{a}$ by $\iota(X) = -w_0\cdot X$.  The opposite involution preserves the positive Weyl chamber $\mathfrak{a}^+$. 

Let $\mathsf{G}$ be a connected real semisimple Lie group with Lie algebra $\mathfrak{g}$.  The Weyl group can be identified with $N_K(\mathfrak{a})/Z_K(\mathfrak{a})$. We sometimes lift $w_0$ to an element $\widehat{w_0}$ of $N_K(\mathfrak{a})$. Then, $\widehat{w_0}$ acts on $\mathsf{G}$ as the conjugation. 

Any element $g\in \mathsf{G}$ can be written as a commuting product $g=g_Kg_A g_U$, where $g_K$, $g_A$, and $g_U$ are elements in $\mathsf{G}$ such that $\Ad_{g_K}$, $\Ad_{g_A}$ and $\Ad_{g_U}$ are elliptic, hyperbolic, and unipotent matrices, respectively.  Moreover, the conjugacy class of $g_A$ intersects $\exp \overline{\mathfrak{a}}^+$ at a single element,  $\exp \lambda(g)$. The map $g\mapsto \lambda(g)$ so-obtained is called the \emph{Jordan projection}. Let 
\[
\mathsf{L}= \bigcup_{g\in \mathsf{G}}g(\exp \mathfrak{a}^+)g^{-1}.
\]
Elements in $\mathsf{L}$ are called \emph{purely loxodromic}.  It is known that $\mathsf{L}$ is an open set and  $\lambda:\mathsf{L}\to \mathfrak{a}^+$ is a real analytic map. We refer to, for instance, \cite[Lemma~A.1]{jung2025}.

Now let $\mathfrak{g}$ be a complex semisimple Lie algebra and let $\mathfrak{h}$ be its Cartan subalgebra. Let $\kf$ be the Killing form of $\mathfrak{g}$. Then,  $\mathfrak{g}$ admits the root space decomposition
\[
\mathfrak{g} = \mathfrak{h} \oplus \bigoplus_{\alpha\in \Sigma}\mathfrak{g}_\alpha
\]
where $\Sigma$ is the set of roots.

We are concerned with two real Lie algebras associated to a complex semisimple Lie algebra $\mathfrak{g}$. One is the Lie algebra $\mathfrak{g}^\bR$ obtained by simply regarding $\mathfrak{g}$ as a real Lie algebra. The other one is the split real form $\mathfrak{g}_{\mathrm{split}}$ of $\mathfrak{g}$. Both real Lie algebras are known to be semisimple. 

First, we consider $\mathfrak{g}^\bR$. Being a semisimple real Lie algebra, $\mathfrak{g}^\bR$ admits the Cartan decomposition $\mathfrak{g}^\bR= \mathfrak{k}^\bR\oplus \mathfrak{p}^\bR$. 
More concretely, $\mathfrak{g}$ admits a compact real form $\mathfrak{u}$ so that  $\mathfrak{k}^\bR$ and $\mathfrak{p}^\bR$ are given by $\mathfrak{u}$ and $\ui \mathfrak{u}=\sqrt{-1} \mathfrak{u}$, respectively. We also have the restricted root space decomposition
\[
\mathfrak{g}^\bR = (\mathfrak{g}^\bR) _0 \oplus \bigoplus_{\alpha\in \Sigma}(\mathfrak{g}^\bR)_\alpha.
\]
The restricted root space decomposition of $\mathfrak{g}^\bR$ and the root space decomposition of $\mathfrak{g}$ are closely related. The maximal abelian subalgebra $\mathfrak{a}^\bR$ of $\mathfrak{p}^\bR$ satisfies $\mathfrak{h}  = \mathfrak{a}^\bR\oplus \ui \mathfrak{a}^\bR=(\mathfrak{g}^\bR) _0$ as real vector spaces. The root system of $\mathfrak{g}$ and the restricted root system of $\mathfrak{g}^\mathbb{R}$ are of the same type; if $\theta$ is a root of $\mathfrak{g}$ then $\theta|\mathfrak{a}^\bR$ is a restricted root of $\mathfrak{g}^\mathbb{R}$. For each $\alpha\in \Sigma\cup\{0\}$, we have $\mathfrak{g}_\alpha=(\mathfrak{g} ^\bR)_\alpha$, as real vector spaces.

The \emph{split real form} of a complex semisimple Lie algebra $\mathfrak{g}$, denoted by $\mathfrak{g}_{\mathrm{split}}$, is a real form such that, for a Cartan decomposition $\mathfrak{g}_\mathrm{split}=\mathfrak{k}\oplus \mathfrak{p}$, a maximal abelian subalgebra $\mathfrak{a}$ of $\mathfrak{p}$ satisfies $\mathfrak{h} =\mathfrak{a}\otimes \mathbb{C}= \mathfrak{a}\oplus \ui\mathfrak{a}$. It follows that $\mathfrak{a}$ is also a maximal abelian subalgebra of $\mathfrak{p}^\bR$. Moreover, the restricted root systems of $\mathfrak{g}_{\mathrm{split}}$, and that of $\mathfrak{g}^\bR$ are of the same type.

Let $\mathsf{G}$ be a complex semisimple connected Lie group with Lie algebra $\mathfrak{g}$. If we regard $\mathsf{G}$ as a real Lie group, then its Lie algebra is given by $\mathfrak{g}^\bR$. For the split real form $\mathfrak{g}_{\mathrm{split}}$, we denote by $\mathsf{G}_{\mathrm{split}}$ the Lie subgroup associated to $\mathfrak{g}_{\mathrm{split}}$.

Now, we define a \emph{complex Jordan projection} $\lambda^\mathbb{C}$ for a complex connected simple Lie group $\mathsf{G}$.  Unlike the usual Jordan projection, we define $\lambda^\mathbb{C}$ only on a small neighborhood of the set of purely loxodromic elements. From now on, we will use $\mathfrak{a}$, instead of $\mathfrak{a}^\bR$, to denote the maximal abelian subalgebra of $\mathfrak{p}^\bR$. Recall that the Cartan subgroup $\mathsf{H}$ of $\mathsf{G}$ is given by $\mathsf{M}\times \mathsf{A}$ where $\mathsf{A}=\exp \mathfrak{a}$ and $\mathsf{M}= \exp (\ui \mathfrak{a})$.  

Let 
\[
\mathsf{H}^+:=\mathsf{M}\times \exp\mathfrak{a}^+.
\]
Observe that each $\mathsf{G}$-conjugacy class intersects $\mathsf{H}^+$ at most once. Let
\[
\mathsf{K} := \bigcup _{h\in \mathsf{G}} h \mathsf{H}^+ h^{-1}
\]
be the set of elements that are conjugate into $\mathsf{H}^+$. 

The exponential map $\exp:\ui \mathfrak{a} \to \mathsf{M}$ is analytic and has a local inverse $\log_M:V\to \ui\mathfrak{a}$ defined an open set $V$ containing the identity such that $\log_M(\mathrm{Id}) = 0$. Define the open set
\[
U=\bigcup_{h\in \mathsf{G}}h(V\times \exp \mathfrak{a}^+)h^{-1}.
\]
Observe that $U$ is open, invariant under conjugation, and contains all purely loxodromic elements. For any element $g\in U$, its $\mathsf{G}$-conjugacy class meets $V\times \exp\mathfrak{a}^+$ at a unique point, say $g'$. Write $g'=g_{M}\cdot g_{A}$ where $g_M\in V$ and $g_A\in \exp \mathfrak{a}^+$. Then, $\lambda^\mathbb{C}(g)$ is defined to be 
\[
\lambda^\mathbb{C}(g)=\log(g_A)+ \log_M(g_M)\in \mathfrak{a}^+ \oplus \ui \mathfrak{a}\subset \mathfrak{h}.
\]
Note that $\lambda^\mathbb{C}$ is well-define only on $U$. Furthermore, $\lambda^\mathbb{C}(g)$ coincides with the usual Jordan projection of $g$ when $g$ is purely loxodromic. We regard $\lambda^\mathbb{C}$ as a $\mathfrak{h}$-valued function defined on the open set $U\subset \mathsf{K}$.   

We claim that $\lambda^\mathbb{C}$ is holomorphic. Indeed, one observes that the map $\mathsf{H}^+\times \mathsf{G}\to \mathsf{K}$ defined by  $(a,g) \mapsto gag^{-1}$ is holomorphic and has a holomorphic local section $\varphi$ near the image of $(a_0,1)$ for each $a_0\in \exp \mathfrak{a}^+$. The logarithm of the $\mathsf{H}^+$-factor of the local section $\varphi$ is, by definition, the complex Cartan projection, showing that it is holomorphic. Moreover, the same argument shows the following.

\begin{lemma}\label{lem:holomorphicconj}
    Let $\mathsf{G}$ be a complex connected simple Lie group. Let $f:D\to \mathsf{G}$ be a holomorphic map defined on a small disk $D\subset \mathbb{C}$ such that $f(0)\in \mathsf{H}^+$ and $f(z)\in \mathsf{K}$ for all $z\in D$. Then there is a holomorphic map $g:D'\to \mathsf{G}$ defined on a smaller open disk $D'\subset D$ with $g(0) = 1$ such that $g(z) f(z) g(z)^{-1}\in \mathsf{H}^+$. 
\end{lemma}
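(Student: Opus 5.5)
The plan is to realize the conjugation map as a holomorphic submersion near $(a_0,1)$ and take a holomorphic local section. Let $a_0:=f(0)\in\mathsf{H}^+$. Consider the holomorphic map
\[
\Phi:\mathsf{H}\times \mathsf{G}\to \mathsf{G},\qquad \Phi(a,h)=h a h^{-1}.
\]
Since $\mathsf{H}^+$ is open in the complex torus $\mathsf{H}$, we may take $a$ near $a_0$ inside $\mathsf{H}^+$. The map $\Phi$ is invariant under $(a,h)\mapsto (a,hs)$ for $s\in\mathsf{H}$. So I would restrict $h$ to a complex submanifold transverse to $\mathsf{H}$ at the identity, namely $h=\exp(Y)$ with $Y\in\mathfrak{n}:=\bigoplus_{\alpha\in\Sigma}\mathfrak{g}_\alpha$ small. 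This gives a holomorphic map $\Psi(a,Y)=\exp(Y)\,a\,\exp(-Y)$ from a neighborhood of $(a_0,0)$ in $\mathsf{H}\times\mathfrak{n}$ to $\mathsf{G}$. Both sides have complex dimension $\dim\mathfrak{g}$.

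The first step is the differential at $(a_0,0)$. After right-translating by $a_0^{-1}$,
\[
(X,Y)\mapsto X+(1-\Ad_{a_0})Y,\qquad X\in\mathfrak{h},\ Y\in\mathfrak{n}.
\]
On $\mathfrak{g}_\alpha$, $\Ad_{a_0}$ acts by the scalar $e^{\alpha(\log a_0)}$. Since $a_0\in\mathsf{M}\times\exp\mathfrak{a}^+$, the real part of $\alpha(\log a_0)$ is $\alpha$ evaluated on the $\mathfrak{a}^+$-component. This is nonzero for every root, because the restricted roots of $\mathfrak{g}^\bR$ are restrictions of the roots of $\mathfrak{g}$ and $\mathfrak{a}^+$ is the open chamber. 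Hence $1-\Ad_{a_0}$ is invertible on $\mathfrak{n}$, the differential is an isomorphism, and by the holomorphic inverse function theorem $\Psi$ is a biholomorphism from a neighborhood $W$ of $(a_0,0)$ onto a neighborhood $\Omega$ of $a_0$ in $\mathsf{G}$.

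Next, choose a disk $D'\subset D$ so small that $f(D')\subset\Omega$, and set $(a(z),Y(z)):=\Psi^{-1}(f(z))$ and $g(z):=\exp(-Y(z))$. These are holomorphic, $Y(0)=0$ so $g(0)=1$, and
\[
g(z)f(z)g(z)^{-1}=a(z)\in\mathsf{H}.
\]

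The main obstacle is upgrading $a(z)\in\mathsf{H}$ to $a(z)\in\mathsf{H}^+$. This is where the hypothesis $f(z)\in\mathsf{K}$ is used, and possibly also a shrinking of $W$. First, shrink $W$ so that its $\mathsf{H}$-component lies in a neighborhood of $a_0$ contained in $\mathsf{H}^+$. This is possible because $\mathsf{H}^+=\mathsf{M}\times\exp\mathfrak{a}^+$ is open in $\mathsf{H}=\mathsf{M}\times\mathsf{A}$. Then $a(z)\in\mathsf{H}^+$ automatically, by continuity of $a(z)$ and $a(0)=a_0$, so the hypothesis $f(z)\in\mathsf{K}$ is consistent but essentially implied near $z=0$.

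As a sanity check on uniqueness, $f(z)\in\mathsf{K}$ means $f(z)$ is conjugate to a unique element of $\mathsf{H}^+$, since each conjugacy class meets $\mathsf{H}^+$ at most once. That element must coincide with $a(z)$, so the $a(z)$ produced by the section is exactly the $\mathsf{H}^+$-representative.
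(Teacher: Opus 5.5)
Your proposal is correct and follows essentially the same route as the paper. The paper only asserts that the conjugation map $(a,h)\mapsto hah^{-1}$ from $\mathsf{H}^+\times\mathsf{G}$ to $\mathsf{K}$ admits a holomorphic local section near $(a_0,1)$; you construct that section explicitly by restricting to the slice $\exp(\mathfrak{n})$ and applying the holomorphic inverse function theorem, which works because regularity of $a_0$ makes $1-\Ad_{a_0}$ invertible on the root spaces. You are also right that the hypothesis $f(z)\in\mathsf{K}$ holds automatically near $z=0$, since $\mathsf{H}^+$ is open in $\mathsf{H}$.
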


Let $\mathbf{a} :  \mathfrak{h}\to \mathbb{C}$ be a linear functional. As $\lambda^\mathbb{C}$ is holomorphic, the function $f:=\mathbf{a}\circ \lambda^\mathbb{C}$ is also holomorphic on $U$. Moreover, $f$ is invariant under the conjugation action, that is,  $f(ghg^{-1}) =f(h)$. For such a function, we can associate the \emph{Goldman function} $\widehat{f}:U \to \mathfrak{g}$ defined by the property that  $\kf(\widehat{f}(g), X) = \frac{\der}{\der t} f(g \exp tX)$ for all $X\in \mathfrak{g}$. By following the computation in \cite[Lemma~3.4]{jung2025}, we can compute the Goldman function of $f$. For this, let $\mathbf{a}^\vee$ be the dual of $\mathbf{a}$ in the sense that $\mathbf{a}(Y) = \kf(\mathbf{a}^\vee,Y)$ for all $Y\in \mathfrak{h}$.
\begin{lemma}\label{lem:goldmanfunction}
Let $\mathbf{a}\in \mathfrak{h}^*$ and let $f = \mathbf{a}\circ \lambda^\mathbb{C}$. Then 
\[
\widehat{f}(g) = \mathbf{a}^\vee,\qquad \widehat{f}(g^{-1}) = w_0 \mathbf{a}^\vee
\]
for $g\in U \cap \mathsf{H}^+$.
\end{lemma}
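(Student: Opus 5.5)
To compute $\widehat{f}(g)$ I test it against $X\in\mathfrak{g}$ one root-space component at a time, using the root space decomposition $\mathfrak{g}=\mathfrak{h}\oplus\bigoplus_{\alpha\in\Sigma}\mathfrak{g}_\alpha$. Since $\kf$ is nondegenerate and pairs $\mathfrak{h}$ with itself and $\mathfrak{g}_\alpha$ with $\mathfrak{g}_{-\alpha}$, it suffices to show two things: $\frac{\der}{\der t}f(g\exp tX)|_{t=0}=\kf(\mathbf{a}^\vee,X)$ for $X\in\mathfrak{h}$, and $\frac{\der}{\der t}f(g\exp tX)|_{t=0}=0$ for $X\in\mathfrak{g}_\alpha$. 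Together these give $\widehat f(g)=\mathbf{a}^\vee$.

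\emph{Cartan directions.} Let $X\in\mathfrak{h}$. Then $g\exp tX$ stays in the abelian group $\mathsf{H}$. For small complex $t$ it stays in $V\times\exp\mathfrak{a}^+$, because $\mathfrak{a}^+$ is open and $g\in U\cap\mathsf{H}^+$ has its $\mathsf{M}$-part in $V$. Here $\lambda^\mathbb{C}$ is simply the logarithm, so $\lambda^\mathbb{C}(g\exp tX)=\lambda^\mathbb{C}(g)+tX$. Hence the derivative is $\mathbf{a}(X)=\kf(\mathbf{a}^\vee,X)$.

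\emph{Root directions.} Let $X\in\mathfrak{g}_\alpha$. Since $\alpha(\lambda^{\mathbb C}(g))\neq 0$ when $g\in\mathsf{H}^+$, the operator $\Ad_{g^{-1}}-1$ is invertible on $\mathfrak{g}_\alpha$. So I write $X=Y-\Ad_{g^{-1}}Y$ with $Y=(1-e^{-\alpha(\lambda^{\mathbb C}(g))})^{-1}X\in\mathfrak{g}_\alpha$. To first order,
\[
\exp(tY)\,g\exp(tX)\exp(-tY)=g\exp\bigl(t(\Ad_{g^{-1}}Y+X-Y)\bigr)+O(t^2)=g+O(t^2).
\]
Since $f$ is conjugation invariant, the derivative at $t=0$ vanishes. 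This is the step I expect to be the main obstacle: the real-loxodromic case handled in \cite[Lemma~3.4]{jung2025} becomes here a complex statement, and the key fact needed is that $e^{\alpha(\lambda^{\mathbb C}(g))}\neq1$ for every root $\alpha$. This holds because $\operatorname{Re}\alpha(\lambda^{\mathbb C}(g))=\alpha(\log g_A)\neq0$ for $g_A\in\exp\mathfrak{a}^+$. Holomorphy of $f$ on the open set $U$ (established above via Lemma~\ref{lem:holomorphicconj}) justifies differentiating.

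\emph{The inverse.} For $g^{-1}$, note that $g^{-1}\in\mathsf{M}\times\exp(-\mathfrak{a}^+)$. Conjugating by the lift $\widehat{w_0}$ gives $h:=\widehat{w_0}g^{-1}\widehat{w_0}^{-1}\in\mathsf{H}^+$, with $\log_M$-part in $V$ after possibly shrinking $V$ to be $W$-invariant. Conjugation invariance gives $f(g^{-1}\exp tX)=f(h\exp(t\Ad_{\widehat{w_0}}X))$. Applying the first part to $h$, the derivative is
\[
\kf(\mathbf{a}^\vee,\Ad_{\widehat{w_0}}X)=\kf(\Ad_{\widehat{w_0}}^{-1}\mathbf{a}^\vee,X)=\kf(w_0\mathbf{a}^\vee,X),
\]
using $\Ad$-invariance of $\kf$ and $w_0^{-1}=w_0$. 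Hence $\widehat f(g^{-1})=w_0\mathbf{a}^\vee$.
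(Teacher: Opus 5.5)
Your proof is correct, and it is essentially the computation the paper intends: the paper gives no argument beyond deferring to \cite[Lemma~3.4]{jung2025}, and you carry that computation over to the complex setting. You split into Cartan directions, where $\lambda^{\mathbb{C}}$ is just the logarithm, and root directions, where your conjugation by $\exp(tY)$ is the usual proof that $\widehat{f}(g)$ is fixed by $\Ad_g$ and hence lies in $\mathfrak{h}$, using $e^{\alpha(\lambda^{\mathbb{C}}(g))}\neq 1$, which you correctly obtain from $\mathrm{Re}\,\alpha(\lambda^{\mathbb{C}}(g))=\alpha(\log g_A)\neq 0$; you then conjugate by $\widehat{w_0}$ to handle $g^{-1}$. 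One thing to tidy: do not shrink $V$ in the last step, since that changes $U$ and could remove $g$ from the domain; the statement already presupposes $g^{-1}\in U$, and because each conjugacy class meets $\mathsf{H}^+$ at most once, $h=\widehat{w_0}g^{-1}\widehat{w_0}^{-1}$ automatically lies in $V\times\exp\mathfrak{a}^+$.
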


\subsection{Anosov and Hitchin representations} An Anosov representation is a higher rank generalization of a convex cocompact representation of $\pi_1(S)$ into $\PSO(3,1)$. When discussing Anosov representations into $\mathsf{G}$, we will treat $\mathsf{G}$ as a real Lie group even if it is a complex Lie group. 

Let $\mathsf{G}$ be a real semisimple noncompact Lie group. Let $\Theta$ be a subset of simple roots of the Lie algebra $\mathfrak{g}$ of $\mathsf{G}$ such that $-w_0 (\Theta) = \iota(\Theta) = \Theta$. We define 
\[
\mathfrak{p}_\Theta:=\mathfrak{z}_{\mathfrak{g}}(\mathfrak{a})\oplus \sum_{\theta\in \mathrm{Span}(\Delta\setminus \Theta)} \mathfrak{g}_{\theta}\oplus\bigoplus_{\theta\in \Sigma^+\setminus \mathrm{Span}(\Delta\setminus \Theta)}\mathfrak{g}_\theta
\]
where $\mathfrak{z}_{\mathfrak{g}}(\mathfrak{a}) = \{X\in \mathfrak{g}\mid [X,A]=0\text{ for all }A\in \mathfrak{a}\}$. The normalizer $\mathsf{P}_\Theta$ of $\mathfrak{p}_\Theta$ in $\mathsf{G}$ is called the $\Theta$-\emph{parabolic subgroup}.  Note that the flag manifold $\mathcal{F}_\Theta:=\mathsf{G}/\mathsf{P}_\Theta$  can be identified with the space of $\Ad_\mathsf{G}$ orbits of $\mathfrak{p}_\Theta$. For this reason, $\mathfrak{p}_\Theta$ is often called the \emph{standard flag}.  We also have the \emph{standard opposite flag} $\mathfrak{p}_\Theta ^{\mathrm{op}} := \Ad_{\widehat{w_0}} \mathfrak{p}_{\Theta}$. The opposite $\Theta$-parabolic subgroup $\mathsf{P}_\Theta ^{\mathrm{op}}$ is the normalizer of $\mathfrak{p}_\Theta ^\mathrm{op}$. It gives the opposite flag manifold $\mathcal{F}_\Theta^\mathrm{op} = \mathsf{G}/\mathsf{P}_\Theta^\mathrm{op}$.

Now we define a $\Theta$-Anosov representation. In fact, there are several equivalent characterizations for Anosov representation. The original definition, due to Labourie \cite{L06}, deals with representations of closed surface groups into a noncompact semisimple Lie group. His notion of Anosov representation is now referred to as Borel Anosov representation. Guichard--Wienhard \cite{GW12} expand Labourie's definition to representations of Gromov hyperbolic groups and introduce various modes of Anosov-ness. A more geometric approach can be found in Porti--Leeb--Kapovich \cite{KLP3}. Bochi--Potrie--Sambarino \cite{BPS} provides a simple characterization for Anosov representation only using uniform singular value gap property.  In this paper, we present the definition by  Gu\'eritaud--Guichard--Kassel--Wienhard \cite{gueritaud} which addresses the existence of limit maps. 

We say a representation $\rho:\pi_1(S) \to \mathsf{G}$ is $\Theta$-\emph{Anosov} if the following properties are satisfied:
\begin{itemize}
\item For any $\theta\in \Theta$, we have $\theta\circ\lambda(\rho(\gamma))\to \infty$ as $|\gamma|_\infty \to \infty$, where $|\gamma|_\infty$ is the stable length, $|\gamma|_\infty = \lim_{n\to \infty} |\gamma^n|/n$. 
    \item There exists a pair of $\rho$-equivariant continuous \emph{limit maps} $\xi_\rho:\partial_\infty \pi_1(S) \to \mathcal{F}_\Theta$ and $\xi_\rho^\mathrm{op}:\partial_\infty \pi_1(S) \to \mathcal{F}_\Theta^\mathrm{op}$ such that
    \begin{itemize}
    \item $\xi_\rho$ and $\xi_\rho^\mathrm{op}$ are dynamics preserving: for any infinite order element $\gamma\in \pi_1(S)$ with its attracting fixed point $\gamma^+$ in $\partial_\infty\pi_1(S)$, the corresponding points $\xi_\rho(\gamma^+)$ and $\xi_\rho^\mathrm{op}(\gamma^+)$ are attracting points of $\rho(\gamma)$. 
    \item the pair $(\xi_\rho,\xi_\rho^\mathrm{op})$ is transverse (or opposite): for any distinct $x,y\in \partial_\infty\pi_1(S)$, $(\xi_\rho(x),\xi_\rho^\mathrm{op}(y))$ is in the $\Ad_\mathsf{G}$-orbit of $(\mathfrak{p}_\Theta, \mathfrak{p}_\Theta^{\mathrm{op}})$
     \end{itemize}
\end{itemize}

In fact, if $\iota(\Theta) = \Theta$, one limit map $\xi_\rho$ determines the other limit map $\xi_\rho^\mathrm{op}$. More precisely, Define $P:\mathcal{F}_\Theta \to \mathcal{F}_\Theta^\mathrm{op}$ by $\Ad_g \mathfrak{p}_\Theta \mapsto \Ad_{g\widehat{w_0}}\mathfrak{p}_\Theta^\mathrm{op}$. Since $\widehat{w_0}\mathsf{P}_\Theta \widehat{w_0}^{-1} = \mathsf{P}_{\iota(\Theta)} ^\mathrm{op} = \mathsf{P}_\Theta^\mathrm{op}$, this map $P$ is well-define.  Then, one can observe that $\xi_\rho ^\mathrm{op}(x) = P(\xi_\rho(x))$. 

The set $\Hom_\Theta(\pi_1(S),\mathsf{G})$ of $\Theta$-Anosov representations is open in $\Hom(\pi_1(S), \mathsf{G})$.  Therefore, if $\mathsf{G}$ is a complex semisimple Lie group,  $\Hom_\Theta(\pi_1(S),\mathsf{G})$ inherits the complex structure. 

Let $\mathsf{G}$ be a semisimple complex Lie group and $\mathsf{G}_{\mathrm{split}}$ its split real form. If $\rho:\pi_1(S)\to \mathsf{G}_{\mathrm{split}}$ is $\Theta$-Anosov then the induced representation $\pi_1(S)\to \mathsf{G}_{\mathrm{split}} \to \mathsf{G}$ is also $\Theta$-Anosov. This follows from the fact that the root structures of $\mathfrak{g}_{\mathrm{split}}$, $\mathfrak{g}$, and $\mathfrak{g}^\bR$ coincide. 

Observe that for any $\gamma\in \pi_1(S)\setminus \{1\}$ and a $\Theta$-Anosov representation $\rho$, the image $\rho(\gamma)$ is always conjugate into the subgroup $\mathsf{P}_\Theta\cap \mathsf{P}_\Theta^\mathrm{op}$. In particular if $\rho$ is $\Delta$-Anosov into a simple complex Lie group $\mathsf{G}$, $\rho(\gamma)$ is conjugate into $\mathsf{P}\cap \mathsf{P}^\mathrm{op}$. 

Consider the complex simple adjoint Lie group $\mathsf{G}$ and a closed orientable surface $S$ with genus $g \ge 2$. There exists preferred representation $\tau_\mathsf{G}:\PSL_2(\bR) \to \mathsf{G}_\mathrm{split}$ obtained by taking a regular $\mathfrak{sl}_2$-triple. We call a representation of $\pi_1(S)$ into $\mathsf{G}_\mathrm{split}$ \emph{Fuchsian} if it is of the form $\tau_\mathsf{G}\circ \rho$ where $\rho:\pi_1(S)\to \PSL_2(\bR)$ is the holonomy of a hyperbolic structure on $S$. The \emph{Hitchin component} is the connected component of $\Hom(\pi_1(S), \mathsf{G}_{\mathrm{split}})$ containing a Fuchsian representation. It is known \cite{L06,guichard_wienhard_labourie} that Hitchin representations are $\Theta$-Anosov with $\Theta=\Delta$. We denote by $\Hit_{\mathsf{G}_\mathrm{split}}(S)$, or just simply $\Hit_\mathsf{G}(S)$, the space of $\mathsf{G}_\mathrm{split}$-conjugacy classes of Hitchin representations.  Hitchin~\cite{hitchin92} showed that  $\Hit_{\mathsf{G}_\mathrm{split}}(S)$ is a cell of dimension $(2g-2)\cdot \dim \mathsf{G}_{\mathrm{split}}$. 

\section{Geodesic laminations and transverse cocycles}\label{sec:cocycle}
Throughout this section, $S$ will denote a closed orientable surface of genus $g>1$ equipped with an auxiliary hyperbolic structure.

\subsection{Notations and hyperbolic geometry estimates}
We clarify conventions and notations that will be used throughout this paper. Since we deal with closed geodesics possibly with self-intersections, notions involving intersections require special care. 

We also collect several technical hyperbolic geometry estimates for the sake of completeness. Although many results in this section are perhaps well-known to experts, we could not locate a proper reference. 

All geodesics in this paper are regarded as subsets of $S$ or $\mathbb{H}^2$. A geodesic  $\gamma$ in $S$ lifts to a subset of the projectivized tangent bundle $\PT S$; a point $x\in \gamma$ lifts to points $(x,l_1),\cdots, (x,l_m)\in\PT_x S\subset \PT S$ where $l_i$ are the tangent lines of $\gamma$ at $x$.  We denote this lift of $\gamma$ to $\PT S$ by $\widehat{\gamma}$. For a simple point $x\in \gamma$, $\widehat{\gamma}\cap \PT_x S$ consists of a single element. We will denote this unique element by $\widehat{\gamma}_x$.

An \emph{oriented geodesic} is a geodesic with a prescribed lift to the unit tangent bundle $\UT S$.  We sometimes write $\vec{\gamma}$ to emphasize that $\gamma$ is oriented. The same notation $\vec{\gamma}$ is used to denote the lifted image of $\gamma$ to $\UT S$. For a simple point $x\in \gamma$, $\vec{\gamma}_x\in \UT_x S$ is the unit tangent vector at $x$ with respect to the orientation of $\gamma$. 

 Given two geodesics $\gamma$ and $\eta$, a pair $p=(v,w)$ with $v\in \widehat{\gamma}$ and $w\in \widehat{\eta}$ is called an \emph{intersection} if $v$ and $w$ lie in the same fiber of the fiber bundle $\PT S\to S$.  An intersection $p=(v,w)\in \gamma\sharp \eta$ is \emph{transverse} if $v\ne w$. Let $\gamma\sharp \eta$ be the set of transverse intersections of $\widehat{\gamma}$ and $\widehat{\eta}$. Notice that $\gamma\sharp \eta$ differs from the usual intersection $\gamma\cap \eta$, especially when $\gamma$ and $\eta$ intersect multiple times at a single point. 
 
 If $\gamma$, $\eta$ and $S$ are oriented, we say $(v,w)$, $v\in \vec{\gamma}$, $w\in \vec{\eta}$ is an intersection if $v,w$ belong to the same fiber of $\UT S\to S$. We use the same notation $\gamma\sharp \eta$ to denote the set of intersections of $\vec{\gamma}$ and $\vec{\eta}$. We say that $p=(v,w)\in \gamma\sharp \eta$ is \emph{positive} (\emph{negative}, respectively) if $(v,w)$, as an ordered tuple of elements in $\mathrm{T}_x S$,  forms a positive (negative, respectively) basis for $\mathrm{T}_x S$.

We will often work in the universal cover $\mathbb{H}^2$ rather than $S$. In this case, geodesics have no self-intersections, and any two distinct geodesics intersect at most once. Therefore, for geodesics $\gamma$ and $\eta$ in  $\mathbb{H}^2$, we do not distinguish between $\gamma\sharp \eta$ and $\gamma\cap \eta$. We say $\vec{\gamma}$ and $\vec{\eta}$ \emph{intersect positively} if $(\vec{\gamma}_x, \vec{\eta}_x)\in \gamma \sharp \eta$ is positive. If $\vec{\gamma}$ intersects  $\vec{\eta}$ positively then, $\vec{\eta}$ intersects $\vec{\gamma}$ negatively. This notion is well-defined since $\gamma$ and $\eta$ meet at most at one point in $\mathbb{H}^2$.

If we are given two distinct points $v,w\in \PT_x S$,  we can measure the \emph{counter-clockwise angle}  $\angle^X (v,w)$  in the counter-clockwise direction from $v$ to $w$ with respect to the hyperbolic structure $X$ on $S$. For convenience, we sometimes write $\angle (v,w)$ if $X$ is clear from the context. Note that $0< \angle (v,w)< \pi$ and $\angle(v,w) = \pi-\angle(w,v)$. To avoid ambiguity, we formally declare $\angle(v,w)=0$ if $v=w$.   The counter-clockwise angle also induces a metric $\der_{\mathbb{P}\bR^1}$ on the projectivized tangent space $\PT_xS$. More precisely, for $v,w\in \PT_xS$ define,
\[
\der_{\mathbb{P}\bR^1}(v, w) = \min\{\angle(v,w),\angle(w,v)\}.
\]

For two geodesics $\gamma$ and $\eta$ intersecting at $p=(v,w)\in \gamma\sharp \eta$ with $v\in \widehat{\gamma}\cap \mathrm{T}_xS$, and $w\in \widehat{\eta}\cap \mathrm{T}_x S$, the counter-clockwise intersection angle $\angle^X_p(\gamma, \eta)$ at $p$ is defined to be $\angle^X (v,w)$. If $\gamma$ and $\eta$ are geodesics in $\mathbb{H}^2$, we may omit the intersection point and simply write $\angle(\gamma, \eta)$ to denote their intersection angle.

For a compact geodesic arc $\gamma$ in $\mathbb{H}^2$, the restricted projectivized tangent bundle $\PT\mathbb{H}^2|\gamma$ will appear frequently. This bundle can be trivialized, $\PT\mathbb{H}^2|\gamma=\gamma\times \mathbb{PR}^1$, using the parallelism. Being compact, all metrics on $\PT\mathbb{H}^2|\gamma$ are bi-Lipschitz equivalent. Therefore, if we are insensitive to multiplicative errors, we may choose a convenient metric. One such example is the product metric $\der((x,l_x),(y,l_y))=\der_\gamma(x,y) + \der_{\mathbb{PR}^1}(l_x,l_y)$ on $\gamma\times \mathbb{PR}^1$, where $\der_\gamma$ is the arc metric on $\gamma$ induced from the hyperbolic metric on $S$.

We also consider the unit tangent bundle $\UT\mathbb{H}^2|\gamma$ restricted to a compact geodesic arc $\gamma$ in  $\mathbb{H}^2$. As in the projective tangent bundle case, we choose a convenient metric on $\UT\mathbb{H}^2|\gamma$ as follows.  Identify $\UT\mathbb{H}^2$ with $(\partial_\infty \mathbb{H}^2)^{(3)}$, the set of distinct triples of $\partial_\infty \mathbb{H}^2$. The metric $\dUT$ on $\UT\mathbb{H}^2|\gamma$ is the restriction of the product metric induced from the metric $\der_\infty$ on $\partial_\infty\mathbb{H}^2$. If $x$ and $y$ belong to $\UT\mathbb{H}^2|\gamma$, and if $x^+$ and $y^+$ denote the forward endpoints of the oriented geodesics generated by $x$ and $y$, we have $\der_\infty (x^+,y^+)\le \dUT(x,y)$.

Suppose that $S$ is oriented. If $v,w\in \UT_x\mathbb{H}^2$, and if $\der_{S^1}$ denotes the angular metric on the unit tangent space $\UT_x\mathbb{H}^2$, we have
\[
\der_{S^1} (v,w) = \begin{cases}
    \angle([v],[w]) & \text{if }  (v,w) \text{ is a positive basis or }v=w\\
    \pi-\angle([v],[w])& \text{if }  (v,w)\text{ is a  negative basis or }v=-w
\end{cases}
\]
where $[v]$ and $[w]$  denote the lines generated by $v$ and $w$ respectively. For a compact geodesic arc $\gamma$, we frequently use the product metric $\der((x,v_x),(y,v_y))=\der_\gamma(x,y)+\der_{S^1}(v_x,v_y)$ on $\UT\mathbb{H}^2|\gamma=\gamma\times S^1$. This metric is again bi-Lipschitz equivalent to the metric inherited from $\der_{\UT}$.

For a given geodesic arc $\gamma\subset S$ or $\mathbb{H}^2$, and a  constant $\theta$ with $0<\theta<\pi/2$, define 
\[
\mathfrak{A}(\gamma, \theta) = \left\{\begin{aligned}\text{geodesics } \eta \text{ intersecting the interior of }\gamma \text{  such that } \\\frac{\pi}{2}-\theta<\angle_p(\gamma,\eta)<\frac{\pi}{2}+\theta\text{ for all }p\in \eta\sharp\gamma \end{aligned}\right\}.
\]
If $\vec{\gamma}$ is a oriented geodesic in $\mathbb{H}^2$, we let 
\[
\mathfrak{A}^+(\vec{\gamma}, \theta) = \{\vec{\eta}\mid  \eta\in \mathfrak{A}(\gamma, \theta) \text{ and } \vec{\gamma} \text{ intersect }\vec{\eta} \text{  positively}\}.
\]

The following lemma is elementary but will be used in several places. 
\begin{lemma}\label{lem:lengthheight}
    Let $\epsilon>0$, and $d>0$ be given numbers. Let $\gamma_1$ and $\gamma_2$ be vertical geodesics in the upper half plane $\mathbb{H}^2$ with base points $(0,0)$ and $(0,d)$, respectively.  Then, there exists a constant $C>0$ depending on $\epsilon$, and $d$ such that for any geodesic arc $\eta$  joining $(0,a)$ and $(d,b)$ with $\epsilon<\angle(\eta,\gamma_i)<\pi-\epsilon$ for $i=1,2$, we have
    \[
    \frac{d}{C\cdot \max\{a,b\}}\le \frac{d}{C\cdot \min\{a,b\}} < \ell(\eta) <\frac{Cd}{\max\{a,b\}}\le \frac{Cd}{\min\{a,b\}}.
    \]
\end{lemma}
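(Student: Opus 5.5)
The plan is to compute the length directly in the upper half plane, using the standard Euclidean description of the hyperbolic metric, and then use the angle condition to show that $a$ and $b$ are comparable up to a constant depending only on $\epsilon$ and $d$. The inequalities involving $\max\{a,b\}$ versus $\min\{a,b\}$ in the two outer positions are trivial, so the content is
\[
\frac{d}{C\min\{a,b\}}<\ell(\eta)<\frac{Cd}{\max\{a,b\}}.
\]
Since $\min\le\max$, this already encodes $\max\{a,b\}\le C^2\min\{a,b\}$. I would therefore first prove comparability of $a$ and $b$, and then a two-sided length estimate in terms of either one of them.

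\emph{Step 1 (geometry of $\eta$).} The geodesic containing $\eta$ is a Euclidean semicircle centered at some point $(c,0)$ with radius $R$; it is not vertical, because it meets both vertical lines. The angle at which a semicircle meets the vertical line $x=x_0$ at height $y_0$ satisfies $\sin\angle = y_0/R$, where $\angle$ is measured from the horizontal radius direction; equivalently, the angle between the vertical and the tangent line has sine $y_0/R$. The hypothesis $\epsilon<\angle(\eta,\gamma_i)<\pi-\epsilon$ therefore gives $a/R\ge \sin\epsilon$ and $b/R\ge\sin\epsilon$. Since also $a,b\le R$, we get
\[
\sin\epsilon\le \frac{a}{b}\le \frac{1}{\sin\epsilon},
\]
so $a$ and $b$ are comparable with a constant depending only on $\epsilon$. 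A useful byproduct is that the whole arc $\eta$ lies at heights between $\min\{a,b\}$ and $R\le \min\{a,b\}/\sin\epsilon$.

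\emph{Step 2 (length bounds).} For the lower bound, note that the Euclidean length of $\eta$ is at least $d$. With $y\le R$ along the arc,
\[
\ell(\eta)=\int_\eta \frac{|\der z|}{y}\ge \frac{d}{R}\ge \frac{d\sin\epsilon}{\max\{a,b\}}\ge \frac{d\sin^2\epsilon}{\min\{a,b\}}.
\]
For the upper bound, parametrize $\eta$ by $x\in[0,d]$. Because the angle at every point of the arc between $\gamma_1$ and $\gamma_2$ stays away from vertical (the tangent slope is monotone along the semicircle, so its extremes occur at the endpoints), we have $|\der z|\le \der x/\sin\epsilon$. Together with $y\ge\min\{a,b\}$ this gives
\[
\ell(\eta)\le \frac{d}{\sin\epsilon\,\min\{a,b\}}\le \frac{d}{\sin^2\epsilon\,\max\{a,b\}}.
\]
To get $y\ge\min\{a,b\}$ along the arc, I use that a semicircle restricted to an $x$-interval is concave, so its height is at least the minimum of its endpoint heights. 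Taking $C=1/\sin^2\epsilon$ (enlarged slightly to make the inequalities strict) finishes the proof, and in fact the constant depends only on $\epsilon$.

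The main obstacle is bookkeeping with angles: identifying correctly that the angle between the semicircle and the vertical line at height $y_0$ has sine $y_0/R$, and justifying monotonicity of the tangent angle along the arc, so that the endpoint angle bounds control the slope in the interior. I would verify this with the explicit parametrization $z=c+R e^{\ui t}$, where the tangent angle with the vertical is $t$ (up to $\pi$) and $y=R\sin t$, and $t$ ranges over an interval whose endpoints satisfy $\epsilon<t<\pi-\epsilon$.
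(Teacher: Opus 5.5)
Your proof is correct. The key facts all check out:
\begin{itemize}
\item the angle between the semicircle and a vertical line at height $y_0$ has sine $y_0/R$;
\item this gives $\max\{a,b\}\le R\le \min\{a,b\}/\sin\epsilon$;
\item along $\eta$ one has $|\mathrm{d}z|\le \mathrm{d}x/\sin\epsilon$ and $\min\{a,b\}\le y\le R$.
\end{itemize}
Together these give the lemma with any $C>1/\sin^2\epsilon$. You rightly read the base point of $\gamma_2$ as $(d,0)$. You also rightly observe that $C$ depends only on $\epsilon$, as dilation invariance predicts.

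The paper calls this lemma elementary and gives no proof, so there is no argument to compare against; your semicircle computation supplies exactly what was omitted. Equivalently, $\ell(\eta)=\int_{t_1}^{t_2}\mathrm{d}t/\sin t$ and $d=R|\cos t_1-\cos t_2|$ with $t_1,t_2\in(\epsilon,\pi-\epsilon)$. The same bounds also give the slightly sharper lower estimate $\ell(\eta)\ge d/R\ge d\sin\epsilon/\min\{a,b\}$ directly.
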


\begin{lemma}\label{lem:parallel}
    Let $\vec{\gamma}$ be an oriented compact geodesic arc on $\mathbb{H}^2$ and let $\theta$ be a given real number with $0<\theta<\pi/2$. There is a constant $C>1$ depending on $\gamma$ and $\theta$ such that for any oriented parallel geodesics $\vec{g}$ and $\vec{h}$ in $\mathfrak{A}^+(\vec{\gamma}, \theta)$, we have
    \[
    \frac{1}{C}\cdot\der_\gamma (p,q) < \dUT(\vec{g}_p,\vec{h}_q)<C \cdot \der _\gamma(p,q).
    \]
    where $p=\gamma\cap g$ and $q=\gamma\cap h$ are intersection points.
\end{lemma}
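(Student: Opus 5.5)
Since $\UT\mathbb{H}^2|\gamma$ is compact, all reasonable metrics on it are bi-Lipschitz equivalent, as noted earlier. So I will replace $\dUT$ by the product metric $\der((x,v_x),(y,v_y))=\der_\gamma(x,y)+\der_{S^1}(v_x,v_y)$ on $\gamma\times S^1$, at the cost of a multiplicative constant. With this metric,
\[
\dUT(\vec g_p,\vec h_q)\asymp \der_\gamma(p,q)+\der_{S^1}(\vec g_p,\vec h_q).
\]
The lower bound is then immediate: $\der_\gamma(p,q)$ is at most this sum.

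For the upper bound I must show $\der_{S^1}(\vec g_p,\vec h_q)\le C\,\der_\gamma(p,q)$, measured after parallel transport along $\gamma$. Both vectors make a positive angle with $\vec\gamma$ lying in $(\pi/2-\theta,\pi/2+\theta)$. Hence the angular distance between them equals $|\angle_p(\gamma,g)-\angle_q(\gamma,h)|$, with no ambiguity from the $\pi$-flip.

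So I need to bound the difference of intersection angles by the distance between the intersection points. I will use that $g$ and $h$ are parallel: they are disjoint in $\mathbb{H}^2$, possibly sharing an ideal endpoint. Consider the geodesic quadrilateral, or triangle, formed by $\gamma$, $g$ and $h$ on one side of $\gamma$. By Gauss--Bonnet, its area equals $\pi$ minus the two interior angles at $p$ and $q$, minus the nonnegative angles at infinity. Concretely, the region bounded by the segment $[p,q]$ and the rays of $g$ and $h$ on the side where they do not meet is a hyperbolic polygon with possibly ideal vertices. Its area is $\pi-\alpha_p-\alpha_q-(\text{angles at far vertices})\ge0$. Here $\alpha_p=\angle_p(\gamma,g)$ and $\alpha_q=\pi-\angle_q(\gamma,h)$, using the orientation convention. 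This gives $\angle_q(\gamma,h)-\angle_p(\gamma,g)\ge 0$ on one side. Applying the same argument on the other side of $\gamma$ gives the reverse inequality up to the area of the other region.

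Thus I must bound the area of such a region by $C\der_\gamma(p,q)$. I will normalize in the upper half plane so that $g$ is vertical, or use Lemma~\ref{lem:lengthheight}. Since the angles are bounded away from $0$ and $\pi$ by $\pi/2-\theta$, the rays leave $\gamma$ transversally. The region between two disjoint geodesics emanating from a segment of length $\delta$ at angles near $\pi/2$ has area $O(\delta)$: it is contained in the union of the normal rays from $[p,q]$ plus thin wedges, and the area of the set swept by geodesics perpendicular to a segment of length $\delta$ on one side is $\delta\cdot\int_0^\infty \cosh t\,e^{-?}$. That integral diverges, so I will instead use Gauss--Bonnet directly. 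The area is at most the angle defect, and the angle defect is exactly what I want to bound, so this is circular.

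A better route is a direct continuity argument. Parametrize geodesics in $\mathfrak A^+(\vec\gamma,\theta)$ by their intersection point with $\gamma$ and their angle. Parallel geodesics through $p$ and $q$ must have their endpoints in an order constrained by disjointness. So the angle at $q$ is squeezed between the angles at $q$ of the two geodesics through $q$ asymptotic to the endpoints of $g$. The angle at $q$ of the geodesic through $q$ sharing an endpoint with $g$ is a smooth function of $(q,\text{endpoint})$. Restricted to endpoints of geodesics in the compact set $\overline{\mathfrak A}(\gamma,\theta)$, which are bounded away from the endpoints of the extension of $\gamma$, this function is Lipschitz in $q$ with a uniform constant. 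It equals $\angle_p(\gamma,g)$ at $q=p$. Hence $|\angle_q(\gamma,h)-\angle_p(\gamma,g)|\le C\der_\gamma(p,q)$.

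The main obstacle is verifying this uniform Lipschitz bound and the squeezing claim from disjointness. Both follow from compactness of the angle constraint together with smoothness of the map sending (a point $q\in\gamma$, an ideal point) to the angle at $q$.
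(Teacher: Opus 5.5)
Your final argument (squeeze plus smoothness) is correct. It differs from the paper's in how the key estimate $|\angle_p(\gamma,g)-\angle_q(\gamma,h)|\le C_1\der_\gamma(p,q)$ is obtained.

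Several ingredients are shared with the paper: the reduction to the product metric on $\gamma\times S^1$, the free lower bound, and even the comparison geodesics. The paper's $h'$ (the vertical geodesic through $q$ after normalizing $g$ to the imaginary axis) is the geodesic through $q$ asymptotic to an endpoint of $g$, and its inequality $\angle(\gamma,h)\le\angle(\gamma,h')$ is exactly your squeeze.

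The paper then bounds $\angle(\gamma,h')-\angle(\gamma,g)$ by Gauss--Bonnet, as the area of the triangle with one ideal vertex cut out by $g,\gamma,h'$. It dominates this triangle by the region $\{0<x<a,\ y>\min(1,b)\}$, whose area is $a/\min(1,b)$, and compares that with $\der_\gamma(p,q)$ via Lemma~\ref{lem:lengthheight}. So Gauss--Bonnet is not circular, as you feared: the area is bounded by a direct integral, not by the angle defect.

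You instead apply the mean value theorem to the visual angle $F(x,\xi)$ at $x\in\gamma$ toward $\xi\in\partial_\infty\mathbb{H}^2$. You work on the compact set $\gamma\times E$, where $E$ is the closure of the set of endpoints of geodesics in $\mathfrak{A}(\gamma,\theta)$. This is shorter and avoids the upper half-plane computation, the case split in $b$, and Lemma~\ref{lem:lengthheight}, at the cost of a non-explicit constant; the paper's route gives a concrete geometric bound.

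To make your version complete, state two facts explicitly:
\begin{itemize}
\item[(i)] The directions at $q$ of geodesics not crossing $g$ form a closed arc of $\mathbb{PR}^1_q$ whose endpoints are the directions toward $g^\pm$. This arc misses the direction of $\gamma$, because the extension of $\gamma$ crosses $g$ at $p\ne q$. Hence it is an honest interval of counter-clockwise angles in $(0,\pi)$.
\item[(ii)] $E$ avoids the endpoints of the complete geodesic containing $\gamma$, because a geodesic crossing it transversally cannot share an ideal endpoint with it. This is what makes $F$ smooth on a neighborhood of $\gamma\times E$.
\end{itemize}
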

\begin{proof}
    Since $\UT \mathbb{H}^2|\gamma$ is compact, all metrics on $\UT \mathbb{H}^2|\gamma$ are bi-Lipschitz equivalent. Thus, we prove the lemma using a convenient metric. Trivialize $\UT \mathbb{H}^2|\gamma=\gamma\times S^1$ using the parallel transport along $\gamma$.  We give $\gamma\times S^1$ the product metric, where $S^1$ and $\gamma$ are equipped with the angular and the path metrics, respectively.  Let $\dUT$ be the associated distance function. It is clear that $\der_\gamma(p,q)<\dUT(\vec{g}_p,\vec{h}_q)$ for any oriented geodesics  $\vec{g}$ and $\vec{h}$ intersecting $\vec{\gamma}$ at $p$ and $q$. 

    We claim that if $g$ and $h$ are parallel, $|\angle_p (\gamma,g)-\angle_q (\gamma,h)|<C_1\cdot \der_\gamma(p,q)$ for some constant $C_1$ depending only on $\gamma$. To show this, assume first that $\angle(\gamma,g)\le\angle(\gamma,h)$. We apply an isometry so that $g$ is the geodesic joining $0$ and $\infty$ in the upper half-plane model and $p= \sqrt{-1}$ and $q= a+b\sqrt{-1}$ for some $a>0$, $b>0$. Consider a vertical geodesic $h'$ passing through $q$ as in Figure~\ref{fig:angle} so that
    \[
    |\angle (\gamma,g)-\angle (\gamma,h)|=\angle (\gamma,h) - \angle (\gamma,g) \le \angle  (\gamma,h')-\angle (\gamma,g).
    \]
    The geodesics $g$, $\gamma$ and $h'$ bound a geodesic triangle $\triangle$ with one ideal vertex. Observe that the total area of the $\triangle$ is given by $\angle  (\gamma,h')-\angle (\gamma,g)$. Draw a horizontal line segment $l$ from $q$  to the imaginary axis if $b\le 1$ or from $p$ to $h'$ if $b>1$. Then $g$, $l$ and $h'$ forms an ideal (not geodesic) triangle  $\triangle'$ containing $\triangle$. The area of $\triangle'$ is 
    \[
    \operatorname{Area}(\triangle')=\int_{\min(b,1)}^\infty \int_0 ^a \frac{\der x \, \der y}{y^2} = \frac{a}{\min(1,b)}.
    \]
    Since $\gamma$ is compact, there exists $\theta'$ depending on $\theta$ and $\gamma$ such that  $g,h'\in \mathfrak{A}^+(\gamma, \theta')$. Thus, Lemma~\ref{lem:lengthheight} shows that exists a constant $C_1>0$ depending on $\gamma$ and $\theta$ such that 
    \[
    \operatorname{Area}(\triangle')=\frac{a}{\min(1,b)}< C_1\cdot \der(p,q).
    \]
    Consequently, we obtain  
    \[
     |\angle_p (\gamma,g)-\angle_q (\gamma,h')|=\operatorname{Area}(\triangle)\le\operatorname{Area}(\triangle')=\ell(l)<C_1\cdot \der_\gamma(p,q).
    \]
    A similar argument applies when $\angle_p(\gamma,g)>\angle_q(\gamma,h)$.
    
    Since $\gamma$ intersects $g$ and $h$ positively, we obtain $\angle_p (\gamma, g)=\der_{S^1}(\vec{\gamma}_p, \vec{g}_p)$ and $\angle_q(\gamma,h) = \der_{S^1}(\vec{\gamma}_q,\vec{h}_q)$. It follows that
    \begin{align*}
    \dUT(\vec{g}_p,\vec{h}_q)&= \der_\gamma(p,q)+|\der_{S^1}(\vec{\gamma}_p,\vec{g}_p)-\der_{S^1}(\vec{\gamma}_q, \vec{h}_q)|\\&<\der_\gamma(p,q)+C_1\cdot \der_\gamma (p,q)\\
    &= (1+C_1)\cdot \der_\gamma (p,q).
    \end{align*}
Setting $C=1+C_1$ completes the proof.
\end{proof}
    \begin{figure}[!htb]
        \centering
        \includegraphics[width=\linewidth]{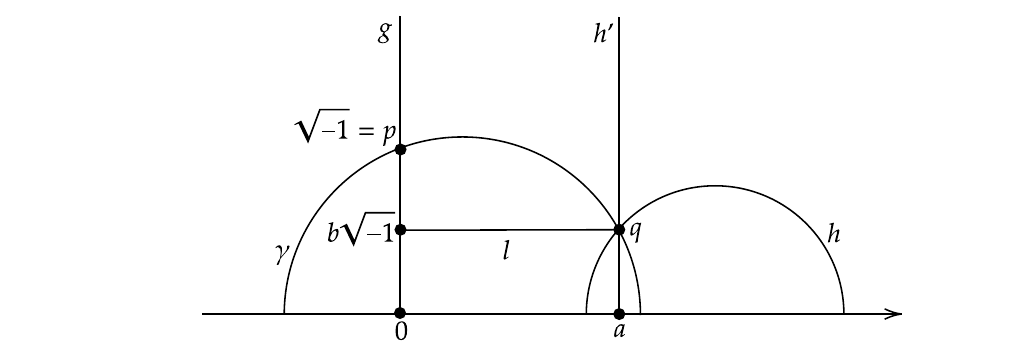}
        \caption{Configuration for the proof of Lemma~\ref{lem:parallel} with  $b<1$.}
        \label{fig:angle}
    \end{figure}

\subsection{Geodesic laminations}
A \emph{geodesic lamination} is a closed subset of $S$ (or $\mathbb{H}^2$) that can be decomposed as a union of disjoint geodesics. Given a geodesic lamination $\mu$, the connected components of $S\setminus \mu$ are called \emph{complementary regions}. A geodesic lamination is \emph{maximal} if all complementary regions are ideal triangles. Since a geodesic lamination itself has zero hyperbolic area, a maximal geodesic lamination always has $4(g-1)$ complementary ideal triangles, where $g$ is the genus of $S$.

Let $p:\mathbb{H}^2\to S$ be the covering map. Let $\gamma$ be a compact geodesic transverse to a maximal geodesic lamination $\mu$ of $S$. Choose a constant speed parametrization $f_\gamma:[0,1]\to \gamma\subset S$ of $\gamma$. Define $\mathcal{C}_\gamma(\mu)$ to be the set of connected components of $[0,1]\setminus f_\gamma^{-1}(\mu)$. To understand $\mathcal{C}_\gamma (\mu)$ geometrically, let $\widetilde{f_\gamma}:[0,1]\to \mathbb{H}^2$ be the lift of $f_\gamma$. Then, the image of $\widetilde{f_\gamma}$ is a compact geodesic arc in $\mathbb{H}^2$ that is transverse to the lift $\widetilde{\mu}$. We will identify $\mathcal{C}_\gamma(\mu)$ with the set of connected component of $\widetilde{f_\gamma}([0,1])\setminus \widetilde{\mu}$. For each $J\in \mathcal{C}_\gamma(\mu)$, there exists a unique ideal triangle in $S\setminus \mu$ that contains $J$. 

If a transverse geodesic $\gamma\subset S$ is oriented, we choose the orientation-preserving constant-speed parametrization $f_\gamma$. As before let $\widetilde{f_\gamma}$ be the lift. Then $\widetilde{f_\gamma}$ induces a total ordering $<$ on the lifted geodesic arc $\widetilde{f_\gamma}([0,1])$ in $\mathbb{H}^2$. Given $J\in \mathcal{C}_\gamma(\mu)$, let $J^0$ and $J^1$ be the left and right endpoints of $J$, respectively. That is, with respect to the induced ordering on $\widetilde{f_\gamma}([0,1])$, the interval $J$ can be written as $J=(J^0,J^1):=\{t\in \widetilde{f_\gamma}([0,1])\mid J^0<t<J^1\}$.

A sequence (or an enumerated subset) $\{J_i\}_{i\in \mathbb{Z}}$ of $\mathcal{C}_{\gamma}(\mu)$ is \emph{ordered} if $J_i<J_{i+1}$ for all $i$. Given any subset $\mathcal{J}\subset \mathcal{C}_{\gamma}(\mu)$, we can form the sequence $\vec{\mathcal{J}}=\{J_i\}_{i\in \mathbb{Z}}$ by enumerating the elements of $\mathcal{J}$ so that $\vec{\mathcal{J}}$ is ordered. 

The set $\mathcal{C}_\gamma(\mu)$ can be partitioned into finitely many parallel classes. We say that $J_1$ and $J_2$ are in the same \emph{parallel class} if there is an ideal triangle $T\subset S\setminus \mu$ that contains both $J_1$ and $J_2$ and they separate the same ideal vertex of $T$ from the other ideal vertices. See Figure~\ref{fig:divrad} for an example.  Notice that there are at most $12(g-1)$ parallel classes of $\mathcal{C}_\gamma(\mu)$.

For an element $J\in \mathcal{C}_{\gamma}(\mu)$, let  $\ell(J)$ be the arc-length of $J$ with respect to the hyperbolic metric on $S$.

Let $\mathcal{GL}(S)$ be the set of geodesic laminations. We equip $\mathcal{GL}(S)$ with a topology as follows. Let  $\mathcal{C}(\PT S)$ be the set of closed subsets in  the projective tangent bundle $\PT S$. Each geodesic leaf of a geodesic lamination $\mu$ in $S$ can be naturally lifted to $\PT S$. This enables us to lift $\mu$ to $\PT S$. By the same convention, let $\widehat{\mu}$ be the lifted image of $\mu$ in $\PT S$. Via this lifting, $\mathcal{GL}(S)$ can be regarded as a subspace of  $\mathcal{C}(\PT S)$.  For any point $x\in \mu$, $\widehat{\mu}_x$ denotes the unique element $\widehat{\mu}\cap \mathrm{T}_x S$. This is well defined as leaves of $\mu$ do not intersect. 

We equip $\PT S$ with any Riemannian metric and let  $\dPT$ be the associated distance. This $\dPT$ induces the Hausdorff distance $\mathrm{D_H}$ on  $\mathcal{C}(\PT S)$, turning it into a metric space. In this topology, it is well-known that $\mathcal{GL}(S)$ is a compact subset of  $\mathcal{C}(\PT S)$. As its meaning is clear, we write simply $\mathrm{D_H}(\mu, \tau)$ instead of $\mathrm{D_H}(\widehat{\mu}, \widehat{\tau})$. 

Let $\mathcal{C}(S)$ be the set of closed subsets in $S$ equipped with the Hausdorff metric $\mathrm{d_H}$ induced by a hyperbolic structure on $S$. The set of geodesic laminations $\mathcal{GL}(S)$ can be viewed as a compact subset of $\mathcal{C}(S)$ as well. It is known that the identity map $(\mathcal{GL}(S),\mathrm{D_H})\to (\mathcal{GL}(S),\mathrm{d_H})$ is a homeomorphism.

\begin{lemma}\label{lem:approx}
Let $\{\mu_n\}$ be a sequence of geodesic laminations in $S$ that converges to a geodesic lamination $\mu$. Let $\gamma$ be a geodesic arc transverse to $\mu$ and $\mu_n$ for all $n$. Let 
\begin{align*}
\Gamma_n &:= \{\widehat{(\mu_n)}_x\mid x\in \mu_n \cap \gamma\}\\
\Gamma&:=\{\widehat{\mu}_x\mid x\in \mu\cap \gamma\}
\end{align*}
be closed subsets in $\PT S$. Then  $\{\Gamma_n\}$ converges to $\Gamma$ in the Hausdorff topology.  
\end{lemma}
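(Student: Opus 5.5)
We prove the two halves of Hausdorff convergence separately: every limit point of points of $\Gamma_n$ lies in $\Gamma$, and every point of $\Gamma$ is a limit of points of $\Gamma_n$. Throughout, we use the fact recalled above that $\mu_n\to\mu$ in $\mathrm{D_H}$ on $\mathcal{C}(\PT S)$, so $\widehat{\mu_n}\to\widehat{\mu}$ in the Hausdorff topology of $\PT S$. Since $\PT S$ is compact, Hausdorff convergence of closed sets is equivalent to Kuratowski convergence, so it suffices to prove the two statements below.

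\emph{Upper semicontinuity.} Take a subsequence and points $v_{n_k}=\widehat{(\mu_{n_k})}_{x_k}\in\Gamma_{n_k}$ with $v_{n_k}\to v$, where $x_k\in\mu_{n_k}\cap\gamma$.
\begin{itemize}
\item Since $\gamma$ is compact, after passing to a further subsequence $x_k\to x\in\gamma$.
\item Since $\widehat{\mu_{n_k}}\to\widehat{\mu}$, the limit $v$ lies in $\widehat{\mu}$, and its base point is $x$. Hence $x\in\mu$, and because leaves of $\mu$ are disjoint, $v=\widehat{\mu}_x$.
\item Therefore $v\in\Gamma$.
\end{itemize}

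\emph{Lower semicontinuity.} Fix $x\in\mu\cap\gamma$ and write $v=\widehat{\mu}_x$. We need $x_n\in\mu_n\cap\gamma$ with $\widehat{(\mu_n)}_{x_n}\to v$.
\begin{itemize}
\item By Hausdorff convergence there are $w_n\in\widehat{\mu_n}$ with $w_n\to v$. Let $y_n$ be the base point of $w_n$ and $\ell_n$ the leaf of $\mu_n$ through $y_n$ in direction $w_n$.
\item Since $\gamma$ is transverse to $\mu$ at $x$, the line $v$ makes an angle bounded away from $0$ with $\gamma$. Hence, for large $n$, a geodesic segment of fixed small length $\delta$ starting at $y_n$ in direction $w_n$ crosses $\gamma$ transversally at a point $x_n$. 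Here $\delta$ is independent of $n$, and $x_n\to x$.
\item The segment lies on $\ell_n$ by continuous dependence of geodesics on initial data. Hence $x_n\in\mu_n\cap\gamma$, and the tangent line $\widehat{(\mu_n)}_{x_n}$ is the parallel transport of $w_n$ along a short geodesic, so it converges to $v$.
\end{itemize}

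One subtlety concerns $x$ lying at an endpoint of $\gamma$. If $\gamma$ is taken with endpoints off $\mu$, or if we work in a slightly larger open arc containing $\gamma$, the crossing point stays inside $\gamma$. Otherwise we use transversality of $\mu_n$ to $\gamma$ together with closedness of $\gamma$ to keep $x_n\in\gamma$.

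The main obstacle is this lower semicontinuity step. It requires a uniform choice of $\delta$, and it needs transversality of $\mu$ to $\gamma$ at $x$, uniform because $\gamma$ is compact and $\widehat\mu$ is closed and disjoint from $\widehat\gamma$. This uniform transversality guarantees that leaves of $\mu_n$ near $v$ actually hit $\gamma$ close to $x$ rather than just missing it. Care is also needed near the endpoints of $\gamma$. Once that is done, Hausdorff convergence $\Gamma_n\to\Gamma$ follows from the equivalence with Kuratowski convergence in the compact space $\PT S$.
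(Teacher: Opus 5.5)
Your argument is correct for arcs whose endpoints avoid $\mu$, and it takes a genuinely different route from the paper's. The endpoint fallback does not work, as explained in the second paragraph.

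\textbf{Comparison of routes.} The paper lifts to the projective (Klein) model, where leaves are straight segments, and puts a Euclidean product metric on $U\times\mathbb{PR}^1$ near $\gamma$. It then proves one planar estimate. Suppose $g,h$ cross $\gamma$ at $x,y$ at Euclidean angles in $(\frac{\pi}{2}-\theta_0,\frac{\pi}{2}+\theta_0)$, and $\widehat{g}_x$ is $\delta_0$-close to $\widehat{h}_z$ for some $z\in h$. Then $\dPT(\widehat{g}_x,\widehat{h}_y)<2\delta_0+\delta_0\tan(\theta_0+\delta_0)$. Used in both directions, this gives an explicit modulus $\mathrm{D_H}(\mu_n,\mu)<\delta\Rightarrow\mathrm{D_H}(\Gamma_n,\Gamma)<\epsilon$ that depends only on the angle bound.

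You instead use three soft ingredients:
\begin{itemize}
\item the Kuratowski characterization of Hausdorff convergence in the compact space $\PT S$;
\item closedness of $\widehat{\mu}$ and $\gamma$;
\item stability of transverse crossings under perturbation of initial data.
\end{itemize}
This is shorter and coordinate-free. Your first half (limits of points of $\Gamma_n$ lie in $\Gamma$) needs no assumption on the endpoints of $\gamma$ at all. What you give up is the uniform quantitative statement over arbitrary pairs $g,h\in\mathfrak{A}(\widetilde{\gamma},\theta)$. That is what the paper actually invokes later, in Set-up~\ref{set}(\ref{set:dPTbound}). You could recover it by a compactness-and-contradiction argument, but the Euclidean estimate gives it directly.

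\textbf{The endpoint case.} Your fallback for $x\in\mu\cap\gamma$ an endpoint of $\gamma$ does not work. Transversality of $\mu_n$ to $\gamma$ and closedness of $\gamma$ say nothing about whether $\ell_n$ hits $\gamma$ or hits its extension just beyond $x$. In fact the conclusion can fail there. Suppose that near $x$ the leaves of $\mu_n$ approach the leaf of $\mu$ through $x$ only from the side opposite to $\gamma$. Then for a short $\gamma$ ending at $x$ one gets $\Gamma=\{\widehat{\mu}_x\}$ but $\Gamma_n=\emptyset$.

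This situation does occur. Take the geodesic representatives of $T_c^n\alpha$, where $T_c$ is the Dehn twist about a simple closed geodesic $c$ and $i(\alpha,c)=1$. They converge to $c\cup g$ with $g$ spiralling onto $c$. Near a generic point of $g$ away from $c$, the single approximating strand eventually lies on a fixed side of $g$.

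So drop the ``otherwise'' sentence. Instead, assume the endpoints of $\gamma$ avoid $\mu$; this holds in the paper's application, where the arcs join $x_0$ to $\gamma_i x_0$ with $x_0$ off all the laminations. The paper's own proof tacitly needs the same assumption, since it takes for granted that a leaf of $\mu_n$ close to $\widehat{\mu}_x$ actually meets $\gamma$.
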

\begin{proof}
Lift everything to the universal cover with the projective model, where all geodesics are straight lines. We introduce affine coordinates on a relatively compact neighborhood $U$ of $\gamma$ in $\mathbb{H}^2$ so that $\gamma = \{(t,0)\mid 0\le t\le l\}$. We introduce the flat Euclidean metric on $U$. Trivialize $\PT\mathbb{H}^2|U=U\times \mathbb{PR}^1$ using the parallelism with respect to the Euclidean metric. We will use the product distance $\der_{\mathrm{Euc}}+\der_{\RP^1}$ on $U\times \mathbb{PR}^1$ which is bi-Lipschitz equivalent to the metric induced by $\dPT$. 

 Note that the Euclidean counter-clockwise angles $\angle^{\mathrm{Euc}}(\gamma, g)$ \footnote{This is not equal to the hyperbolic angle $\angle(\gamma, g)$ since the projective model is not conformal to the Euclidean space.} from $\gamma$ to geodesic leaves $g$ of $\mu$ that intersect $\gamma$ are bounded between $\frac{\pi}{2}-\theta_0$ and $\frac{\pi}{2}+\theta_0$ for some $\theta_0$ with $0<\theta_0<\frac{\pi}{2}$. Since $\{\mu_n\}$ converges to $\mu$, we can find $N_0$ such that whenever $n>N_0$, $\frac{\pi}{2}-\theta_0 < \angle^\mathrm{Euc}(\gamma,g)<\frac{\pi}{2}+\theta_0$ for all leaves $g$ of $\mu_n$  intersecting $\gamma$.

Let $g$ and $h$ be geodesics intersecting $\gamma$ at $x$ and $y$ respectively. Suppose that the Euclidean counter-clockwise angles from $\gamma$ to $g$ and to $h$ are bounded between $\frac{\pi}{2}-\theta_0$ and $\frac{\pi}{2}+\theta_0$. Choose $\delta_0$ such that $\theta_0 + \delta_0<\frac{\pi}{2}$ and that the $\delta_0$-Euclidean balls around each point in $\gamma$ stay in $U$. By  Euclidean geometry as in Figure~\ref{fig:euclidean}, if $z\in h\cap U$ satisfies $\dPT( \widehat{g}_x,\widehat{h}_z)<\delta_0$ then we have
\[
\dPT(\widehat{g}_x, \widehat{h}_y) < 2\delta_0+\delta_0\cdot\tan (\theta_0 + \delta_0).
\]

\begin{figure}[!htb]
    \centering
    \includegraphics[width=\linewidth]{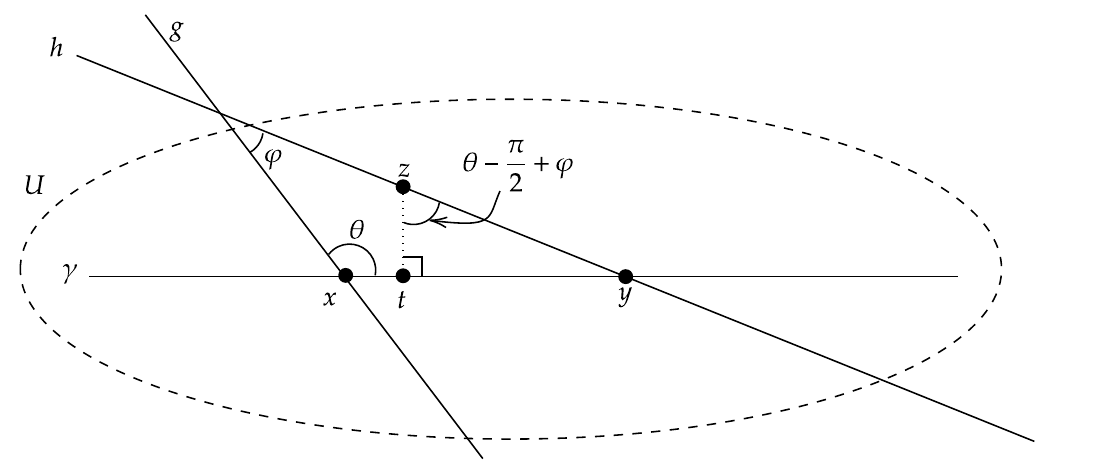}
    \caption{Since $\dPT(\widehat{g}_x,\widehat{h}_z)=\der_{\mathrm{Euc}}(x,z)+\varphi<\delta_0$, we have $\varphi< \delta_0$. The length of the segment $\overline{xz}$ is bounded from above by $\delta_0$ as well. Hence, the length of the line segment $\overline{xt}$ is at most $\delta_0$ and the length of $\overline{ty}$ is at most $|\overline{zt}|\cdot \tan (\theta-\frac{\pi}{2}+\varphi) < \delta_0 \cdot \tan (\theta_0 +\delta_0)$. Consequently, we obtain $\dPT(\widehat{g}_x,\widehat{h}_y)=\der_{\mathrm{Euc}}(x,y)+\varphi< 2\delta_0+\delta_0\cdot\tan (\theta_0 + \delta_0)$.}
    \label{fig:euclidean}
\end{figure}

Given $\epsilon>0$, let $0<\delta<\delta_0$ be a real number satisfying $2\delta+\delta\cdot\tan (\theta_0 + \delta_0)<\epsilon$. Let $N>N_0$ be an integer such that $\mathrm{D_H}(\mu_n, \mu)<\delta$ for all $n>N$. Then the estimate explained in the above paragraph shows that the set $\Gamma$ is contained in the $\epsilon$-neighborhood of $\Gamma_n$ for each $n>N$. Since the estimate is symmetric, we conclude that each $\Gamma_n$ sits inside the $\epsilon$-neighborhood of $\Gamma$ whenever $n>N$. Therefore, $\mathrm{D_H}(\Gamma_n, \Gamma)<\epsilon$ for all $n>N$. 
\end{proof}

We can approximate a geodesic lamination using a train-track.  Formally, a \emph{train-track} is a collection of smoothly embedded  images of $[0,1]\times[0,1]$ in $S$, called \emph{branches} satisfying 
\begin{itemize}
    \item two branches are either disjoint or intersect along their horizontal edges $[0,1]\times \{0,1\}$,
    \item a horizontal edge of a branch is covered by horizontal edges of other branches. 
\end{itemize}
The images of $[0,1]\times\{y\}$ for $0<y<1$ are called ties and  the images of $[0,1]\times\{0,1\}$ are called \emph{switches}.  A geodesic lamination is \emph{carried by} a train-track if it is contained in the union of branches of the train-track and is transverse to ties and switches.

If we orient a switch $s$, the set of branches that intersect $s$ can be decomposed into two subsets; a branch meeting $s$ from the left is called an \emph{inbound} branch. An \emph{outbound} branch is one that intersects $s$ from the right. 

For any $\delta$-neighborhood $\mathcal{N}_\delta(\mu)=\{x\in S \mid \der(x,\mu)<\delta\}$ of a geodesic lamination $\mu$, we can find a train-track $\mathcal{T}$ contained in $\mathcal{N}_\delta(\mu)$ that carries $\mu$  \cite[Theorem~1.6.5]{penner}.

The following result is also well-known; see for example \cite[Lemma~4.2.1]{otal}. 
\begin{lemma}\label{lem:traintracknbd}
    Given a geodesic lamination $\mu$ carried by a train-track $\mathcal{T}$, there is a constant $\epsilon>0$ such that any geodesic lamination $\lambda$ with $\mathrm{D_H}(\mu, \lambda)<\epsilon$ is carried by  $\mathcal{T}$.  
\end{lemma}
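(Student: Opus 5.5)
The plan is to argue by contradiction using compactness of the train-track structure together with Lemma~\ref{lem:approx}. Let $\mathcal{T}$ carry $\mu$, and let $N(\mathcal{T})$ denote the union of its branches. Since $\mathcal{T}$ is a finite union of embedded rectangles, carrying is an open condition. By this I mean that a lamination $\lambda$ is carried by $\mathcal{T}$ precisely when two things hold. First, $\lambda$ lies in the interior of $N(\mathcal{T})$, or at least avoids the vertical sides of the branches. Second, at every point of $\lambda$ the tangent line is transverse to the tie foliation. The proof will show that both conditions persist under small Hausdorff perturbations in $\PT S$.

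The key steps are as follows. First, I will shrink or isotope $\mathcal{T}$ slightly if needed, so that $\mu$ is contained in a compact subset $K$ of the union of the interiors of the branches together with the interiors of the switches. Second, I will use the tie foliation to define a continuous line field on $N(\mathcal{T})$ consisting of tie directions. Transversality of $\mu$ to the ties then says that the compact set $\widehat{\mu}\subset \PT S$ is disjoint from the closed set $\widehat{\mathrm{ties}}$ of tie directions. Compactness gives a positive distance $\epsilon_1=\dPT(\widehat{\mu},\widehat{\mathrm{ties}})$, and also a positive distance $\epsilon_2$ from the projection of $\widehat\mu$ to the vertical boundary of $N(\mathcal{T})$. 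Third, I will take $\epsilon<\min\{\epsilon_1,\epsilon_2\}/2$, measured in the metric $\dPT$; we may freely pass between $\mathrm{D_H}$ and $\mathrm{d_H}$ by the stated homeomorphism. If $\mathrm{D_H}(\mu,\lambda)<\epsilon$, then every point of $\widehat\lambda$ is $\epsilon$-close to $\widehat\mu$. Hence $\lambda$ lies in $N(\mathcal{T})$ away from its vertical sides, and its tangent lines are never tie directions. So $\lambda$ is transverse to the ties.

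It remains to check the behavior at switches. A leaf of $\lambda$ inside $N(\mathcal{T})$ that is transverse to the ties must cross from one branch to an adjacent branch through a switch. It cannot exit through a vertical side, because $\lambda$ stays away from those sides. Since $\lambda$ is closed and lies entirely in $N(\mathcal{T})$, it is carried. I will use Lemma~\ref{lem:approx}, applied to short transverse arcs (ties), mainly as a sanity check that the intersection points of $\lambda$ with each tie converge to those of $\mu$ with their directions.

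The main obstacle is the first step. Depending on the precise convention for ``carried'', $\mu$ might touch the boundary of $N(\mathcal{T})$, for example along cusps near switches. My first attempt at handling this is to replace $\mathcal{T}$ by a slightly enlarged train track of the same combinatorial type, with the ties extended. This puts $\mu$ in the interior and changes nothing about which laminations are carried up to isotopy, which is the standard approach in \cite{otal,penner}.
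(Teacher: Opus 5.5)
Apart from your first step, this is a complete, self-contained proof, whereas the paper only cites \cite[Lemma~4.2.1]{otal}. The argument works as follows. The tie directions form a continuous line field on all of $N(\mathcal{T})$: along a switch, both adjacent branches have the switch as a horizontal edge, so their tie directions agree. Hence the compact set $\widehat{\mu}$ lies at positive $\dPT$-distance $\epsilon_1$ from the graph of this line field. If $\mu\subset N(\mathcal{T})$ also lies at positive distance $\epsilon_2$ from $\partial N(\mathcal{T})$, then $\mathrm{D_H}(\mu,\lambda)<\epsilon$ with $\epsilon$ small gives $\lambda\subset N(\mathcal{T})$, with every tangent line of $\lambda$ at distance at least $\epsilon_1-\epsilon$ from the tie direction. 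Transversality to the switches is then automatic, since switches are tangent to that line field. So your paragraph on switches and the appeal to Lemma~\ref{lem:approx} can be dropped.

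The gap is in how you obtain $\epsilon_2>0$. That is exactly the hypothesis $\mu\subset\mathrm{int}\,N(\mathcal{T})$, and it cannot be manufactured by changing the track. Shrinking $\mathcal{T}$ can only break containment. Enlarging it to $\mathcal{T}'$ proves that nearby laminations are carried by $\mathcal{T}'$, which is not the assertion. The remark that nothing changes ``up to isotopy'' does not help: geodesic laminations are rigid subsets of $S$, and the paper uses literal containment in the given track (steps (6)--(7) of Set-up~\ref{set} need $\mu_n\subset\widetilde{\mathcal{T}}$ with $\mathcal{T}\subset\mathcal{N}_{\delta_0}(|\mathsf{c}|)$).

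Under the paper's literal wording (``contained in the union of branches''), a leaf of $\mu$ may be tangent to a vertical side from the inside. Then geodesics arbitrarily $\dPT$-close to that leaf exit $N(\mathcal{T})$, your $\epsilon_2$ is zero, and the argument gives nothing for $\mathcal{T}$ itself. The right repair is not to modify $\mathcal{T}$ but to read ``carried'' in the standard sense that the lamination lies in the interior of $N(\mathcal{T})$; this is easily arranged when $\mathcal{T}$ is chosen inside $\mathcal{N}_\delta(\mu)$. With that convention your first step is vacuous, and the rest of your proof goes through unchanged.
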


The set $\mathcal{N}_\delta(\gamma\cap \mu)=\mathcal{N}_\delta(\mu) \cap \gamma$ is a countable union of disjoint  open intervals, say $\mathcal{N}_\delta(\gamma\cap \mu) = \bigcup_i J_i$. The total length of $\mathcal{N}_\delta(\gamma\cap \mu)$ is given by $\sum_i \ell(J_i)$. We need the following:

\begin{lemma}\label{lem:hdimzero}
 Let $\mu$ be a geodesic lamination in $\mathbb{H}^2$ and let $\gamma$ be a geodesic arc in $\mathbb{H}^2$. The total arc-length of $\mathcal{N}_\delta(\gamma\cap \mu)$ converges to 0 as $\delta\to 0$.
\end{lemma}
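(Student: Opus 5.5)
The natural route is continuity of Lebesgue measure from above. We may assume $\gamma$ is compact, as elsewhere in this section. The sets $\mathcal{N}_\delta(\gamma\cap\mu)=\mathcal{N}_\delta(\mu)\cap\gamma$ decrease as $\delta\downarrow 0$. They are open subsets of the finite-length arc $\gamma$, and their intersection over all $\delta>0$ is $\overline{\mu}\cap\gamma=\mu\cap\gamma$, because $\mu$ is closed. So, measuring arc-length with the one-dimensional Lebesgue measure on $\gamma$, monotone convergence gives
\[
\lim_{\delta\to 0}\sum_i \ell(J_i)=\operatorname{Leb}_\gamma(\mu\cap\gamma).
\]
It then suffices to prove that $\mu\cap\gamma$ has zero one-dimensional measure.

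If $\gamma$ is contained in a leaf of $\mu$, this fails as stated. In that degenerate case I would read the statement as intended for arcs transverse to $\mu$, which is the only setting in which it is used. Since $\gamma$ is then transverse and compact, it meets only leaves that cross it at angles bounded away from $0$ and $\pi$ (by compactness of $\widehat\mu$ over $\gamma$).

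The key step is to show $\operatorname{Leb}_\gamma(\mu\cap\gamma)=0$, and this is the main obstacle. I would deduce it from the classical fact that a geodesic lamination has zero area, which is already implicitly used when the paper counts $4(g-1)$ complementary triangles.

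Suppose $E=\mu\cap\gamma$ had positive length. Flow $E$ along the leaves of $\mu$ for time $t\in(-s,s)$ with $s$ small. This gives an injective map $E\times(-s,s)\to\mu$; injectivity holds because leaves are disjoint and each leaf crosses $\gamma$ at most once locally. The angle bound makes this map bi-Lipschitz onto its image, since nearby disjoint geodesics stay close for short times, as in Lemma~\ref{lem:parallel}. The image would then have area at least $c\cdot s\cdot\operatorname{Leb}(E)>0$, contradicting area zero of $\mu$ (in $\mathbb{H}^2$, use the lift of a lamination on $S$, or simply cite the standard result for laminations in $\mathbb{H}^2$).

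If instead I must prove area zero directly, I would argue as follows. The complementary regions of a lamination on a closed surface (after maximal completion) are ideal triangles of total area $4\pi(g-1)=\operatorname{Area}(S)$ by Gauss--Bonnet, so $\mu$ has measure zero. For a lamination in $\mathbb{H}^2$ that is not a lift, there is a more direct route: the gaps of $\gamma\setminus\mu$, grouped into finitely many parallel classes, have lengths that sum to $\ell(\gamma)$. This follows from Lemma~\ref{lem:lengthheight}, since gaps deep in a spike have length decaying like $d/\text{height}$, and this comparison controls the complement.
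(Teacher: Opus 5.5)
With $\mu$ the lift of a lamination on $S$ (the only way the paper uses this lemma and, as explained below, the only setting in which it can hold), your argument is correct but takes a different route from the paper's.

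\textbf{Comparison with the paper.} The paper quotes $\dim_H(\mu\cap\gamma)=0$ and covers $\mu\cap\gamma$ by open intervals of total length $<\epsilon$. It then uses a Lebesgue number of $\{I_i\}\cup\{\gamma\setminus\mu\}$ to trap $\mathcal{N}_\delta(\gamma\cap\mu)$ inside those intervals. Your continuity-from-above step is the same reduction in cleaner form, and it shows the lemma is \emph{equivalent} to $\operatorname{Leb}_\gamma(\mu\cap\gamma)=0$.

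For that null-set input, you replace the Birman--Series dimension bound (far more than is needed) by the classical zero-area property plus a flow box. This is more elementary, but it obliges you to prove an area \emph{lower} bound. Your justification (nearby disjoint leaves stay close for short times) only gives the upper Lipschitz bound. The reverse inequality comes from the leaves not crossing. Work in the projective model with $\gamma$ horizontal, and write the leaf through $x\in E$ as $\{(x+y\,k(x),y)\}$. If leaves do not cross inside the strip $|y|\le h$, then $|k(x)-k(x')|\le|x-x'|/h$. Hence for $|y|\le h/2$ the map $x\mapsto x+y\,k(x)$ shrinks distances on $E$ by at most a factor $2$, so each such slice meets $\mu$ in a set of length at least $\tfrac12\operatorname{Leb}(E)$. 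Fubini then gives area $\gtrsim h\operatorname{Leb}(E)$.

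\textbf{Scope claims.} Several of your claims about when the lemma holds are wrong. The lemma does not fail only when $\gamma$ lies in a leaf; for general laminations in $\mathbb{H}^2$ it is false. Let $C\subset[0,1]$ be a closed set of positive length with empty interior, and let $\mu$ be the union of the vertical geodesics over $C$ in the upper half-plane; this is a geodesic lamination. The arc of the semicircle $|z-\tfrac12|=R$, $R>1$, lying over $[0,1]$ is a transverse geodesic arc with $\operatorname{Leb}_\gamma(\mu\cap\gamma)>0$. By your own first step, the total length then tends to a positive limit.

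Consequences:
\begin{itemize}
\item The zero-area fact you propose to cite for laminations in $\mathbb{H}^2$ does not exist.
\item The alternative route in your last paragraph cannot work. Finitely many parallel classes require a lift of a maximal lamination on a compact surface. Even for lifts, gap estimates such as Lemma~\ref{lem:lengthheight} control individual gaps but do not show that the gaps exhaust $\gamma$ in measure, which is the very claim.
\item The paper's appeal to Hausdorff dimension zero tacitly carries the same lift hypothesis.
\end{itemize}

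\textbf{Circularity in your direct proof of zero area.} As written it is circular, since the paper derives the count $4(g-1)$ \emph{from} zero area. You need an independent topological count. For instance, a train track obtained by collapsing a tie neighbourhood of a maximal extension of $\mu$ has one trigon per complementary triangle, and an Euler characteristic count gives $-2\chi(S)=4g-4$ trigons. Gauss--Bonnet then yields zero area. Alternatively, simply cite the classical theorem.
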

\begin{proof}
Let $\epsilon>0$ be given. Since the Hausdorff dimension of $\mu\cap \gamma$ equals 0, we can find an open covering $\{I_i\}$ of $\gamma\cap \mu$ such that $\sum \ell(I_i) <\epsilon$. Let $J= \gamma\setminus \mu$ be an open subset of $\gamma$.  Observe that $\{I_i\}\cup \{J\}$ is an open covering of the compact set $\gamma$.   Let $\delta_1>0$ be a Lebesgue number for this covering  and let $\delta = \delta_1/2$. Then, for any $x\in \gamma\cap \mu$, the $\delta$-neighborhood of $x$ in $\gamma$ is contained in some $I_i$. In particular, this implies that $\mathcal{N}_\delta(\gamma \cap \mu)\subset \bigcup_{i}I_i$. Therefore, the total length of $\mathcal{N}_\delta(\gamma\cap \mu)$ is at most $\epsilon$. Since $\epsilon$ was arbitrary, the lemma follows. 
\end{proof}

Suppose that we are given a maximal geodesic lamination $\mu$ of $S$ and a transverse geodesic arc $\gamma$. Now we define the \emph{divergence radius} $\dr_\mu:\mathcal{C}_\gamma(\mu)\to \mathbb{N}$ with respect to $\mu$. The role of $\dr_\mu(J)$ is to measure how deep $J$ lies inside a cusp neighborhood of a complementary triangle. 
This number can be defined in various ways as long as it satisfies properties (i) and (ii) stated below. We present one  possible construction. 

Choose a train-track $\mathcal{T}$ that carries $\mu$. We may further assume that $\gamma$ does not pass through any switch of $\mathcal{T}$. For a branch $e$ of $\mathcal{T}$,  $e\setminus \mu$ consists of countably many connected components which we call rectangles. The set of rectangles can be decomposed into two classes. One is finitely many rectangles $R_1, \cdots, R_m$ that meet $\partial \mathcal{T}$. The other class consists of $12(g-1)$ infinite chains $\mathfrak{C}^i=\{C^i_{1}, C^i_2, \cdots\}$ of rectangles, for $i=1,2,\cdots, 12g-12$,  where $C^i_j$ shares an edge with $C^i_{j+1}$ and where $\bigcup_j C^i_j$ forms a spike. Now let $J\in \mathcal{C}_\gamma(\mu)$ be given. Since $\gamma$ does not pass through switches, $J$ is either entirely contained in a rectangle $C^i_j$ of an infinite chain or does not intersect any rectangles of infinite chains. We set  
\[
\dr_{\mu}(J)=\begin{cases}
    n&\text{if }J\text{ is contained in } C^i_n\\
    0& \text{otherwise}
\end{cases}.
\]
Figure~\ref{fig:divrad} illustrates how $\dr_\mu$ is defined.

As alluded, it enjoys the following properties  \cite{bonahon96,BD,pfeil}:
    \begin{enumerate}
        \item  There exists a uniform constant $N$ depending only on $\gamma$ and the topological type of $S\setminus\mu$ such that for any $n\ge 0$,
        \[
        \sharp\{J\in \mathcal{C}_\gamma(\mu)\mid\dr_\mu(J)=n\}\le N.
        \]
        \item There exist positive constants $B$, $C$, $B'$ and $C'$ such that
        \[
        B \e^{-C\dr_\mu(J)}\le \ell(J)\le B' \e^{-C'\dr_\mu(J)}.
        \]
    \end{enumerate}

\begin{figure}[!htb]
    \centering
    \includegraphics[width=\linewidth]{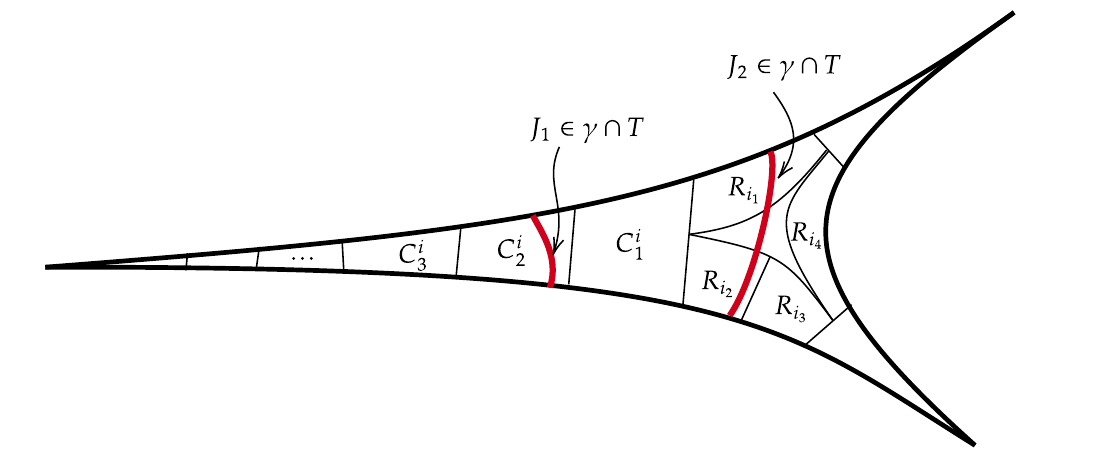}
    \caption{A complementary ideal triangle $T$ and its train-track neighborhood. The intervals $J_1$ and $J_2$ are in the same parallel class. The interval $J_1\in \mathcal{C}_\gamma(\mu)$ in the figure has divergence radius 2. The other interval $J_2$, intersecting $R_{i_1}$ and $R_{i_2}$, has divergence radius 0.}
    \label{fig:divrad}
\end{figure}
    
The constants $B,C,B',C'$ and $N$ in the above (i) and (ii) are chosen for each given $\mu$. We observe that, to some extent, the same set of constants can be used for any geodesic laminations in a small neighborhood $U$ of $\mu$.

\begin{lemma}\label{lem:divrad}
    Let $\mu$ be a maximal geodesic lamination of $S$ and let $\gamma$ be a geodesic transverse to $\mu$. Let $\theta$ be a number such that every leaf of $\mu$ belongs to $\mathfrak{A}(\gamma, \theta)$. Then, there exist a neighborhood $U\subset \mathcal{GL}(S)$  of $\mu$ and constants $B,C,B',C', N$, and $\theta'$ with $C'<C$ depending on $\gamma$ and $\theta$ such that
        \begin{enumerate}
        \item\label{enum:transverse}  For $\tau\in U$, every leaf of $\tau$ belongs to $\mathfrak{A}(\gamma, \theta')$. In particular, $\gamma$ is transverse to every $\tau\in U$.
        \item\label{enum:finite} For any $n\ge0$, and maximal $\tau\in U$,
        \[
        \sharp\{J\in \mathcal{C}_\gamma(\tau)\mid\dr_\tau(J)=n\}\le N.
        \]
        \item\label{enum:compatible} For any maximal $\tau\in U$, and any $J\in \mathcal{C}_\gamma(\tau)$, we have
        \[
         B\cdot \ell(J_\mathrm{min})\cdot \e^{-C\dr_\tau(J)}\le \ell(J)\le  B' \cdot \ell(J_\mathrm{min})\cdot \e^{-C'\dr_\tau(J)}
        \]
        where $J_\mathrm{min}\in \mathcal{C}_\gamma(\tau)$ is the one that has the smallest divergence radius among all elements that are in the same parallel class as $J$.  
    \end{enumerate}
\end{lemma}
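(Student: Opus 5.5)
Fix a train-track $\mathcal{T}$ carrying $\mu$, contained in a small neighborhood $\mathcal{N}_\delta(\mu)$, and chosen so that $\gamma$ avoids its switches and crosses each branch it meets along ties. By Lemma~\ref{lem:traintracknbd} there is $\epsilon>0$ such that every $\tau$ with $\mathrm{D_H}(\mu,\tau)<\epsilon$ is carried by $\mathcal{T}$. Let $U$ be this $\epsilon$-ball, possibly shrunk further below. We then define $\dr_\tau$ for every maximal $\tau\in U$ using the \emph{same} train-track $\mathcal{T}$. This is the key device: the rectangles $R_1,\dots,R_m$ meeting $\partial\mathcal{T}$ and the $12(g-1)$ infinite chains are then indexed combinatorially by the same branches of $\mathcal{T}$, so the constants can be read off from $\mathcal{T}$ and $\gamma$ alone.

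For (\ref{enum:transverse}), choose $\theta<\theta'<\pi/2$. Lemma~\ref{lem:approx} says the sets of tangent lines $\Gamma_\tau$ along $\gamma$ are Hausdorff close to $\Gamma_\mu$ when $\tau$ is close to $\mu$. Since all angles in $\Gamma_\mu$ lie in $(\pi/2-\theta,\pi/2+\theta)$, shrinking $U$ puts all angles of $\Gamma_\tau$ in $(\pi/2-\theta',\pi/2+\theta')$. We also need transversality near the endpoints of $\gamma$, and that no leaf is nearly tangent. This follows because leaves of $\tau$ near $\gamma$ are close in $\PT S$ to leaves of $\mu$, and the relevant set is compact.

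For (\ref{enum:finite}), $\gamma$ crosses finitely many branches of $\mathcal{T}$, say $k$ crossings. In each crossing of a branch $e$, the arc $\gamma$ meets each rectangle $C^i_n\subset e$ of a chain at most once, since rectangles are bounded by leaves and $\gamma$ crosses each leaf at most once in a branch. Hence the number of $J$ with $\dr_\tau(J)=n$ is at most $N:=k\cdot 12(g-1)$. This bound is independent of $\tau$.

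Part (\ref{enum:compatible}) is the main obstacle, and I would argue it in the universal cover. Fix $J$ in a chain $\mathfrak{C}^i$ with depth $n=\dr_\tau(J)$. Let $J_{\min}$ be the element of the same parallel class with smallest divergence radius. $J$ and $J_{\min}$ lie in the same spike of a lifted ideal triangle $\widetilde T$ of $\tau$, and both are bounded by the two asymptotic sides $g,h$ of that spike. These sides lie in $\mathfrak{A}(\gamma,\theta')$ by (\ref{enum:transverse}). Normalize so that the common ideal vertex is $\infty$ in the upper half plane, making $g$ and $h$ vertical at distance $d$.

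Lemma~\ref{lem:lengthheight}, with the angle bound from (\ref{enum:transverse}), then gives $\ell(J)\asymp d/\mathrm{height}(J)$, and similarly for $J_{\min}$. The ratio $\ell(J)/\ell(J_{\min})$ is therefore comparable, with constants depending only on $\theta'$, to the ratio of heights. The ratio of heights is $\exp$ of the hyperbolic distance travelled into the cusp.

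It remains to compare this cusp depth with the combinatorial depth $\dr_\tau(J)-\dr_\tau(J_{\min})$. Each successive rectangle of the chain corresponds to the spike passing once more through a branch of $\mathcal{T}$. Branches have length bounded above and below uniformly, because $\mathcal{T}$ is fixed and $\tau$ is carried transversely to the ties. So each step raises the height by a factor between $\e^{c_1}$ and $\e^{c_2}$, with $0<c_1\le c_2$ depending only on $\mathcal{T}$ and $\gamma$.

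This yields the two-sided exponential bounds with $C=c_2$ and $C'=c_1$; after adjusting constants we may take $C'<C$. The factor $\ell(J_{\min})$ appears explicitly because it is the base case of the chain. This replaces the non-uniform constant present in the fixed-$\mu$ estimate (ii). The delicate point is the uniform per-step bound. I would control it using Lemma~\ref{lem:parallel}, which converts distances along $\gamma$ between parallel leaves into $\dUT$ distances with constants depending on $\gamma$ and $\theta'$ only.
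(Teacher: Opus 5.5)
Your route is the paper's: one train track $\mathcal{T}$ for every $\tau\in U$, a count per chain rectangle for (ii), and in (iii) the spike normalized in the upper half-plane, Lemma~\ref{lem:lengthheight} to trade lengths for heights, and the minimal and maximal switch-to-switch distances of branches to turn height into chain depth (Figure~\ref{fig:divrad2}). For (i) you argue directly from Hausdorff closeness in $\PT S$ instead of the paper's contradiction argument inside a branch. That works, although citing Lemma~\ref{lem:approx} there is circular (it assumes transversality); it is your compactness remark that does the work.

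The genuine gap is the last step of (iii). Take $A$ from Lemma~\ref{lem:lengthheight} and $0<c_1\le c_2$ your per-branch height factors. What your argument proves is
\[
\frac{1}{A}\,\ell(J_{\min})\,\e^{-c_2(\dr_\tau(J)-\dr_\tau(J_{\min})+1)}\;\le\;\ell(J)\;\le\;A\,\ell(J_{\min})\,\e^{-c_1(\dr_\tau(J)-\dr_\tau(J_{\min})-1)}.
\]
The lower bound gives the stated form since $\dr_\tau(J_{\min})\ge 0$. The upper bound becomes $B'\ell(J_{\min})\e^{-C'\dr_\tau(J)}$ only if $\e^{c_1\dr_\tau(J_{\min})}$ is bounded uniformly over $\tau\in U$ and over parallel classes. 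Calling $J_{\min}$ ``the base case of the chain'' silently assumes this. But $J_{\min}$ is only the first place where $\gamma$ happens to cross that spike, and nothing in your argument bounds its depth.

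This is not cosmetic. Taking $J=J_{\min}$ in (iii) shows the upper bound is \emph{equivalent} to $\e^{C'\dr_\tau(J_{\min})}\le B'$, and that can fail. For instance:
let $c$ be a simple closed geodesic and $\alpha$ a curve crossing $c$;
let $\mu$ be a Hausdorff limit of maximal completions $\tau_n$ of $T_c^n(\alpha)$;
let $\gamma$ be a short arc inside a branch far from $c$, crossing leaves approximated by $T_c^n(\alpha)$.
The spike of $\tau_n$ issuing from a cusp of $\mathcal{T}$ next to $c$ shadows the corresponding spike of $\mu$, which spirals onto $c$ forever. It does so for more and more turns before it leaves and first meets $\gamma$, so $\dr_{\tau_n}(J_{\min})\to\infty$.

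The paper's own proof ends at the same relative estimate, $\ell(J)<A^2\ell(J_{\min})\e^{-l(\dr_\tau(J)-\dr_\tau(J_{\min})-1)}$, and makes the same leap. So you are not deviating from it, but what this method actually gives is the upper bound with $\dr_\tau(J)-\dr_\tau(J_{\min})$ in the exponent.

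Smaller omissions:
(ii) never treats $n=0$. You need to count the components meeting $\gamma\setminus\mathcal{T}$ plus those lying in the finitely many non-chain rectangles; this number is bounded in terms of $\mathcal{T}$ and $\gamma$ alone.
(iii) never treats $\dr_\tau(J)=0$.
Distinct $J$ and $J_{\min}$ lie in \emph{different} lifts of $T$, because a geodesic arc meets each lift in a connected set. So one of them must first be moved by a deck transformation.
The per-step bound needs no Lemma~\ref{lem:parallel}. A leaf segment across a lifted branch is a geodesic between its two switches, so its length lies between their minimal and maximal distances.
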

\begin{definition}
    Call a neighborhood $U$ obtained in Lemma~\ref{lem:divrad} an \emph{admissible neighborhood} of $\mu$ with respect to $\gamma$. 
\end{definition}

\begin{proof}[Proof of Lemma~\ref{lem:divrad}] Choose a train-track $\mathcal{T}$ that carries $\mu$. We may further assume that $\gamma$ does not pass through any switch of $\mathcal{T}$ and that $\gamma$ intersects $\mathcal{T}$ along ties. Using Lemma~\ref{lem:traintracknbd}, we find a neighborhood $U$ of $\mu$ where all $\tau\in U$ are carried by $\mathcal{T}$. Suppose that there is a sequence of geodesic laminations $\tau_i$ in $U$ containing a leaf $l_i$ with $\angle_{p_i}(\gamma, l_i)\to 0$ as $i\to \infty$. We may assume that the intersection points $p_i$ are contained in a single branch, say $e$. Lift everything to the universal cover $\mathbb{H}^2$ and fix a smooth embedding $\psi:[0,1]\times[0,1]\to S$ for the branch $e$. Consider $g_i:=\psi^{-1}(l_i)$. Since $\tau_i$ are carried by $\mathcal{T}$, $g_i$ are arcs in $[0,1]\times[0,1]$ transverse to all horizontal lines $[0,1]\times\{t\}$ for $0\le t\le 1$. Since $\gamma$ and $l_i$ are geodesics, the condition $\angle_{p_i}(\gamma, l_i)\to 0$ implies that the sequence $\{g_i\}$ must converge to $\psi^{-1}(\gamma)$ in the Hausdorff topology with respect to any metric on $[0,1]\times [0,1]$. This is impossible because $g_i$ intersects the switch $[0,1]\times\{1\}$ whereas $\psi^{-1}(\gamma)$ does not. We reach a similar contradiction if we assume that there exists a sequence of leaves $l_i$ with $\angle_{p_i}(\gamma, l_i)\to \pi$.  Therefore, for some $\theta'$ depending on $\mathcal{T}$, every $\tau\in U$ has property~(\ref{enum:transverse}). 

Both $\tau$ and $\mu$ are maximal laminations carried by $\mathcal{T}$. The following argument is a refinement of the proof of \cite[Lemma~8.7]{BD}. 

Choose a parametrization $f_\gamma:[0,1]\to \gamma\subset S$ for $\gamma$ and consider its lift $\widetilde{f_\gamma}:[0,1]\to \mathbb{H}^2$. Let $\widetilde{\gamma} :=\widetilde{f_\gamma}([0,1])$ and let $\widetilde{\mathcal{T}}$ be the full lift of $\mathcal{T}$ to $\mathbb{H}^2$. 

For each connected component $C$ of $\widetilde{\gamma} \setminus \widetilde{\mathcal{T}}$, there exists a unique element $J_C\in \mathcal{C}_\gamma(\tau)$ with $\dr_{\tau}(J_C)=0$ such that $C\subset J_C$. This correspondence $C\mapsto J_C$ is a surjective map from the set of connected components of $\widetilde{\gamma} \setminus \widetilde{\mathcal{T}}$ onto \[
\{J\in \mathcal{C}_\gamma(\tau)\mid\dr_\tau(J)=0\}.
\]
It follows that, as long as $\tau$ is carried by $\mathcal{T}$, the quantity
\[
\sharp\{J\in \mathcal{C}_\gamma(\tau)\mid\dr_\tau(J)=0\}
\]
is bounded from above by the number of connected components of $\widetilde{\gamma} \setminus \widetilde{\mathcal{T}}$. 

Let $\mathcal{B}(\mathcal{T})$ be the set of branches of $\mathcal{T}$ and let  
\[
M=\max_{e\in \mathcal{B}(\mathcal{T})} \sharp\{\text{connected components of }\gamma \cap e\}.
\]
Then each element $C^i_j$ of an infinite chain contains at most $M$ elements of $\mathcal{C}_\gamma(\tau)$. Therefore, for any $n>0$ and any $\tau\in U$, we conclude that 
\[
\sharp\{J\in \mathcal{C}_\gamma(\tau)\mid\dr_\tau(J)=n\}
\]
is bounded from above by $M$ times the number of spikes, or $M\cdot (12g-12)$, where $g$ is the genus of $S$. This bound depends only on $\mathcal{T}$ and the topology of $S$.  Therefore, for (\ref{enum:finite}), we choose $N$ to be the maximum of $M\cdot(12g-12)$ and the number of connected components of $\widetilde{\gamma} \setminus \widetilde{\mathcal{T}}$.

Now we prove (\ref{enum:compatible}). First observe that, using the surjection from the set of connected components of $\widetilde{\gamma} \setminus \widetilde{\mathcal{T}}$ onto $\{J\in \mathcal{C}_\gamma(\tau)\mid\dr_\tau(J)=0\}$, one can find  $B_1 $ and $B_1'$ depending only on $U$ such that whenever $\tau\in U$ and $J\in \mathcal{C}_{\gamma}(\tau)$ with $\dr_\tau(J)=0$, we have $B_1<\ell(J)<B_1'$. 

If $J\in \mathcal{C}_{\gamma}(\tau)$ satisfies $\dr_\tau(J)=n>0$, we can find an infinite chain $\mathfrak{C}^i=\{C^i_1,C^i_2, \cdots\}$  such that  $J$ is contained in $C^i_{\dr_\tau(J)}$. Then $J$,  $J_{\mathrm{min}}$ and segments of geodesic leaves of a complementary ideal triangle bound a geodesic quadrilateral $Q$ embedded in $S$. Applying a suitable isometry, we may assume that $Q$ lifts to the universal cover as in Figure~\ref{fig:divrad2}. As we showed in (\ref{enum:transverse}), all leaves of $\tau\in U$ belong to $\mathfrak{A}(\gamma, \theta')$. Using hyperbolic geometry and Lemma~\ref{lem:lengthheight} as explained in Figure~\ref{fig:divrad2}, we obtain constants $A$ and $l$ and $L$, $l<L$, depending on $\theta'$, $\mathcal{T}$ and $\gamma$ such that
\[
\frac{\ell(J_\mathrm{min})}{A^2}\e^{-L\cdot \dr_\tau(J)}<\ell(J)<A^2\cdot \ell(J_{\mathrm{min}})\cdot \e^{-l\cdot (\dr_\tau(J)-\dr_\tau(J_{\mathrm{min}})-1)}.
\]
Therefore, we obtain (\ref{enum:compatible}).
\begin{figure}[!hbt]
    \centering
    \includegraphics[width=\linewidth]{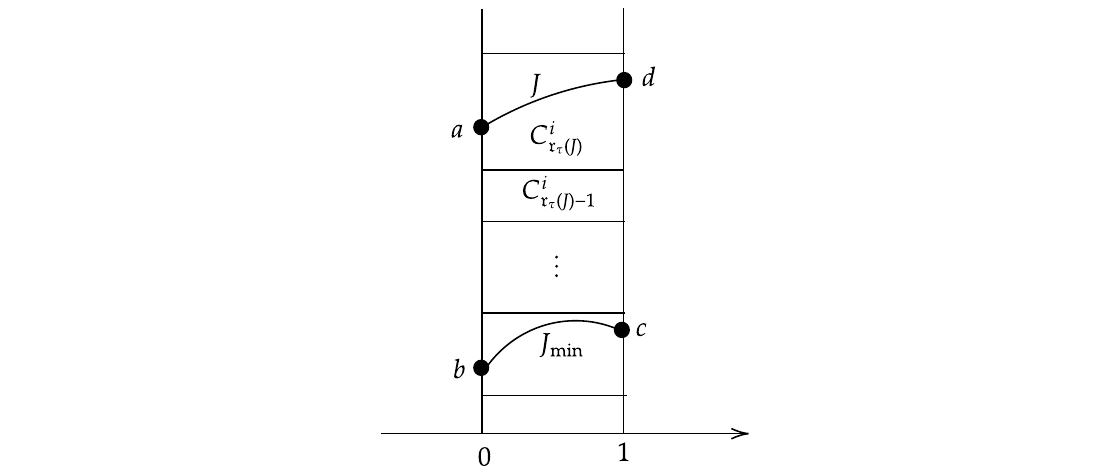}
    \caption{Two vertical lines are geodesic leaves of $\tau$. Let $B(\mathcal{T})$ be the set of branches of $\mathcal{T}$. Let $L=\max_{e\in B(\mathcal{T})} \{H(e)\}$ and $l=\min_{e\in B(\mathcal{T})}\{h(e)\}$, where $H(e)$ and $h(e)$ are the maximum and the minimum hyperbolic distances between two switches of a branch $e$ respectively. The hyperbolic length of the vertical geodesic segment $\overline{ab}$ is given by $\ln\frac{a}{b}$. Since there are  $\dr_\tau(J)-\dr_\tau(J_\mathrm{min})$ branches between $J$ and $J_{\mathrm{min}}$, we obtain  $l\cdot(\dr_\tau(J)-\dr_\tau(J_\mathrm{min})-1) <\ln\frac{a}{b}<L \cdot \dr_\tau(J)$. The same bound holds for $\ln\frac{d}{c}$. By Lemma~\ref{lem:lengthheight}, we have $\frac{\ell(J_\mathrm{min})}{A^2}\e^{-L\cdot \dr_\tau(J)}<\ell(J)<A^2\cdot \ell(J_{\mathrm{min}})\cdot \e^{-l\cdot (\dr_\tau(J)-\dr_\tau(J_{\mathrm{min}})-1)}$ for some constant $A$ depending only on $\theta'$.}
    \label{fig:divrad2}
\end{figure}
\end{proof}

For each fixed $\tau$, the bound (\ref{enum:compatible}) in Lemma~\ref{lem:divrad} can be written as the following form:
\[
         \min_{J\in \mathcal{C}_\gamma(\tau)}\{\ell(J_\mathrm{min})\} \cdot B \e^{-C\dr_\tau(J)}\le \ell(J)\le  \max_{J\in \mathcal{C}_\gamma(\tau)}\{\ell(J_\mathrm{min})\} \cdot B' \e^{-C'\dr_\tau(J)}.
\]
Since there are only finitely many $J_\mathrm{min}$'s, we recover the familiar estimation. 

Since $\ell(J_\mathrm{min})\le \ell(\gamma)$, we  conclude that there exist uniform constants $B''$ and $C'$ depending only on $U$ and $\gamma$ such that
\[
\ell(J) \le B'' \e^{-C'\dr_\tau(J)}
\]
for all $\tau\in U$ and all $J\in \mathcal{C}_\gamma(\tau)$. However, we are not able to rule out the possibility that the quantity $\ell(J_\mathrm{min})$ converges to zero as $\tau$ converges to $\mu$, while $\dr_\tau(J_\mathrm{min})$ remains bounded. For this reason, it may not be possible to find uniform constants $B$ and $C$ for which
\[
B \e^{-C\dr_\tau(J)}\le \ell(J)
\]
holds for all $J\in \mathcal{C}_\gamma(\tau)$ and all $\tau$ near $\mu$.

The divergence radius plays a role in the proof of the following useful lemma. 

\begin{lemma}\label{lem:exponentcase}
Let $\mu$ be a maximal geodesic lamination of $S$ and let $\gamma$ be a transverse oriented geodesic arc. Let $U$ be an admissible neighborhood of $\mu$  with respect to $\gamma$. Let $\nu>0$. Then there exist constants $Q$, and $\nu'$ depending on $U$, $\nu$, and $\gamma$ with the following properties: 

\begin{enumerate}
    \item\label{eq:lengthbound1} For every maximal $\tau\in U$ and every subset $\mathcal{J}$ of $\mathcal{C}_\gamma(\tau)$,    we have
    \[
    \sum_{J\in\mathcal{J}}\ell(J)^\nu \le Q \cdot \left(\sum_{J\in\mathcal{J}} \ell(J)\right)^{\nu'}
    \]
    \item\label{eq:lengthbound2} For every maximal $\tau\in U$ and every ordered subset $\vec{\mathcal{J}}=\{J_i\}_{i\in \mathbb{Z}}$ of $\mathcal{C}_\gamma(\tau)$ with
        \[
    \sum_{L\in \mathcal{C}_\gamma(\tau)\setminus \vec{\mathcal{J}}} \ell(L)\le 1
    \]
    we have
    \[
    \sum_{i} \mathrm{d}(J_i^1, J_{i+1}^0)^\nu \le Q\cdot\left(\sum_{L\in \mathcal{C}_\gamma(\tau)\setminus \mathcal{J}} \ell(L)\right)^{\nu'}.
    \]
\end{enumerate}
\end{lemma}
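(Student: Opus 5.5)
The plan is to exploit the two uniform facts available on the admissible neighbourhood $U$. By Lemma~\ref{lem:divrad}(\ref{enum:finite}), for every maximal $\tau\in U$ at most $N$ intervals $J\in\mathcal{C}_\gamma(\tau)$ have $\dr_\tau(J)=r$. By the remark following the proof of Lemma~\ref{lem:divrad}, the upper bound $\ell(J)\le B''\e^{-C'\dr_\tau(J)}$ holds with $B'',C'$ depending only on $U$ and $\gamma$. Only this upper bound is needed; the lower bound, which may fail to be uniform, is never used. If $\nu\ge 1$, both statements are elementary: $\sum x_i^\nu\le(\sum x_i)^\nu$ for nonnegative $x_i$, so one takes $Q=1$ and $\nu'=\nu$. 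In (\ref{eq:lengthbound2}) one also uses that each gap is at most the total length of the complementary intervals, as explained below. So the real content is the case $0<\nu<1$, where I would aim for $\nu'=\nu/2$.

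For (\ref{eq:lengthbound1}) with $\nu<1$, put $\Sigma=\sum_{J\in\mathcal{J}}\ell(J)\le\ell(\gamma)$ and fix a cutoff radius $r_0\ge 0$. Split $\mathcal{J}$ into the intervals with $\dr_\tau(J)<r_0$ and those with $\dr_\tau(J)\ge r_0$.
\begin{itemize}
\item The first class has at most $Nr_0$ elements, each with $\ell(J)\le\Sigma$, so it contributes at most $Nr_0\Sigma^\nu$.
\item For the second class, the exponential bound gives a contribution of at most
\[
\sum_{r\ge r_0}N(B'')^\nu\e^{-C'\nu r}\le \frac{N(B'')^\nu}{1-\e^{-C'\nu}}\,\e^{-C'\nu r_0}.
\]
\end{itemize}
If $\Sigma<1$, choose $r_0=\lceil \log(1/\Sigma)/C'\rceil$. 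The tail is then $O(\Sigma^\nu)$, and the head is $O\big((1+\log(1/\Sigma))\Sigma^\nu\big)$. Since $\log(1/\Sigma)\Sigma^{\nu/2}$ is bounded on $(0,1)$, the head is also $O(\Sigma^{\nu/2})$. If $\Sigma\ge1$, take $r_0=0$: the whole sum is bounded by a uniform constant, and hence by a constant times $\Sigma^{\nu/2}$. In both cases the constants depend only on $N,B'',C',\nu$, and hence only on $U$, $\gamma$ and $\nu$.

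For (\ref{eq:lengthbound2}), the first step is to show that the segment of $\gamma$ between $J_i^1$ and $J_{i+1}^0$ is covered, up to the set $\gamma\cap\tau$, by the intervals $L\in\mathcal{C}_\gamma(\tau)\setminus\vec{\mathcal{J}}$ lying between $J_i$ and $J_{i+1}$. Since $\gamma\cap\tau$ has Hausdorff dimension zero, and in particular Lebesgue measure zero (this is the fact underlying Lemma~\ref{lem:hdimzero}), it follows that
\[
\mathrm{d}(J_i^1,J_{i+1}^0)=\sum_{L\text{ between }J_i,J_{i+1}}\ell(L).
\]
The sets of such $L$ are pairwise disjoint as $i$ varies. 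For $\nu<1$, subadditivity of $x\mapsto x^\nu$ then gives
\[
\sum_i \mathrm{d}(J_i^1,J_{i+1}^0)^\nu\le\sum_{L\in\mathcal{C}_\gamma(\tau)\setminus\vec{\mathcal{J}}}\ell(L)^\nu.
\]
Applying (\ref{eq:lengthbound1}) to the subset $\mathcal{C}_\gamma(\tau)\setminus\vec{\mathcal{J}}$ finishes the argument. The hypothesis that this complementary total length is $\le1$ guarantees we are in the regime $\Sigma\le 1$ and keeps the constants uniform. For $\nu\ge1$, the same identity combined with $\sum x_i^\nu\le(\sum x_i)^\nu$ suffices.

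The main obstacle I expect is uniformity over $\tau\in U$. I must make sure that only the uniform upper bound $B''\e^{-C'\dr}$ and the uniform count $N$ enter, and not the possibly degenerating factor $\ell(J_\mathrm{min})$. I must also justify carefully that the gap length equals the sum of the complementary interval lengths, that is, that the lamination contributes no length along $\gamma$.
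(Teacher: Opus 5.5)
Your proof is correct, but it takes a different route from the paper's.

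The paper sorts $\mathcal{J}$ into the parallel classes $A_i$ and applies both halves of Lemma~\ref{lem:divrad}(iii) inside each class. The factor $\ell(M_i)$ of the shallowest interval appears on both sides and is absorbed using $\nu'<\nu$ and $\ell(M_i)\le\ell(\gamma)$. The mismatch of decay rates $C'<C$ forces the exponent $\nu'=(C'/C)\nu$, and a power-mean inequality over the at most $12g-12$ classes finishes (1). Since this single $\nu'$ lies below $\nu$ even when $\nu\ge 1$, the hypothesis $\sum_L\ell(L)\le 1$ is then needed in (2) to pass from exponent $\nu$ to $\nu'$.

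You use neither the lower bound nor the parallel-class structure. The uniform count $N$ and the uniform upper bound $\ell(J)\le B''\e^{-C'\dr_\tau(J)}$, with a cutoff at radius about $\log(1/\Sigma)/C'$, give the sharper estimate $O\bigl(\Sigma^\nu(1+\log(1/\Sigma))\bigr)$. This yields any $\nu'<\nu$ when $\nu<1$, and $\nu'=\nu$ when $\nu\ge 1$, which makes the hypothesis in (2) superfluous. What the paper's route buys instead is a pure power law whose exponent is tied to the geometry of the spikes, given by one formula for all $\nu$.

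A side benefit of your approach is that it avoids a step in the paper's chain of inequalities, namely $\sum_{J\in\mathcal{J}_i}\e^{-C\dr_\tau(J)}\ge \e^{-C\dr_\tau(M_i)}$. As written, this presumes that $\mathcal{J}$ contains an interval of $A_i$ of minimal radius, which fails for instance when $\mathcal{J}\cap A_i=\emptyset$. It is repaired by replacing $\dr_\tau(M_i)$ with $\min_{J\in\mathcal{J}_i}\dr_\tau(J)$ in both the upper and lower bounds and discarding empty classes.

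Your treatment of (2) coincides with the paper's: the gap identity, which rests on $\gamma\cap\tau$ having measure zero, combined with subadditivity of $x\mapsto x^\nu$.
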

\begin{proof} By the property (\ref{enum:compatible}) of Lemma~\ref{lem:divrad}, 
\[
\sum_{J\in \mathcal{J}}\ell(J)^\nu < \sum_{J\in \mathcal{J}} \ell(J_\mathrm{min})^\nu (B')^\nu \e^{-C'\nu \dr_\tau(J)}.
\]
Let  $A_1,\cdots, A_{12g-12}$ be  the parallel classes of $\mathcal{C}_\gamma(\tau)$ and let $\mathcal{J}_i=\{J\in \mathcal{J}\mid J\in A_i\}$. Then,
\begin{align*}
\sum_{J\in \mathcal{J}}\ell(J)^\nu &< \sum_{i=1}^{12g-12}\sum_{J\in \mathcal{J}_i} \ell(M_i)^\nu (B')^\nu \e^{-C'\nu \dr_\tau(J)}\\
&\le (B')^\nu\cdot \sum_{i=1}^{12g-12}\sum_{r=\dr(M_i)}^\infty N\cdot \ell(M_i)^\nu  \e^{-C'\nu r}\\
&= \frac{N\cdot (B')^\nu}{1-\e^{-C'\nu}}\cdot\sum_{i=1}^{12g-12}\ell(M_i)^\nu  \e^{-C'\nu\dr(M_i)}
\end{align*}
where $N$ is the constant from (\ref{enum:finite}) of Lemma~\ref{lem:divrad} and where $M_i$ is the element of $A_i$ with the smallest $\dr_\tau(M_i)$. That is,  $\dr_\tau(M_i) = \min_{J\in A_i} \{\dr_\tau(J)\}$. 

On the other hand, for  $\nu'=\frac{C'}{C}\nu$, we have
\begin{align*}
\left(\sum_{J\in \mathcal{J}} \ell(J)\right)^{\nu'}&=\left( \sum_{i=1}^{12g-12} \sum_{J\in \mathcal{J}_i}\ell(J)\right)^{\nu'}\\
&\ge G\cdot \sum_{i=1}^{12g-12}\left(\sum_{J\in \mathcal{J}_i} \ell(J)\right)^{\nu'} \\
&\ge B^{\nu'}G\cdot\sum_{i=1}^{12g-12} \ell(M_i)^{\nu'} \left( \sum_{J\in \mathcal{J}_i} \e^{-C\dr_\tau(J)}\right)^{\nu'}\\
&\ge B^{\nu'}G\cdot\sum_{i=1}^{12g-12} \ell(M_i)^{\nu'}  \e^{-C\nu'\dr_\tau(M_i)}\\
&=B^{\nu'}G\cdot\sum_{i=1}^{12g-12} \ell(M_i)^{\nu'}  \e^{-C'\nu\dr_\tau(M_i)}
\end{align*}
where $G= 1$ if $\nu'\ge  1$ or $G=(12g-12)^{\nu'-1}$ if $0<\nu'<1$.  Here, we used Jensen's inequality and the convexity of the exponential function. Since $C'<C$, we obtain $\nu'<\nu$. Thus, we have  
\[
\ell(M_i)^\nu<\max(1,\ell(\gamma)^\nu)\cdot \ell(M_i)^{\nu'}
\] 
for all $i$. Therefore,
\begin{equation}\label{eq:lengthbound3}
\sum_{J\in \mathcal{J}}\ell(J)^\nu< H\cdot \left(\sum_{J\in \mathcal{J}}\ell(J)\right)^{\nu'},
\end{equation}
where
\[
H:=\frac{(B')^\nu\cdot \max\{1,\ell(\gamma)^\nu\}\cdot N}{(1-\e^{-C\nu})\cdot B^{C'\nu/C}G},\qquad \nu'=\frac{C'}{C}\nu.
\]

Observe that for a sequence of positive numbers $\{a_i\}$, 
\begin{equation}\label{eq:nu>1}
\sum_i a_i^\nu \le \left(\sum_i a_i\right)^\nu
\end{equation}
if $1\le \nu$ and 
\begin{equation}\label{eq:nu<1}
\sum_i a_i^\nu \ge \left(\sum_i a_i\right)^\nu
\end{equation}
if $0<\nu<1$.

Suppose that $1\le \nu $. From (\ref{eq:nu>1}), we have
\[
\sum_{i} \mathrm{d}(J_i^1, J_{i+1}^0)^\nu \le \left(\sum_{i} \mathrm{d}(J_i^1, J_{i+1}^0)\right)^\nu\le \left(\sum_{L\in \mathcal{C}_\gamma(\tau)\setminus \vec{\mathcal{J}}}\ell(L)\right)^\nu.
\]

On the other hand, if $0<\nu<1$, we   note that
\begin{equation}\label{eq:between}
\mathrm{d}(J_i^1, J_{i+1}^0)= \sum_{\substack{L\in \mathcal{C}_\gamma(\tau)\\ J_i^1<L< J_{i+1}^0}}\ell(L).
\end{equation}
Thus, we deduce that
\[
        \sum_{i} \mathrm{d}(J_i^1, J_{i+1}^0)^\nu \le \sum_{L\in \mathcal{C}_\gamma(\tau)\setminus \vec{\mathcal{J}}}\ell(L)^{\nu}<H\cdot\left(\sum_{L\in \mathcal{C}_\gamma(\tau)\setminus \vec{\mathcal{J}}} \ell(L)\right)^{\nu'}
\]
where the first inequality is a consequence of (\ref{eq:nu<1}) and (\ref{eq:between}), and the second inequality follows from (\ref{eq:lengthbound3}). 

Since 
\[
    \sum_{L\in \mathcal{C}_\gamma(\tau)\setminus \vec{\mathcal{J}}} \ell(L)\le 1,
\]
we obtain (\ref{eq:lengthbound1}) and (\ref{eq:lengthbound2}) by taking $Q= \max(1,H)$.
\end{proof}

\subsection{Twisted transverse cocycles}

In this subsection, we define twisted transverse cocycles and their weak convergence. 

In this article, we assume that geodesic arcs are compact unless otherwise stated. Let  $\gamma$ be an oriente geodesic arc. If $\eta$ is a subarc of $\gamma$, we will always assume that $\eta$ is given the oriented compatible to the orientation of $\gamma$.

\begin{definition}
 Let $V$ be a finite dimensional vector space with an involutive automorphism $\iota:V\to V$. A \emph{twisted $V$-valued transverse cocycle} $\mathsf{c}$ consists of an underlying maximal geodesic lamination $|\mathsf{c}|$ and an assignment mapping each oriented simple transverse arc $\gamma$ to an element $\mathsf{c}(\gamma)\in V$ satisfying the following properties:
\begin{itemize}
    \item If $\gamma_1$ and $\gamma_2$ are isotopic as oriented arcs relative to $|\mathsf{c}|$, then $\mathsf{c}(\gamma_1) = \mathsf{c}(\gamma_2)$,
    \item Any point $x$ in the interior of $\gamma\setminus |\mathsf{c}|$ splits $\gamma$ into two subarcs $\gamma_1$ and $\gamma_2$. We request that $\mathsf{c}(\gamma) = \mathsf{c}(\gamma_1)+\mathsf{c}(\gamma_2)$,
    \item $\mathsf{c}(\overline{\gamma}) = \iota \mathsf{c}(\gamma)$, where $\overline{\gamma}$ denotes the oriented geodesic arc with the same underlying geodesic arc as $\gamma$ but with the opposite orientation. 
\end{itemize}
\end{definition}

Let $\HD(S;V)$ be the set of twisted $V$-valued transverse cocycles on $S$. Given a maximal geodesic lamination $\mu$,  $\HD(\mu;V)$ denotes the set of twisted $V$-valued transverse cocycles whose underlying lamination is $\mu$.

Given a train-track $\mathcal{T}$, denote by $B(\mathcal{T})$ the set of branches of $\mathcal{T}$. We define the set of weights
\[
\mathcal{WT}(\mathcal{T};V) =\{ \alpha: B(\mathcal{T})\to V\mid\alpha\text{ satisfies the switch condition}\}.
\]
Here the switch condition means that if $e_1, \cdots, e_k$ are incoming branches and $f_1, \cdots, f_r$ are outgoing branches along a switch, we have $\sum\alpha(e_i)  = \sum \alpha(f_j)$. 

For a given maximal geodesic lamination $\lambda$ carried by a train-track $\mathcal{T}$, we consider the orientation double cover $\lambda^{\mathrm{or}}\to \lambda$ which extends to the double cover of the train-track $p:\mathcal{T}^{\mathrm{or}}\to \mathcal{T}$. We remark that, since $\lambda$ is maximal, $\lambda^\mathrm{or}$ and $\mathcal{T}^\mathrm{or}$ are both connected. Then we can orient the ties and switches of $\mathcal{T}^\mathrm{or}$ consistently. The covering involution $R:\mathcal{T}^\mathrm{or}\to \mathcal{T}^\mathrm{or}$ flips the orientation of $\mathcal{T}^\mathrm{or}$.

Let $\mathsf{c}$ be a twisted $V$-valued transverse cocycle with the underlying lamination $\lambda$. We assign to each branch $e$ of $\mathcal{T}^\mathrm{or}$ the value $\mathsf{c}(p(\gamma_e))$, where $\gamma_e$ is an oriented generic tie of $e$ such that $\gamma_e$ intersects $\lambda^\mathrm{or}$ positively.  This assignment, denoted by $\alpha_\mathsf{c}$, gives rise to an element $\alpha_\mathsf{c}\in \WT(\mathcal{T}^\mathrm{or};V)$ satisfying $\alpha_\mathsf{c} (R (e)) = \iota \alpha_\mathsf{c} (e)$. 

Define
\[
\WT(\mathcal{T};V) = \{\alpha \in \mathcal{WT}(\mathcal{T}^\mathrm{or};V)\mid\alpha(R(e))= \iota(\alpha(e))\}.
\]
Then via the association $\mathsf{c}\mapsto \alpha_\mathsf{c}$,  $\HD(|\mathsf{c}|;V)$ can be identified with a subspace of the finite dimensional vector space $\WT(\mathcal{T};V)$.

We can lift a $V$-valued twisted transverse cocycle $\mathsf{c}$ of $S$ to a $V$-valued transverse cocycle  $\widetilde{\mathsf{c}}$ of the universal cover $\mathbb{H}^2$. For any oriented geodesic arc $\gamma$ in $\mathbb{H}^2$ transverse to the lift of $|\mathsf{c}|$, we take a subdivision $\gamma = \gamma_1\cup \cdots \cup \gamma_l$ such that each projected image $p(\gamma_i)$ is simple in $S$, where $p:\mathbb{H}^2\to S$ is the covering map. Then , the value $\widetilde{\mathsf{c}}(\gamma)$ is obtained by computing $\widetilde{\mathsf{c}}(\gamma) = \sum_i \mathsf{c}(p(\gamma_i))$. One can check that this definition does not depend on the choice of partition of $\gamma$. By abuse of notation, we will write $\mathsf{c}(\gamma)$ instead of $\widetilde{\mathsf{c}}(\gamma)$. 

Using this lift, we may define $\mathsf{c}(\gamma)$ for a non-simple transverse geodesic in $S$: lift $\gamma$ to a simple transverse geodesic arc $\widetilde{\gamma}$ in $\mathbb{H}^2$ and set $\mathsf{c}(\gamma) = \widetilde{c}(\widetilde{\gamma})$. Equivalently, we may split $\gamma$ into simple transverse oriented arcs $\gamma=\gamma_1\cup \gamma_2 \cup \cdots \cup \gamma_l$ and define $\mathsf{c}(\gamma) = \sum \mathsf{c}(\gamma_i)$.

A \emph{measured lamination} $\mu$ of $S$ is an example of $\bR$-valued transverse cocycles. More precisely, it consists of an underlying maximal geodesic lamination, denoted by $|\mu|$, and a locally finite Borel measure $\mu_\gamma$ attached to each simple arc $\gamma$ transverse to $|\mu|$ satisfying homotopy invariance and compatibility in a sense that if $\gamma' \subset \gamma$ is  a subarc then $\mu_{\gamma'}=\mu_\gamma|\gamma'$. By abuse of notation, $\mu(\gamma)$ will often denote the total $\mu_\gamma$-mass $\mu_\gamma(\gamma)$ of the transverse arc $\gamma$. With this convention, $\gamma\mapsto \mu(\gamma)$ is a $\bR$-valued transverse cocycle. Let $\ML(S)$ be the space of measured geodesic laminations of $S$.

Similarly, for a finite dimensional vector space $V$ with involution $\iota:V\to V$,  we can define a \emph{twisted $V$-valued measured lamination} $\mathsf{c}$ to be a pair consisting of an underlying maximal geodesic lamination $|\mathsf{c}|$ and a map $\gamma\mapsto \mathsf{c}_\gamma$ that assigns a countably additive locally finite measure with values in $V$ to each oriented geodesic arc $\gamma$ such that 
\begin{itemize}
    \item $f_*\mathsf{c}_\gamma=\mathsf{c}_\eta$ where $f$ is an isotopy preserving $|\mathsf{c}|$ with $f(\gamma)=\eta$ as oriented arcs,
    \item $\mathsf{c}_\eta = \mathsf{c}_\gamma|\eta$ if $\eta$ is an oriented subarc of $\gamma$ 
    \item $\mathsf{c}_{\overline{\gamma}} = \iota \mathsf{c}_\gamma$ if $\overline{\gamma}$ is a geodesic arc with the same underlying geodesic as $\gamma$ but with the reverse orientation.
\end{itemize} 
One can regard a twisted $V$-valued measured lamination as a twisted $V$-valued cocycle satisfying countable additivity. 

Let $\mathsf{c}$ be a twisted $V$-valued measured lamination and let $\gamma$ be an oriented transverse arc. Since $V$ is finite dimensional, we can choose a basis $\mathbf{e}_1, \cdots, \mathbf{e}_n$ of $V$ and write $\mathsf{c}_\gamma$ as a linear combination $\mathsf{c}_\gamma = \sum_{i=1} ^n\mathsf{c}_i\mathbf{e}_i$ of complex (if $V$ is a complex vector space), or signed (if $V$ is a real vector space) measures $\mathsf{c}_i$. Since $\gamma$ is compact and each $\mathsf{c}_i$ is locally finite, the total variation of $\mathsf{c}_i$ is also finite. Hence, there exists a uniform bound $L_0>0$ such that $|\mathsf{c}_i (I)|<L_0$ for all measurable subset $I\subset \gamma$. It follows that one can find a constant $L>0$ such that $\|\mathsf{c}_\gamma(I)\|<L$ for all measurable $I\subset \gamma$.

By lifting $\gamma$ to the universal cover, we can assign a locally finite Borel measure on a non-simple transverse geodesic $\gamma$ as we did in the transverse cocycle case. More precisely, to each open set $U\subset \gamma$ we assign a real number
\[
\int_U\der\mu_\gamma=\sup \sum_{i} \mu_\gamma(U_i)
\]
where the supremum is taken over all partitions of $U$ into simple subarcs. We sometimes simply write $\der \mu$ and $\mu(U)$ instead of $\der\mu_\gamma$ and $\int_U\der\mu_\gamma$ when their meanings are clear from context. 

We equip $\ML(S)$ with the weak-* topology; a sequence $\{\mu_i\}$ in $\ML(S)$ converges weakly to $\mu$ if for every simple geodesic arc $\gamma$ transverse to $|\mu|$ and any continuous function $f:\gamma \to \bR$,
\[
\int_\gamma f\; \der\mu_i \to \int_\gamma f\;\der\mu
\]
as $i\to \infty$. 

 Let $\mathcal{S}$ be the set of simple closed geodesics in $S$. The set $\mathcal{S}\times\bR_+$ of \emph{weighted simple closed geodesics} can be regarded as a subset of $\mathcal{ML}(S)$. That is, an element $(\gamma, r)\in \mathcal{S}\times \bR_+$ represents a measured geodesic lamination whose support is the single geodesic $\gamma$ and whose transverse measure is the $r$ times the intersection number. It is known that $\mathcal{S}\times\bR_+$ is dense in $\ML(S)$.

We can also define the weak convergence and support for twisted transverse cocycles. 

\begin{definition}A sequence $\{\mathsf{c}_n\}$ of twisted transverse cocycles \emph{converges weakly} to $\mathsf{c}$ if for every oriented simple geodesic arc $\gamma$ transverse to $\supp(\mathsf{c})$, $\gamma$ is transverse to $\supp(\mathsf{c}_n)$ for all sufficiently large $n$ and $\mathsf{c}_n(\gamma) \to \mathsf{c}(\gamma)$ as $n\to \infty$. 
\end{definition}

\begin{definition} The \emph{support} $\supp(\mathsf{c})$ of a twisted $V$-valued transverse cocycle $\mathsf{c}$ is the set of points $x\in S$ such that there exists a nesting sequence of arcs $\gamma_i$ transverse to $|\mathsf{c}|$ satisfying  $\bigcap_i \gamma_i = \{x\}$ and $\mathsf{c}(\gamma_i)\ne 0$ for all $i$.
\end{definition}

\begin{remark}
In the literature, measured laminations are usually assumed to have full support. That is, $\mu(\gamma)\ne 0$ when $\gamma\cap |\mu|\ne \emptyset$. In this paper, we use a slightly different convention. We do not assume that measured laminations are fully supported. Consequently, given a measured lamination $\mu$, the underlying (maximal) geodesic lamination $|\mu|$ is not necessarily identical to its support $\supp(\mu)$.
\end{remark}

Let $\mathsf{c}$ be a twisted $V$-valued cocycle and let $\gamma$ be an oriented arc $\gamma$ transverse to $|\mathsf{c}|$. Unlike measured laminations, even if $\mathsf{c}(\gamma)=0$  there might exist a subarc $\eta\subset \gamma$ with $\mathsf{c}(\eta)\ne 0$. Therefore, the following lemma deserves a proof although it is  clear for measured laminations. 

\begin{lemma}\label{lem:support}
     $\supp(\mathsf{c})$ is a closed subset of $|\mathsf{c}|$. In particular, $\supp(\mathsf{c})$ is a sublamination of $|\mathsf{c}|$.
\end{lemma}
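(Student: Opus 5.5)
We have to show two things: that $\supp(\mathsf{c})\subset|\mathsf{c}|$, and that $\supp(\mathsf{c})$ is closed. We treat them in that order.

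\emph{Containment.} Take $x\notin|\mathsf{c}|$ and let $\gamma_i$ be nested transverse arcs with $\bigcap_i\gamma_i=\{x\}$. Since $|\mathsf{c}|$ is closed, some $\gamma_i$ misses $|\mathsf{c}|$. Its endpoints are not in $|\mathsf{c}|$, so the additivity axiom lets us split $\gamma_i$ at an interior point, and isotopy invariance relative to $|\mathsf{c}|$ lets us shrink each piece to a point. Concretely, $\mathsf{c}(\gamma_i)=\mathsf{c}(\gamma_i)+\mathsf{c}(\gamma_i)$ after isotoping both halves to $\gamma_i$ itself. Hence $\mathsf{c}(\gamma_i)=0$, so $x\notin\supp(\mathsf{c})$.

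\emph{Closedness.} We show the complement is open. Fix $x\in S\setminus\supp(\mathsf{c})$; by the first step we may assume $x\in|\mathsf{c}|$. Let $\ell$ be the leaf through $x$ and $\kappa$ a short transverse arc through $x$. The key reformulation is:

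\[
x\notin\supp(\mathsf{c}) \iff \mathsf{c}(\eta)=0 \text{ for every transverse subarc } \eta\subset\kappa \text{ in some neighborhood } W \text{ of } x.
\]

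For the forward direction, suppose every neighborhood contains a subarc $\eta$ with $\mathsf{c}(\eta)\neq0$. Then we want a nested sequence shrinking to $x$ with nonzero values. If $x$ is an endpoint-type point, the arcs $\eta_n$ with $\mathsf{c}(\eta_n)\ne0$ accumulating at $x$ can be enlarged to nested arcs containing $x$ by adding or subtracting pieces, using additivity. This needs a short case analysis:
\begin{itemize}
\item If the arcs between $\eta_n$ and $x$ all carry $0$, then the enlarged arcs still have nonzero value.
\item Otherwise, the arcs between $\eta_n$ and $x$ themselves carry nonzero values, and we pass to those.
\end{itemize}
Moreover, the definition of support should be read as ``for every nesting sequence, eventually nonzero fails'', i.e.\ $x\notin\supp$ means some nesting sequence has a zero term, or equivalently for the natural reading, arcs near $x$ have zero value. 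I would fix this reading explicitly in the proof; it is the \textbf{main obstacle}, since it requires this additivity bookkeeping.

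Given the reformulation, let $W\cap\kappa$ be a subarc $\kappa_0$ on which all subarcs have $\mathsf{c}=0$. Flow along leaves of $|\mathsf{c}|$ using a train-track box (a branch of a train-track carrying $|\mathsf{c}|$, as in Lemma~\ref{lem:traintracknbd}) around $\kappa_0$. Every transverse arc in this box is isotopic relative to $|\mathsf{c}|$, through ties, to a subarc of $\kappa_0$, possibly after splitting at gaps of $|\mathsf{c}|$ and applying additivity. Hence all transverse arcs inside the box have value $0$, and every point of the open box lies outside $\supp(\mathsf{c})$.

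Thus $\supp(\mathsf{c})$ is closed. It is a closed subset of the lamination $|\mathsf{c}|$ that is a union of leaves, because the box argument is invariant along leaves. Therefore it is a sublamination.
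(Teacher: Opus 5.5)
Your overall architecture is sound, but the step you flag as the main obstacle is not actually established as written. In the forward direction of your reformulation, the second case of your case analysis is circular. The arcs ``between $\eta_n$ and $x$'' still do not contain $x$, so passing to them reproduces the original problem and yields neither arcs through $x$ nor a nested sequence.

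Your gloss of the definition is also off. $x\notin\supp(\mathsf{c})$ means that \emph{every} nested sequence shrinking to $x$ has a term with value $0$, not that some sequence does. No change of reading is needed, because the local-vanishing characterization follows from the definition as stated.

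The missing idea is a one-line additivity trick. Let $W\subset\kappa$ be an open window around $x$ and $\eta=[a,b]\subset W$ with $\mathsf{c}(\eta)\neq 0$ and, say, $x<a$. Choose $s\in W\setminus|\mathsf{c}|$ with $s<x$. Then $\mathsf{c}([s,b])-\mathsf{c}([s,a])=\mathsf{c}([a,b])\neq 0$, so one of $[s,a]$, $[s,b]$ lies in $W$, contains $x$ in its interior, and has nonzero value. Taking $W_n=\mathrm{int}(\gamma_{n-1})\cap(x-1/n,x+1/n)$ at each stage produces the required nested sequence. With this inserted, your containment step (the doubling trick), the flow-box propagation along plaques, and the leaf-saturation remark give a complete proof.

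Your route differs from the paper's. The paper stays on a single transversal $\gamma$ and takes $I$ to be the closure of the intersection of all subarcs of $\gamma$ containing $x$ with nonzero value. It asserts $I\supsetneq\{x\}$, shows $\mathsf{c}(I)=0$ and that $\mathsf{c}$ vanishes on subarcs of $\mathrm{int}(I)$, and concludes that $\supp(\mathsf{c})\cap\gamma$ is closed. Containment in $|\mathsf{c}|$ and propagation along leaves are left implicit. Your version makes both explicit, which is what the ``sublamination'' conclusion really needs.

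It is also more robust for signed cocycles. The family of nonzero arcs is not closed under intersection, so the paper's $I$ can collapse to $\{x\}$ even when $x\notin\supp(\mathsf{c})$. Suppose the cocycle jumps by $v$ at a leaf just before $x$ and by $-v$ at a leaf just after. The arcs running from before the first jump to between $x$ and the second have nonzero value. So do the arcs running from between the first jump and $x$ to after the second. These arcs intersect only in $\{x\}$, yet $\mathsf{c}$ vanishes near $x$. Your characterization by local vanishing on a transversal is insensitive to such cancellations.
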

\begin{proof}
Let $x\in |\mathsf{c}|\setminus \supp(\mathsf{c})$ and let $\gamma$ be an oriented arc transverse to $|\mathsf{c}|$ that contains $x$ in its interior. Let 
\[
I:=\overline{\bigcap \{\eta\mid \eta\text{ are subarcs of }\gamma \text{ containing }x\text{ in the interior and }\mathsf{c}(\eta)\ne0\}}.
\]
Since $x\notin\supp(\mathsf{c})$, $I\supsetneq\{x\}$. In particular, the interior $\mathrm{int}(I)$ is non-empty. 

Observe that $\mathsf{c}(I)=0$. To this end, let $\mathsf{c}(I)=v\in V$. We can split $I$ into subarcs $I_-\cup I_0\cup I_+$ with $x\in I_0$. By the construction of $I$, we must have $\mathsf{c}(I_-\cup I_0) = \mathsf{c}(I_0\cup I_+)=0$. By finite additivity, we obtain $\mathsf{c}(I_-) = \mathsf{c}(I_+)= v$. But, since $\mathsf{c}(I) = \mathsf{c}(I_-)+\mathsf{c}(I_0)+\mathsf{c}(I_+)$, we have $\mathsf{c}(I_0) = -v$. By the construction of $I$, we obtain $\mathsf{c}(I_0) = v = 0$. 

Now, we claim that every subarc $\eta'\subset \mathrm{int}(I)$ whose endpoints do not lie on $|\mathsf{c}|$ satisfies $\mathsf{c}(\eta') = 0$. From this claim, we conclude that $\supp(c)\cap \gamma$ is closed.  Indeed, if $\eta'$ contains $x$ in its interior then, by the construction of $I$, we must have $\mathsf{c}(\eta')=0$. Thus, we may assume that $x\notin \mathrm{int}(\eta')$. If $I\setminus \eta'$ consists of two components, we may split $I$ into three subarcs $I_-\cup \eta' \cup I_+$ with $x\in I_-$, say.  Since $\mathsf{c}(I)= \mathsf{c}(I_-) = 0$, we observe that $I_-\cup \eta'$ is a proper subarc of $I$ containing $x$ in its interior with $\mathsf{c}(I_-\cup \eta') \ne 0$, violating the construction of $I$.  Therefore, $I \setminus \eta'$ is connected. Then since $\mathsf{c}(I)=0$, we must have $\mathsf{c}(\eta')=0$.
\end{proof}

The following lemma relates weak limits of transverse cocycles and Hausdorff limits of their underlying geodesic laminations. This result is also a folklore for measured laminations. We refer to \cite[Proposition A.3.1]{otal}.  
\begin{lemma}\label{lem:limit}
    Let $\{\mathsf{c}_n\}$ be a sequence in $\HD(S;V)$ converging to $\mathsf{c}\in \HD(S;V)$ weakly. Then, for every $\delta>0$, there exist $N$ and finitely many maximal geodesic laminations $\tau_1,\cdots, \tau_k$ with the following properties:
    \begin{enumerate}
        \item For each $i\in\{1,2,\cdots,k\}$, $\supp(\mathsf{c})\subset \tau_i$.
        \item For each $n>N$, there exists $i\in \{1,2,\cdots,k\}$ such that $\mathrm{D_H}(\tau_i,|\mathsf{c}_n|)<\delta$ and $|\mathsf{c}_n|$ is contained in an admissible neighborhood of $\tau_i$. 
    \end{enumerate}
\end{lemma}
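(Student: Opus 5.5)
We will argue by compactness of $\mathcal{GL}(S)$ together with the fact that supports of weak limits are contained in Hausdorff limits of the underlying laminations. Fix $\delta>0$. First, we claim that every Hausdorff accumulation point $\lambda$ of the sequence $\{|\mathsf{c}_n|\}$ satisfies $\supp(\mathsf{c})\subset\lambda$. Suppose $x\in\supp(\mathsf{c})\setminus\lambda$. Since $\lambda$ is closed, there is a short oriented arc $\eta$ transverse to $|\mathsf{c}|$, containing $x$ in its interior, with endpoints off $|\mathsf{c}|$, with $\mathsf{c}(\eta)\neq0$ (by definition of support), and with $\eta$ at positive distance from $\lambda$. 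Along a subsequence $|\mathsf{c}_{n_j}|\to\lambda$ in $\mathrm{d_H}$, so for large $j$ the arc $\eta$ misses $|\mathsf{c}_{n_j}|$. Then $\eta$ splits into arbitrarily small subarcs, and additivity and isotopy invariance (an arc disjoint from the lamination can be shrunk by isotopy rel the lamination, so finite additivity forces it to carry zero) give $\mathsf{c}_{n_j}(\eta)=0$. This contradicts weak convergence $\mathsf{c}_{n_j}(\eta)\to\mathsf{c}(\eta)\neq0$. Note that weak convergence applies because $\eta$ is transverse to $\supp(\mathsf{c})$.

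Next, let $\Lambda$ be the set of accumulation points of $\{|\mathsf{c}_n|\}$ in $\mathcal{GL}(S)$. It is compact, being closed in a compact space, and it consists of maximal laminations, since a Hausdorff limit of maximal laminations contains a maximal lamination and in fact is maximal (an area argument: complementary regions of a limit cannot be larger than ideal triangles because the number of complementary triangles is fixed). If this maximality point is delicate, we instead enlarge each limit $\lambda$ to a maximal lamination $\tau\supset\lambda$; this still contains $\supp(\mathsf{c})$. For each $\tau\in\Lambda$, choose an admissible neighborhood $U_\tau$ as in Lemma~\ref{lem:divrad}, intersected with the $\delta$-ball about $\tau$ in $\mathrm{D_H}$. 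Here Lemma~\ref{lem:divrad} needs a transverse arc $\gamma$, which is the arc fixed in the context where admissibility is used; it is transverse to $\tau$ after a small perturbation. By compactness, finitely many $U_{\tau_1},\dots,U_{\tau_k}$ cover $\Lambda$.

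Finally, the union $W=\bigcup_i U_{\tau_i}$ is an open neighborhood of $\Lambda$. By compactness of $\mathcal{GL}(S)$, only finitely many $|\mathsf{c}_n|$ lie outside $W$: otherwise they would accumulate at a point of $\Lambda$ outside the open set $W$, which is impossible. Choose $N$ beyond these terms. Then for $n>N$ some $U_{\tau_i}$ contains $|\mathsf{c}_n|$, giving (2), and (1) holds by the first step.

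The main obstacle is the first step: making rigorous that an arc disjoint from $|\mathsf{c}_n|$ has zero cocycle value, and that transversality of $\eta$ to $|\mathsf{c}_n|$ is ensured. The latter is part of the definition of weak convergence once $\eta$ is transverse to $\supp(\mathsf{c})$. A secondary subtlety is guaranteeing that the $\tau_i$ are maximal while containing $\supp(\mathsf{c})$.
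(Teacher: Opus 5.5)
Your proposal is correct and follows essentially the paper's route. Every Hausdorff accumulation point of $\{|\mathsf{c}_n|\}$ is maximal and contains $\supp(\mathsf{c})$, which you test on a short arc $\eta$ with $\mathsf{c}(\eta)\neq 0$ that misses the limit; the compact set of accumulation points is then covered by finitely many admissible neighborhoods shrunk into $\delta$-balls. Your additivity/isotopy argument that $\mathsf{c}_{n_j}(\eta)=0$ and your index-wise finiteness argument are a bit more careful than the paper's.

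One caution: drop the fallback of enlarging a non-maximal limit $\lambda$ to a maximal $\tau\supsetneq\lambda$. Neighborhoods of $\tau$ need not contain $\lambda$ or the nearby $|\mathsf{c}_n|$, and $\mathrm{D_H}(\tau,|\mathsf{c}_n|)<\delta$ could fail. Instead justify maximality directly: a point $p\notin\lambda$ lies, along a subsequence converging to $\lambda$, in complementary ideal triangles of $|\mathsf{c}_{n_j}|$ that contain a fixed ball about $p$, and these subconverge to a complementary ideal triangle of $\lambda$. The paper simply asserts this fact.
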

\begin{proof}
    Let $\mathcal{A}$ be the set of accumulation points of the sequence $\{|\mathsf{c}_n|\}_{n\in \mathbb{N}}$. Since $\mathcal{GL}(S)$ is compact, $\mathcal{A}$ is also compact and $\mathcal{A}\ne\emptyset$. Let $\tau\in \mathcal{A}$. Since $|\mathsf{c}_n|$ are maximal, so is $\tau$. We claim that $\supp(\mathsf{c})\subset \tau$. Indeed, if there were a leaf $l$ of $\supp(\mathsf{c})$ that is not a leaf of $\tau$, there would exist an oriented arc $\gamma$ transverse to  $\supp(\mathsf{c})$ such that $\mathsf{c}(\gamma)\ne 0$, but $\gamma\cap \tau = \emptyset$. Choose a subsequence $n_j$ such that $\lim_{j\to\infty}|\mathsf{c}_{n_j}|=\tau$. Since $\gamma\cap \tau=\emptyset$, we have $\mathsf{c}_{n_j}(\gamma)\to 0$ as $j\to  \infty$. On the other hand, we had $\mathsf{c}(\gamma) \ne 0$,  a contradiction. Therefore, every accumulation point of $\{|\mathsf{c}_n|\}$ contains $\supp(\mathsf{c})$.

 Now for each $\tau\in \mathcal{A}$, choose an open ball $B_\tau$ of radius $r_\tau$ centered at $\tau$. We assume that $r_\tau$ is small enough such that $r_\tau<\delta/2$ and that $B_\tau$ is contained in an admissible neighborhood of $\tau$.  Since $\mathcal{A}$ is compact,  we can find a finite collection $B_{\tau_1},\cdots, B_{\tau_k}$ such that $\mathcal{A}\subset  \bigcup_{i=1}^k B_{\tau_i}$. Let $U:=\bigcup_{i=1}^k B_{\tau_i}$ and let $N:=\max\{n\mid|\mathsf{c}_n|\notin U\}$. This is a finite number since  $\{|\mathsf{c}_n|\}\setminus U$ is a finite set. By construction, whenever $n>N$, $|\mathsf{c}_n|$ is $\delta$-close to some $\tau_i$ , $i\in \{1,2,\cdots, k\}$ and is in the admissible neighborhood of $\tau_i$. As observed above, $\tau_i$ contains $\supp(\mathsf{c})$. 
\end{proof}

Since $\supp(\mathsf{c})\subset \tau_i$ for all $i$, we may regard $\mathsf{c}$ as a twisted transverse cocycle with underlying lamination $\tau_i$ for all $i$. Let $\mathcal{T}_i$ be a train-track that carries $\tau_i$. Lemma~\ref{lem:traintracknbd} and Lemma~\ref{lem:limit} together show that there exists $N$ such that $\mathsf{c}_n$ is carried by $\mathcal{T}_i$ for some $i$ whenever $n>N$. Hence, we may regard $\mathsf{c}_n$ as a member of $\WT(\mathcal{T}_i;V)$ for some $i$ if $n>N$. Consequently, we can compare $\mathsf{c}$ and $\mathsf{c}_n$ in the finite dimensional vector space $\WT(\mathcal{T}_i;V)$.

\begin{lemma}\label{lem:transverse}
    Let $\{\mathsf{c}_n\}$ be a sequence in $\HD(S;V)$ converging weakly to $\mathsf{c}$. Then for any subsequence $\{\mathsf{c}_{n_i}\}$ of $\{\mathsf{c}_n\}$ such that $\lim_{i\to \infty} |\mathsf{c}_{n_i}|=\tau_m$, the sequence $\{\mathsf{c}_{n_i}\}$ converges to $\mathsf{c}$ in $\WT(\mathcal{T}_m;V)$.

\end{lemma}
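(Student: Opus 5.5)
The plan is to reduce convergence in the finite dimensional space $\WT(\mathcal{T}_m;V)$ to convergence of finitely many coordinates, one per branch of the orientation cover $\mathcal{T}_m^\mathrm{or}$. By Lemma~\ref{lem:traintracknbd}, for $i$ large, $|\mathsf{c}_{n_i}|$ is carried by $\mathcal{T}_m$, since $|\mathsf{c}_{n_i}|\to\tau_m$ in the Hausdorff topology. Thus $\alpha_{\mathsf{c}_{n_i}}\in\WT(\mathcal{T}_m;V)$ is defined, as is $\alpha_{\mathsf{c}}$, because $\supp(\mathsf{c})\subset\tau_m$ (Lemma~\ref{lem:limit}). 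It therefore suffices to show that, for each branch $e$ of $\mathcal{T}_m^\mathrm{or}$, we have $\alpha_{\mathsf{c}_{n_i}}(e)\to\alpha_{\mathsf{c}}(e)$.

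To evaluate a coordinate, fix a branch $e$ and choose a generic tie $\gamma_e$ of $p(e)$, oriented so that it crosses $\tau_m^\mathrm{or}$ positively. We may take $\gamma_e$ to be a geodesic arc, after replacing the tie by a geodesic arc isotopic to it relative to the carried laminations; this uses that ties are transverse to everything carried, and that the endpoints of $\gamma_e$ lie in $\mathcal{T}_m\setminus\tau_m$, which remains in the complement of $|\mathsf{c}_{n_i}|$ once $i$ is large. The key point is that the orientation convention is stable. By Lemma~\ref{lem:approx}, the sets of tangent lines of $|\mathsf{c}_{n_i}|$ along $\gamma_e$ converge to those of $\tau_m$. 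Moreover, the orientation of $|\mathsf{c}_{n_i}|^\mathrm{or}$ induced through the carrying map into $\mathcal{T}_m^\mathrm{or}$ agrees, on $e$, with the one used for $\tau_m$. Hence $\alpha_{\mathsf{c}_{n_i}}(e)=\mathsf{c}_{n_i}(\gamma_e)$ and $\alpha_{\mathsf{c}}(e)=\mathsf{c}(\gamma_e)$, with the same oriented arc used in both.

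It remains to apply weak convergence. The arc $\gamma_e$ is transverse to $\tau_m\supset\supp(\mathsf{c})$, so it is an oriented simple geodesic arc transverse to $\supp(\mathsf{c})$. Weak convergence then gives $\mathsf{c}_{n_i}(\gamma_e)\to\mathsf{c}(\gamma_e)$. Since there are finitely many branches, convergence holds in $\WT(\mathcal{T}_m;V)$.

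I expect the main obstacle to be the identification step. One must justify that $\mathsf{c}(\gamma_e)$ computed with respect to the underlying lamination $\tau_m$ agrees with the value of $\mathsf{c}$ computed with respect to $|\mathsf{c}|$; this should follow because both are determined by the restriction to $\supp(\mathsf{c})$. One must also check that the tie arcs can be chosen simultaneously transverse to all $|\mathsf{c}_{n_i}|$ for large $i$. If the second point is delicate, I would first shrink the neighborhood provided by Lemma~\ref{lem:traintracknbd}. I would then use item~(\ref{enum:transverse}) of Lemma~\ref{lem:divrad} applied to each geodesic tie, which gives uniform transversality on an admissible neighborhood.
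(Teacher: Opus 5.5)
Your proposal is correct and follows the paper's proof. Convergence in the finite-dimensional space $\WT(\mathcal{T}_m;V)$ is checked branch by branch, with $\alpha_{\mathsf{c}_{n_i}}(e)=\mathsf{c}_{n_i}(\gamma_e)\to\mathsf{c}(\gamma_e)=\alpha_{\mathsf{c}}(e)$ obtained by applying weak convergence to a generic tie. The additional points you raise fill in details the paper leaves implicit: carrying by $\mathcal{T}_m$ via Lemma~\ref{lem:traintracknbd}, straightening the tie to a simple geodesic arc with endpoints off the laminations, consistency of orientations, and regarding $\mathsf{c}$ as a cocycle on $\tau_m$.
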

\begin{proof}
We have to show that $\alpha_{\mathsf{c}_{n_i}}(e)\to \alpha_\mathsf{c}(e)$ as $i\to\infty$ for every branch $e$ of $\mathcal{T}_m^\mathrm{or}$. Since $\mathsf{c}_{n_i}$ converges to $\mathsf{c}$ weakly, we have that $\mathsf{c}_{n_i}(\gamma_e) \to \mathsf{c}(\gamma_e)$ where $\gamma_e$ is the projection of an oriented generic tie in the branch $e$. Therefore, by definition, $\lim_{i\to\infty}\alpha_{\mathsf{c}_{n_i}}(e)= \alpha_\mathsf{c}(e)$ follows.
\end{proof}

\subsection{Geodesic currents}

Theorem~\ref{thm:poisson} requires interpreting measured laminations as geodesic currents in the sense of  Bonahon \cite{bonahoncurrent}. We briefly review the basic theory of geodesic currents. Denote by $\mathcal{G}$ the set of unoriented geodesics in $\mathbb{H}^2$. If the surface $S$ is given a hyperbolic structure, the fundamental group $\pi_1(S)$ acts  on $\mathcal{G}$. A \emph{geodesic current} is a Borel measure on $\mathcal{G}$ that is invariant under the $\pi_1(S)$-action. The set of geodesic currents $\mathcal{C}(S)$ is equipped with the weak* topology. That is, $\mu_i \to \mu$ if and only if $\int_{\mathcal{G}} f\;\der \mu_i \to \int_{\mathcal{G}} f \; \der \mu$ for all compactly supported continuous functions $f$. 

Any weighted simple closed curve $(\gamma,t)\in \mathcal{S}\times \mathbb{R}_+$ can be interpreted as $t$ times the Dirac delta measure $t\cdot \dirac_\gamma$ supported on the full lift of $\gamma$. It is also known that $\ML(S)$ can be continuously embedded into $\mathcal{C}(S)$. Every $\mu\in \ML(S)$, regarded as a member of  $\mathcal{C}(S)$, satisfies 
\[
\int_{\mathcal{G}^\times(S)} \der(\mu \times \mu) = 0,
\]
where $\mathcal{G}^\times$ is the set of pairs of intersecting geodesics in $\mathbb{H}^2$ and $\mathcal{G}^\times (S) = \mathcal{G}^\times/\pi_1(S)$.

\begin{lemma}\label{lem:continuity}
    Let $\{\mu_n\}$ be a sequence in $\ML(S)\subset \mathcal{C}(S)$ converging weakly to $\mu$. Let $\{g_n: V \to\bR\}_{n\in \mathbb{N}}$ be a sequence of continuous functions defined on an open set $V\subset \mathcal{G}$. Suppose that there exists a compact set $K$ containing  $V\cap \supp(\mu)$ and $V \cap \supp(\mu_n)$ for all $n$. If the sequence $\{g_n\}$ converges to a continuous function $g:V \to \bR$ uniformly on $K$, then for any $\epsilon>0$, there exists $N$ such that
    \[
    \left|\int_{V} g_n \;\der\mu_n - \int_{V} g \;\der\mu\right|<\epsilon
    \]
    for all  $n>N$.
\end{lemma}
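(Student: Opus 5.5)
We prove Lemma~\ref{lem:continuity} by splitting the difference with the triangle inequality:
\[
\left|\int_V g_n\,\der\mu_n-\int_V g\,\der\mu\right|\le \int_V |g_n-g|\,\der\mu_n+\left|\int_V g\,\der\mu_n-\int_V g\,\der\mu\right|.
\]
Both terms will be controlled using the compact set $K$. Since $\supp(\mu_n)\cap V\subset K$ and $\supp(\mu)\cap V\subset K$, every integral over $V$ equals the corresponding integral over $K$.

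\textbf{First term.} This is at most $\sup_K|g_n-g|\cdot\mu_n(K)$. The supremum tends to zero by the assumed uniform convergence on $K$. To bound $\mu_n(K)$ uniformly, we choose a compactly supported continuous function $\phi\colon\mathcal{G}\to[0,1]$ with $\phi\equiv1$ on $K$; such a $\phi$ exists because $\mathcal{G}$ is locally compact Hausdorff. Weak convergence gives $\int\phi\,\der\mu_n\to\int\phi\,\der\mu<\infty$, so $\sup_n\mu_n(K)\le M$ for some $M<\infty$. Hence the first term is below $\epsilon/2$ for $n$ large.

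\textbf{Second term.} Here we want to pass $\int g\,\der\mu_n\to\int g\,\der\mu$, but $g$ is only defined and continuous on the open set $V$, so weak-$*$ convergence cannot be applied to it directly. Our plan is the following.

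1. Use the compact set $K\subset V$. By Urysohn's lemma, choose a cutoff $\chi\colon\mathcal{G}\to[0,1]$ that is continuous, equals $1$ on $K$, and has compact support inside $V$.
2. The function $\chi g$, extended by zero outside $V$, is then a compactly supported continuous function on $\mathcal{G}$.
3. Since $\chi g=g$ on $K$ and all the measures restricted to $V$ live on $K$, we have $\int_V g\,\der\mu_n=\int_{\mathcal{G}}\chi g\,\der\mu_n$ for every $n$, and likewise for $\mu$. The convergence then follows directly from the definition of the weak-$*$ topology on $\mathcal{C}(S)$.

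If $K$ is not literally contained in $V$, we instead replace $K$ by the compact set $K\cap\overline{V'}$ for a suitable relatively compact open $V'\Subset V$ containing the supports. This is the main technical point: the cutoff must be chosen so that it does not change the integrals.

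One subtlety deserves care. Geodesic currents are $\pi_1(S)$-invariant, hence infinite measures on $\mathcal{G}$. We only ever integrate compactly supported functions against them, so these infinite masses never appear, and local finiteness gives $\mu(K)<\infty$.

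Combining the two estimates, there is an $N$ such that for all $n>N$ each term is below $\epsilon/2$, which gives the claim. The step requiring the most care is the cutoff construction in the second term. The mass bound in the first term is routine once $\phi$ is chosen.
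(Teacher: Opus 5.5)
Your proof is correct and follows the paper's argument essentially verbatim. It uses the same triangle-inequality split, the same cutoff (the paper's $s$, your $\chi$) equal to $1$ on $K$ and vanishing off $V$ so that $g$ becomes an admissible test function, and a uniform bound on $\mu_n(K)$. You get that bound from a test function $\phi$, whereas the paper simply takes $n$ large enough that $|\mu_n(K)-\mu(K)|<1$.

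You can drop the paragraph treating $K\not\subset V$. Uniform convergence of $g_n$ on $K$ already presupposes $K\subset V$, which the paper also assumes tacitly. Moreover, the relatively compact $V'\Subset V$ you propose need not exist in general.
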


\begin{proof} Choose a compactly supported continuous function $s:\mathcal{G}\to \mathbb{R}$ such that $s =1$ on $K$ and $s=0$ on $\mathcal{G}\setminus V$. Then, $s\cdot g$ and $s\cdot g_n$ are compactly supported continuous functions on $\mathcal{G}$, and
\[
\int_V g\;\der\mu = \int_{\mathcal{G}} s\cdot g\;\der\mu,\quad\text{ and }\quad \int_V g_n \;\der\mu_n=\int_{\mathcal{G}} s\cdot g_n\;\der\mu_n
\]
for all $n$. Choose $N$ so that whenever $n>N$, $|\mu_n(K)-\mu(K)|<1$, 
\[
\left| \int_{V}  g \;\der\mu_n-\int_{V} g  \;\der \mu \right|=\left| \int_{\mathcal{G}}  s\cdot g \;\der\mu_n-\int_{\mathcal{G}} s\cdot g  \;\der \mu \right|<\frac{\epsilon}{2+\mu(K)}
\]
and 
\[
\sup_{x\in K}|g_n(x) -g(x)|<\frac{\epsilon}{2+\mu(K)}
\]
hold. Then, whenever $n>N$, we obtain
\begin{align*}
\left| \int_{V} g_n \;\der \mu_n -\int_{V}  g \;\der \mu \right| &\le \left| \int_{V}  g \;\der\mu_n-\int_{V} g  \;\der \mu \right|+\int_{V} |g_n - g|\;\der \mu_n\\
&< \frac{\epsilon}{2+\mu(K)} + \int_{V} |g_n - g| \;\der \mu_n\\
&< \frac{\epsilon}{2+\mu(K)}+ \frac{\epsilon}{2+\mu(K)} \cdot \mu_n (K)\\
&< \frac{\epsilon}{2+\mu(K)}+ \frac{\epsilon}{2+\mu(K)} \cdot( 1+\mu(K))\\
&= \epsilon.\qedhere
\end{align*}
\end{proof}

\section{Cataclysm deformations}\label{sec:cataclysm}
\subsection{General construction}

We begin with a slightly generalized definition of cataclysm deformations. While cataclysm deformations are typically defined for $\Theta$-Anosov representations into a real semisimple Lie group, it is more convenient to work with complex Lie groups for our purposes. The construction in \cite{pfeil} remains largely applicable with only minor modifications.

Let $\mathsf{G}$ be a semisimple connected complex Lie group with Lie algebra $\mathfrak{g}$. As in Section~\ref{sec:lie}, let $\mathfrak{a}=\mathfrak{a}^\bR$ be a maximal abelian subalgebra of $\mathfrak{p}^\bR$ and let $\Sigma$ be the set of restricted roots. The Weyl group $W(\Sigma)$ has the longest element $w_0$ acting  on $\mathfrak{a}$ as an involution. Since the Cartan subalgebra $\mathfrak{h}$ of $\mathfrak{g}$ can be written as $\mathfrak{a}\oplus \ui \mathfrak{a}$, $w_0$ also acts naturally on $\mathfrak{h}$.  Define the \emph{opposite involution} $\iota:\mathfrak{h}\to\mathfrak{h}$ by the map $X\mapsto -w_0\cdot X$. 

Let $\Theta$ be a subset of simple roots such that $\iota(\Theta) = \Theta$. Let 
\[
\mathfrak{a}_\Theta := \bigcap _{\theta\in \Delta\setminus \Theta} \ker \theta.
\]
and
\[
\mathfrak{h}_\Theta := \mathfrak{a}_\Theta  \oplus \ui \mathfrak{a}_\Theta.
\]
Note that $\iota(\mathfrak{h}_\Theta) = \mathfrak{h}_{\iota(\Theta)}=\mathfrak{h}_\Theta$. Therefore, it makes sense to consider $\HD(S;\mathfrak{h}_\Theta)$. From now on we fix a non-zero element $\mathsf{c}\in \HD(S;\mathfrak{h}_\Theta)$.

Let $\mu$ be the lift of the underlying geodesic lamination $|\mathsf{c}|$. Given an oriented compact geodesic arc $\gamma$ in $\mathbb{H}^2$ transverse to a geodesic lamination $\mu$ in $\mathbb{H}^2$, $\mathcal{C}_\gamma(\mu)$ denotes the set of connected components of $\gamma\setminus \mu$. As before, we sometimes assume that $\gamma$ is given a total ordering $<$ compatible with its orientation. Let $x_0$ and $x_1$ be the endpoints of $\gamma$ with $x_0<x_1$. We can always assume that $x_0$ and $x_1$ do not lie on leaves of $\mu$. The set $\mathcal{C}_\gamma(\mu)$ has two special elements $E$ and $F$ containing $x_0$ and $x_1$, respectively. Again, for $I\in \mathcal{C}_\gamma(\mu)$, the endpoints of $I$ are denoted by $I^0$ and $I^1$ with $I^0<I^1$.

We orient each leaf $g$ of $\mu$ so that $\gamma$ intersects $g$ positively. 
Let $g^+$ and $g^-$ be the forward and backward endpoints of $g$. Since a tangent vector determines a unique geodesic, the notation $\vec{\mu}_p$, $p\in \mu\cap \gamma$ is used to denote the (oriented) geodesic leaf of $\mu$ passing through $p$.  Given $I\in \mathcal{C}_\gamma(\mu)$, there are geodesic leaves of $\mu$ passing through $I^0$ and $I^1$. As we defined above, we may denote by $\vec{\mu}_{I^0}$ and $\vec{\mu}_{I^1}$ the oriented geodesic leaves of $\mu$ passing through $I^0$ and $I^1$. 

Recall that a pair of flags $ \mathcal{F}_\Theta\times \mathcal{F}_\Theta^\mathrm{op}$ is transverse if it is in the $\Ad_\mathsf{G}$-orbit of $(\mathfrak{p}_\Theta, \mathfrak{p}_\Theta ^\mathrm{op})$. Let $\mathfrak{T}\subset\mathcal{F}_\Theta\times \mathcal{F}_\Theta^\mathrm{op}$ be the set of transverse flags.  For each $(\mathfrak{x},\mathfrak{y})\in \mathfrak{T}$, there exists an element $\mathsf{g}(\mathfrak{x}, \mathfrak{y})\in \mathsf{G}$ such that  
\begin{equation}\label{eq:sligher}
\Ad_{\mathsf{g}(\mathfrak{x},\mathfrak{y})}(\mathfrak{p}_\Theta, \mathfrak{p}_\Theta^\mathrm{op}):=(\Ad_{\mathsf{g}(\mathfrak{x},\mathfrak{y})}\mathfrak{p}_\Theta,\Ad_{\mathsf{g}(\mathfrak{x},\mathfrak{y})}\mathfrak{p}_\Theta^\mathrm{op}) =(\mathfrak{x},\mathfrak{y})
\end{equation}
Note that $\mathsf{g}(\mathfrak{x},\mathfrak{y})$ is not uniquely determined. If we find another element $\mathsf{g}'$ with $\Ad_{\mathsf{g}'}(\mathfrak{p}_\Theta,\mathfrak{p}_\Theta^\mathrm{op}) =(\mathfrak{x},\mathfrak{y})$, we have $\mathsf{g}' = \mathsf{g}(\mathfrak{x},\mathfrak{y})\cdot \mathsf{h}$ for some $\mathsf{h}\in \mathsf{P}_\Theta \cap \mathsf{P}_\Theta^\mathrm{op}$. However, we will see in Lemma~\ref{lem:welldef} that this ambiguity does not affect our discussion.

For an element $I\in \mathcal{C}_\gamma(\mu)$, we define 
\[
m_{\vec{\mu}_{I^0}}= m_{\vec{\mu}_{I^0}}(\rho):=\mathsf{g}(\xi_\rho(\vec{\mu}_{I^0}^+),\xi^\mathrm{op}_\rho(\vec{\mu}_{I^1}^-)).
\]
Similarly, we define $m_{\vec{\mu}_{I^1}}$.

By convention, for $I\in \mathcal{C}_\gamma(\mu)$, write 
\[
\mathsf{c}(I):=\mathsf{c}([E,I])=\mathsf{c}([x_0,I]) := \mathsf{c}([x_0, x_I])\in \mathfrak{h}_\Theta, 
\]
where $[x_0,x_I]$ is the geodesic subsegment of $\gamma$ oriented from $x_0$ to any point $x_I\in I$.  Choosing a different point in the same interval does not affect the value since the cocycle is supported on leaves of the underlying lamination.

Let $X\in \mathfrak{h}_\Theta$ and let $\rho$ be a  $\Theta$-Anosov representation  with the limit map $\xi_\rho$. Define 
\[
\mathrm{T}^{X}_{I^0} (\rho) := m_{\vec{\mu}_{I^0}}(\rho)\exp (X)m_{\vec{\mu}_{I^0}}(\rho)^{-1}
\]
We define $\mathrm{T}^{X}_{I^1}$ similarly. 

As we mentioned above, the element $m_{\vec{\mu}_{I^0}}=\mathsf{g}(\xi_\rho(\vec{\mu}_{I^0}^+),\xi^\mathrm{op}_\rho(\vec{\mu}_{I^1}^-))$ is determined up to  multiplication of $\mathsf{P}_\Theta \cap \mathsf{P}_\Theta^\mathrm{op}$. Nevertheless we observe the following.
\begin{lemma}\label{lem:welldef}
    $\mathrm{T}^{X}_{I^0} (\rho) $ is well-defined in the sense that the choice of $m_{\vec{\mu}_{I^0}}(\rho)$ does not affect the result.
\end{lemma}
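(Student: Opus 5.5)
Since any two choices of $m_{\vec{\mu}_{I^0}}(\rho)$ differ by right multiplication by an element of $\mathsf{P}_\Theta\cap\mathsf{P}_\Theta^\mathrm{op}$, it suffices to show that every $\mathsf{h}\in \mathsf{P}_\Theta\cap \mathsf{P}_\Theta^\mathrm{op}$ commutes with $\exp(X)$ for $X\in\mathfrak{h}_\Theta$. Indeed, if $m'=m\mathsf{h}$, then
\[
m'\exp(X)m'^{-1} = m\,\mathsf{h}\exp(X)\mathsf{h}^{-1}\,m^{-1},
\]
which equals $m\exp(X)m^{-1}$ exactly when $\mathsf{h}$ commutes with $\exp(X)$.

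The first step is to identify $\mathsf{P}_\Theta\cap\mathsf{P}_\Theta^\mathrm{op}$ with the Levi subgroup $\mathsf{L}_\Theta = Z_\mathsf{G}(\mathfrak{a}_\Theta)$, the centralizer of $\mathfrak{a}_\Theta$. At the Lie algebra level this is a root space computation. The algebra $\mathfrak{p}_\Theta$ contains $\mathfrak{g}_0$, all positive root spaces, and the negative root spaces $\mathfrak{g}_{-\alpha}$ with $\alpha\in\mathrm{Span}(\Delta\setminus\Theta)$. The algebra $\mathfrak{p}_\Theta^\mathrm{op}=\Ad_{\widehat{w_0}}\mathfrak{p}_{\Theta}$ is the parabolic of type $\iota(\Theta)=\Theta$ built from the negative roots. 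Their intersection is therefore
\[
\mathfrak{g}_0\oplus\bigoplus_{\alpha\in\Sigma\cap \mathrm{Span}(\Delta\setminus\Theta)}\mathfrak{g}_\alpha .
\]
A root $\alpha$ vanishes on $\mathfrak{a}_\Theta=\bigcap_{\theta\in\Delta\setminus\Theta}\ker\theta$ precisely when $\alpha\in\mathrm{Span}(\Delta\setminus\Theta)$, so this intersection is the centralizer $\mathfrak{z}_\mathfrak{g}(\mathfrak{a}_\Theta)$. At the group level, the intersection of opposite parabolics is the common Levi factor, which is $Z_\mathsf{G}(\mathfrak{a}_\Theta)$. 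One can verify this by the standard fact that $\mathsf{P}_\Theta=\mathsf{L}_\Theta\ltimes \mathsf{N}_\Theta$ and $\mathsf{N}_\Theta\cap\mathsf{P}_\Theta^\mathrm{op}=\{1\}$, or cite it as standard Lie theory. This group-level identification is the one step needing care. The main subtlety is possible disconnectedness, which is harmless because the Levi factor is by definition the full centralizer of $\mathfrak{a}_\Theta$ in $\mathsf{G}$.

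The second step handles $\mathfrak{h}_\Theta=\mathfrak{a}_\Theta\oplus\ui\mathfrak{a}_\Theta$. Since $\mathsf{G}$ is complex, $\Ad_\mathsf{h}$ is $\mathbb{C}$-linear. Hence if $\mathsf{h}$ fixes $\mathfrak{a}_\Theta$ pointwise, it also fixes $\ui\mathfrak{a}_\Theta$ pointwise, so $\mathsf{h}\in Z_\mathsf{G}(\mathfrak{h}_\Theta)$. Then for every $X\in\mathfrak{h}_\Theta$,
\[
\mathsf{h}\exp(X)\mathsf{h}^{-1}=\exp(\Ad_\mathsf{h}X)=\exp X .
\]
Combined with the reduction in the first paragraph, this shows $\mathrm{T}^X_{I^0}(\rho)$ does not depend on the choice of $m_{\vec{\mu}_{I^0}}(\rho)$. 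The same argument applies verbatim to $\mathrm{T}^X_{I^1}(\rho)$.
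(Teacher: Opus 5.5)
Your proof is correct, but it reaches the key point by a different route than the paper. Both arguments reduce to showing that every $g\in\mathsf{P}_\Theta\cap\mathsf{P}_\Theta^{\mathrm{op}}$ centralizes $\mathfrak{h}_\Theta$.

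The paper proves this by hand. It notes that $g$ normalizes the identity component of $\mathsf{P}_\Theta\cap\mathsf{P}_\Theta^{\mathrm{op}}$, so it preserves that component's center $\mathfrak{h}_\Theta$. It then runs a Killing-form computation in the basis $\{\beta^\vee\}_{\beta\in\Delta}$, using $\kf(\mathfrak{h}_\Theta,\alpha^\vee)=0$ for $\alpha\in\Delta\setminus\Theta$ and $\kf(\mathfrak{h},\mathfrak{g}_\alpha)=0$, to get $\kf(\Ad_g H,W)=\kf(H,W)$ for all $W\in\mathfrak{h}$.

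You instead use the structure theory of parabolics, namely $\mathsf{P}_\Theta=Z_\mathsf{G}(\mathfrak{a}_\Theta)\ltimes\mathsf{N}_\Theta$ and $\mathsf{N}_\Theta\cap\mathsf{P}_\Theta^{\mathrm{op}}=\{1\}$. This identifies the intersection with the full centralizer $Z_\mathsf{G}(\mathfrak{a}_\Theta)$, and you then pass to $\mathfrak{h}_\Theta=\mathfrak{a}_\Theta\oplus\ui\mathfrak{a}_\Theta$ by $\mathbb{C}$-linearity of $\Ad_g$.

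Your version is shorter and treats all components of the intersection uniformly. It is airtight modulo a standard theorem that you should cite precisely: the Langlands/Levi decomposition of $N_\mathsf{G}(\mathfrak{p}_\Theta)$ and injectivity of the big cell $\mathsf{N}_\Theta\times\mathsf{P}_\Theta^{\mathrm{op}}\to\mathsf{G}$.

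The paper's version aims to be self-contained. However, two of its intermediate assertions are not literally true for a general element $g$ of the Levi factor.
\begin{enumerate}
\item It claims $\Ad_g$ is the identity on $\mathfrak{g}_{\pm\theta}$. A torus element $\exp H$ instead scales $\mathfrak{g}_\theta$ by $\e^{\theta(H)}$.
\item It claims $\Ad_g$ fixes $\beta^\vee$ for $\beta\in\Theta$. A reflection $s_\alpha$ with $\alpha\in\Delta\setminus\Theta$ instead shifts $\beta^\vee$ by a multiple of $\alpha^\vee$.
\end{enumerate}
What the Killing-form computation actually needs is $\Ad_{g^{-1}}\beta^\vee\equiv\beta^\vee$ modulo $\mathfrak{h}_\Theta^{\perp}$. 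That is exactly what your identification of the intersection with $Z_\mathsf{G}(\mathfrak{a}_\Theta)$ supplies.
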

\begin{proof}
We claim that every element $g\in \mathsf{P}_\Theta\cap \mathsf{P}_\Theta^\mathrm{op}$ centralizes $\mathfrak{h}_\Theta$. Indeed, every element $g\in \mathsf{P}_\Theta\cap \mathsf{P}_\Theta^\mathrm{op}$ normalizes the identity component $(\mathsf{P}_\Theta\cap \mathsf{P}_\Theta^\mathrm{op})^0$ and therefore, it normalizes the center $Z((\mathsf{P}_\Theta\cap \mathsf{P}_\Theta^\mathrm{op})^0)$ of the identity component. It follows that $\Ad_g(\mathfrak{h}_\Theta) = \mathfrak{h}_\Theta$.  Since $g$ normalizes both $\mathfrak{p}_\Theta$ and $\mathfrak{p}_\Theta^\mathrm{op}$, we have $\Ad_{g}|\mathfrak{g}_{\pm\theta}=\mathrm{Id}$ for all $\theta\in \Sigma^+\setminus \mathrm{Span}(\Delta\setminus\Theta)$. Choose any $W\in \mathfrak{h}$, and  $H\in \mathfrak{h}_\Theta$. Write $W=\sum_{\beta\in\Delta} c_\beta \beta^\vee$, for  $c_\beta\in \mathbb{C}$.  As we observed above, we have that $\Ad_{g^{-1}} \beta^\vee =\beta^\vee$ for $\beta\in \Theta$. If $\beta\in\Delta\setminus\Theta$, we can write
\[
\Ad_{g^{-1}}\beta^\vee=\sum_{\alpha\in \Delta\setminus \Theta}d_\alpha\alpha^\vee+ \sum_{\alpha\in \mathrm{Span}(\Delta\setminus \Theta)}E_\alpha
\]
for some $E_\alpha\in \mathfrak{g}_{\alpha}$ and $d_\alpha\in \mathbb{C}$. Hence, 
\[
\Ad_{g^{-1}}\left(\sum_{\beta\in \Delta\setminus\Theta}c_\beta\beta^\vee\right) = \sum_{\alpha\in \Delta\setminus \Theta}d'_\alpha\alpha^\vee+ \sum_{\alpha\in \mathrm{Span}(\Delta\setminus \Theta)}E_\alpha'
\]
for $d'_\alpha\in \mathbb{C}$ and $E_\alpha'\in \mathfrak{g}_\alpha$. Note that
$\kf(H, \alpha^\vee)=\alpha(H)= 0$ for any $\alpha\in \Delta\setminus \Theta$. Since $\kf(\mathfrak{h}, \mathfrak{g}_\alpha) = 0$ for all roots $\alpha$, we have
\begin{align*}
\kf(\Ad_g H , W)&= \kf(H, \Ad_{g^{-1}} W)\\
&=\kf \left(H, \sum_{\beta\in \Theta}c_\beta \beta^\vee+\sum_{\alpha\in \Delta\setminus \Theta}d'_\alpha\alpha^\vee+\sum_{\alpha\in \mathrm{Span}(\Delta\setminus \Theta)}E'_\alpha\right)\\
&=\kf \left(H, \sum_{\beta\in \Theta}c_\beta \beta^\vee\right)\\
&=\kf(H, W).
\end{align*}
Since the Killing form $\kf$ is nondegenerate on $\mathfrak{h}$, and since $W$ is arbitrary, we conclude $H = \Ad_g H$ for any $H\in \mathfrak{h}_\Theta$. 
\end{proof}

For simplicity, write
\begin{align*}
\mathrm{T}^{\pm \mathsf{c}}_{I^0} (\rho)&=\mathrm{T}^{\pm \mathsf{c}(I)}_{I^0} (\rho) = m_{\vec{\mu}_{I^0}}(\rho)\exp (\pm\mathsf{c}(I))m_{\vec{\mu}_{I^0}}(\rho)^{-1}\\
\mathrm{T}^{\pm \mathsf{c}}_{I^1} (\rho)&=\mathrm{T}^{\pm \mathsf{c}(I)}_{I^1} (\rho)=m_{\vec{\mu}_{I^1}}(\rho)\exp (\pm\mathsf{c}(I))m_{\vec{\mu}_{I^1}}(\rho)^{-1}.
\end{align*}

The shearing map is given by
\[
\varphi^{\mathsf{c}} _{\gamma}(\rho):=\lim_{\mathcal{C}\to \mathcal{C}_k(\mu)}\left(\vec{\prod_{J\in \mathcal{C}}}\mathrm{T}_{J^0}^{\mathsf{c}}(\rho)\mathrm{T}_{J ^1}^{-\mathsf{c}}(\rho) \right)\mathrm{T}_{F^0}^{\mathsf{c}}(\rho),
\]
where the limit is taken over the directed system of subsets of $\mathcal{C}_\gamma(\mu)$. The arrow over the product notation indicates that it is an ``ordered product.'' More precisely,  given  $\mathcal{J}\subset \mathcal{C}_\gamma(\mu)$, we enumerate the elements of $\mathcal{J}$ in such a way that $J_i<J_{i+1}$ for all $i$.  Then 
\[
\vec{\prod_{J\in \mathcal{J}}} \mathrm{T}^{\mathsf{c}}_{J^0} \mathrm{T}^{-\mathsf{c}}_{J^1}=\left(\mathrm{T}^{\mathsf{c}}_{J_1^0} \mathrm{T}^{-\mathsf{c}}_{J_1^1}\right)\cdot \left(\mathrm{T}^{\mathsf{c}}_{J_2^0} \mathrm{T}^{-\mathsf{c}}_{J_2^1}\right)\cdots \left(\mathrm{T}^{\mathsf{c}}_{J_k^0} \mathrm{T}^{-\mathsf{c}}_{J_k^1}\right).
\]

The key ingredient of the construction is that the above limit eventually converges for all small enough cocycles $\mathsf{c}$. We state this in the following modified form. The proof is essentially a combination of the proof of \cite[Proposition 5.1]{pfeil} (see also \cite{BD,bonahon96}), our observation Lemma~\ref{lem:divrad} and the fact that the limit map depends continuously on representations \cite{BCLS2015}.

\begin{lemma}\label{lem:domain}
    Let $\rho\in \Hom_\Theta(\pi_1(S), \mathsf{G})$ and let $\mathsf{c}\in \HD(S;\mathfrak{h}_\Theta)$. Let $\gamma$ be a compact geodesic arc in $\mathbb{H}^2$ transverse to the lift of $|\mathsf{c}|$. Choose a train-track $\mathcal{T}$ carrying $|\mathsf{c}|$ and an admissible neighborhood $U$ of $|\mathsf{c}|$. Then, there exist constant $B$ and an open neighborhood $W$ of $\rho $ such that $\varphi^{\mathsf{c}'} _\gamma (\rho')$ exists for any $\rho'\in W$ and any $\mathsf{c}'$ with $|\mathsf{c}'|\in U$ and $\|\mathsf{c}'\|<B$. Here, the norm $\| \cdot \|$ is measured in $\WT(\mathcal{T};\mathfrak{h}_\Theta)$. 
\end{lemma}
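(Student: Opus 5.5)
The plan is to follow the Bonahon--Dreyer/Pfeil scheme and show that the net of finite ordered products is Cauchy, with every constant made uniform over the neighbourhood $W$ of $\rho$ and over the admissible neighbourhood $U$. Fix $\mathcal{J}\subset\mathcal{J}'\subset\mathcal{C}_\gamma(\mu')$, where $\mu'$ is the lift of $|\mathsf{c}'|$. Rewrite each factor as
\[
\mathrm{T}^{\mathsf{c}'}_{J^0}\mathrm{T}^{-\mathsf{c}'}_{J^1} = m_{J^0}\exp(\mathsf{c}'(J))\, m_{J^0}^{-1}m_{J^1}\exp(-\mathsf{c}'(J))\, m_{J^1}^{-1}.
\]
When $J$ is short, the two leaves $\vec{\mu}'_{J^0}$ and $\vec{\mu}'_{J^1}$ are close as points of $\UT\mathbb{H}^2|\gamma$. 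The flags attached to them are then close, so $m_{J^0}^{-1}m_{J^1}$ can be chosen close to the identity.

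The key estimate is the following. The limit map of a $\Theta$-Anosov representation is Hölder continuous, and both the Hölder exponent $\nu$ and the constant can be taken uniform on a small neighbourhood $W$ of $\rho$. This uses the continuous dependence of limit maps on the representation \cite{BCLS2015}, together with the uniform Anosov constants on a neighbourhood. Combining this with Lemma~\ref{lem:parallel}, which says that parallel leaves in $\mathfrak{A}^+(\vec{\gamma},\theta')$ have $\dUT$-distance comparable to their distance along $\gamma$, I get
\[
\|\mathrm{T}^{\mathsf{c}'}_{J^0}\mathrm{T}^{-\mathsf{c}'}_{J^1}-\mathrm{Id}\| \le D\,\|\mathsf{c}'(J)\|\,\ell(J)^\nu .
\]
Lemma~\ref{lem:divrad}(\ref{enum:transverse}) guarantees that every $|\mathsf{c}'|\in U$ has leaves in $\mathfrak{A}(\gamma,\theta')$, which is exactly what Lemma~\ref{lem:parallel} needs.

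It remains to control $\mathsf{c}'(J)=\mathsf{c}'([x_0,J])$. It is bounded by $\|\mathsf{c}'\|$ times the number of branches crossed, and that number grows linearly in $\dr(J)$ by the train-track description. So $\|\mathsf{c}'(J)\|\le D'\|\mathsf{c}'\|(1+\dr(J))$.

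Now I can run the standard telescoping argument. For $\mathcal{J}\subset\mathcal{J}'$, insert the missing factors one at a time and bound the difference of the two ordered products by
\[
\prod_J\bigl(1+\|\text{factor}_J-\mathrm{Id}\|\bigr)\cdot \sum_{J\in\mathcal{J}'\setminus\mathcal{J}}\|\text{factor}_J-\mathrm{Id}\|,
\]
after conjugating by partial products. The subtle point, and the one I expect to be the main obstacle, is that conjugating by $\exp(\mathsf{c}'(J))$ can expand by a factor of $\mathrm{e}^{C''\|\mathsf{c}'\|(1+\dr(J))}$. Pfeil handles this by grouping the factors, and it produces a series of the form
\[
\sum_J \mathrm{e}^{C''\|\mathsf{c}'\|\dr(J)}\,\ell(J)^\nu \le \sum_J \mathrm{e}^{(C''\|\mathsf{c}'\|-C'\nu)\dr(J)}.
\]
Here I use the upper bound $\ell(J)\le B''\mathrm{e}^{-C'\dr(J)}$ derived after Lemma~\ref{lem:divrad}; its constants are uniform on $U$, which is why only the upper bound is needed. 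This series converges uniformly as long as $\|\mathsf{c}'\|<B:=C'\nu/C''$, because the number of $J$ with each divergence radius is bounded by $N$ (Lemma~\ref{lem:divrad}(\ref{enum:finite})). The tail sums over $\dr(J)>R$ then go to zero uniformly, and this gives the Cauchy property of the net.

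Finally, I must check that all constants ($D$, $D'$, $C''$, $\nu$, $N$, $B''$, $C'$) depend only on $W$, $U$, $\mathcal{T}$ and $\gamma$, and not on the specific $\rho'\in W$ or $\mathsf{c}'$. The only delicate one is the Hölder data for $\xi_{\rho'}$, and its uniformity is exactly what the shrinking of $W$ provides. Lemma~\ref{lem:welldef} ensures the choices of the elements $m$ are irrelevant. The trailing factor $\mathrm{T}^{\mathsf{c}'}_{F^0}$ is a single bounded term and does not affect convergence.
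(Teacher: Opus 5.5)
Your proposal is correct and is essentially the argument the paper invokes. It is Pfeil's convergence proof for the ordered product, in which the exponential expansion coming from $\exp(\mathsf{c}'(J))$ is beaten by the decay $\ell(J)^\nu\lesssim \e^{-C'\nu\dr(J)}$, made uniform via Lemma~\ref{lem:divrad} on the admissible neighbourhood and via \cite{BCLS2015} for uniform H\"older limit maps on $W$.

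The only slip is your displayed single-factor bound: it cannot hold with a uniform $D$, since (as in Lemma~\ref{lem:TTest}) it must carry a factor like $\max_{\beta\in\Sigma}|\e^{\beta(\mathsf{c}'(J))}|$. Your final series already accounts for exactly this expansion, so the conclusion and the choice $B=C'\nu/C''$ are unaffected.
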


The \emph{cataclysm deformation} of $\rho$ with respect to $\mathsf{c}\in \HD(S;\mathfrak{h}_\Theta)$ is a new representation $\Lambda^\mathsf{c}_{x_0}\rho$ defined by
\[
(\Lambda^{\mathsf{c}}_{x_0}\rho)(\gamma) := \varphi_{\widetilde{\gamma}}^{\mathsf{c}}(\rho) \rho(\gamma),
\]
where $x_0\in \mathbb{H}^2\setminus \mu$ is a reference point and $\widetilde{\gamma}$ is an oriented geodesic segment joining $x_0$ and $\gamma x_0$. One can check that a different choice of $x_0$ results in a conjugation of the cataclysm deformation. We therefore have the map
\[
\Lambda^{\mathsf{c}} : \Hom_\Theta(\pi_1(S),\mathsf{G})/\mathsf{G} \to \Hom_\Theta(\pi_1(S),\mathsf{G})/\mathsf{G}
\]
for a sufficiently small twisted $\mathfrak{h}_\Theta$-valued transverse cocycle $\mathsf{c}$.

\subsection{Convergence of cataclysm deformations}

In this subsection, we show that cataclysm deformations behave continuously as we vary transverse cocycles. This result is closely related to \cite[Theorem~II.3.11.5]{EM06} as well as the original definition of earthquake deformation by Kerckhoff \cite{Kerckhoff}. As a corollary, we obtain that cataclysm deformations are analytic on Hitchin components. Note that analyticity is known for earthquake deformations \cite{kerckhoff_analytic}.

Before we start our main proof, we give a preferred  metric on $\mathsf{G}$ as follow. We fix a norm on the Lie algebra $\mathfrak{g}$. This will give rise to a left invariant metric $\dG$ on $\mathsf{G}$ by setting $\dG(g,h)= \inf_{\alpha} \int \|\alpha'(t)\|\;\der t$ over all smooth paths $\alpha$ joining $g$ and $h$. We observe that $\dG$ satisfies ``almost $\Ad$-invariance'' in the sense that
\[
\dG(gk,hk) \le \|\Ad_{k^{-1}}\|\cdot \dG(g,h),
\]
where $\|\Ad_{k^{-1}}\|_{\mathrm{op}}$ is the operator norm of the adjoint action with respect to the norm on $\mathfrak{g}$. By abusing notation, we let $\|g\|_{\mathrm{op}}:= \|\Ad_g\|_{\mathrm{op}}$ for $g\in \mathsf{G}$.

Straightforward computations show that
\begin{align}
    \dG(1,ghg^{-1})&\le \|g\|_{\mathrm{op}}\cdot \dG(1,h) \label{eq:distanceconj}\\
    \dG(1,ghg^{-1}h^{-1})&\le \|h\|_{\mathrm{op}}\cdot(1+\|g\|_{\mathrm{op}})\cdot \dG(1,h)\label{eq:distancecommutator}\\
    \dG(1,g_1g_2 \cdots g_l)&\le\sum_{i=1}^l \dG(1,g_i)\label{eq:product}\\
    \dG(g_1g_2g_3,g_1g_3)&\le \|g_3^{-1}\|_{\mathrm{op}}\cdot \dG(1,g_2)\label{eq:deletemiddle}.
\end{align}

\begin{lemma}\label{lem:TTest}
Let $K$ be a compact subset in $\Hom_\Theta(\pi_1(S),\mathsf{G})$ and let $0<\theta<\frac{\pi}{2}$.  Let $\gamma$ be a compact geodesic arc in $\mathbb{H}^2$. There exist constants $A,B>0$ and $\nu>0$ depending on $K$, $\theta$, and $\gamma$ such that for any geodesics $g$ and $h$ in $\mathfrak{A}(\gamma, \theta)$ which transversely intersect $\gamma$ at $x,y$, any $X,Y\in \mathfrak{h}_\Theta$, and any $\rho\in K$ we have
\[
     \dG(\mathrm{T}^X_g(\rho), \mathrm{T}^Y_h(\rho))<A \cdot\left(\left(1+B\cdot \max_{\beta\in \Sigma }(|\e^{-\beta(X)}|)\right) \dPT(\widehat{g}_x,\widehat{h}_y)^\nu +\|X-Y\|\right).
     \]
\end{lemma}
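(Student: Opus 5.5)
We split the distance with the triangle inequality:
\[
\dG(\mathrm{T}^X_g,\mathrm{T}^Y_h)\le \dG(\mathrm{T}^X_g,\mathrm{T}^X_h)+\dG(\mathrm{T}^X_h,\mathrm{T}^Y_h).
\]
Here $\mathrm{T}^X_g=m_g\exp(X)m_g^{-1}$ and $m_g=\mathsf{g}(\xi_\rho(g^+),\xi^{\mathrm{op}}_\rho(g^-))$.

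\textbf{Second term.} By left invariance and the almost $\Ad$-invariance of $\dG$, this term is bounded by $\|m_h\|_{\mathrm{op}}\|m_h^{-1}\|_{\mathrm{op}}\,\dG(\exp X,\exp Y)$. Since $\mathfrak{h}_\Theta$ is abelian, $\dG(\exp X,\exp Y)=\dG(1,\exp(Y-X))\le\|X-Y\|$. It remains to bound $\|m_h^{\pm1}\|_{\mathrm{op}}$ uniformly. The leaves in $\mathfrak{A}(\gamma,\theta)$ meeting the compact arc $\gamma$ have endpoint pairs in a compact subset of $\partial_\infty\mathbb{H}^2\times\partial_\infty\mathbb{H}^2$ minus the diagonal. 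The limit maps depend continuously on $\rho$ \cite{BCLS2015}, so the flag pairs $(\xi_\rho(h^+),\xi^{\mathrm{op}}_\rho(h^-))$ lie in a compact subset of $\mathfrak{T}$ for $\rho\in K$. On such a set we choose $\mathsf{g}$ via a continuous local section of $\mathsf{G}\to\mathsf{G}/(\mathsf{P}_\Theta\cap\mathsf{P}_\Theta^{\mathrm{op}})\cong\mathfrak{T}$. Lemma~\ref{lem:welldef} says this choice is harmless, and it gives uniform operator-norm bounds on $m_h^{\pm1}$.

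\textbf{First term.} This is the main obstacle, because $\exp X$ need not be bounded and the bound must grow only through $\max_\beta|\e^{-\beta(X)}|$. First we make $m_h$ close to $m_g$. Take $n\in\mathsf{G}$ small with $m_h=m_g n$; this is possible using the local section above. Conjugating by $m_g$ reduces the problem to estimating $\dG(\exp X, n\exp X n^{-1})$, with $\dG(1,n)$ controlled. Write $n=n_-\,l\,n_+$ near the identity, with $n_\pm$ in the unipotent radicals of $\mathsf{P}_\Theta^{\mathrm{op}}$ and $\mathsf{P}_\Theta$, and $l$ in the Levi factor $\mathsf{P}_\Theta\cap\mathsf{P}_\Theta^{\mathrm{op}}$. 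Then $l$ commutes with $\exp X$, by the proof of Lemma~\ref{lem:welldef}. This gives
\[
n\exp(X)n^{-1}=n_-\,l\,\bigl(n_+\exp(X)n_+^{-1}\exp(-X)\bigr)\exp(X)\,l^{-1}n_-^{-1}.
\]
The factor $\exp(X)n_+^{-1}\exp(-X)=\exp(\Ad_{\exp X}\log n_+^{-1})$ scales each root component by $\e^{\beta(X)}$ with $\beta$ positive. This is paired with the $n_-$ side, which scales by $\e^{-\beta(X)}$. Using the inequalities (\ref{eq:distanceconj})--(\ref{eq:deletemiddle}), we expect a bound of the form $C(1+B\max_\beta|\e^{-\beta(X)}|)\,\dG(1,n)$, after arranging the ordering so that only one sign of exponential appears. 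If this fails, we will instead use the symmetric decomposition and accept the $\max$ over all of $\Sigma$, which is what the statement allows anyway.

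\textbf{Bounding $\dG(1,n)$.} It remains to show $\dG(1,n)\lesssim\dPT(\widehat g_x,\widehat h_y)^\nu$. Limit maps of Anosov representations are Hölder continuous, with exponent and constant uniform over compact $K$ (again by \cite{BCLS2015}). The section $\mathsf{g}$ is smooth. So $\dG(1,n)\lesssim \der_\infty(g^\pm,h^\pm)^{\nu}$. For geodesics in $\mathfrak{A}(\gamma,\theta)$, the endpoint distance is Lipschitz in $\dPT(\widehat g_x,\widehat h_y)$, by compactness and the identification $\UT\mathbb{H}^2\cong(\partial_\infty\mathbb{H}^2)^{(3)}$ used in Lemma~\ref{lem:parallel}. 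Finally, when $\dPT(\widehat g_x,\widehat h_y)$ is not small, the $n$-decomposition above may not exist. In that case we can use the crude bound that both $\mathrm{T}$'s are conjugates of $\exp X$ by uniformly bounded elements, and absorb the result into the constants $A,B$.
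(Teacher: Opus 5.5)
Your proposal is correct and follows essentially the paper's route: the same triangle-inequality split, the same almost-$\Ad$-invariance bound for the $\|X-Y\|$ term, and a commutator estimate against a conjugator whose distance to the identity is H\"older-controlled via \cite{BCLS2015}. The one real difference is that the paper builds this conjugator explicitly as $U_{gh}^\rho=U_2^\rho U_1^\rho$, where the two unipotents fix $\xi_\rho(g^+)$ and $\xi^{\mathrm{op}}_\rho(h^-)$ respectively; this exists and is uniformly bounded for every pair because $g^+\ne h^-$ once both leaves are oriented so that $\gamma$ crosses them positively, so no local sections or near/far case split are needed. Your Levi-decomposition detour is also unnecessary: applying (\ref{eq:distancecommutator}) to $\exp(-X)$ and $n$ gives $\dG(\exp X, n\exp(X)n^{-1})\le \|n\|_{\mathrm{op}}\,(1+\|\exp(-X)\|_{\mathrm{op}})\,\dG(1,n)$ with $\|\exp(-X)\|_{\mathrm{op}}\lesssim\max_{\beta\in\Sigma}|\e^{-\beta(X)}|$, and the same inequality with the bounded conjugator $m_hm_g^{-1}$ settles your far case. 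If you instead bound the far case through $\dG(1,\exp X)$, you must also note that $\exp(\ui\mathfrak{a}_\Theta)$ is relatively compact, because $\|X\|$ itself is not controlled by the right-hand side.
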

\begin{proof}
    We cannot use the slithering map construction \cite{BD,pfeil} directly since $g$ and $h$ may not be leaves of a single geodesic lamination. We rather adopt their proofs. 

    Orient $g$ and $h$ so that $\gamma$ intersects $g$ and $h$ positively. Draw a geodesic $l$ joining the forward endpoint $g^+$ of $g$ and the backward endpoint, $h^-$ of $h$. For a given $\rho\in K$, there is a unique unipotent $U_1^\rho$ fixing $\xi_\rho(g^+)$ that sends $\xi^\mathrm{op}_\rho(g^-)$ to $\xi^\mathrm{op}_\rho(h^-)$. By \cite[Theorem~6.1(3)]{BCLS2015}, there exist constants $C_1>0$ and $\nu>0$ depending on $K$ such that $\dG(1,U_1^\rho)<C_1\cdot \mathrm{d}_\infty(g^-,h^-)^\nu$ where $\der_\infty$ is a metric on the ideal boundary. We similarly construct an unipotent $U_2^\rho$ fixing $\xi^\mathrm{op}_\rho(h^-)$ that maps $\xi_\rho(g^+)$ to $\xi_\rho(h^+)$. We have $\dG(1,U_2^\rho)<C_1\cdot \mathrm{d}_\infty(g^+,h^+)^\nu$. The constants $C_1>0$ and $\nu>0$ depend on the set $K$ and do not depend on $g$, $h$ or $\rho$.  Let $U_{gh}^\rho = U_2^\rho U_1^\rho$.

    It follows that for all $\rho\in K$,
    \begin{align*}
    \dG(1,U_{gh}^\rho)&\le C_1\cdot(\mathrm{d}_\infty(g^+,h^+)^\nu+\mathrm{d}_\infty(g^-,h^-)^\nu) \\
    &\le C_2 \cdot (\dUT(\vec{g}_x,\vec{h}_y)^\nu +\dUT(\vec{g'}_x,\vec{h'}_y)^\nu).
    \end{align*}
    Here, $g'$ and $h'$ are geodesics with reversed orientation. The constants $C_2$ and $\nu$ depend on $K$, $\theta$ and $\gamma$. In particular, for all $\rho\in K$, and all $g$ and $h$ in $\mathfrak{A}(\gamma,\theta)$, 
    \[
    \|U_{gh}^\rho\|_{\mathrm{op}}<C_3
    \]
    for some constant $C_3$ depending on $K$, $\theta$ and $\gamma$. 
    
    Moreover, if $g$ and $h$ belong to $\mathfrak{A}(\gamma,\theta)$, one can find a constant $C_4>0$ depending on $\gamma$ and $\theta$ such that
    \begin{equation}\label{eq:tangentvsprojective}
    \dUT(\vec{g}_x,\vec{h}_y)+\dUT(\vec{g'}_x,\vec{h'}_y)<C_4\cdot \dPT(\widehat{g}_x,\widehat{h}_y).
    \end{equation}
    It follows that
    \begin{align*}
    \dUT(\vec{g}_x,\vec{h}_y)^\nu +\dUT(\vec{g'}_x,\vec{h'}_y)^\nu & < 2\cdot \left( \dUT(\vec{g}_x,\vec{h}_y) +\dUT(\vec{g'}_x,\vec{h'}_y)\right)^\nu \\
    &< 2 C_4^\nu \cdot \dPT(\widehat{g}_x,\widehat{h}_y)^\nu.
    \end{align*}

    Further, we estimate
    \begin{align*}
    \|\mathrm{T}^X_g\|_{\mathrm{op}}&\le\max(\|U_{g_0g}^\rho\|_{\mathrm{op}},\|(U_{g_0g}^\rho)^{-1}\|_{\mathrm{op}}) \cdot \|\Ad_{\exp X}\|_{\mathrm{op}}\\
    &<C_5\ \cdot\max_{\beta\in \Sigma}(|\e^{\beta(X)}|),
    \end{align*}
    where $g_0$ is an oriented geodesic satisfying $\xi_\rho(g_0^+)= \mathfrak{p}_\Theta$, $\xi^\mathrm{op}_\rho (g_0^-) = \mathfrak{p}^{\mathrm{op}}_\Theta$.  The constant $C_5$ depends on $K$, $\theta$, and $\gamma$. 

    We have
    \[
     \dG(\mathrm{T}^X_g, \mathrm{T}^Y_h)\le \dG(\mathrm{T}^X_g, \mathrm{T}^X_h)+\dG(\mathrm{T}^X_h,\mathrm{T}^Y_h).
     \]
     By (\ref{eq:distancecommutator}), 
     \begin{align*}
         \dG(\mathrm{T}_g^X, \mathrm{T}_h^X)&= \dG(1,\mathrm{T}_g^{-X}\mathrm{T}_h^{X})\\
         &=\dG(1,\mathrm{T}_g^{-X}U_{gh}^\rho \mathrm{T}_g^{X} (U_{gh}^\rho)^{-1})\\
         &<\|U_{gh}^\rho\|_{\mathrm{op}}\cdot (1+\|\mathrm{T}_g^{-X}\|_{\mathrm{op}}) \cdot \dG(1,U_{gh}^\rho)\\
         &<2C_3C_2C_4^\nu\cdot \dPT(\widehat{g}_x,\widehat{h}_y)^\nu\cdot \left(1+C_5\cdot\max_{\beta\in \Sigma}(|\e^{-\beta(X)}|)\right).
     \end{align*}

     We also have, by (\ref{eq:distanceconj}),
     \[
     \dG(\mathrm{T}_h^{X},\mathrm{T}_h^Y)< \|U_{g_0h}^\rho\|_{\mathrm{op}}\cdot\dG(\exp X,\exp Y)<C_3\cdot \|X-Y\|,
     \]
     where the constant $C_3$ depends on $K$, $\theta$ and $\gamma$. 
     Putting all together, we obtain
     \[
     \dG(\mathrm{T}_g^X,\mathrm{T}_h^Y)<A \cdot\left( \dPT(\widehat{g}_x,\widehat{h}_y)^\nu\cdot \left(1+B\cdot\max_{\beta\in \Sigma}(|\e^{-\beta(X)}|)\right) +\|X-Y\|\right),
     \]
     where $A=\max(2C_2C_3 C_4^\nu,C_3)$ and $B=C_5$ are constants depending on $K$, $\theta$, and  $\gamma$. 
\end{proof}

A similar estimate holds when geodesics $g$ and $h$ are parallel.  

\begin{lemma}\label{lem:TTestparallel}
Let $K$ be a compact subset in $\Hom_\Theta(\pi_1(S),\mathsf{G})$. Let $\gamma$ be a compact geodesic arc in $\mathbb{H}^2$. There exist constants $A,B>0$ and $\nu>0$ depending on $K$, and $\gamma$ such that for any parallel geodesics $g$ and $h$ which transversely intersect $\gamma$ at $x,y$, any $X,Y\in \mathfrak{h}_\Theta$, and any $\rho\in K$ we have
\[
     \dG(\mathrm{T}^X_g(\rho), \mathrm{T}^Y_h(\rho))<A \cdot\left(\left(1+B\cdot \max_{\beta\in \Sigma }(|\e^{-\beta(X)}|)\right) \der(x,y)^\nu +\|X-Y\|\right).
     \]
\end{lemma}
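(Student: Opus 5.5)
The plan is to rerun the argument of Lemma~\ref{lem:TTest} almost verbatim, replacing the estimate \eqref{eq:tangentvsprojective} by the estimate for parallel geodesics in Lemma~\ref{lem:parallel}. The point is that parallel geodesics need not lie in a fixed cone $\mathfrak{A}(\gamma,\theta)$: their angles with $\gamma$ can be close to $0$ or $\pi$. For parallel geodesics, however, the distance between the relevant endpoints is controlled by $\der(x,y)$ alone, and the angular term is absorbed into the constant.

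First, orient $g$ and $h$ so that $\gamma$ crosses both positively; being parallel, they then share the same ideal ends in the coherent sense. As in the proof of Lemma~\ref{lem:TTest}, let $U_1^\rho$ be the unipotent element fixing $\xi_\rho(g^+)$ and sending $\xi^\mathrm{op}_\rho(g^-)$ to $\xi^\mathrm{op}_\rho(h^-)$, and let $U_2^\rho$ be the unipotent element fixing $\xi^\mathrm{op}_\rho(h^-)$ and sending $\xi_\rho(g^+)$ to $\xi_\rho(h^+)$. Set $U^\rho_{gh}=U_2^\rho U_1^\rho$. By \cite[Theorem~6.1(3)]{BCLS2015},
\[
\dG(1,U^\rho_{gh})\le C_1\bigl(\der_\infty(g^+,h^+)^\nu+\der_\infty(g^-,h^-)^\nu\bigr).
\]

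The key step is to bound $\der_\infty(g^\pm,h^\pm)$ by a constant times $\der(x,y)$. If the angles are bounded away from $0$ and $\pi$, say $g,h\in\mathfrak{A}^+(\vec\gamma,\theta)$, then Lemma~\ref{lem:parallel} gives $\der_\infty(g^+,h^+)\le\dUT(\vec g_x,\vec h_y)\le C\der(x,y)$. The same argument with the reversed orientations controls $\der_\infty(g^-,h^-)$. For general parallel geodesics I would avoid any angle hypothesis through a geometric observation. Two disjoint geodesics crossing the compact arc $\gamma$ bound, together with $\gamma$, a region whose ideal boundary arcs between $g^+,h^+$ and between $g^-,h^-$ are cut off by the segment $[x,y]$. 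Hence each endpoint pair lies in the shadow, seen from a fixed basepoint near $\gamma$, of the segment $[x,y]$, or of a slight extension of it. Concretely, one can use the area argument of Lemma~\ref{lem:parallel}: the visual angle of $[x,y]$ from a point at bounded distance is comparable to its length, up to constants depending on $\gamma$. I expect this uniformity in the angle to be the main obstacle. If the shadow argument turns out to be delicate, a fallback is to note that in the applications all leaves lie in some $\mathfrak{A}(\gamma,\theta')$ by Lemma~\ref{lem:divrad}, and then let the constants depend on $\theta$ as well.

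Once this is established, the rest follows Lemma~\ref{lem:TTest}. Compactness of $K$ and the bound on $U^\rho_{gh}$ give $\|U^\rho_{gh}\|_{\mathrm{op}}<C_3$ and $\|\mathrm{T}^{-X}_g\|_{\mathrm{op}}\le C_5\max_\beta|\e^{-\beta(X)}|$. Using \eqref{eq:distancecommutator},
\[
\dG(\mathrm{T}^X_g,\mathrm{T}^X_h)\lesssim\bigl(1+C_5\max_\beta|\e^{-\beta(X)}|\bigr)\der(x,y)^\nu ,
\]
and using \eqref{eq:distanceconj}, $\dG(\mathrm{T}^X_h,\mathrm{T}^Y_h)\le C_3\|X-Y\|$. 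The triangle inequality then gives the stated bound, with constants depending only on $K$ and $\gamma$.
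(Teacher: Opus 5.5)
Your proposal is the paper's proof: the paper simply reruns the argument of Lemma~\ref{lem:TTest} with Lemma~\ref{lem:parallel} in place of (\ref{eq:tangentvsprojective}), as you do. Your worry about angles is well founded. Lemma~\ref{lem:parallel} requires $\vec g,\vec h\in\mathfrak{A}^+(\vec\gamma,\theta)$ and its constant depends on $\theta$, so the paper's one-line argument tacitly uses your fallback, which suffices because all leaves involved in Set-up~\ref{set} lie in $\mathfrak{A}(\widetilde{\gamma},\theta)$. Your shadow idea also works and removes the $\theta$-dependence: after normalizing $g$ to the imaginary axis with $x=\ui$, every geodesic through $y$ disjoint from $g$ has endpoints within visual distance $\lesssim\sinh\mathrm{d}(y,g)\lesssim\mathrm{d}(x,y)$ of those of $g$, with constants depending only on $\gamma$.
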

\begin{proof}
    The proof is identical to that of Lemma~\ref{lem:TTest} except for using the inequality from Lemma~\ref{lem:parallel} instead of (\ref{eq:tangentvsprojective}).
\end{proof}

We introduce some notion. For intervals $I,J\subset \widetilde{\gamma}$,  we write $I<J$ if  $I^1<x$ for all $x\in J$. Given two collections of intervals $\mathcal{P}$ and $\mathcal{Q}$, write $\mathcal{P}<\mathcal{Q}$ if $I<J$ for all $I\in \mathcal{P}$ and  $J\in \mathcal{Q}$. We say an interval $J$ is bounded by $I$ and $K$ if $I^0<J < K^1$. 

Now we state and prove one of the main theorems of this paper. 

\begin{theorem}\label{thm:convergence}  Let $\mathsf{G}$ be a semisimple complex connected Lie group and Let $\{\mathsf{c}_n\}$ be a sequence of twisted $\mathfrak{h}_\Theta$-valued measured laminations converging weakly to  a twisted $\mathfrak{h}_\Theta$-valued measured lamination $\mathsf{c}$.  Let $Z$ be an open set of $\mathbb{C}\times \Hom_\Theta(\pi_1(S),\mathsf{G})$ such that $\Lambda^{z\mathsf{c}_n}_{x_0}\rho$ is well-defined for all $(z,\rho)\in Z$ and all $n$. Then, the sequence of cataclysm deformations $(z,\rho)\mapsto \Lambda^{z\mathsf{c}_n}(\rho)$ converges to $(z,\rho)\mapsto \Lambda^{z\mathsf{c}}(\rho)$ uniformly on each compact set of $Z$.
\end{theorem}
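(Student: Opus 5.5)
Since $\Lambda^{z\mathsf{c}}_{x_0}\rho(\eta)=\varphi^{z\mathsf{c}}_{\widetilde\eta}(\rho)\rho(\eta)$ and $\pi_1(S)$ is finitely generated, it suffices to fix a generator $\eta$ and the arc $\gamma=\widetilde\eta$ from $x_0$ to $\eta x_0$. We then show that $\varphi^{z\mathsf{c}_n}_\gamma(\rho)\to\varphi^{z\mathsf{c}}_\gamma(\rho)$ in $\dG$, uniformly for $(z,\rho)$ in a compact $K\subset Z$.

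We argue by subsequences. By Lemma~\ref{lem:limit}, for any $\delta>0$ and large $n$, $|\mathsf{c}_n|$ lies $\delta$-close to one of finitely many maximal laminations $\tau_1,\dots,\tau_k\supset\supp(\mathsf{c})$, inside an admissible neighborhood of $\tau_i$. We may therefore restrict to a subsequence with $|\mathsf{c}_n|\to\tau=\tau_m$, regard $\mathsf{c}$ as a cocycle on $\tau$, and use Lemma~\ref{lem:transverse} to get $\mathsf{c}_n\to\mathsf{c}$ in $\WT(\mathcal{T}_m;\mathfrak{h}_\Theta)$. In particular $\|z\mathsf{c}_n\|$ is uniformly bounded on $K$, so every value $z\mathsf{c}_n(I)$, and hence every factor $\max_\beta|\e^{-\beta(X)}|$ appearing in Lemmas~\ref{lem:TTest} and~\ref{lem:TTestparallel}, is uniformly bounded.

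\textbf{Step 1: uniform finite truncation.} For a finite set $\mathcal{J}\subset\mathcal{C}_\gamma(\mu)$, let $P_{\mathcal{J}}$ denote the ordered product $\vec\prod_{J\in\mathcal{J}}\mathrm{T}^{\mathsf{c}}_{J^0}\mathrm{T}^{-\mathsf{c}}_{J^1}\cdot\mathrm{T}^{\mathsf{c}}_{F^0}$. We show that for every $\epsilon>0$ there is a finite $\mathcal{J}$ with $\dG(\varphi,P_{\mathcal{J}})<\epsilon$ simultaneously for all large $n$ and all $(z,\rho)\in K$, using the lamination $|\mathsf{c}_n|$.

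The argument follows the convergence proof (Lemma~\ref{lem:domain}). Inserting or deleting a gap between consecutive chosen intervals $J_i$ and $J_{i+1}$ replaces $\mathrm{T}_{J_i^1}^{-\mathsf{c}}\mathrm{T}_{J_{i+1}^0}^{\mathsf{c}}$ by a product whose distance from $1$ is controlled, via Lemma~\ref{lem:TTestparallel} and (\ref{eq:distanceconj})--(\ref{eq:deletemiddle}), by $\der(J_i^1,J_{i+1}^0)^\nu$. Here the $\mathsf{c}$-values are equal across the gap modulo the cocycle mass inside it. We sum these contributions with Lemma~\ref{lem:exponentcase}(\ref{eq:lengthbound2}), which gives a bound by $Q(\sum_{L\notin\mathcal{J}}\ell(L))^{\nu'}$. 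The constants $Q,\nu'$ are uniform over the admissible neighborhood, by Lemma~\ref{lem:divrad}.

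To make the tail uniform in $n$, choose $\mathcal{J}$ as the components of $\gamma\setminus\mathcal{N}_\delta(\tau)$, with $\delta$ small via Lemma~\ref{lem:hdimzero}. For $|\mathsf{c}_n|$ within $\delta/2$ of $\tau$, these components are approximated by intervals of $\mathcal{C}_\gamma(|\mathsf{c}_n|)$, and the complement has total length at most that of $\mathcal{N}_\delta(\gamma\cap\tau)$. The operator norms of the accumulated partial products must stay bounded; this follows from the uniform bound on $\|z\mathsf{c}_n\|$, handled with the same geometric-series estimate used in Pfeil's construction.

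\textbf{Step 2: finite products converge.} With the finite family fixed (one set of intervals for $\mathsf{c}$, and corresponding intervals for $\mathsf{c}_n$), Lemma~\ref{lem:approx} gives that the boundary leaves $\widehat{(\mu_n)}_{J^0}$ converge in $\PT$ to those of $\tau$. The partial sums $z\mathsf{c}_n(J)$ converge to $z\mathsf{c}(J)$ by weak convergence, after adjusting interval endpoints to lie off the laminations; countable additivity of measured laminations controls the mass in the small discrepancy region. Lemma~\ref{lem:TTest} then gives uniform convergence of each factor over $(z,\rho)\in K$, and a finite product of uniformly convergent factors with bounded norms converges uniformly.

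\textbf{Conclusion and main obstacle.} Combining the two steps gives $\dG(\varphi^{z\mathsf{c}_n}_\gamma,\varphi^{z\mathsf{c}}_\gamma)<3\epsilon$ eventually. Every subsequence has a further subsequence along which this holds, so the whole sequence converges. The main obstacle is the uniformity in Step 1: the lower bound in Lemma~\ref{lem:divrad} may degenerate as $\ell(J_{\min})\to0$. This is why the plan relies only on the upper bound $\ell(J)\le B''\e^{-C'\dr(J)}$ and on Lemma~\ref{lem:exponentcase}, whose constants are uniform over the admissible neighborhood.
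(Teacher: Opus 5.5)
Your overall plan matches the paper's: pass to a subsequence with $|\mathsf{c}_n|\to\tau$, discard the complementary intervals in a thin neighbourhood of $\tau\cap\gamma$ at a cost controlled by Lemma~\ref{lem:exponentcase}, then compare the finitely many surviving factors using Lemma~\ref{lem:approx}, convergence of cocycle values, and Lemma~\ref{lem:TTest}. However, the justification you give for Step~1 does not work as written. You bound the bracket $\mathrm{T}^{-\mathsf{c}(J_i)}_{J_i^1}\mathrm{T}^{\mathsf{c}(J_{i+1})}_{J_{i+1}^0}$ across a gap by $\der(J_i^1,J_{i+1}^0)^\nu$, treating the two cocycle values as equal ``modulo the cocycle mass inside it''. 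Lemma~\ref{lem:TTestparallel} only gives $A\bigl(\der(J_i^1,J_{i+1}^0)^\nu+\|\mathsf{c}(\text{gap})\|\bigr)$. The retained intervals carry no mass and the gaps contain all of $\supp(\mathsf{c})\cap\gamma$, so the mass terms sum to at least $\|\mathsf{c}(\gamma)\|$, however small $\delta$ is. These brackets are exactly the shears that must be kept, and they are what your Step~2 compares. They are not close to $1$, and Lemma~\ref{lem:exponentcase}(\ref{eq:lengthbound2}) controls only their $\der^\nu$ part.

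What is actually small is the discrepancy between the full product and $P_{\mathcal{J}}$. Enlarging $\mathcal{J}$ only inserts factors $\mathrm{T}^{\mathsf{c}(L)}_{L^0}\mathrm{T}^{-\mathsf{c}(L)}_{L^1}$ with $L\notin\mathcal{J}$. Each of these has the \emph{same} cocycle value at both ends, so it lies within $A\ell(L)^\nu$ of $1$. Deleting them one at a time with (\ref{eq:deletemiddle}) costs at most $AM\sum_{L\notin\mathcal{J}}\ell(L)^\nu\le AMQ\bigl(\sum_{L\notin\mathcal{J}}\ell(L)\bigr)^{\nu'}$ by Lemma~\ref{lem:exponentcase}(\ref{eq:lengthbound1}); this is the paper's Claim~\ref{est1}. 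Applying the same estimate to all of $\mathcal{C}_\gamma(|\mathsf{c}_n|)$ puts every ordered sub-product within $AQ\,\ell(\gamma)^{\nu'}$ of $1$, which gives the uniform operator-norm bound $M$ directly. The paper uses the gap-length bound you wrote only across train-track components disjoint from $\supp(\mathsf{c})$, where $\|z\mathsf{c}_n(V_j)\|\to 0$ by weak convergence (Claim~\ref{est2}).

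With Step~1 repaired this way, your route is slightly leaner than the paper's. Your Step~2 compares every surviving factor directly via $\mathsf{c}_n([x_0,x_C])\to\mathsf{c}([x_0,x_C])$ at points $x_C$ off all the laminations, so the paper's separate removal of negligible pairs becomes unnecessary. Your sub-subsequence argument also makes explicit the passage from one subsequence to the whole sequence.
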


\begin{proof}
Fix the universal covering $\mathbb{H}^2\to S$. Let $\mu$ and $\mu_n$ be the lifts of $|\mathsf{c}|$, and $|\mathsf{c}_n|$ to $\mathbb{H}^2$ respectively.  We also fix a base-point $x_0\in \mathbb{H}^2\setminus\left(\mu\cup\bigcup_{n>0}\mu_n\right)$. Let 
\[
\Lambda:Z\to \Hom_\Theta(\pi_1(S), \mathsf{G})
\]
be the map given by $\Lambda(z, \rho) = \Lambda^{z\mathsf{c}}_{x_0} (\rho)$ and similarly define $\Lambda_n(z,\rho) = \Lambda^{z\mathsf{c}_n}_{x_0}(\rho)$. We will show that the sequence $\{\Lambda_n\}$ converges to $\Lambda$ uniformly on compact subsets of $Z\subset \mathbb{C}\times \Hom_\Theta(\pi_1(S),\mathsf{G})$. 

To this end, let $\{\gamma_1,\cdots, \gamma_{2g}\}$ be a standard generating set for $\pi_1(S)$ and let $C$ be a compact set in $Z$. We may cover $C$ by finitely many compact sets of the form $D\times K\subset Z$. Thus, it suffices to show that for any $\epsilon>0$ there exists $N$ such that  $\mathrm{d}(\Lambda_n(z,\rho),\Lambda(z, \rho))<\epsilon$ for all $z\in D$, $\rho \in K$, and $n>N$, where $\mathrm{d}$ is any metric on $\Hom(\pi_1(S),\mathsf{G})$, which may be chosen to be 
\[
\mathrm{d}(\rho_1,\rho_2) = \sup_{i=1,2,\cdots,2g} \dG(\rho_1(\gamma_i),\rho_2(\gamma_i))
\]
with respect to the metric $\dG$ on $\mathsf{G}$.

Let $\widetilde{\gamma}_i$ be the geodesic arc joining $x_0$ and $\gamma_ix_0$ oriented  from $x_0$ to $\gamma_i x_0$.  Let $\mathcal{C}=\mathcal{C}_{\widetilde{\gamma}_i}(\mu)$ and $\mathcal{C}_n = \mathcal{C}_{\widetilde{\gamma}_i}(\mu_n)$ be the sets of complementary intervals of $\widetilde{\gamma}_i \setminus \mu$ and $\widetilde{\gamma}_i\setminus \mu_n$, respectively. By definition, we have
\[
\Lambda_n(z,\rho) (\gamma_i)= \varphi_{\widetilde{\gamma_i}}^{z\mathsf{c}_n}(\rho)\cdot \rho(\gamma_i).
\]
Thus, the proof reduces to showing that for each $i=1,2,\cdots,2g$, the sequence $\{\varphi_{\widetilde{\gamma_i}} ^{z\mathsf{c}_n}(\rho)\}$ converges to $\varphi_{\widetilde{\gamma_i}}^{z\mathsf{c}}(\rho)$ uniformly on $D\times K$.

By Lemma~\ref{lem:limit}, there are finitely many maximal geodesic laminations $\tau_1, \cdots, \tau_k$ to which a subsequence of $\{|\mathsf{c}_n|\}$ converges. Therefore, by passing to a subsequence, we may assume that $\{|\mathsf{c}_n|\}$ converges to $\tau_1$, say. Since $\tau_1$ contains $\supp(\mathsf{c})$, the underlying lamination of $|\mathsf{c}|$ can be replaced with $\tau_1$. In other words, we may assume that $\mu$ is the lift of $\tau_1$ to the universal cover $\mathbb{H}^2$ and $\{\mu_n\}$ converges to $\mu$ in the Hausdorff topology.

 For convenience, we will omit the index for the generators of $\pi_1(S)$ and simply write $\widetilde{\gamma}$ instead of $\widetilde{\gamma}_i$.  

\begin{setup}\label{set}
Given $\epsilon>0$, we prepare a number $0<\delta<1$, and an integer $N$ together with finite subsets $\mathcal{I}\subset\mathcal{C}$, and $\mathcal{I}_n\subset\mathcal{C}_n$. The constants $\delta$ and $N$ will depend only on $\epsilon$, $D$, $K$, and $\widetilde{\gamma}$. On the other hand,  $\mathcal{I}$ and $\mathcal{I}_n$ will be chosen for each $(z,\rho)\in D\times K$. 

\begin{enumerate}

\item\label{set:angle} There is a constant $\theta_0$ such that $0<\frac{\pi}{2}-\theta_0<\angle(\widetilde{\gamma},g)<\frac{\pi}{2}+\theta_0<\pi$ for all leaves $g$ of $\mu$. Let $\theta$ be a number satisfying $\theta_0<\theta<\frac{\pi}{2}$. Choose an integer $N_1$ such that $g\in \mathfrak{A}(\widetilde{\gamma},\theta)$ for all leaves $g$ of $\mu_n$ with $n>N_1$. This number $N_1$ depends only on $\widetilde{\gamma}$ and $\mu$.

 Due to Lemma \ref{lem:transverse},  $\{\mathsf{c}_n\}$ converges to $\mathsf{c}$ as a sequence of elements in $\WT(\mathcal{T}_0;\mathfrak{h}_\Theta)$ for some train-track $\mathcal{T}_0$ carrying $\mu$. We regard $\mathsf{c}_n$ and $\mathsf{c}$ as elements of $\WT(\mathcal{T}_0;\mathfrak{h}_\Theta)$.  

\item\label{set:shearing} Since $D$ is compact, there exists a bound $L$ such that $\|z\mathsf{c}([I,J])\|<L$ for all $I,J\in \mathcal{C}$, and $z\in D$ and that $\|z\mathsf{c}_n([I,J])\|<L$ for all $I,J\in \mathcal{C}_n$ with $n>N_1$, and $z\in D$. By  Lemma~\ref{lem:TTest}, Lemma~\ref{lem:TTestparallel} and Set-up~\ref{set}(\ref{set:angle}), one can find constants $A>0$, and $\nu>0$ such that 
\[
     \dG(\mathrm{T}^{X}_g(\rho), \mathrm{T}^{Y}_h(\rho))<A \cdot\left( \der(g\cap \widetilde{\gamma},h\cap \widetilde{\gamma})^\nu + \|X-Y\|\right)
\]
for all $\rho\in  K$, all leaves $g$ and $h$ of $\mu_n$ with $n>N_1$ and all $X,Y\in \mathfrak{h}_\Theta$ with $\|X\|,\|Y\|<L$
\[
    \dG(\mathrm{T}^{X}_g(\rho), \mathrm{T}^{Y}_h(\rho))<A \cdot\left( \der(g\cap \widetilde{\gamma},h\cap \widetilde{\gamma})^\nu + \|X-Y\|\right)
\]
for all $\rho\in  K$, all leaves $g$ and $h$ of $\mu$, and all $X,Y\in \mathfrak{h}_\Theta$ with $\|X\|,\|Y\|<L$ and
\[
\dG(\mathrm{T}^{X}_g(\rho), \mathrm{T}^{Y}_h(\rho))<A \cdot\left( \dPT(\widehat{g}_x,\widehat{h}_y)^\nu + \|X-Y\|\right)
\]
for all $\rho\in  K$, all leaves $g$ of $\mu_n$ with $n>N_1$, all leaves $h$ of $\mu$  and all $X,Y\in \mathfrak{h}_\Theta$ with $\|X\|,\|Y\|<L$. Here, $x=g\cap \widetilde{\gamma}$ and $y=h\cap \widetilde{\gamma}$.

\item\label{set:admissible} Choose an admissible neighborhood $U$ of $\mu$. Find $N_2$ such that $\mu_n\in U$ for all $n>N_2$. Notice that $N_2$ depends only on  $U$. Let $Q$, and $\nu'$ be constants from Lemma~\ref{lem:exponentcase} with respect to $U$ and the exponent $\nu$ from Set-up~\ref{set}(\ref{set:shearing}). They depend only on $U$, $\nu$, and $\widetilde{\gamma}$. 

\item \label{set:operatornorm} Let $\mathcal{J}_n^k=\{J\in \mathcal{C}_n\mid \dr_{|\mathsf{c}_n|} (J)\le k \}$ and let
\[
M= \sup_{\substack{(z,\rho)\in D\times K \\ n=1,2,\cdots,\infty\\k=1,2,\cdots}}\left\| \left(\vec{\prod}_{J\in \mathcal{J}_n ^k}\mathrm{T}^{z\mathsf{c}_n}_{J^0}(\rho)\mathrm{T}^{-z\mathsf{c}_n}_{J^1}(\rho)\right) \mathrm{T}^{z\mathsf{c}_n}_{F_n^0}(\rho)\right\|_{\mathrm{op}},
\]
with conventions $\mathsf{c}_\infty=\mathsf{c}$ and $\mathcal{C}_\infty = \mathcal{C}$. This number is finite since the infinite product converges to $\varphi^{z\mathsf{c}_n}_{\widetilde{\gamma}}(\rho)$ and since $D\times K$ is compact. $M$ depends on $D$, $K$ and $\widetilde{\gamma}$. 

\item\label{set:delta} Choose $0<\delta<1$ so that
\[
2\delta + 2AQM\delta^{\nu'} + AM\delta+2AQM \delta^{\nu'}+4AM\delta <\epsilon.
\]
As all coefficients depend only on $D$, $K$ and $\widetilde{\gamma}$, $\delta$ also depends only on these data.

\item \label{set:totallength} We choose $0<\delta_0<\min\{1, \delta/2\}$ such that the total arc-length of $\mathcal{N}_{\delta_0}(\mu\cap \widetilde{\gamma})$ is at most $\delta$. This is possible by Lemma~\ref{lem:hdimzero}.  Let $\mathcal{T}$ be a train-track carrying $|\mathsf{c}|$ that is contained in $\mathcal{N}_{\delta_0}(|\mathsf{c}|)$.  Let $\widetilde{\mathcal{T}}$ be the lift of $\mathcal{T}$ to $\mathbb{H}^2$. We may choose $\mathcal{T}$ so that the boundary of $\widetilde{\gamma}\cap \widetilde{\mathcal{T}}$ does not meet $\mu_i$ for all $i$ and that $\gamma$ intersects $\mathcal{T}$ along generic ties.

\item \label{set:traintrack}
Lemma~\ref{lem:traintracknbd} guarantees that there exists $N_3$  such that whenever $n>N_3$, each $|\mathsf{c}_n|$ is carried by $\mathcal{T}$.

\item\label{set:partition} Note that $\widetilde{\gamma}\cap \widetilde{\mathcal{T}}$ is a finite union of intervals. Let $\mathfrak{U}=\{U_1, \cdots, U_d\}$ be the connected components of $\widetilde{\mathcal{T}}\cap \widetilde{\gamma}$, enumerated from left to right. Let
\begin{align*}
    \mathfrak{V}&=\{V\in \mathfrak{U}\mid V \cap \supp(\mathsf{c}) =\emptyset\}\\
    \mathfrak{W}&=\{W\in \mathfrak{U}\mid W \cap \supp(\mathsf{c}) \ne\emptyset\}
\end{align*}
so that $\mathfrak{U}=\mathfrak{V}\cup \mathfrak{W}$. Enumerate $\mathfrak{V}=\{V_1,\cdots, V_e\}$ and $\mathfrak{W}=\{W_1, \cdots, W_f\}$ from left to right. 

\item \label{set:I} For each consecutive pair $(U_i,U_{i+1})$ of elements in $\mathfrak{U}$, there exists $I_i\in \mathcal{C}$ intersecting both $U_i$ and $U_{i+1}$. Choose a finite subset containing all of these $I_i$, $i=1,2,\cdots, d-1$ and extend it to a finite set  $\mathcal{I}=\{I\in \mathcal{C}\mid \dr_{|\mathsf{c}|}(I)\le k\}$ for some large enough $k$  such that 
\[
\mathrm{d}_\mathsf{G} \left(\vec{\prod}_{J\in\mathcal{I}}\left(\mathrm{T}^{z\mathsf{c}}_{J^0}(\rho)\mathrm{T}^{-z\mathsf{c}}_{J^1}(\rho)\right) \mathrm{T}^{z\mathsf{c}}_{F^0}(\rho),\varphi^{z\mathsf{c}}_{\widetilde{\gamma}}(\rho)\right)<\delta.
\]
As before, $E$ and $F$ are the elements of $\mathcal{C}$ containing $x_0$ and $\gamma_ix_0$. See Figure~\ref{fig:setup}.

\item \label{set:totalcocycle} Choose $N_4$ such that 
\[
\sum_{j=1}^e\| {z\mathsf{c}}_n(V_j)\|<\delta
\]
and that 
\[
\|z\mathsf{c}_n([x_0,U_i^0])-z\mathsf{c}([x_0, U_i^0])\|<\frac{\delta}{f}
\]
for all $n>N_4$, all $z\in D$ and all $i=1,2,\cdots,d$. This is possible by Lemma~\ref{lem:transverse}.  

\item \label{set:dPTbound} For $n>N_1$, leaves of $|\mathsf{c}_n|$ and leaves of $|\mathsf{c}|$ are members of $\mathfrak{A}(\widetilde{\gamma},\theta)$, where $\theta$ is obtained in Set-up~\ref{set}(\ref{set:angle}). We find a constant $\delta_1<\delta_0$ such that whenever $g,h\in \mathfrak{A}(\widetilde{\gamma},\theta)$ satisfy $\mathrm{D_H}(\widehat{g}, \widehat{h})<\delta_1$, we have $\dPT(\widehat{g}_x,\widehat{h}_y)<(\delta /f)^{1/\nu}$. This is possible by Lemma~\ref{lem:approx}. Choose $N_5$ such that  $\mathrm{D_H}(|\mathsf{c}_n|,|\mathsf{c}|)<\delta_1$ for all $n>N_5$.  Set $N=\max(N_1,N_2,N_3,N_4,N_5)$.

\item \label{set:In} Assume $n>N$. For each $i=1,2,\cdots, d-1$, let $B_i\in \mathcal{C}_n$ be the one that intersects both $U_i$ and $U_{i+1}$. As we did in Set-up~\ref{set}(\ref{set:I}),   we  form a set  containing all of these $B_1, \cdots, B_{d-1}$  and extend it  to a finite subset $\mathcal{I}_n=\{I\in  \mathcal{C}_n\mid \dr_{|\mathsf{c}|}(I)\le k\}$ for some large $k$ such that 
    \[
\dG\left(\vec{\prod_{J\in\mathcal{I}_n}} \left(\mathrm{T}^{z\mathsf{c}_n}_{J^0} \mathrm{T}^{-z\mathsf{c}_n}_{J^1} \right)\mathrm{T}_{F_n^0}^{z\mathsf{c}_n}, \varphi^{z\mathsf{c}_n}_{\widetilde{\gamma}}\right)<\delta
\]
where $E_n$ and $F_n$ denote the elements of $\mathcal{C}_n$ containing $x_0$ and $\gamma_i x_0$, respectively. 
\end{enumerate}
\end{setup}

\begin{figure}[!htb]
    \centering
    \includegraphics[width=\linewidth]{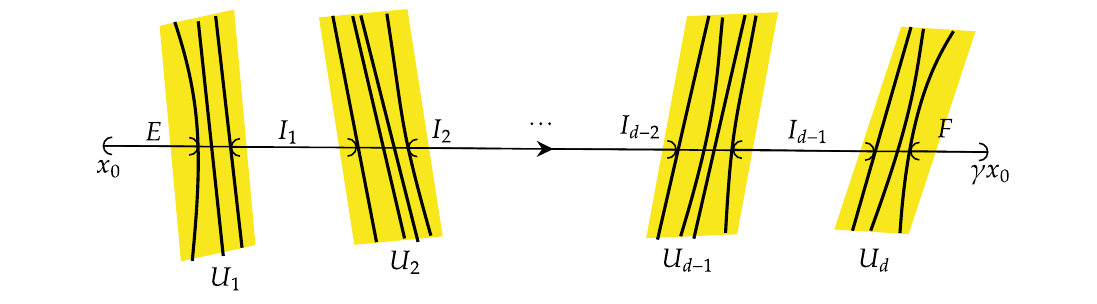}
    \caption{Configuration for Set-up~\ref{set}. The horizontal line denotes $\widetilde{\gamma}$.  Yellow strips are parts of $\widetilde{\mathcal{T}}$. Thick vertical lines are leaves of the lamination $\mu$.}
    \label{fig:setup}
\end{figure}

Now under the assumption $n>N$, we will make several estimates. Let 
\[
\mathcal{Q}_n^i :=\{J\in \mathcal{I}_n\mid J\subset U_i\}
\]
for each $U_i\in \mathfrak{U}$. Let
\[
\mathcal{Q}^1_n\cup\mathcal{P}^1_n\cup\mathcal{Q}^2_n\cup\mathcal{P}^2_n\cup\mathcal{Q}^2_n\cup \cdots\cup \mathcal{P}^{d-1}_n\cup \mathcal{Q}^{d}_n
\]
be a partition of $\mathcal{I}_n$ such that $\mathcal{Q}_n ^1<\mathcal{P}^1_n <  \mathcal{Q}_n^2< \mathcal{P}^2_n<\cdots<\mathcal{Q}_n^{d}$. We formally allow that some $\mathcal{Q}_n^i$ are empty. Also notice that each $\mathcal{P}^i_n$ consists of a single element bounded by $U_i$ and $U_{i+1}$. 

Similarly, we define 
\[
\mathcal{Q}^i=\{J\in \mathcal{I}\mid J\subset U_i\}
\]
and form a partition 
\[
\mathcal{I} = \mathcal{Q}^1\cup\mathcal{P}^1\cup\mathcal{Q}^2\cup\mathcal{P}^2\cup\mathcal{Q}^2\cup \cdots\cup \mathcal{P}^{d-1}\cup \mathcal{Q}^{d}
\]
with $\mathcal{Q}^1<\mathcal{P}^1<\mathcal{Q}^2<\cdots<\mathcal{P}^{d-1}<\mathcal{Q}^d$.

\begin{claim}\label{est1}
    For each $\mathcal{Q}^{i}_n$, we have the estimate
    \[
    \dG\left(\mathrm{Id},\vec{\prod_{J\in \mathcal{Q}^i_n}}\mathrm{T}^{z\mathsf{c}_n}_{J^0}\mathrm{T}^{-z\mathsf{c}_n}_{J^1}\right)< A\cdot\sum_{J\in \mathcal{Q}_n^i} \ell(J)^\nu
    \]
    where $A>0$ and $\nu>0$ are constants depending only on the compact sets $D$, $K$, and the transverse arc $\gamma$. The same estimate holds for each $\mathcal{Q}^i$. 
\end{claim}
\begin{proof}
Due to Set-up~\ref{set}(\ref{set:shearing}), we have that
\[
 \dG\left(\mathrm{Id}, \mathrm{T}^{z\mathsf{c}_n}_{J^0}\mathrm{T}^{-z\mathsf{c}_n}_{J^1}\right) < A\cdot \mathrm{d}(J^0,J^1)^\nu
\]
for each $J\in \mathcal{Q}_n^i$, where $A$ and $\nu$ depend on $D$, $K$ and  $\gamma$. Hence, by (\ref{eq:product}, 
    \[
    \dG\left(\mathrm{Id},\vec{\prod_{J\in \mathcal{Q}^i_n}}\mathrm{T}^{z\mathsf{c}_n}_{J^0}\mathrm{T}^{-z\mathsf{c}_n}_{J^1}\right)\le A\cdot \sum_{J\in \mathcal{Q}_n^i} \ell(J)^\nu.\qedhere
    \]
    \end{proof}

For each $\mathcal{Q}_n^i$ and $\mathcal{Q}^i$, we can replace the product
\[
\vec{\prod_{J\in \mathcal{Q}^i_n}}\mathrm{T}^{z\mathsf{c}_n}_{J^0}\mathrm{T}^{-z\mathsf{c}_n}_{J^1}\quad\text{and}\quad \vec{\prod_{J\in \mathcal{Q}^i}}\mathrm{T}^{z\mathsf{c}}_{J^0}\mathrm{T}^{-z\mathsf{c}}_{J^1}
\]
with the identity with errors of
\[
AM\cdot \sum_{J\in \mathcal{Q}_n^i} \ell(J)^\nu, \quad \text{and} \quad AM\cdot\sum_{J\in \mathcal{Q}^i} \ell(J)^\nu,
\]
respectively. Here, we applied (\ref{eq:deletemiddle}) inductively. The constant $M$ comes from Set-up~\ref{set}(\ref{set:operatornorm}). Observe that, by Set-up~\ref{set}(\ref{set:totallength}),  the elements in $\mathcal{Q}_n^i$ and $\mathcal{Q}^i$ are contained in $\mathcal{N}_{\delta_0}(\mu\cap \widetilde{\gamma})$. By Set-up~\ref{set}(\ref{set:totallength}),  the total arc length of $\mathcal{N}_{\delta_0}(\mu\cap \widetilde{\gamma})$ is less than $\delta$. Combining this with Set-up~\ref{set}(\ref{set:admissible}), it follows that the total error for these replacements is bounded from above by 
\[
2 AQ M \cdot \delta^{\nu'}.
\]
The constants $M$, $Q$, and $\nu'$ come from Set-up~\ref{set}(\ref{set:admissible}) and they depend on $D$, $K$ and $\gamma$. 

After this process, the original products may be reduced to
\[
\vec{\prod_{J\in \mathcal{P}^1\cup\mathcal{P}^2\cup\cdots\cup \mathcal{P}^{d-1}}} (\mathrm{T}^{z\mathsf{c}}_{J^0}\mathrm{T}^{-z\mathsf{c}}_{J^1})\mathrm{T}^{z\mathsf{c}}_{F^0}
\]
and to
\[
\vec{\prod_{J\in \mathcal{P}_n^1\cup\mathcal{P}^2_n\cup\cdots\cup \mathcal{P}_n^{d-1}}} (\mathrm{T}^{z\mathsf{c}_n}_{J^0}\mathrm{T}^{-z\mathsf{c}_n}_{J^1})\mathrm{T}^{z\mathsf{c}_n}_{F_n^0}.
\]
According to Set-up~\ref{set}(\ref{set:In}), each $\mathcal{P}_n^i$ consists of a single element $B_i\in \mathcal{I}_n$ that is bounded between $U_i$ and $U_{i+1}$. Thus, a consecutive pair $(B_i, B_{i+1})$ bounds $U_{i+1}\in \mathfrak{U}$. We call a pair $(B_i, B_{i+1})$ of elements in $\mathcal{I}_n\cup\{F_n\}$ negligible if they bound $U_{i+1}\in \mathfrak{V}$. Let $\mathfrak{N}_n$ be the set of negligible pairs. We similarly define the set of negligible pairs $\mathfrak{N}$ for $\mathcal{I}\cup\{F\}$.

\begin{claim}\label{est2} For each  $(B_i,B_{i+1})\in \mathfrak{N}_n$ bounding $V_j\in \mathfrak{V}$, we have
    \[
        \dG\left(\mathrm{T}^{-z\mathsf{c}_n}_{B_i^1}\mathrm{T}^{z\mathsf{c}_n}_{B_{i+1}^0},\,\mathrm{Id}\right)< A\cdot(\|z\mathsf{c}_n(V_j)\|+\der (B^1_i, B^0 _{i+1})^\nu)
    \]
    for some constants $A>0$ and $\nu>0$ depending on $D$, $K$, and  $\gamma$. The same estimate holds for each pair in $\mathfrak{N}$.
\end{claim}
\begin{proof} By Set-up~\ref{set}(\ref{set:shearing}), we have
\begin{align*}
        \dG\left(\mathrm{T}^{-z\mathsf{c}_n}_{B_i^1}\mathrm{T}^{z\mathsf{c}_n}_{B_{i+1}^0},\,\mathrm{Id}\right)&=
        \dG\left(\mathrm{T}^{z\mathsf{c}_n}_{B_{i+1}^0},\,\mathrm{T}^{z\mathsf{c}_n}_{B_i^1}\right)\\
        &< A\cdot(\der (B^1_i, B^0 _{i+1})^\nu+\|z\mathsf{c}_n([x_0,B_{i+1}])-z\mathsf{c}_n([x_0,B_i])\|)\\
        &= A\cdot(\der (B^1_i, B^0 _{i+1})^\nu+\|z\mathsf{c}_n(V_j)\|)
\end{align*}
where $A$ and $\nu$ depend on $D$, $K$ and $\gamma$.
\end{proof}

By Claim~\ref{est2}, whenever we see a term of the form
\[
\mathrm{T}^{-z\mathsf{c}_n}_{B_i^1}\mathrm{T}^{z\mathsf{c}_n}_{B_{i+1}^0},
\]
with $(B_i,B_{i+1})\in \mathfrak{N}_n$, in the product 
\[
\vec{\prod_{J\in \mathcal{P}_n^1\cup\mathcal{P}^2_n\cup\cdots\cup \mathcal{P}_n^{d+1}}} (\mathrm{T}^{z\mathsf{c}_n}_{J^0}\mathrm{T}^{-z\mathsf{c}_n}_{J^1})\mathrm{T}^{z\mathsf{c}_n}_{F_n^0}
\]
we can remove it with an error of
\[
AM\cdot(\|z\mathsf{c}_n(V_j)\|+\der (B^1_i, B^0 _{i+1})^\nu).
\]
The same replacement for the product 
\[
\vec{\prod_{J\in\mathcal{I}}}\left(\mathrm{T}^{z\mathsf{c}}_{J^0}\mathrm{T}^{-z\mathsf{c}}_{J^1}\right) \mathrm{T}^{z\mathsf{c}}_{F^0}
\]
costs
\[
AM\cdot \der (B^1_i, B^0 _{i+1})^\nu.
\]

Therefore,  the total error of such modifications is at most
\[
AM \cdot \sum_{i=1}^e ( \|z\mathsf{c}_n(V_i)\|+ 2\cdot \ell(V_i)^\nu)<AM\delta+2AQM \delta^{\nu'} 
\]
where we used Set-up~\ref{set}(\ref{set:totalcocycle}) and Set-up~\ref{set}(\ref{set:admissible}). 

Now enumerate $X_0,X_1, \cdots, X_{f-1},X_f$ from left to right the connected components of $\widetilde{\gamma}\setminus (W_1\cup W_2\cup \cdots \cup W_f)$. By our construction of $\mathcal{I}$ and $\mathcal{I}_n$ (Set-up~\ref{set}(\ref{set:I}), and  Set-up~\ref{set}(\ref{set:In})), we have $P_{i,\pm}\in \mathcal{I}\cup\{F\}$ and $P_{n,i,\pm}\in \mathcal{I}_n\cup\{F_n\}$ such that
\begin{multline*}
X_0<P_{n,0,+}^1,\,P_{0,+}^1< P_{n,1,-}^0,\,P_{1,-}^0<X_1<P_{n,1,+}^1,\,P_{1,+}^1\\<P_{n,2,-}^0,\,P_{2,-}^0<X_2<P_{n,2,+}^1,\,P_{2,+}^1<\cdots\\
<P_{n,f-1,+}^1,\,P_{f-1,+}^1<P_{n,f,-}^0,\,P_{f,-}^0<X_f.
\end{multline*}
See Figure~\ref{fig:W}. Note that we may have $P_{i,-}=P_{i,+}$ and $P_{n,i,-}=P_{n,i,+}$ when there is no element of $\mathfrak{V}$ between $W_i$ and $W_{i+1}$.

\begin{figure}[!htb]
    \centering
    \includegraphics[width=\linewidth]{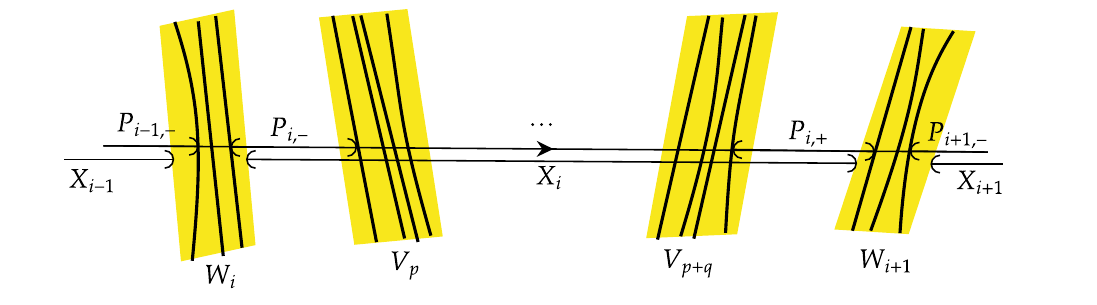}
    \caption{Definition of $P_{i,\pm}$. Between $W_i$ and $W_{i+1}$ all yellow strips belong to $\mathfrak{V}$.}
    \label{fig:W}
\end{figure}
With this notation, it remains to compare  
\[
\mathrm{T}^{-z\mathsf{c}_n}_{P_{n,0,+}^1}\left(\prod_{i=1} ^{f-1} \mathrm{T}^{z\mathsf{c}_n}_{P_{n,i,-}^0}\mathrm{T}^{-z\mathsf{c}_n}_{P_{n,i,+}^1}  \right)\mathrm{T}^{z\mathsf{c}_n}_{P_{n,f,-}^0} 
\]
and
\[
\mathrm{T}^{-z\mathsf{c}}_{P_{0,+}^1}\left(\prod_{i=1} ^{f-1} \mathrm{T}^{z\mathsf{c}}_{P_{i,-}^0}\mathrm{T}^{-z\mathsf{c}}_{P_{i,+}^1}  \right)\mathrm{T}^{z\mathsf{c}}_{P_{f,-}^0}.
\]
Due to Set-up~\ref{set}(\ref{set:totalcocycle}), Set-up~\ref{set}(\ref{set:dPTbound}) and Set-up~\ref{set}(\ref{set:shearing}),
\[
\dG(\mathrm{T}_{P_{n,i,+}^1}^{-z\mathsf{c}_n},\mathrm{T}_{P_{i,+}^1}^{-z\mathsf{c}})<2A\cdot \frac{\delta}{f}
\]
for each $i=0,1,\cdots,f-1$. Likewise, we obtain
\[
\dG(\mathrm{T}_{P_{n,i,-}^0}^{-z\mathsf{c}_n},\mathrm{T}_{P_{i,-}^0}^{-z\mathsf{c}})<2A\cdot \frac{\delta}{f}
\]
for $i=1,2,\cdots,f$. Since there are $f$ such terms, the total error is smaller than $4AM \delta$.

Overall, whenever $n>N$, the distance between $\varphi^{z\mathsf{c}_n}_{\widetilde{\gamma}}(\rho)$ and $\varphi^{z\mathsf{c}}_{\widetilde{\gamma}}(\rho)$ is at most
\[
2\delta + 2AQM\delta^{\nu'} + AM\delta+2AQM \delta^{\nu'}+4AM\delta <\epsilon
\]
by Set-up~\ref{set}(\ref{set:delta}). This completes the proof. 
\end{proof}

Let $V$ be a finite dimensional vector space with an involution. We say that an element $\mathsf{c}\in \HD(S;V)$ is a $V$-weighted multicurve if the support of $\mathsf{c}$ consists of disjoint simple closed curves. 

\begin{corollary}\label{cor:holomorphic}
    Let $\mathsf{G}$ be a complex semisimple connected Lie group and let $\rho_0\in \Hom_\Theta(\pi_1(S), \mathsf{G})$ be a smooth point.  Choose a twisted $\mathfrak{h}_\Theta$-valued measured lamination $\mathsf{c}\in \HD(S;\mathfrak{h}_\Theta)$ that can be weakly approximated by a sequence of $\mathfrak{h}_\Theta$-weighted multicurves. Then for a small neighborhood $D\subset \mathbb{C}$ containing 0 and a small neighborhood $W$ of $\rho_0$, the map $H^\mathsf{c}:D\times W\to \Hom_\Theta(\pi_1(S),\mathsf{G})$ defined by $(z,\rho)\mapsto \Lambda^{z\mathsf{c}}(\rho)$ is holomorphic. 
\end{corollary}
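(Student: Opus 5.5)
The plan is to combine three ingredients: holomorphy of the deformation for weighted multicurves, the uniform convergence of Theorem~\ref{thm:convergence}, and the fact that a locally uniform limit of holomorphic maps is holomorphic (Weierstrass). Let $\{\mathsf{c}_n\}$ be $\mathfrak{h}_\Theta$-weighted multicurves converging weakly to $\mathsf{c}$. Lemma~\ref{lem:domain}, applied with a train-track carrying $|\mathsf{c}|$ and an admissible neighborhood, gives a disk $D\ni 0$ and a neighborhood $W$ of $\rho_0$. On this domain the cataclysms $\Lambda^{z\mathsf{c}_n}(\rho)$ and $\Lambda^{z\mathsf{c}}(\rho)$ are defined for all large $n$. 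This uses that $\|z\mathsf{c}_n\|$ is uniformly small in $\WT(\mathcal{T}_0;\mathfrak{h}_\Theta)$ by Lemma~\ref{lem:transverse}, after shrinking $D$. Here $\Lambda^{z\mathsf{c}}$ means $\Lambda^{z\mathsf{c}}_{x_0}$ for a fixed base point, so that we work in $\Hom_\Theta$ rather than in the quotient. Since $\rho_0$ is a smooth point, we may shrink $W$ to a complex manifold chart.

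The first step is to show that each $(z,\rho)\mapsto\Lambda^{z\mathsf{c}_n}_{x_0}(\rho)$ is holomorphic. For a multicurve, each arc $\widetilde{\gamma}_i$ meets only finitely many lifts of the support. So $\varphi^{z\mathsf{c}_n}_{\widetilde{\gamma}_i}(\rho)$ is a finite ordered product of terms
\[
m(\rho)\exp(\pm z X)\,m(\rho)^{-1},
\]
where $X=\mathsf{c}_n(I)\in\mathfrak{h}_\Theta$. Leaves outside $\supp(\mathsf{c}_n)$ contribute trivially, because consecutive factors $\mathrm{T}^{-z\mathsf{c}_n}_{J^1}\mathrm{T}^{z\mathsf{c}_n}_{J'^0}$ across an unweighted leaf region cancel up to the slithering. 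It is cleaner to note directly that the product reduces to conjugates of $\exp(zX)$ by elements $\mathsf{g}(\xi_\rho(g^+),\xi^{\mathrm{op}}_\rho(g^-))$ for the finitely many weighted lifts $g$ crossed. For a closed leaf $g$ with stabilizer $\langle\eta\rangle$, the endpoints are the attracting and repelling flags of $\rho(\eta)$. The required conjugacy can then be chosen holomorphically in $\rho$ locally, by Lemma~\ref{lem:holomorphicconj} applied to $\rho\mapsto\rho(\eta)$ (after an initial conjugation into $\mathsf{H}^+$). By Lemma~\ref{lem:welldef}, the conjugate $m\exp(zX)m^{-1}$ does not depend on this choice. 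Hence the product is jointly holomorphic in $(z,\rho)$, and so is $\Lambda^{z\mathsf{c}_n}_{x_0}(\rho)(\gamma_i)=\varphi\,\rho(\gamma_i)$.

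Next, Theorem~\ref{thm:convergence} shows that $\Lambda^{z\mathsf{c}_n}_{x_0}(\rho)(\gamma_i)\to\Lambda^{z\mathsf{c}}_{x_0}(\rho)(\gamma_i)$ uniformly on compact subsets of $D\times W$. Taking local holomorphic coordinates on $\mathsf{G}$, for instance a faithful linear representation, the matrix entries form a locally uniformly convergent sequence of holomorphic functions on the complex manifold $D\times W$. Their limit is therefore holomorphic. Since $\Hom_\Theta(\pi_1(S),\mathsf{G})$ embeds holomorphically in $\mathsf{G}^{2g}$ through the generators, $H^\mathsf{c}$ is holomorphic.

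The main obstacle is the first step, namely holomorphic dependence on $\rho$ of the elements $m_{\vec\mu_{I^0}}(\rho)$ in the multicurve case. The limit maps are only continuous in general. However, at endpoints of closed leaves they are the fixed flags of $\rho(\eta)$, and these vary holomorphically because they are eigenflags of a loxodromic-type element. Lemma~\ref{lem:holomorphicconj} is exactly suited to this. There is one care point: the map $\mathsf{g}(\mathfrak{x},\mathfrak{y})$ is paired across different leaves $I^0,I^1$. So I will first rewrite $\varphi$ for a multicurve as a product involving only single-leaf conjugations, checking that the mixed pairs cancel, before invoking the lemma.
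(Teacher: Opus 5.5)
Your proposal is correct and follows the paper's own argument: holomorphy in the weighted-multicurve case, locally uniform convergence from Theorem~\ref{thm:convergence}, and holomorphy of locally uniform limits. The paper simply takes the multicurve case as known, whereas you sketch it, and in that sketch note that Lemma~\ref{lem:holomorphicconj} (a one-variable statement about conjugating into $\mathsf{H}^+$) only covers $\Theta=\Delta$: for general $\Theta$ the element $\rho(\eta)$ is merely conjugate into $\mathsf{P}_\Theta\cap\mathsf{P}_\Theta^{\mathrm{op}}$, so you should instead use that its attracting and repelling flags in $\mathcal{F}_\Theta$ and $\mathcal{F}_\Theta^{\mathrm{op}}$ vary holomorphically in $\rho$ (they are hyperbolic fixed points), together with local holomorphic sections of $\mathsf{G}\to\mathfrak{T}$.
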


\begin{proof}
    We know that $H^\mathsf{c}$ is holomorphic if $\supp(\mathsf{c})$ is a $\mathfrak{h}_\Theta$-weighted multicurve. In general,  we find a sequence of $\mathfrak{h}_\Theta$-weighted multicurves $\{\mathsf{c}_n\}$ that converges weakly to $\mathsf{c}$. By Theorem~\ref{thm:convergence}, the sequence of holomorphic maps $H^{\mathsf{c}_n}$ converges to $H^\mathsf{c}$ uniformly on compact sets. Therefore, $H^\mathsf{c}$ is also holomorphic.
    \end{proof}

Let $\mathsf{G}_{\mathrm{split}}$ be a split real form of a complex connected simple Lie group $\mathsf{G}$. The $\mathsf{G}$-Hitchin component is a connected component of the space of $\Theta$-Anosov representations with $\Theta=\Delta$. In this case we have $\mathfrak{a}= \mathfrak{a}_\Theta$. 

\begin{corollary}\label{cor:analytic}
    Let $\mathsf{G}_{\mathrm{split}}$ be a split real form of a complex simple Lie group $\mathsf{G}$. Fix a twisted $\mathfrak{a}$-valued measured lamination $\mathsf{c}\in \HD(S;\mathfrak{a})$ that can be weakly approximated by a sequence of $\mathfrak{a}$-weighted multicurves. Then the cataclysm deformation $\Lambda^\mathsf{c}:\Hit_{\mathsf{G}_{\mathrm{split}}}(S) \to \Hit_{\mathsf{G}_{\mathrm{split}}}(S)$ is real analytic. 
\end{corollary}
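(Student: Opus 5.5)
The plan is to deduce Corollary~\ref{cor:analytic} from Corollary~\ref{cor:holomorphic} by complexification.

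First, set $\mathsf{G}$ to be the complex simple group with split real form $\mathsf{G}_{\mathrm{split}}$, and take $\Theta=\Delta$. Then $\mathfrak{a}_\Theta=\mathfrak{a}$ and $\mathfrak{h}_\Theta=\mathfrak{h}=\mathfrak{a}\oplus\ui\mathfrak{a}$. A twisted $\mathfrak{a}$-valued measured lamination $\mathsf{c}$ is also an element of $\HD(S;\mathfrak{h})$, since the opposite involution on $\mathfrak{h}$ restricts to that on $\mathfrak{a}$. An approximating sequence of $\mathfrak{a}$-weighted multicurves likewise becomes a sequence of $\mathfrak{h}$-weighted multicurves.

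Next, recall two facts stated earlier. Hitchin representations composed with $\mathsf{G}_{\mathrm{split}}\hookrightarrow\mathsf{G}$ are $\Delta$-Anosov in $\mathsf{G}$. The character variety near an irreducible (Zariski dense, or at least with finite centralizer) Hitchin representation $\rho_0$ is a smooth complex manifold, whose real points form the Hitchin component as a totally real submanifold of half dimension. Hitchin representations are irreducible, so they are smooth points. Corollary~\ref{cor:holomorphic} then gives a neighborhood $D\times W$ of $(0,\rho_0)$ on which $(z,\rho)\mapsto\Lambda^{z\mathsf{c}}(\rho)$ is holomorphic.

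For real $t\in D\cap\bR$ and $\rho\in W\cap\Hom(\pi_1(S),\mathsf{G}_{\mathrm{split}})$, I check that $\Lambda^{t\mathsf{c}}(\rho)$ takes values in $\mathsf{G}_{\mathrm{split}}$. The limit maps take values in the real flag manifold, so the elements $m_{\vec\mu}$ may be chosen in $\mathsf{G}_{\mathrm{split}}$. Since $\exp(t\mathsf{c}(I))\in\exp\mathfrak{a}$, every factor $\mathrm{T}$ is real. This agrees with the usual real cataclysm. Restricting a holomorphic map to the totally real submanifold gives a real analytic map, and it descends to the quotient $\Hit_{\mathsf{G}_{\mathrm{split}}}(S)$ because the action is free and proper there. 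This proves real analyticity for $t$ near $0$.

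The main obstacle is removing the smallness restriction $z\in D$, since Lemma~\ref{lem:domain} only defines cataclysms for small cocycles. On the Hitchin component the real cataclysm $\Lambda^{t\mathsf{c}}$ is defined for all $t$ (by Bonahon--Dreyer and Pfeil) and satisfies the flow property $\Lambda^{(s+t)\mathsf{c}}=\Lambda^{s\mathsf{c}}\circ\Lambda^{t\mathsf{c}}$. I would write $\Lambda^{\mathsf{c}}=(\Lambda^{\mathsf{c}/m})^m$ for a large integer $m$. Applying the local result around each point of a compact set, $\Lambda^{\mathsf{c}/m}$ is real analytic near every Hitchin representation; the constants are uniform on compact sets, using Lemma~\ref{lem:domain} with the neighborhood $W$. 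A composition of real analytic maps is real analytic, so $\Lambda^{\mathsf{c}}$ is real analytic. One point must be verified: the bound $\|\mathsf{c}/m\|<B$ has to hold along the whole orbit $\{\Lambda^{k\mathsf{c}/m}\rho\}_{k\le m}$, which is compact. Covering it by finitely many neighborhoods $W$ and choosing $m$ large enough handles this.
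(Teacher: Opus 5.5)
Your proposal is correct and follows the paper's argument: regard $\mathsf{c}$ as $\mathfrak{h}$-valued, get holomorphy of the complexified cataclysm as a locally uniform limit of multicurve cataclysms (Theorem~\ref{thm:convergence}, packaged as Corollary~\ref{cor:holomorphic}), and restrict to the real-analytic, totally real slice $\Hit_{\mathsf{G}_{\mathrm{split}}}(S)$, which $\Lambda^{\mathsf{c}}$ preserves. Your extra factorization $\Lambda^{\mathsf{c}}=(\Lambda^{\mathsf{c}/m})^m$ handles the small-parameter restriction, which the paper passes over by applying Theorem~\ref{thm:convergence} directly at $z=1$. That step is fine given additivity of cataclysms and completeness of the real flow, but both are imported from outside this paper (and Bonahon--Dreyer only covers $\PSL_n$); also, cover the continuous path $\{\Lambda^{t\mathsf{c}}\rho_0\}_{t\in[0,1]}$ before choosing $m$, because the discrete orbit you cover depends on $m$.
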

\begin{proof}
Since $\mathsf{G}_{\mathrm{split}}$ is an analytic subgroup of $\mathsf{G}$, $\Hit_{\mathsf{G}_{\mathrm{split}}}(S)\subset \Hom_\Theta(\pi_1(S),\mathsf{G})/\mathsf{G}$ is a real analytic submanifold.

We observe that $\Lambda^\mathsf{c}:\Hom(\pi_1(S),\mathsf{G})/\mathsf{G}\to \Hom(\pi_1(S),\mathsf{G})/\mathsf{G}$ is holomorphic on a neighborhood of a Hitchin representation if $\mathsf{c}$ is an $\mathfrak{a}$-weighted multicurve. For a general $\mathsf{c}$, we use Theorem~\ref{thm:convergence} to show that $\Lambda^\mathsf{c}$ is  holomorphic. The result follows since $\Hit_{\mathsf{G}_{\mathrm{split}}}(S)\subset \Hom_\Theta(\pi_1(S),\mathsf{G})/\mathsf{G}$ is preserved by $\Lambda^\mathsf{c}$.
\end{proof}

\section{Applications to Hitchin components}\label{sec:app}

In this section, we discuss several applications of cataclysm deformations on Hitchin components. 

Throughout this section $\mathsf{G}_\mathrm{split}$ will denote the split real form of a complex simple Lie group $\mathsf{G}$. Then, the Cartan subalgebra $\mathfrak{h}$ of $\mathsf{G}$ and the maximal abelian subalgebra $\mathfrak{a}$ of $\mathsf{G}_{\mathrm{split}}$ are related by $\mathfrak{h} = \mathfrak{a}\otimes\mathbb{C}=\mathfrak{a}\oplus \ui \mathfrak{a}$. 

Let $U$ be a domain in $\mathsf{G}$ where the complex Jordan projection $\lambda^\mathbb{C}$ is defined. (See Section~\ref{sec:lie}). Note that $U$ contains all purely loxodromic elements.

We will be only concerned with $\mathsf{G}_{\mathrm{split}}$-Hitchin representations and their small deformations inside $\Hom(\pi_1(S), \mathsf{G})$. In this setting, we may assume the following properties.  
\begin{itemize}
    \item Every representation is Borel Anosov; that is, $\Theta$-Anosov with $\Theta= \Delta$. In this case, we have that  $\mathfrak{h}_\Theta = \mathfrak{h}$, $\mathfrak{p}_\Theta = \mathfrak{p}$ and $\mathfrak{p}_\Theta^\mathrm{op} = \mathfrak{p}^\mathrm{op}$, where $\mathfrak{h}$ is the Cartan subalgebra of $\mathfrak{g}$ and  $\mathfrak{p}$ is the the minimal parabolic subalgebra of $\mathfrak{g}^\bR$. Moreover, $\mathsf{P}\cap \mathsf{P}^\mathrm{op}$ is connected and equal to the Cartan subgroup $\mathsf{H}$. 
    \item For any non-trivial $\gamma\in \pi_1(S)$ and any $\rho\in \Hom(\pi_1(S), \mathsf{G})$,  $\rho(\gamma)$ is conjugate into  $U\cap \mathsf{H}^+$. Note that for a Borel Anosov representation, we can only assume that $\rho(\gamma)$ is conjugate into $\mathsf{H}^+$. The extra condition that $\rho$ is close enough to a $\mathsf{G}_\mathrm{split}$-Hitchin representation allows us to have this further property. 
\end{itemize} 

\subsection{Cataclysms associated with measured laminations}

We are concerned with a particular type of cataclysms that are induced from measured geodesic laminations. Choose a non-trivial element $X\in \mathfrak{h}$ and a measured lamination $\mu\in \ML(S)$. Orient  each leaf of the orientation double cover $\mu^\mathrm{or}$ of the support of $\mu$ continuously and consider the map 
\begin{equation}\label{eq:muX}
\mu^{X}: \gamma \mapsto\begin{cases} X \mu(\gamma) & \text{ if }\gamma \text{ intersects } \mu^\mathrm{or} \text{ positively}\\
 -w_0X \mu(\gamma) & \text{ if }\gamma \text{ intersects } \mu^\mathrm{or} \text{ negatively}
\end{cases}
\end{equation}
that assigns an element of $\mathfrak{h}$ to each short oriented arc $\gamma$ transverse to $\mu^\mathrm{or}$. One can check that this gives rise to a twisted $\mathfrak{h}$-valued measured lamination. Note that $\mu^X$ depends on the choice of the orientation of $\mu^\mathrm{or}$.

In this section, we are concerned with cataclysm deformations of the form $\Lambda^{\mu^X}$ for $\mu\in \ML(S)$, and $X\in \mathfrak{h}$.

\begin{remark}
    If $\mu=(\gamma, s)\in \mathcal{S}\times \mathbb{R}_+\subset  \ML(S)$ is a weighted simple closed geodesic, then $\Lambda^{t\mu^X}$, $t\in \bR$, is the Goldman flow along $\gamma$ with respect to $X$. 
\end{remark}

Very often, we will assume that $X$ is invariant under the opposite involution for the following two reasons. If $X$ satisfies  $X=-w_0X$,  we may simply define $\mu^X(\gamma)= X\mu(\gamma)$ for oriented arcs $\gamma$. Then, since 
\[
\mu^X(\overline{\gamma})=X\mu(\overline{\gamma}) = X\mu (\gamma)=-w_0 X \mu(\gamma)=-w_0\mu^X(\gamma),
\]
$\mu^X$ becomes a twisted $\mathfrak{h}$-valued measured lamination. Note that, in this case, $\mu^X$ is defined without ambiguity. In general, however, $\mu^X$ depends on the choice of the orientation of $\mu^\mathrm{or}$.  Furthermore, the assumption  $X= -w_0X$ is crucial when we discuss the convergence of a sequence of twisted measured laminations associated with  $X$. 
\begin{lemma}\label{lem:convergetoo}
Let $\{\mu_n\}$ be a sequence of measured laminations converges to $\mu$ weakly and let $X$ be an element of $\mathfrak{h}$.  If $-w_0X = X$, then, $\{\mu_n ^X\}$ converges weakly to $\mu^X$.    
\end{lemma}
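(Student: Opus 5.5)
When $-w_0X = X$, the twisted cocycle is simply $\mu^X(\gamma)=X\,\mu(\gamma)$ for every oriented transverse arc $\gamma$, with no orientation choice of $\mu^{\mathrm{or}}$ involved. So the statement should reduce to weak convergence of the real measured laminations $\mu_n\to\mu$, tensored with the fixed vector $X$. The plan is to check the definition of weak convergence of twisted transverse cocycles directly.

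First I fix the underlying laminations. The underlying lamination of $\mu^X$ is $|\mu|$, but only $\supp(\mu^X)=\supp(\mu)$ matters for the definition. Indeed, $X\neq 0$ forces $\mu^X(\eta)\ne 0$ exactly when $\mu(\eta)\neq 0$, and since $\mu$ is a nonnegative measure this gives $\supp(\mu^X)=\supp(\mu)$. Now let $\gamma$ be an oriented simple geodesic arc transverse to $\supp(\mu^X)$. We may shrink or perturb $\gamma$ so that its endpoints avoid $\supp(\mu)$, since the cocycle is isotopy invariant relative to the lamination. I then need two facts.
\begin{enumerate}
\item $\gamma$ is transverse to $\supp(\mu_n)$ for all large $n$.
\item $\mu_n(\gamma)\to\mu(\gamma)$.
\end{enumerate}

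For (2), weak convergence $\mu_n\to\mu$ gives $\int_\gamma f\,\der\mu_n\to\int_\gamma f\,\der\mu$ for continuous $f$. To pass to the total mass of $\gamma$, I use that the endpoints of $\gamma$ carry no $\mu$-mass: they lie off $\supp(\mu)$. Then the Portmanteau theorem, or a sandwich argument with continuous cutoff functions $f_\pm$ approximating $\mathbf{1}_\gamma$ on a slightly larger arc, yields $\mu_n(\gamma)\to\mu(\gamma)$. Linearity then gives
\[
\mu_n^X(\gamma)=X\mu_n(\gamma)\to X\mu(\gamma)=\mu^X(\gamma).
\]
The reversed orientation $\overline{\gamma}$ needs no separate argument: $\mu^X(\overline{\gamma})=X\mu(\gamma)$ as well, and this is exactly where $-w_0X=X$ is used. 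Without it, the sign convention in \eqref{eq:muX} would depend on how $\gamma$ meets the chosen orientation of $\mu_n^{\mathrm{or}}$, and that orientation need not be coherent with the one on $\mu^{\mathrm{or}}$ in the limit.

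The main obstacle is (1), namely transversality to $\supp(\mu_n)$ for large $n$, because weak convergence of measures does not directly control supports. I would argue by contradiction, using compactness of $\mathcal{GL}(S)$ as in Lemma~\ref{lem:limit}. Suppose infinitely many $\supp(\mu_n)$ contain a leaf tangent to $\gamma$ or containing a subarc of $\gamma$. Pass to a subsequence whose supports converge in the Hausdorff sense to a lamination $\lambda$. Since $\gamma$ is a geodesic arc, $\lambda$ would have to contain the full geodesic extending $\gamma$. I would then show that $\lambda\subset\supp(\mu)$, following the standard argument for measured laminations (cf.~\cite[Proposition A.3.1]{otal}): a transverse arc crossing a limit leaf picks up mass bounded below along the subsequence, so the weak limit charges it. That would contradict the transversality of $\gamma$ to $\supp(\mu)$.

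If this lower-bound step proves delicate, I would first try to replace it by what the cataclysm construction actually needs: transversality of $\gamma$ to the maximal underlying laminations $|\mu_n^X|$. These can be chosen as maximal completions converging to a completion of $\supp(\mu)$ transverse to $\gamma$, via Lemma~\ref{lem:divrad}(\ref{enum:transverse}).
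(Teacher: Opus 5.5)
Your core reduction is exactly the paper's proof. When $-w_0X=X$, one has $\mu_n^X(\gamma)=X\mu_n(\gamma)$ for both orientations of $\gamma$, so the values converge by linearity once $\mu_n(\gamma)\to\mu(\gamma)$. Your cutoff/Portmanteau argument for that last point is unnecessary: $f\equiv 1$ is continuous on the compact arc $\gamma$, so the definition of weak convergence in $\ML(S)$ gives it directly.

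The problem is the extra step (1), which you single out as the main obstacle. Your argument rests on the inclusion $\lambda\subset\supp(\mu)$ for a Hausdorff limit $\lambda$ of the $\supp(\mu_n)$, and this inclusion is false: the transverse mass carried by leaves of $\supp(\mu_n)$ can tend to zero. For example, let $c$ be a simple closed geodesic disjoint from $\supp(\mu)$ and set $\mu_n=\mu+(c,\tfrac1n)$.
\begin{itemize}
\item Then $\mu_n\to\mu$ weakly, while $\supp(\mu_n)=\supp(\mu)\cup c$ for every $n$.
\item An arc crossing $c$ picks up only mass $\tfrac1n$ per intersection point, so the weak limit does not charge $c$.
\item Lemma~\ref{lem:limit} in fact proves the \emph{reverse} inclusion $\supp(\mu)\subset\lambda$.
\end{itemize}
The same example shows that (1), as you formulate it, does not follow from weak convergence of the measures at all. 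A subarc of $c$ is transverse to $\supp(\mu)$ but lies in a leaf of $\supp(\mu_n)$ for all $n$. Your fallback does not rescue this either, since every maximal completion of $\supp(\mu_n)$ contains $c$ as a leaf.

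So (1) should not be derived from Hausdorff limits; it has to be read off the hypothesis. The laminations $\mu_n^X$ and $\mu_n$ have the same underlying laminations and the same supports. Whatever transversality of test arcs is already presupposed by $\mu_n\to\mu$ (the integrals $\int_\gamma f\,\der\mu_n$ must make sense) therefore transfers verbatim to $\mu_n^X$, and nothing new needs proving. That is what the paper's one-line proof implicitly does. With (1) handled this way, the rest of your argument stands.
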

\begin{proof}
Since $X=-w_0X$, we know that $\mu_n^X(\gamma)=X\mu_n(\gamma)$ regardless of the orientation of $\gamma$. Hence, the lemma follows. 
\end{proof}

Without the condition $X= -w_0 X$, the weak convergence of $\{\mu_n\}$ does not guarantee the weak convergence of $\{\mu_n ^X\}$. 

\begin{example}\label{ex:counter} Consider a surface $S$ of genus 3, an oriented simple closed geodesic $\gamma'$ and  simple closed geodesics $\gamma$, $\eta$ with $i(\gamma, \eta) =2$ as in Figure~\ref{fig:cocycle}. Choose a non-trivial $X\in \mathfrak{h}$ with $X= w_0 X$. Let $\gamma_n$ be the image of $\gamma$ after applying the Dehn twist along $\eta$ $n$-times. Let $\mu_n=(\gamma_n, \frac{1}{2n})$ be the sequence in $ \mathcal{S}\times \bR^+$. It converges weakly to the weighted simple closed curve $\mu=(\eta, 1)\in \mathcal{S}\times \bR_+$. In particular, $\mu^X(\gamma') = \pm X$ (depending on the choice of the orientation of $\eta$). On the other hand, the condition $X=w_0X$ forces $\mu_n^X(\gamma')= 0$ for all $n>0$. Thus, $\{\mu^X_n\}$ does not converge weakly to $\mu^X$. 
\end{example}
\begin{figure}[!htb]
    \centering
    \includegraphics[width=\linewidth]{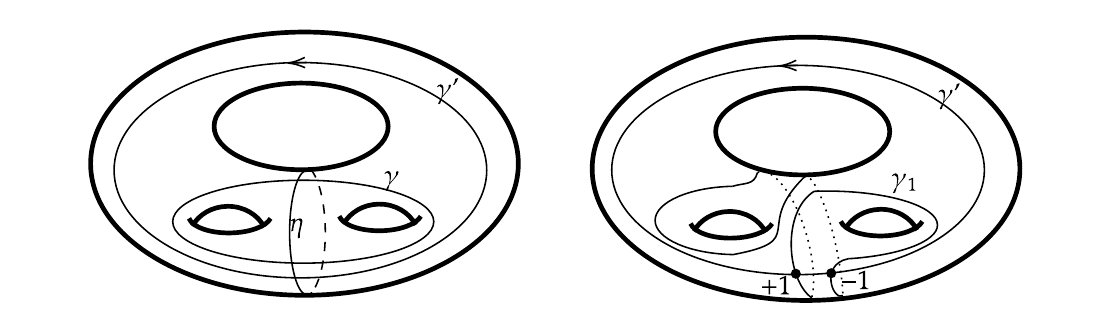}
    \caption{Left: The configuration of geodesics for Example~\ref{ex:counter}. Right: After the Dehn twist along $\eta$,  $\gamma_1$ and $\gamma'$ have two intersections with opposite sign. Thus, we obtain  $\mu_1^X(\gamma') = \frac{1}{2}(X-w_0X) = 0$.}
    \label{fig:cocycle}
\end{figure}

Let $\mathbf{a}$ and $\mathbf{b}$ be non-trivial elements in $\mathfrak{h}^*$.   Let $\gamma\in \pi_1(S)\setminus\{1\}$. Define the function $\ell^\mathbf{a}_\gamma:\Hit_{\mathsf{G}_\mathrm{split}}(S)\to \bR$ by
\[
\ell^\mathbf{a} _\gamma ([\rho]) = \mathbf{a} (\lambda(\rho(\gamma))).
\]
This quantity may be referred to as the $\mathbf{a}$-\emph{length function}. Since the Jordan projection $\lambda$ is analytic on the set of purely loxodromic elements, $\ell^{\mathbf{a}}_\gamma$ is an analytic function on $\Hit_{\mathsf{G}_\mathrm{split}}(S)$. 

Observe that
\[
\ell^\mathbf{a}_{\gamma^{-1}} (\rho) = \mathbf{a}(\lambda(\rho(\gamma^{-1}))) = \mathbf{a}(-w_0\lambda(\rho(\gamma)))=\ell^{-w_0\mathbf{a}}_{\gamma}(\rho).
\]
If we further assume that $\mathbf{a}$ is fixed by the opposite involution, $\mathbf{a}=-w_0 \mathbf{a}$, then $\ell^{\mathbf{a}}_\gamma$ does not depend on the orientation of $\gamma$, which is an expected property of the concept of ``length.'' For this reason, we will always assume that $\mathbf{a}$ is fixed by the opposite involution when we discuss the $\mathbf{a}$-length.

Choose a simple closed geodesic $\eta$, regarded as a weighted closed curve with weight 1. Let $\mathbf{b}^\vee\in \mathfrak{h}$ be the dual of $\mathbf{b}$, that is, $\mathbf{b}(Y) = \kf (\mathbf{b}^\vee,Y)$ for all $Y\in \mathfrak{h}$.  Then $\eta^{\mathbf{b}^\vee}$ is a twisted $\mathfrak{h}$-valued measured lamination so that $\Lambda^{t\eta^{\mathbf{b}^\vee}}$ is defined for small enough $t$. 

In the following subsection, we will compute the first variation of the $\mathbf{a}$-length function $\ell^\mathbf{a}_\gamma$ along the 1-parameter family of cataclysm deformations $\Lambda^{t\mu^{\mathbf{b}^\vee}}$, for $-\epsilon<t<\epsilon$. To avoid cumbersome notation, we write $\mu^\mathbf{b}$ instead of $\mu^{\mathbf{b}^\vee}$.

\subsection{Goldman product formula revisited} 
The main tool to compute the first variation of a length function along a cataclysm  is the Goldman product formula \cite{Goldman86}. In this subsection, we first briefly recall the result and interpret it in terms of geodesic currents. Then, we derive a general variation formula. 

As in the previous section, $\mathbf{a},\mathbf{b}\in \mathfrak{h}^*$ are functionals invariant under the opposite involution and $\gamma,\eta$ are elements of $\pi_1(S)\setminus\{1\}$. For technical reasons, we assume that $\gamma$ and $\eta$ are primitive and $\eta$ is simple. For a non-primitive element $\gamma^n$, we have $\ell^\mathbf{a}_{\gamma^n} = |n| \cdot \ell^\mathbf{a}_\gamma$. Using this relation, one can generalize most of the results in this section to non-primitive elements.  We may view $\gamma$ and $\eta$ as oriented closed geodesics in $S$. We also regard $\eta$ as a weighted simple closed curve with weight 1 so that the cataclysm $\Lambda^{t\eta^\mathbf{b}}$, for $-\epsilon<t<\epsilon$, is well defined. 

Recall that, in Section~\ref{sec:lie}, we defined the Goldman functions  $\widehat{\mathbf{a}\circ\lambda^{\mathbb{C}} }$ and $\widehat{\mathbf{b}\circ\lambda^{\mathbb{C}} }$ of $\mathbf{a}\circ\lambda^{\mathbb{C}}$ and $\mathbf{b}\circ\lambda^{\mathbb{C}}$, respectively. For each  $p=(\vec{\gamma}_{x_p}, \vec{\eta}_{x_p})\in \gamma\sharp \eta$, let $\gamma_p$ be the element in $\pi_1(S,x_p)$ that is represented by the parametrized closed geodesic emanating from $x_p$ and tangent to the vector $\vec{\gamma}_{x_p}\in \UT_{x_p} S$. Similarly, we define the element $\eta_p\in \pi_1(S,x_p)$. The sign of $p\in \gamma\sharp \eta$ is 1 or $-1$ depending on whether $p$ is a positive or negative intersection. The Goldman product formula in this context reads:
\begin{equation}\label{eq:goldman}
\frac{\der }{\der t} \ell^\mathbf{a}_\gamma (\Lambda^{t\eta^\mathbf{b}}\rho)=\sum_{p\in\gamma\sharp \eta} (\operatorname{sign} p)\cdot \kf(\widehat{\mathbf{a}\circ\lambda^{\mathbb{C}} }(\rho(\gamma_p)), \widehat{\mathbf{b}\circ\lambda^{\mathbb{C}} }(\rho(\eta_p))).
\end{equation}

The standing assumptions on $\rho$ in this section are much stricter than what the original theorem requires. Indeed, the Goldman product formula holds for general invariant functions and representations, provided that the representation is contained in the smooth locus of the character variety. 

Now we rewrite (\ref{eq:goldman}) in terms of geodesic currents.

Let $\mathfrak{T}\subset \mathcal{F}\times \mathcal{F}^\mathrm{op}$ be the space of transverse flags. We define the function $\kf^{\mathbf{a},\mathbf{b}}:\mathfrak{T}\times \mathfrak{T}\to \mathbb{C}$ by
\[
\kf^{\mathbf{a},\mathbf{b}}(\mathfrak{x},\mathfrak{y},\mathfrak{z},\mathfrak{w}) = \kf(\Ad_{\mathsf{g}(\mathfrak{x},\mathfrak{y})}\mathbf{a}^\vee,\Ad_{\mathsf{g}(\mathfrak{z},\mathfrak{w})}\mathbf{b}^\vee)
\]
where $\mathsf{g}(\mathfrak{x},\mathfrak{y})$ and $\mathsf{g}(\mathfrak{z},\mathfrak{w})$ are elements in $\mathsf{G}$ defined as in (\ref{eq:sligher}). As explained in Lemma~\ref{lem:welldef},  the function $\kf^{\mathbf{a},\mathbf{b}}$ is well-defined no matter how we choose $\mathsf{g}(\mathfrak{x},\mathfrak{y})$ and $\mathsf{g}(\mathfrak{z},\mathfrak{w})$.

Let $\rho$ be a  Borel Anosov representation with the limit map $\xi_\rho:\partial_\infty \pi_1(S)\to \mathcal{F}$. Let $\vec{\mathcal{G}}$ be the space of oriented geodesics in $\mathbb{H}^2$ and let $\vec{\mathcal{G}}^\times$ be the space of pairs of oriented geodesics $(g,h)$ that intersect transversely.  Now we define the function $\kf^{\mathbf{a},\mathbf{b}}_\rho:\vec{\mathcal{G}}^{2}\to \mathbb{C}$. For $g\in \vec{\mathcal{G}}$,  Denote by $g^+$ and $g^-$ the forward and backward endpoints of $g$, respectively. Then, 
\[
\kf^{\mathbf{a},\mathbf{b}}_\rho (g,h)= \mathrm{sign}(p) \cdot \kf^{\mathbf{a},\mathbf{b}}(\xi_\rho(g^+),\xi^\mathrm{op}_\rho (g^-), \xi_\rho (h^+), \xi^\mathrm{op}_\rho (h^-))
\]
for $(g,h)\in \vec{\mathcal{G}}^\times$ and $p\in \vec{g}\sharp \vec{h}$. We declare $\kf^{\mathbf{a},\mathbf{b}}_\rho (g,h)=0$ when  $g$ and $h$ do not transversely intersect. 
Notice that
\[
\kf^{\mathbf{a},\mathbf{b}}_\rho (g,h) =- \kf^{\mathbf{b},\mathbf{a}}_\rho (h,g)
\]
and
\[
\kf^{\mathbf{a},\mathbf{b}}_\rho (\overline{g},h)=-\kf^{w_0\mathbf{a},\mathbf{b}}_\rho (g,h),
\]
where $\overline{g}$ is the oriented geodesic with the opposite orientation. Since $\mathbf{a}$ and $\mathbf{b}$ are invariant under the opposite involution, we obtain 
\[
\kf^{\mathbf{a},\mathbf{b}}_\rho (\overline{g},h)=\kf^{\mathbf{a},\mathbf{b}}_\rho (g,\overline{h})=\kf^{\mathbf{a},\mathbf{b}}_\rho (g,h),
\]
showing that the function $\kf^{\mathbf{a},\mathbf{b}}_\rho$ can be viewed as the function on the space $\mathcal{G}^2$ of pairs of unoriented geodesics.

Let $\widetilde{\gamma}$ and $\widetilde{\eta}$ be the full lifts of $\gamma$ and $\eta$ to $\mathbb{H}^2$, respectively. When establishing the Goldman product formula, we defined, for each $p=(\vec{\gamma}_{x_p}, \vec{\eta}_{x_p})\in \gamma\sharp \eta$, the  elements $\gamma_p$ and $\eta_p$ in $\pi_1(S,x_p)$. We choose a lift $\widetilde{x_p}\in \mathbb{H}^2$ for each $x_p$. Once we have done this, we can canonically realize $\gamma_p$  as a deck transformation having a unique geodesic axis $\widetilde{\gamma}_p\subset \widetilde{\gamma}$ passing through $\widetilde{x_p}$. We define similarly the geodesic  $\widetilde{\eta}_p$  in $\mathbb{H}^2$ that is invariant under the $\eta_p$-action. They are oriented toward the attracting fixed points of $\gamma_p$ and $\eta_p$ respectively.  Let  $I(\gamma, \eta) := \{(\widetilde{\gamma}_p, \widetilde{\eta}_p)\mid p\in \gamma\sharp \eta\}$ regarded as a subset of $\mathcal{G}^\times(S)$. Then,
\[
I(\gamma, \eta)=\{(g,h)\in \mathcal{G}^\times(S)\mid g\subset \widetilde{\gamma}\text{, }\eta\subset \widetilde{\eta}\}.
\]

Recall that we have $\mathsf{g}(\xi_\rho(\widetilde{\gamma}_p ^+), \xi^\mathrm{op}_\rho( \widetilde{\gamma}_p^- ))\in \mathsf{G}$ mapping $(\mathfrak{p}, \mathfrak{p}^\mathrm{op})$ to $(\xi_\rho(\widetilde{\gamma}_p ^+), \xi^\mathrm{op}_\rho( \widetilde{\gamma}_p^-))$. This element satisfies
\[
\mathsf{g}(\xi_\rho(\widetilde{\gamma}_p ^+), \xi^\mathrm{op}_\rho( \widetilde{\gamma}_p^-))^{-1} \rho(\gamma_p) \mathsf{g}(\xi_\rho(\widetilde{\gamma}_p ^+), \xi^\mathrm{op}_\rho( \widetilde{\gamma}_p^-))\in U\cap \mathsf{H}^+.
\]
Likewise we have
\[
\mathsf{g}(\xi_\rho(\widetilde{\eta}_p ^+), \xi^\mathrm{op}_\rho( \widetilde{\eta}_p^-))^{-1}\rho(\eta_p)\mathsf{g}(\xi_\rho(\widetilde{\eta}_p ^+), \xi^\mathrm{op}_\rho( \widetilde{\eta}_p^-)) \in U\cap \mathsf{H}^+.
\]

By Lemma~\ref{lem:goldmanfunction}, we have
\[
\widehat{\mathbf{a}\circ \lambda^\mathbb{C}}(g) = \mathbf{a}^\vee,\quad \text{ and }\quad \widehat{\mathbf{a}\circ \lambda^\mathbb{C}}(g^{-1}) = w_0\mathbf{a}^\vee
\]
for $g\in U\cap \mathsf{H}^+$. It follows that for each $p\in \gamma\sharp \eta$, one can write
\[
(\sign p )\cdot \kf(\widehat{\mathbf{a}\circ\lambda^\mathbb{C}}(\rho(\gamma_p)), \widehat{\mathbf{b}\circ\lambda^\mathbb{C}}(\rho(\eta_p)))= \kf^{\mathbf{a}, \mathbf{b}}_\rho (\widetilde{\gamma}_p, \widetilde{\eta}_p).
\]
Therefore, 
\begin{align*}
\sum_{p\in \gamma\sharp \eta} (\sign p )\cdot \kf(\widehat{\mathbf{a}\circ\lambda^\mathbb{C}}(\rho(\gamma_p)), \widehat{\mathbf{b}\circ\lambda^\mathbb{C}}(\rho(\eta_p))) & = \sum_{p\in \gamma\sharp \eta}  \kf^{\mathbf{a}, \mathbf{b}}_\rho (\widetilde{\gamma}_p, \widetilde{\eta}_p)\\
&=\sum_{(g,h)\in I(\gamma, \eta)}  \kf^{\mathbf{a}, \mathbf{b}}_\rho (g,h)\\
&=\int_{\mathcal{G}^\times(S)} \kf^{\mathbf{a},\mathbf{b}}_\rho \;\der(\dirac_\gamma\times \dirac_\eta),
\end{align*}
where $\dirac_\gamma$ and $\dirac_\eta$ are the Dirac delta measures supported on the lift of the geodesics $\gamma$ and $\eta$.

\begin{lemma}\label{lem:bounded}
The function $\kf^{\mathbf{a},\mathbf{b}}_\rho:\mathcal{G}^2\to \mathbb{C}$ is continuous on $\mathcal{G}^\times$. Moreover, for any compact subset $K\subset \mathcal{G}$, the restricted function $\kf^{\mathbf{a},\mathbf{b}}_\rho:K\times K\to \mathbb{C}$ is bounded.  
\end{lemma}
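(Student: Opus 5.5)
The plan is to write $\kf^{\mathbf{a},\mathbf{b}}_\rho$ as a composition of continuous maps, and then get boundedness on $K\times K$ by a compactness argument on $\mathfrak{T}$.

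\emph{Continuity on $\mathcal{G}^\times$.} First, the limit maps $\xi_\rho$ and $\xi^\mathrm{op}_\rho$ are continuous. For transverse geodesics we have $g^+\neq g^-$, so the pair $(\xi_\rho(g^+),\xi^\mathrm{op}_\rho(g^-))$ lies in the space $\mathfrak{T}$ of transverse flags. The map $\mathcal{G}\to\mathfrak{T}$ sending $g$ to this pair is therefore continuous; here we fix an orientation locally, which is legitimate because the function does not depend on orientations.

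Second, $\mathfrak{T}$ is the single $\mathsf{G}$-orbit $\mathsf{G}/(\mathsf{P}\cap\mathsf{P}^\mathrm{op})$, so the orbit map $\mathsf{G}\to\mathfrak{T}$ admits continuous local sections. Thus $\mathsf{g}(\mathfrak{x},\mathfrak{y})$ can be chosen continuously on a neighborhood of any point. By Lemma~\ref{lem:welldef}, $\Ad_{\mathsf{g}(\mathfrak{x},\mathfrak{y})}\mathbf{a}^\vee$ is independent of the choice, so $(\mathfrak{x},\mathfrak{y})\mapsto \Ad_{\mathsf{g}(\mathfrak{x},\mathfrak{y})}\mathbf{a}^\vee$ is a well-defined continuous map $\mathfrak{T}\to\mathfrak{g}$. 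It follows that $\kf^{\mathbf{a},\mathbf{b}}$ is continuous on $\mathfrak{T}\times\mathfrak{T}$.

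Third, the sign of the intersection point $p$ is locally constant on the open set $\vec{\mathcal{G}}^\times$. Composing these three facts gives continuity of $\kf^{\mathbf{a},\mathbf{b}}_\rho$ on $\mathcal{G}^\times$.

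\emph{Boundedness on $K\times K$.} Outside $\mathcal{G}^\times$ the function is $0$. On $\mathcal{G}^\times$ its absolute value is at most $|\kf(\Ad_{\mathsf{g}(\mathfrak{x},\mathfrak{y})}\mathbf{a}^\vee,\Ad_{\mathsf{g}(\mathfrak{z},\mathfrak{w})}\mathbf{b}^\vee)|$, and the points $(\mathfrak{x},\mathfrak{y})$, $(\mathfrak{z},\mathfrak{w})$ range over the image of $K$ in $\mathfrak{T}$. That image is compact: every geodesic in $K$ has distinct endpoints, so the endpoint map sends $K$ to a compact subset of $(\partial_\infty\mathbb{H}^2)^{(2)}$, and its image under the continuous map $(\xi_\rho,\xi_\rho^\mathrm{op})$ is compact and contained in $\mathfrak{T}$ by transversality. (The orientation issue is handled by working on the double cover of oriented geodesics, which is still compact.)

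A continuous function on a compact set is bounded, so the map $\mathfrak{T}\to\mathfrak{g}$ from the continuity step is bounded on this image. Hence $\kf$ of two such bounded vectors is bounded, which is the required bound on $K\times K$.

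\emph{Main obstacle.} The subtle point is that $\kf^{\mathbf{a},\mathbf{b}}_\rho$ is \emph{not} continuous across the boundary of $\mathcal{G}^\times$, where the sign jumps and the function is declared $0$ off $\mathcal{G}^\times$. For this reason boundedness must not be deduced from continuity on $K\times K$ itself. Instead it has to come from the compactness of the images in $\mathfrak{T}$, separately for each factor, as done above.
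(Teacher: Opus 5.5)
Your proposal is correct and follows essentially the same route as the paper: continuity of $\kf^{\mathbf{a},\mathbf{b}}$ on $\mathfrak{T}\times\mathfrak{T}$ via local choices of $\mathsf{g}(\mathfrak{x},\mathfrak{y})$ (independent of the choice by Lemma~\ref{lem:welldef}), then local choices of orientations on which the intersection sign is constant, and boundedness from the compactness of the image of the lifted compact set $\vec{K}$ in $\mathfrak{T}$. The only cosmetic difference is the source of the local sections: you use the homogeneous-space structure $\mathfrak{T}\cong\mathsf{G}/(\mathsf{P}\cap\mathsf{P}^{\mathrm{op}})$, whereas the paper cites Lemma~\ref{lem:holomorphicconj}; if anything, your justification is the cleaner one.
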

\begin{proof}
By Lemma~\ref{lem:holomorphicconj}, $\mathsf{g}(\mathfrak{x},\mathfrak{y})$ can be chosen to be a holomorphic function on a neighborhood of $(\mathfrak{p}, \mathfrak{p}^\mathrm{op})$ in  $\mathfrak{T}$. Thus, $\kf^{\mathbf{a},\mathbf{b}}$ is continuous (in fact, holomorphic) on $\mathfrak{T}\times \mathfrak{T}$. Note that the map $\vec{\mathcal{G}}\to \mathcal{G}$, forgetting  the orientation, is a 2-fold covering. For each $(x,y)\in \mathcal{G}^\times$, we can find an open neighborhood $U$ of $(x,y)$ and a lift $\vec{U}$ of $U$ to $\vec{\mathcal{G}}^\times$ so that for every pair $(g,h)\in \vec{U}$, $g$ intersects $h$ positively. Then, 
\[
\kf^{\mathbf{a},\mathbf{b}}_\rho (g,h) = \kf^{\mathbf{a},\mathbf{b}}(\xi_\rho(g^+),\xi^\mathrm{op}_\rho (g^-), \xi_\rho (h^+), \xi^\mathrm{op}_\rho (h^-))
\]
on  $U$. Therefore, $\kf^{\mathbf{a},\mathbf{b}}_\rho$ is continuous on $U$. 

For the second claim,  lift $K$ to $\vec{\mathcal{G}}$ as a compact set $\vec{K}$. As we showed above, the function  $\kf^{\mathbf{a},\mathbf{b}}:\mathfrak{T}\times\mathfrak{T}\to \mathbb{C}$ is continuous. Since $\{(\xi_\rho(g^+), \xi_\rho^\mathrm{op}(g^-))\mid g\in \vec{K}\}$ is compact in $\mathfrak{T}$, $\sup_{(g,h)\in \vec{K}\times \vec{K}} \|\kf^{\mathbf{a},\mathbf{b}}_\rho(g,h)\|$ is finite. 
\end{proof}

\begin{example}\label{example} Let $\mathsf{G}_{\mathrm{split}}$ be the split real form of a complex simple adjoint Lie group $\mathsf{G}$. Fix a hyperbolic structure $X$ on $S$ and its Fuchsian holonomy representation  $\rho_X:\pi_1(S)\to \PSL_2(\bR)$. We define $\Theta_X:\mathcal{G}^\times\to \mathsf{G}_\mathrm{split}$ by
\[
\Theta_{X}(g,h):=\tau_\mathsf{G}\begin{pmatrix}
    \cos (\angle^{X}(g,h)/2) & \sin (\angle^{X}(g,h)/2) \\ -\sin (\angle^{X}(g,h)/2) & \cos(\angle^{X}(g,h)/2)
\end{pmatrix}
\]
where $\tau_\mathsf{G}:\PSL_2(\bR)\to \mathsf{G}_\mathrm{split}$ is the canonical representation.  Then, Goldman \cite{Goldman86} (also \cite{jung2025}) observed that one can write
\[
\kf^{\mathbf{a},\mathbf{b}}_{\rho_X}(g,h) = \kf(\mathbf{a}^\vee,\Ad_{\Theta_X(g,h)}\mathbf{b}^\vee).
\]
\end{example}

\begin{lemma}\label{lem:derivative}  Let $\mathsf{G}$ be a complex simple adjoint group. Let $\mathbf{a},\mathbf{b}\in \mathfrak{a}^*$, where both are fixed by the opposite involution. Let $\gamma\in \pi_1(S)\setminus\{1\}$ be a primitive element that can be represented by an oriented closed geodesic.  Let $\mu\in \ML(S)$. Define $\ell^\mathbf{a}_\gamma:(-\varepsilon,\varepsilon)\times \Hit_{\mathsf{G}_{\mathrm{split}}}(S)\to \bR$ by
    \[
    \ell^\mathbf{a}_\gamma(t,[\rho]):=\mathbf{a}(\lambda(\Lambda^{t\mu^\mathbf{b}}\rho(\gamma))).
    \]
    Then, $\ell^\mathbf{a}_\gamma$ is a real analytic function and 
    \[
    \left.\frac{\partial \ell^\mathbf{a}_\gamma}{\partial t}\right|_{t=0}=\int_{\mathcal{G}^\times(S)}\kf^{\mathbf{a},\mathbf{b}}_\rho\;\der(\dirac_\gamma\times \mu)
    \]
    is a real analytic function on $\Hit_{\mathsf{G}_\mathrm{split}}(S)$.
    \end{lemma}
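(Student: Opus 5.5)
Our plan is to reduce to weighted simple closed curves, where the variation is given by the Goldman product formula~(\ref{eq:goldman}), and then pass to the limit using Theorem~\ref{thm:convergence}. Since $\mathcal{S}\times\bR_+$ is dense in $\ML(S)$, we choose $\mu_n=(\eta_n,s_n)$ converging weakly to $\mu$. Because $\mathbf{b}$ is fixed by the opposite involution, so is $\mathbf{b}^\vee$, and Lemma~\ref{lem:convergetoo} shows that $\mu_n^{\mathbf{b}}\to\mu^{\mathbf{b}}$ weakly. By Theorem~\ref{thm:convergence}, applied on a complex neighborhood $Z$ of $\{0\}\times\Hit_{\mathsf{G}_\mathrm{split}}(S)$ inside $\mathbb{C}\times\Hom_\Delta(\pi_1(S),\mathsf{G})$, the maps $(z,\rho)\mapsto\Lambda^{z\mu_n^{\mathbf{b}}}\rho$ converge to $(z,\rho)\mapsto\Lambda^{z\mu^{\mathbf{b}}}\rho$ uniformly on compact sets. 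For curves these maps are holomorphic, so the limit is holomorphic as well (as in Corollary~\ref{cor:holomorphic}). Composing with the holomorphic function $\mathbf{a}\circ\lambda^\mathbb{C}$, evaluated at $\gamma$, gives holomorphic functions $L_n(z,\rho)$ converging locally uniformly to $L(z,\rho)$. Restricting $L$ to real $z$ and to the real-analytic submanifold $\Hit_{\mathsf{G}_\mathrm{split}}(S)$ yields the analyticity of $\ell^\mathbf{a}_\gamma$, since $\lambda^\mathbb{C}=\lambda$ on purely loxodromic elements. By the Weierstrass (Cauchy estimate) theorem, $\partial_zL_n\to\partial_zL$ locally uniformly. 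In particular, $\partial_t\ell^\mathbf{a}_\gamma|_{t=0}$ is the locally uniform limit of the holomorphic functions $\partial_zL_n(0,\cdot)$, so it is real analytic on the Hitchin component.

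Next, for each $n$ the Goldman formula together with the current computation carried out above gives
\[
\partial_z L_n(0,\rho)= s_n\int_{\mathcal{G}^\times(S)}\kf^{\mathbf{a},\mathbf{b}}_\rho\;\der(\dirac_\gamma\times\dirac_{\eta_n})=\int_{\mathcal{G}^\times(S)}\kf^{\mathbf{a},\mathbf{b}}_\rho\;\der(\dirac_\gamma\times\mu_n).
\]
It therefore remains to show that this converges to $\int\kf^{\mathbf{a},\mathbf{b}}_\rho\,\der(\dirac_\gamma\times\mu)$, which will identify the limit.

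To rewrite the integral, we fix a lift $\tilde\gamma$ (the axis of $\gamma$) and a compact fundamental segment $\sigma\subset\tilde\gamma$. The integral becomes $\int_{\mathcal{G}}f_\rho\,\der\mu_n$, where $f_\rho(h)=\kf^{\mathbf{a},\mathbf{b}}_\rho(\tilde\gamma,h)$ for $h$ crossing $\sigma$ (with a half-open convention at the endpoints). This reduces the problem to weak convergence of $\mu_n$ against a function that is continuous on geodesics transverse to $\tilde\gamma$ (Lemma~\ref{lem:bounded}) but discontinuous at the boundary of the region $V$ of geodesics meeting $\sigma$. I would apply Lemma~\ref{lem:continuity} with $g_n=g=f_\rho$ on the open set $V$ of geodesics transversely crossing the interior of a slightly enlarged segment. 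The compact set $K$ will consist of geodesics meeting $\sigma$ at angle bounded away from $0$ and $\pi$. Such a bound exists uniformly in $n$ by Lemma~\ref{lem:divrad}(\ref{enum:transverse}) applied to the Hausdorff limits of $|\mu_n|$ furnished by Lemma~\ref{lem:limit}, after perturbing $\sigma$ so that it is transverse to them. Boundedness of $f_\rho$ on $K$ comes from Lemma~\ref{lem:bounded}.

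\textbf{Main obstacle.} The delicate step is the boundary effect. If an atom, or a positive amount of $\mu$-mass, lies on geodesics through the endpoints of $\sigma$, then the cutoff is not continuous there. To handle this, I would choose the endpoints of $\sigma$ (equivalently, the basepoint on $\gamma$) outside the countably many points carrying atoms of the transverse measure $\mu_{\tilde\gamma}$, and away from $|\mu|$ where possible. Then the mass of $\mu_n$ near these endpoints tends to zero uniformly, which controls the error introduced by a continuous cutoff. Equivariance under $\gamma$ makes the result independent of the choice of $\sigma$.
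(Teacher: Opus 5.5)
Your proposal is correct and follows essentially the paper's route. You approximate $\mu$ by weighted simple closed curves, use Theorem~\ref{thm:convergence} and holomorphy (Weierstrass) to pass $\partial_z$ to the limit, identify the approximating derivatives as current integrals via Goldman's formula, and conclude with Lemma~\ref{lem:continuity} on a fundamental arc of $\tilde\gamma$, using a compact set coming from Hausdorff limits of $|\mu_n|$. The differences are minor. First, you work only at $z=0$, so the integrand $\kf^{\mathbf{a},\mathbf{b}}_\rho(\tilde\gamma,\cdot)$ does not depend on $n$, and the paper's use of the continuous dependence of limit maps on $\rho_{n,z}$ is not needed. Second, the paper avoids your endpoint/atom issue by taking the base point of the arc off the limit lamination (and off $\bigcup\eta_n$); then a thin train-track neighborhood meets the open arc inside a compact sub-arc, and Lemma~\ref{lem:continuity} applies verbatim with no cutoff error.
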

\begin{proof}

Regard $\mathbf{a}$, and $\mathbf{b}$ as the complexified linear functionals belonging to $\mathfrak{h}^*$. Due to Lemma~\ref{lem:domain}, we can find a disk $D\subset\mathbb{C}$ centered at $0$ and a neighborhood $W$ of $\rho$ such that $\rho_z:=\Lambda^{z\mu^\mathbf{b}}(\rho)$ and $\rho_{n,z}:=\Lambda^{z\mu_n^\mathbf{b}}(\rho)$ are defined for all $z\in D$, all $\rho\in W$ and all large enough $n$. Define the function $f:D\times W\to \mathbb{C}$ by $f(z,\rho) := \mathbf{a}(\lambda^\mathbb{C}(\rho_z(\gamma)))$. We approximate $\mu$ by a sequence $\{\mu_n=(\eta_n, s_n)\}$ of weighted simple closed curves. Let $f_n(z,\rho) := \mathbf{a}(\lambda^\mathbb{C}(\rho_{n,z}(\gamma)))$. By Corollary~\ref{cor:holomorphic} and Lemma~\ref{lem:holomorphicconj}, both $f$ and $f_n$ are holomorphic. Since $\ell^\mathbf{a}_\gamma(t,\rho)$ is the restriction of $f$ to real points contained in $D\times W$, we conclude that $\ell^\mathbf{a}_\gamma$ is real analytic.

We now compute $\frac{\partial f_n}{\partial z}$ and $\frac{\partial f}{\partial z}$.  Write $z=x+ y\ui$. As we discussed above, we have 
\begin{align*}
    \frac{\partial f_n}{\partial x}&=\int_{\mathcal{G}^\times(S)} \kf^{\mathbf{a},\mathbf{b}}_{\rho_{n,z}}
    \;\der(\dirac_\gamma\times \mu_n),\\
    \frac{\partial f_n}{\partial y}&=\ui\int_{\mathcal{G}^\times(S)} \kf^{\mathbf{a},\mathbf{b}}_{\rho_{n,z}} \;\der(\dirac_\gamma\times \mu_n).
\end{align*}
Therefore,
\[
\frac{\partial f_n}{\partial z} =\frac{1}{2}\left(\frac{\partial f_n}{\partial x}-\ui  \frac{\partial f_n}{\partial y}\right)= \frac{\partial f_n}{\partial x}=\int_{\mathcal{G}^\times(S)} \kf^{\mathbf{a},\mathbf{b}}_{\rho_{n,z}} \;\der(\dirac_{\gamma}\times \mu_n). 
\]

By Theorem~\ref{thm:convergence},  $\{f_n\}$ converges to $f$ uniformly on compact subsets of $D\times W$. Since $f_n$ are holomorphic, $\{\frac{\partial f_n}{\partial z}\}$ converges to $\frac{\partial f}{\partial z}$ uniformly on compact subsets as well. It remains to show that for each $(z,\rho)\in D\times W$, we can find a subsequence $\{n_i\}$  such that
\[
\int_{\mathcal{G}^\times(S)} \kf^{\mathbf{a},\mathbf{b}}_{\rho_{n_i,z}} \;\der(\dirac_{\gamma}\times \mu_{n_i})\to\int_{\mathcal{G}^\times(S)} \kf^{\mathbf{a},\mathbf{b}}_{\rho_{z}} \;\der(\dirac_{\gamma}\times \mu)
\]
as $i\to\infty$.

We rewrite the integral in a slightly different form. Pick a point $p\in \gamma\setminus \bigcup\eta_n$ that is not a self-intersection point. Let $\widetilde{\gamma}$ be a complete geodesic lift of $\gamma$. For two consecutive lifted points $p_1$ and $p_2$ of $p$ lying on  $\widetilde{\gamma}$, let $\widetilde{\gamma}_0\subset \widetilde{\gamma}$ be an open geodesic arc joining $p_1$ and $p_2$. Define
\[
\mathcal{Y}_\gamma :=\{g\in \mathcal{G}\mid g\text{ intersects }\widetilde{\gamma}_0\text{ transversely} \}
\]
be a relatively compact open set in $\mathcal{G}$. Then,  we can write
\[
\int_{\mathcal{G}^\times(S)} \kf^{\mathbf{a},\mathbf{b}}_{\rho_{n,z}} \;\der(\dirac_\gamma\times \mu_n)=\int_{\{\widetilde{\gamma}\}\times\mathcal{Y}_\gamma} \kf^{\mathbf{a},\mathbf{b}}_{\rho_{n,z}} \; \der (\dirac_\gamma\times \mu_n)=\int_{\mathcal{Y}_\gamma} \kf^{\mathbf{a},\mathbf{b}}_{\rho_{n,z}}(\widetilde{\gamma}, g) \; \der \mu_n(g).
\]

Now, we appeal to Lemma~\ref{lem:continuity}. To this end, we first need to find a compact set $K\subset \mathcal{Y}_\gamma$ containing $\mathcal{Y}_\gamma \cap \supp(\mu)$ and $\mathcal{Y}_\gamma \cap \supp(\mu_n)$ for infinitely many $n$. Then  we claim that, for each $(z,\rho)$, the sequence of functions $\{g\mapsto\kf^{\mathbf{a},\mathbf{b}}_{\rho_{n,z}}(\widetilde{\gamma},g)\}$ defined on $\mathcal{Y}_\gamma$ has a subsequence that converges uniformly on $K$ to $g\mapsto \kf^{\mathbf{a},\mathbf{b}}_{\rho_z}(\widetilde{\gamma},g)$.

By Lemma~\ref{lem:limit}, passing to a subsequence if necessary, we may assume that the sequence $\{|\mu_n|\}$ of underlying maximal laminations converges to the maximal geodesic lamination $|\mu|$ containing $\supp(\mu)$ in the Hausdorff topology. Let $\mathcal{T}$ be a train track carrying $\mu$ and let $\widetilde{\mathcal{T}}$ be its lift. We may choose $\mathcal{T}$ small enough so that $\widetilde{\mathcal{T}}\cap \widetilde{\gamma}_0$ is contained in a proper compact geodesic subarc $ \widetilde{\gamma}_0'$ of $\widetilde{\gamma}_0$ and each connected component $\widetilde{\mathcal{T}}\cap \widetilde{\gamma}_0$ is a tie of $\widetilde{\mathcal{T}}$. Define 
\[
K:=\{g\in \mathcal{G}\mid g\text{ is carried by } \widetilde{\mathcal{T}}\text{ and  intersects } \widetilde{\gamma}_0'\text{ transversely}\}.
\]
Since $|\mu_n|$ converges to $|\mu|$ we can find an integer $N$ such that $\widetilde{\mu_n}\subset \widetilde{\mathcal{T}}$ for all $n>N$, where $\widetilde{\mu_n}$ is the lift of $|\mu_n|$. Notice that $K$ is compact and it contains all $\supp (\mu_n)\cap \mathcal{Y}_\gamma$, $n>N$, and $\supp(\mu)\cap \mathcal{Y}_\gamma$.

Now we show that the sequence of functions $\{g\mapsto\kf^{\mathbf{a},\mathbf{b}}_{\rho_{n,z}}(\widetilde{\gamma},g)\}$  converges uniformly on $K$ to $g\mapsto \kf^{\mathbf{a},\mathbf{b}}_{\rho_z}(\widetilde{\gamma},g)$.  

Since $\{\rho_{n,z}\}$ converges to $\rho_z$, \cite[Theorem~6.1]{BCLS2015} shows that
\begin{align*}
Q&:=\overline{\{(\xi_{\rho_{n,z}}(\widetilde{\gamma}^+),\xi^\mathrm{op}_{\rho_{n,z}}(\widetilde{\gamma}^-))\mid n\in \mathbb{N}\}}\\
R&:=\overline{\{(\xi_{\rho_{n,z}}(\eta^+),\xi^\mathrm{op}_{\rho_{n,z}}(\eta^-))\mid \eta\in K,\,n\in \mathbb{N}\}}
\end{align*}
are compact subsets in $\mathfrak{T}$. Here all geodesics are oriented so that $\widetilde{\gamma}$ intersects $\eta$ positively. Equip $\mathcal{F}_\Theta$ with a metric, say $\der_\mathcal{F}$, which induces a metric on $Q\times R$. Given $\epsilon>0$, there exists $\delta$ such that whenever $(\mathfrak{x},\mathfrak{y},\mathfrak{z},\mathfrak{w}), (\mathfrak{x}',\mathfrak{y}',\mathfrak{z}',\mathfrak{w}')$ in $Q\times R$ satisfy 
\[
\der((\mathfrak{x},\mathfrak{y},\mathfrak{z},\mathfrak{w}), (\mathfrak{x}',\mathfrak{y}',\mathfrak{z}',\mathfrak{w}')):=\der_\mathcal{F}(\mathfrak{x},\mathfrak{x}')+\der_\mathcal{F}(\mathfrak{y},\mathfrak{y}')+\der_\mathcal{F}(\mathfrak{z},\mathfrak{z}')+\der_\mathcal{F}(\mathfrak{w},\mathfrak{w}')<\delta,
\]
we have $|\kf^{\mathbf{a},\mathbf{b}}(\mathfrak{x},\mathfrak{y},\mathfrak{z},\mathfrak{w})-\kf^{\mathbf{a},\mathbf{b}}(\mathfrak{x}',\mathfrak{y}',\mathfrak{z}',\mathfrak{w}')|<\epsilon$. Again, by   \cite[Theorem~6.1(4)]{BCLS2015}, we can find $N$ such that 
\[
\sup_{x\in \partial_\infty\pi_1(S)} \der_{\mathcal{F}}(\xi_{\rho_{n,z}} (x),\xi_{\rho_z}(x))<\frac{\delta}{4},\qquad \sup_{x\in \partial_\infty\pi_1(S)} \der_{\mathcal{F}}(\xi^\mathrm{op}_{\rho_{n,z}} (x),\xi^\mathrm{op}_{\rho_z}(x))<\frac{\delta}{4}
\]
hold whenever $n>N$. It follows that, for any $n>N$ and any $\eta\in K$,
\[
|\kf^{\mathbf{a},\mathbf{b}}_{\rho_{n,z}}(\widetilde{\gamma}, \eta)-\kf^{\mathbf{a},\mathbf{b}}_{\rho_{z}}(\widetilde{\gamma}, \eta)|<\epsilon
\]
holds. This establishes the claim.
\end{proof}

\subsection{The Poisson Bracket} As an application, we first obtain a generalized version of the Goldman product formula \cite[Theorem 3.5]{Goldman86} for Hamiltonian vector fields associated to measured laminations. 

Let $\mathbf{a}\in \mathfrak{a}^*$ be a non-trivial element invariant under the opposite involution,  and let $\mu$ be a measured lamination. We consider the vector field associated to the cataclysm deformation $\Lambda^{t\mu^\mathbf{a}}$:
\[
\mathcal{H}^\mathbf{a}_\mu :=\frac{\der }{\der t} \Lambda^{t\mu^\mathbf{a}}.
\]

If $\mu$ is a weighted simple closed curve, it is known that $\mathcal{H}^{\mathbf{a}}_\mu$ is a Hamiltonian vector field with respect to the Atiyah-Bott-Goldman symplectic form $\omega_{\mathrm{ABG}}$ \cite[Theorems 4.5, 4.7]{Goldman86}. In other words, 
\[
\der \ell^{\mathbf{a}}_\mu  = \omega_{\mathrm{ABG}}(\mathcal{H}^\mathbf{a}_\mu, -).
\]

For a general measured lamination $\mu$, choose a sequence $\{\mu_i\}$ of weighted simple closed curves that converges weakly to  $\mu$. By Theorem~\ref{thm:convergence}, the sequence of flows $\{ \Lambda^{t\mu_i^{\mathbf{a}}}\}$ and the sequence of associated vector fields $\{\mathcal{H}^{\mathbf{a}}_{\mu_i}\}$ converge uniformly to $\Lambda^{t\mu^\mathbf{a}}$ and to $\mathcal{H}^{\mathbf{a}}_{\mu}$ on compact sets. Since $\Hit_{\mathsf{G}_\mathrm{split}}(S)$ is contractible, we know that the limiting 1-form $\omega_{\mathrm{ABG}}(\mathcal{H}^{\mathbf{a}}_{\mu}, -)$ is exact. It follows that  there is a smooth function, denoted by $\ell^\mathbf{a}_\mu$, satisfying 
\begin{equation}\label{eq:hamiltonianpotential}
\der\ell^\mathbf{a}_\mu = \omega_\mathrm{ABG}(\mathcal{H}^\mathbf{a}_\mu,-).
\end{equation}

We may call $\ell^{\mathbf{a}}_\mu$, defined up to an additive constant, a length function of $\mu$. At this stage, we are merely asserting that there exists a function $\ell^\mathbf{a}_\mu$ satisfying (\ref{eq:hamiltonianpotential}). We are neither giving an explicit definition of $\ell^\mathbf{a}_\mu$, nor claiming that $\lim_{i\to \infty}\ell^\mathbf{a}_{\mu_i}$ exists. What we can say is that there is a sequence $c_i$ of real numbers such that $\{\ell^\mathbf{a}_{\mu_i}+c_i\}$ converges uniformly on compact sets to $\ell^\mathbf{a}_\mu$. Nonetheless, for any function that satisfies (\ref{eq:hamiltonianpotential}), we are able to compute the Poisson bracket as follows
\[
\{\ell^\mathbf{a}_\mu, \ell^\mathbf{b}_{\tau}\} = \lim_{i\to \infty}\{\ell^\mathbf{a}_{\mu_i}, \ell^\mathbf{b}_{\tau}\}=\lim_{i\to \infty}\int_{\mathcal{G}^\times(S)}\kf^{\mathbf{a},\mathbf{b}}_\rho \;\der(\mu_i \times \tau).
\]
By evaluating this limit, we obtain the following result. 

\begin{theorem} Let $S$ be a closed oriented surface of genus at least two. Let $\mathsf{G}_\mathrm{split}$ be the split real form of a complex simple adjoint group $\mathsf{G}$. Let $\mathbf{a},\mathbf{b}$ be non-trivial elements of $\mathfrak{a}^*$ that are fixed by the opposite involution. Then, for two measured laminations $\mu$ and $\tau$ on $S$, and for $\rho\in \Hit_{\mathsf{G}_\mathrm{split}}(S)$, we have  
\[
\{\ell^\mathbf{a}_\mu, \ell^\mathbf{b}_{\tau}\}(\rho)=\omega_\mathrm{ABG}(\mathcal{H}^\mathbf{a}_\mu , \mathcal{H}^\mathbf{b}_\tau)|_\rho = \int_{\mathcal{G}^\times (S)}\kf^{\mathbf{a},\mathbf{b}}_\rho \;\der(\mu \times \tau).
\]    
\end{theorem}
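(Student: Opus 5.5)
The strategy is a double approximation. First I establish the formula when both laminations are weighted simple closed curves, then pass to limits in each variable separately, using Lemma~\ref{lem:derivative} together with the Hamiltonian property (\ref{eq:hamiltonianpotential}).

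\textbf{Step 1: the first identity and the curve case.} The first equality $\{\ell^\mathbf{a}_\mu,\ell^\mathbf{b}_\tau\}=\omega_\mathrm{ABG}(\mathcal{H}^\mathbf{a}_\mu,\mathcal{H}^\mathbf{b}_\tau)$ is the definition of the Poisson bracket once (\ref{eq:hamiltonianpotential}) holds for both functions. Hence only the integral formula needs proof. Since $\der\ell^\mathbf{a}_\mu=\omega_\mathrm{ABG}(\mathcal{H}^\mathbf{a}_\mu,-)$, we get $\omega_\mathrm{ABG}(\mathcal{H}^\mathbf{a}_\mu,\mathcal{H}^\mathbf{b}_\tau)=\der\ell^\mathbf{a}_\mu(\mathcal{H}^\mathbf{b}_\tau)$. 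This is the derivative of $\ell^\mathbf{a}_\mu$ along the cataclysm flow $\Lambda^{t\tau^\mathbf{b}}$, up to the sign convention for Hamiltonian vector fields, which I would fix so that it agrees with (\ref{eq:goldman}).

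Take $\mu=(\gamma,s)$ a weighted simple closed curve. Then $\ell^\mathbf{a}_\mu=s\,\ell^\mathbf{a}_\gamma$ up to a constant. Lemma~\ref{lem:derivative}, applied with $\tau$ in place of $\mu$, gives
\[
\frac{\der}{\der t}\Big|_{t=0}\ell^\mathbf{a}_\gamma(\Lambda^{t\tau^\mathbf{b}}\rho)=\int_{\mathcal{G}^\times(S)}\kf^{\mathbf{a},\mathbf{b}}_\rho\,\der(\dirac_\gamma\times\tau).
\]
Multiplying by $s$ gives the formula for $(\mu,\tau)$ whenever $\mu$ is a weighted curve and $\tau$ is an arbitrary measured lamination.

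\textbf{Step 2: general $\mu$.} Choose weighted simple closed curves $\mu_i\to\mu$ weakly. By Lemma~\ref{lem:convergetoo} (here $-w_0\mathbf{a}=\mathbf{a}$ is used), $\mu_i^\mathbf{a}\to\mu^\mathbf{a}$ weakly. Theorem~\ref{thm:convergence} applies to the holomorphic extensions $z\mapsto\Lambda^{z\mu_i^\mathbf{a}}$, and Cauchy estimates then give $\mathcal{H}^\mathbf{a}_{\mu_i}\to\mathcal{H}^\mathbf{a}_\mu$ uniformly on compact sets. Consequently $\omega_\mathrm{ABG}(\mathcal{H}^\mathbf{a}_{\mu_i},\mathcal{H}^\mathbf{b}_\tau)|_\rho\to\omega_\mathrm{ABG}(\mathcal{H}^\mathbf{a}_\mu,\mathcal{H}^\mathbf{b}_\tau)|_\rho$. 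By Step 1, it remains to show
\[
\int\kf^{\mathbf{a},\mathbf{b}}_\rho\,\der(\mu_i\times\tau)\to\int\kf^{\mathbf{a},\mathbf{b}}_\rho\,\der(\mu\times\tau).
\]
Here $\rho$ is fixed, which makes this easier than in Lemma~\ref{lem:derivative}.

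\textbf{Step 3: the convergence of the integrals (main obstacle).} I would localize $\tau$ by fixing a compact fundamental domain for the $\pi_1(S)$-action on $\mathcal{G}^\times$. Concretely, cover $\supp\tau$ by finitely many short transverse arcs $k_j$ with relatively compact open sets $\mathcal{Y}_{k_j}$ of geodesics crossing them, and use a partition of unity to write the integral as $\sum_j\int_{\mathcal{Y}_{k_j}}F_i(h)\,\der\tau(h)$ with
\[
F_i(h)=\int\kf^{\mathbf{a},\mathbf{b}}_\rho(g,h)\,\der\mu_i(g),
\]
the $g$-integral running over a relatively compact set of geodesics meeting a fixed compact neighbourhood of $k_j$ in $\mathbb{H}^2$.

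First, for each $h$, I would get $F_i(h)\to F(h)$ from Lemma~\ref{lem:continuity}. The compact set $K$ required there comes from train-tracks carrying $|\mu_i|$ and $|\mu|$, exactly as in the proof of Lemma~\ref{lem:derivative}, using Lemma~\ref{lem:limit}. Continuity of $\kf^{\mathbf{a},\mathbf{b}}_\rho$ comes from Lemma~\ref{lem:bounded}.

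The delicate point is the diagonal, where $g$ and $h$ become tangent and $\kf^{\mathbf{a},\mathbf{b}}_\rho$ jumps from its transverse value to $0$. The plan is the following.
\begin{itemize}
\item Bound $|\kf^{\mathbf{a},\mathbf{b}}_\rho|$ on the compact region using Lemma~\ref{lem:bounded}.
\item Show that the $\mu_i\times\tau$-mass of pairs with small intersection angle goes to zero uniformly in $i$, via a cutoff function on $\mathcal{G}^\times$ vanishing near the diagonal.
\item For the mass estimate: since $\mu\times\tau$ gives zero mass to the diagonal, weak convergence $\mu_i\times\tau\to\mu\times\tau$ on the compact region controls the mass of thin neighbourhoods of the diagonal.
\end{itemize}
If $\tau$ and $\mu$ share leaves, the diagonal has positive $\mu\times\tau$-mass but contributes $0$ to the integral, since tangent or equal geodesics do not intersect transversely. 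In that case I would need the stronger fact that near-tangent transverse pairs contribute little. I would try to prove this via a bound $|\kf^{\mathbf{a},\mathbf{b}}_\rho(g,h)-\kf^{\mathbf{a},\mathbf{b}}_\rho(g,h')|$ controlled through Hölder continuity of $\xi_\rho$, together with cancellation of the signs $\mathrm{sign}(p)$. I expect this to be the hardest step.

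Finally, dominated convergence in $h$ with respect to $\tau$ gives the limit.
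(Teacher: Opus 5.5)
Your route is the paper's. The curve case comes from Lemma~\ref{lem:derivative} together with the Hamiltonian identity, and Theorem~\ref{thm:convergence} gives $\mathcal{H}^{\mathbf{a}}_{\mu_i}\to\mathcal{H}^{\mathbf{a}}_\mu$. The limit of the integrals is then handled by cutting off with a fundamental domain $D$, proving pointwise convergence of the inner integral $F_i(y)$ over geodesics crossing $y_0$, and applying dominated convergence in $y$. You are right that the near-diagonal region is delicate. The paper passes over it by asserting a compact $K\subset\mathcal{X}_y$ ``as in Lemma~\ref{lem:derivative}'', a train-track construction that needs $y$ transverse to the Hausdorff limit of $|\mu_i|$.

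However, the remedy you propose for the hard case would fail. For nearly tangent transverse pairs, $\kf^{\mathbf{a},\mathbf{b}}_\rho(g,h)$ is close to $\pm\kf(\mathbf{a}^\vee,\mathbf{b}^\vee)$ by $\Ad$-invariance of $\kf$. So H\"older control of $\xi_\rho$ only confirms that such pairs contribute a non-negligible amount per unit mass, and the signs need not cancel. Take $\mu=\tau=(c,1)$, $c'$ meeting $c$ once, and $\mu_n=\frac{1}{n}T_c^n(c')\to\mu$. The lifts of $\supp\mu_n$ cross each lift of $c$ once per period, at angles tending to $0$; in particular no compact $K\subset\mathcal{X}_y$ exists when $y$ is a lift of $c$. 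There is nothing to cancel, and $\int\kf^{\mathbf{a},\mathbf{b}}_\rho\,\der(\mu_n\times\tau)\approx\pm\frac{1}{n}\kf(\mathbf{a}^\vee,\mathbf{b}^\vee)$ tends to $0$ only because the crossing mass does.

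The missing ingredient is continuity of the intersection number on $\ML(S)$ (Bonahon): $i(\mu_n,\tau)\to i(\mu,\tau)$. Since $i(\mu_n,\tau)$ is the transverse mass of $\mu_n\times\tau$, none of that mass can pile up at the diagonal.

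Your case split can also be sharpened. The diagonal has $\mu\times\tau$-mass $\sum_g\mu(\{g\})\tau(\{g\})$, so shared non-closed leaves are harmless. Concretely, let $y\in\supp\tau$ with $\mu(\{y\})=0$. Then $\overline{\mathcal{X}_y}\setminus\mathcal{X}_y$ consists of $y$ and geodesics through the endpoints of $y_0$, and this set is $\mu$-null by $\mu(\{y\})=0$ and the choice of $D$. Hence $x\mapsto\kf^{\mathbf{a},\mathbf{b}}_\rho(x,y)$ is bounded (Lemma~\ref{lem:bounded}) and $\mu$-a.e.\ continuous on $\overline{\mathcal{X}_y}$, so $F_i(y)\to F(y)$ follows from weak convergence alone, with no train tracks needed. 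The only exceptional $y$ are lifts of a closed leaf $c$ carrying positive weight in both $\mu$ and $\tau$. There $F(y)=0$, and since $y_0$ projects injectively into $c$, $|F_i(y)|\le C\,\mu_i(\mathcal{X}_y)\le C\,i(\mu_i,c)\to C\,i(\mu,c)=0$. With this in place of sign cancellation, your dominated-convergence step closes the argument.
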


\begin{proof}
Let $D$ be the interior of a fundamental domain for the surface $S$ such that the boundary of the closure of $D$ does not contain intersection points $x\cap y$ for $x\in \supp(\mu)$, $y\in \supp(\tau)$. Define a measurable function $\breve{\kf}^{\mathbf{a},\mathbf{b}}_\rho$ on $\mathcal{G}\times\mathcal{G}$ by
\[
\breve{\kf}^{\mathbf{a},\mathbf{b}}_\rho (x,y) = \begin{cases}
    \kf^{\mathbf{a},\mathbf{b}}_\rho (x,y) & \text{if }(x,y)\in \mathcal{G}^\times\text{ and } x\cap y \text{ belongs to }D\\
    0 & \text{ otherwise }
\end{cases}.
\]
Let
\[
\mathcal{D} = \{g\in \mathcal{G}\mid g\cap D\ne\emptyset\}.
\]
Then, $\mathcal{D}$ is a relatively compact open subset of $\mathcal{G}$. Observe that
\[
\int_{\mathcal{G}^\times(S)} \kf^{\mathbf{a},\mathbf{b}}_\rho\;\der(\mu\times \tau) =\int_{\mathcal{D}\times \mathcal{D}} \breve{\kf}^{\mathbf{a},\mathbf{b}}_\rho \;\der(\mu \times \tau)=\int_{\mathcal{D}}\left(\int_{\mathcal{D}}\breve{\kf}^{\mathbf{a},\mathbf{b}}_\rho(x,y) \;\der\mu(x) \right)\der \tau(y).
\]
Similarly,
\[
\int_{\mathcal{G}^\times(S)}\kf^{\mathbf{a},\mathbf{b}}_\rho \;\der\mu_n \;\der\tau= \int_\mathcal{D}\left( \int_\mathcal{D} \breve{\kf}^{\mathbf{a},\mathbf{b}}_\rho (x,y) \;\der\mu_n(x) \right)\der\tau(y).
\]

For each $y\in \mathcal{D}$, let $y_0=y\cap D$ and let
\[
\mathcal{X}_y=\{g\in \mathcal{D}\mid g\text{ intersects }y_0\text{ transversely}\}.
\]
As we did in the proof of Lemma~\ref{lem:derivative}, choose a compact set $K\subset \mathcal{X}_y$ and an integer $N$ such that 
\[
\left\{\left.x\in \left(\bigcup_{n>N}\supp(\mu_n)\right)\cup \supp(\mu)\,\right|\,x\text{ intersects } y_0 \text{ transversely}\right\} \subset K.
\]

Note that  $\kf^{\mathbf{a},\mathbf{b}}_\rho (x,y)=\breve{\kf}^{\mathbf{a},\mathbf{b}}_\rho(x,y)$ for  $x\in \mathcal{X}_y$. By Lemma~\ref{lem:bounded}, the function $x\mapsto \breve{\kf}^{\mathbf{a},\mathbf{b}}_\rho(x,y)$ is continuous on $\mathcal{X}_y$. We can also find a compactly supported continuous function $s:\mathcal{G}\to \mathbb{R}$ such that $s=1$ on $K$ and $s=0$ on $\mathcal{G}\setminus \mathcal{X}_y$.  Since $\mu_n\to \mu$ weakly, we know that 
\[
\int_{\mathcal{D}}\breve{\kf}^{\mathbf{a},\mathbf{b}}_\rho(x,y)\;\der\mu_n(x)=\int_{\mathcal{G}}s(x)\cdot \breve{\kf}^{\mathbf{a},\mathbf{b}}_\rho(x,y)\;\der\mu_n(x)
\]
converges to 
\[
 \int_{\mathcal{G}}s(x)\cdot\breve{\kf}^{\mathbf{a},\mathbf{b}}_\rho(x,y)\;\der \mu(x)=\int_{\mathcal{D}}\breve{\kf}^{\mathbf{a},\mathbf{b}}_\rho(x,y)\;\der \mu(x)
\]
as $n\to \infty$. Moreover, by Lemma~\ref{lem:bounded},
\[
\left|\int_{\mathcal{D}} \breve{\kf}^{\mathbf{a},\mathbf{b}}_\rho(x,y)\;\der\mu_n (x)\right|<\mu_n(\mathcal{D}) \cdot \sup_{(g,h)\in \overline{\mathcal{D}}\times \overline{\mathcal{D}}} \kf^{\mathbf{a},\mathbf{b}}_\rho(g,h)
\]
is uniformly bounded. Therefore, by using the dominated convergence theorem, we obtain
\begin{align*}
\lim_{n\to\infty}\int_{\mathcal{G}^\times(S)}\kf^{\mathbf{a},\mathbf{b}}_\rho \;\der(\mu_n \times \tau)&=\lim_{n\to\infty}\int_\mathcal{D}\left( \int_\mathcal{D} \breve{\kf}^{\mathbf{a},\mathbf{b}}_\rho (x,y) \;\der\mu_n(x) \right)\der\tau(y)\\
&=\int_\mathcal{D}\left( \int_\mathcal{D} \breve{\kf}^{\mathbf{a},\mathbf{b}}_\rho (x,y) \;\der\mu(x) \right)\der\tau(y)\\
&=\int_{\mathcal{G}^\times(S)}\kf^{\mathbf{a},\mathbf{b}}_\rho \;\der(\mu \times\tau). \qedhere
\end{align*}
\end{proof}

\subsection{Strongly dense representations} 
This subsection is devoted to proving Theorem~\ref{thm:main}.  We will  assume that $w_0=-1$ for the last of this section. Equivalently, the opposite involution is the identity. 

\begin{lemma}\label{lem:goodhyperbolicstr}
    Let $\mu\in\ML(S)$ be a measured lamination whose support fills $S$ and let $\mathbf{a},\mathbf{b}\in \mathfrak{a}^*$ be such that $\kf(\mathbf{a}^\vee,\mathbf{b}^\vee)\ne0$. Then there exists a hyperbolic structure $X$ with Fuchsian holonomy $\rho:\pi_1(S)\to \PSL_2(\bR)$ such that
    \[
    \int_{\mathcal{G}^\times(S)}\kf^{\mathbf{a},\mathbf{b}}_{\rho}\;\der(\dirac_\gamma\times\mu)\ne 0
    \]
    for all  closed geodesics $\gamma$.
\end{lemma}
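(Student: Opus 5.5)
The plan is to work with a Fuchsian holonomy $\tau_\mathsf{G}\circ\rho_X$ and use Example~\ref{example}. That example reduces the integrand to an explicit function of the intersection angle:
\[
\kf^{\mathbf{a},\mathbf{b}}_{\rho_X}(g,h)=F\bigl(\angle^X(g,h)\bigr),\qquad F(\theta):=\kf\bigl(\mathbf{a}^\vee,\Ad_{\tau_\mathsf{G}(R_{\theta/2})}\mathbf{b}^\vee\bigr),
\]
where $R_{\theta/2}$ is the rotation matrix of Example~\ref{example}. The function $F$ is real-valued and continuous on $[0,\pi]$, and $F(0)=\kf(\mathbf{a}^\vee,\mathbf{b}^\vee)\neq 0$. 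Since $\kf^{\mathbf{a},\mathbf{b}}_\rho$ does not depend on orientations when $\mathbf{a},\mathbf{b}$ are fixed by the opposite involution, reversing $h$ replaces $\theta$ by $\pi-\theta$. Hence $F(\pi-\theta)=F(\theta)$, so $F$ is also close to $F(0)$ near $\theta=\pi$. Choose $\theta_0>0$ such that $|F(\theta)-F(0)|<|F(0)|/2$ whenever $\theta<\theta_0$ or $\theta>\pi-\theta_0$. Then for every closed geodesic $\gamma$ we have
\[
\int_{\mathcal{G}^\times(S)}\kf^{\mathbf{a},\mathbf{b}}_{\rho_X}\,\der(\dirac_\gamma\times\mu)=\int_{\gamma} F\bigl(\angle^X(\gamma,\mu)\bigr)\,\der\mu_\gamma .
\]
This is a real number with the sign of $F(0)$ and absolute value at least $\tfrac12|F(0)|\,i(\gamma,\mu)$, provided every intersection angle of $\gamma$ with a leaf of $\supp(\mu)$ lies in $(0,\theta_0)\cup(\pi-\theta_0,\pi)$. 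Because $\supp(\mu)$ fills $S$, we have $i(\gamma,\mu)>0$ for every closed geodesic $\gamma$. The lemma therefore reduces to the following geometric statement.

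\emph{Geometric claim:} there is a hyperbolic structure $X$ on $S$ such that every closed geodesic meets every leaf of the geodesic realization of $\supp(\mu)$ at an angle less than $\theta_0$ (measured in $\der_{\mathbb{P}\bR^1}$), uniformly over all closed geodesics.

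To produce $X$, I would take a long ray of a Thurston stretch path $X_t$ directed by a maximal completion $\lambda$ of $\supp(\mu)$. Along this path, the transverse (horocyclic) foliation is contracted by $\e^{-t}$, while the leaves of $\lambda$ keep their unit-speed parametrization. The claim would follow from a local statement: in $X_t$, any geodesic segment crossing a leaf of $\lambda$ at angle at least $\theta_0$ must exit within bounded length into a complementary ideal triangle. Such a segment then travels a definite distance transverse to the foliation. By contrast, the ideal triangles of $X_t$ become arbitrarily thin in the transverse direction, because the horocyclic foliation has been contracted. Moreover, a closed geodesic must return, and the filling property forces it to cross $\lambda$ again. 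I would turn this into a contradiction by comparing the hyperbolic length of transverse horocyclic arcs, which is at most $\e^{-t}$ times a fixed constant, with the length of the crossing segment given by Lemma~\ref{lem:lengthheight}.

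An alternative route is to pass to the limit: rescaled, the structures $X_t$ converge to the $\bR$-tree dual to $\lambda$, or equivalently to the projective class of the horocyclic lamination. In that limit a closed geodesic becomes, except for a vanishing proportion of its length, a concatenation of nearly tangent pieces. Uniformity in $\gamma$ would then come from compactness of $\PT S$ together with the fact that large-angle crossings occur only at points where the geodesic is far from $\lambda$ in a complementary region.

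I expect this uniform angle estimate to be the main obstacle, since it must hold for all closed geodesics simultaneously, including those that spend a long time inside cusp regions of the complementary triangles. What I would try first is the stretch-path argument, with Lemma~\ref{lem:lengthheight} supplying a lower bound on the length of a large-angle crossing between two leaves bounding a spike. Inside a spike of width $w\le C\e^{-t}$, a chord making angle at least $\theta_0$ with the boundary leaves has length comparable to $w$. Yet consecutive crossings of $\supp(\mu)$ by a geodesic are separated by segments that cross complementary triangles. If those triangles are entered at a large angle, the geodesic must leave through another side, and I would use the triangle's geometry to show that the exit angle is also forced to be small. A fallback, if uniformity over all $\gamma$ fails, is to choose $X$ generically. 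The integral is real analytic in $X$ and not identically zero, since it is nonzero near the stretch limit for each fixed $\gamma$. The countably many $\gamma$ then each have only a nowhere dense vanishing locus, so a Baire category argument yields a single $X$ that works for all $\gamma$.
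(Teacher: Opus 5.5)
There is a genuine gap, and it is in your reduction step.

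\textbf{The symmetry of $F$ goes the other way.} In Example~\ref{example} the angle $\angle^X(g,h)$ is the counter-clockwise angle between \emph{unoriented} lines in $\PT_xS$. Reversing the orientation of $h$ does not change it, so orientation-independence of $\kf^{\mathbf{a},\mathbf{b}}_\rho$ tells you nothing about $F(\pi-\theta)$. In fact $R_{\pi/2}$ represents the nontrivial Weyl element of $\PSL_2(\bR)$, so $\tau_\mathsf{G}(R_{\pi/2})$ acts on $\mathfrak{a}$ by $w_0$. Hence $F(\pi)=\kf(\mathbf{a}^\vee,w_0\mathbf{b}^\vee)=-F(0)$, and more precisely $F(\pi-\theta)=-F(\theta)$. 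For $\PSL_2(\bR)$ this is Wolpert's cosine formula, $F(\theta)=c\cos\theta$.

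So a crossing at angle near $\pi$ contributes about $-F(0)$. Small angles in the $\der_{\mathbb{P}\bR^1}$ sense therefore do not make the integral sign-definite, because crossings on the two sides cancel. What you need is \emph{one-sided} control: $\angle^X(\gamma,g)<\theta_0$ for every leaf $g$ of $\supp(\mu)$ crossing $\gamma$. Nothing in the stretch-path picture decides on which side $\gamma$ crosses the leaves. A geodesic following a train track near $\lambda$ can drift across the leaves leftward in one branch and rightward in another. So even the per-$\gamma$ nonvanishing that your fallback relies on is not established.

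\textbf{The uniform claim is false, and the fix.} Your uniform geometric claim cannot hold for any $X$. Tangent lines of closed geodesics are dense in $\PT S$, so some closed geodesic crosses $\supp(\mu)$ almost perpendicularly. Uniformity in $\gamma$ therefore has to come from your fallback, and that is exactly how the paper begins. By Lemma~\ref{lem:derivative}, $\Phi_\gamma(X)=\int_{\mathcal{G}^\times(S)}\kf^{\mathbf{a},\mathbf{b}}_{\rho_X}\,\der(\dirac_\gamma\times\mu)$ is real analytic on Teichm\"uller space. It therefore suffices to find, for each $\gamma$ separately, one $X$ with $\Phi_\gamma(X)\neq 0$. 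Baire category then avoids the countably many nowhere dense zero sets.

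For the per-$\gamma$ step the paper uses the earthquake $X_n=E_{n\mu}(X_0)$ along $\mu$ itself, not a stretch path. The shear tilts every crossing of $\gamma$ with $\supp(\mu)$ the same way. So for fixed $\gamma$ one gets $\angle^{X_n}(\gamma,g)\to 0$ uniformly over the crossing leaves $g$, hence $\Theta_{X_n}(\gamma,g)\to\mathrm{Id}$. The integrand is then uniformly close to $\kf(\mathbf{a}^\vee,\mathbf{b}^\vee)$, and $i(\gamma,\mu)>0$ (by filling) gives $\Phi_\gamma(X_n)\neq 0$. Replacing the stretch path by this earthquake and keeping your Baire step repairs the proof.
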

\begin{proof}By Lemma~\ref{lem:derivative},
\[
\Phi_\gamma:  X\mapsto  \int_{\mathcal{G}^\times(S)}\kf^{\mathbf{a},\mathbf{b}}_{\rho_X}\;\der(\dirac_\gamma\times \mu)
\]
is a real analytic function on the Teichm\"uller space. Thus, it suffices to show the existence of a hyperbolic structure $X$  such that $\Phi_\gamma(X)\ne 0 $ for each closed parametrized geodesic $\gamma$.

    To this end, recall Example~\ref{example}:
    \[
    \kf^{\mathbf{a},\mathbf{b}}_{\rho_X}(g,h)= \kf(\mathbf{a}^\vee,\Ad_{\Theta_X(g,h)}\mathbf{b}^\vee).
\]
    We start with the hyperbolic structure $X_0$ and apply sufficiently long earthquake  $E_{t\mu}:S\to S$ along $\mu$. Let $X_n = E_{n\mu}(X_0)$.  Then, as observed in \cite{jung2025}, given $\epsilon>0$ there exists $N$ such that whenever $n>N$,  we have $\angle^{X_n}(\gamma, g)<\epsilon$ for all leaves $g$ of $\supp(\mu)$ intersecting $\gamma$. Therefore, $\Theta_{X_n}(\gamma,g)\to \mathrm{Id}$ as $n\to \infty$. It follows that $\kf(\mathbf{a}^\vee,\Ad_{\Theta_{X_n}(\gamma,g)}\mathbf{b}^\vee)\ne 0$ for all large enough $n$. 
\end{proof}

We are ready to show Theorem~\ref{thm:main}. For reader's convenience, we restate the result here. 

\begin{theorem} Let $\mathsf{G}_{\mathrm{split}}$ be a split real form of a complex simple Lie group of adjoint type $\mathsf{G}$. Assume that the longest element of the Weyl group of $\mathsf{G}$ is $-1$.  Fix a symmetric generating set for $\pi_1(S)$ and let $|\cdot|$ be the associated word metric. There exist a sequence of $\mathsf{G}_{\mathrm{split}}$-Hitchin representations $\{\rho_n\}$ and constants $A,B>0$ that satisfy the following properties: 
 \begin{enumerate}
      \item each $\rho_n$ is not strongly dense,
     \item $\langle\rho_n(x),\rho_n(y)\rangle$ is Zariski dense for all non-commuting  $x,y\in \pi_1(S)$ with $|x|,|y|<An-B$,
     \item $\{\rho_n\}$ converges to a strongly dense $\mathsf{G}_{\mathrm{split}}$-Hitchin representation.
 \end{enumerate}
\end{theorem}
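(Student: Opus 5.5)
Throughout, $\mu$ is a filling measured lamination, which I take to be uniquely ergodic, and $\mu_n=(\eta_n,s_n)\in\mathcal{S}\times\bR_+$ is a sequence of weighted simple closed curves converging weakly to $\mu$; the $\eta_n$ are obtained, for instance, by applying a pseudo-Anosov with stable lamination $\mu$ to a fixed curve. I fix $\mathbf{b}\in\mathfrak{a}^*$ and a real parameter $t$. Since $w_0=-1$, every $\mathbf{b}^\vee$ is fixed by the opposite involution, so Lemma~\ref{lem:convergetoo} gives $\mu_n^{\mathbf{b}}\to\mu^{\mathbf{b}}$ weakly. Theorem~\ref{thm:convergence} then gives $\rho_n:=\Lambda^{t\mu_n^{\mathbf{b}}}\rho_X\to\rho_\infty:=\Lambda^{t\mu^{\mathbf{b}}}\rho_X$, with $X$ the hyperbolic structure provided by Lemma~\ref{lem:goodhyperbolicstr}. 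Each $\rho_n$ is a Goldman flow along the simple closed curve $\eta_n$, i.e.\ a generalized bending.

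\textbf{Step (1): each $\rho_n$ is not strongly dense.} I take two distinct commuting-free elements supported in the complement of $\eta_n$, or the pair $\eta_n$ and a curve disjoint from it. The twist along $\eta_n$ conjugates one side by $\exp(\cdot\,\mathbf{b}^\vee)$ while keeping $\rho_n(\eta_n)$ in the centralizer of that flow. If I choose $x=\eta_n$ and $y$ a curve that crosses $\eta_n$ once, then $\langle\rho_n(x),\rho_n(y)\rangle$ is unchanged up to conjugation from the Fuchsian one. That subgroup is Fuchsian and lies in $\tau_\mathsf{G}(\PSL_2)$, hence is not Zariski dense. More simply, I take $x,y$ both in a subsurface disjoint from $\eta_n$; their images coincide (up to conjugation) with $\rho_X$ on that subsurface, which lies in the principal $\PSL_2$. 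The word length of such a pair grows linearly in $n$, because $\eta_n$ has length roughly $\lambda^n$ in the pseudo-Anosov case. To obtain a linear bound I instead arrange $|\eta_n|$ comparable to $n$, using e.g.\ $\eta_n=T^n(\cdot)$ with Dehn twists, or by re-indexing the sequence.

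\textbf{Step (2): short pairs generate Zariski dense subgroups.} The restriction of $\rho_n$ to any pair of curves $x,y$ avoiding the region where $\eta_n$ is a short geodesic is still Fuchsian. The real content is therefore this: for words of length less than $An-B$, every non-commuting pair $x,y$ must intersect $\eta_n$. This I derive from the growth of the subsurface complement or from an intersection bound, namely that any curve of length below $An-B$ meets $\eta_n$ in proportion to $\mu$. For such pairs, I use Lemma~\ref{lem:derivative}: the derivative in $t$ of $\ell^\mathbf{a}_\gamma$ along the flow equals $\int\kf^{\mathbf{a},\mathbf{b}}_{\rho}\,\der(\dirac_\gamma\times\mu_n)$. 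With $X$ from Lemma~\ref{lem:goodhyperbolicstr}, this is nonzero for all $\gamma$ and uniformly so in $n$, by the angle argument. This makes the length functions of $\rho_n(x)$ vary nonproportionally to those of the Fuchsian locus, which rules out containment in a proper algebraic subgroup. Here I appeal to the classification argument of \cite{jung2025}: a Zariski closure that is not all of $\mathsf{G}$ but contains a loxodromic pair from a Hitchin representation must be the principal $\PSL_2$ or one of a finite list, and the corresponding length spectra satisfy fixed linear relations that the nonvanishing derivative breaks for all but countably many $t$.

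\textbf{Step (3): the limit is strongly dense.} Every pair in $\rho_\infty$ crosses the filling lamination $\mu$. The same derivative argument at $t$ generic, together with analyticity from Corollary~\ref{cor:analytic}, shows that the set of bad $t$ for each of the countably many pairs is discrete. I therefore choose $t$ outside a countable set, which also handles Step (2) for all $n$ simultaneously.

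The main obstacle is Step (2): making the word-length threshold linear in $n$, and making the Zariski density criterion for pairs uniform in $n$.
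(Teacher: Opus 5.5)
\paragraph{Main gap: the linear threshold in (ii).} Your overall architecture matches the paper's. You use Goldman flows along simple closed curves $\eta_n$ (pseudo-Anosov iterates of a fixed curve), rescaled to converge weakly to a multiple of a filling $\mu$, starting from the Fuchsian $\rho_X$ of Lemma~\ref{lem:goodhyperbolicstr}. Failure of strong density comes from the subsurface complementary to $\eta_n$. Strong density of the limit comes from choosing the parameter off a countable union of discrete zero sets via Lemma~\ref{lem:derivative}. For that last step you must choose $\mathbf{b}$ with $\kf(\mathbf{a}_i^\vee,\mathbf{b}^\vee)\neq 0$ for the finitely many $\mathbf{a}_i$, so that Lemma~\ref{lem:goodhyperbolicstr} applies.

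The genuine gap is the word-length threshold in (ii), which you flag as the main obstacle but do not resolve. You need a lower bound, linear in $n$, on $\min\{|z| : z\neq 1,\ i(z,\eta_n)=0\}$. None of your suggestions gives one.
\begin{itemize}
\item That short curves meet $\eta_n$ ``in proportion to $\mu$'' holds only for each fixed $z$ as $n\to\infty$ (from $s_n\, i(z,\eta_n)\to i(z,\mu)>0$). It is not uniform over the growing set $\{|z|<An-B\}$.
\item The growth $\ell(\eta_n)\sim\lambda^n$ is irrelevant, since a long curve can be disjoint from a short one. A super-linear lower bound would also be fine, so there is nothing to ``slow down'', and re-indexing only weakens a bound you already have.
\item Replacing $\phi^n$ by Dehn twist powers $T_\alpha^n$ destroys (iii). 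Then $s_n\eta_n$ converges to a multiple of $\alpha$, which is not filling. The limit is a Goldman flow along $\alpha$, still Fuchsian on $\pi_1$ of a component of $S\setminus\alpha$, hence not strongly dense.
\end{itemize}
The missing idea is the curve-complex argument. If $z\neq 1$ is disjoint from $\phi^n c$, it lies in a complementary subsurface; let $z'$ be a shortest simple closed geodesic there. Then, up to multiplicative and additive constants,
\[
|z|\gtrsim \ell_X(z)\ge \ell_X(z')\gtrsim i(z',c)\gtrsim \der_{\mathscr{C}}(z',c)\ge \der_{\mathscr{C}}(c,\phi^n c)-1\ge C_4 n-1 .
\]
This uses the collar lemma, the bound $\der_{\mathscr{C}}\le 2i+1$, the fact $\der_{\mathscr{C}}(z',\phi^n c)\le 1$, and positive translation length of pseudo-Anosovs on the curve complex (Masur--Minsky \cite{minski}).

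\paragraph{Two further errors.} First, in Step (1), your opening claim is false: it is not true that with $x=\eta_n$ and $y$ crossing $\eta_n$ once the pair is unchanged up to conjugation. The twist multiplies $\rho_X(y)$ by an element of the centralizer of $\rho_X(\eta_n)$ in the direction $\mathbf{b}^\vee$. In general this leaves the principal $\PSL_2(\bR)$; indeed (ii) relies on such pairs becoming Zariski dense. Only your subsurface argument is valid, and it works whether $\eta_n$ is separating or not.

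Second, your device of one generic $t$ handling all $n$ at once is reasonable in principle. But it needs $t\mapsto \ell^{\mathbf{a}_i}_z(\Lambda^{t\mu_n^{\mathbf{b}}}\rho_X)$ to be non-constant for every $n$ and every $z$ crossing $\eta_n$. Lemma~\ref{lem:goodhyperbolicstr} only gives $\int\kf^{\mathbf{a}_i,\mathbf{b}}_{\rho_X}\,\der(\dirac_z\times\mu)\neq 0$ for the filling $\mu$. The convergence in the proof of Lemma~\ref{lem:derivative} transfers this to $\mu_n$ only for $n\ge N(z)$, not uniformly in $z$. So ``uniformly in $n$, by the angle argument'' needs an actual proof. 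The paper passes quickly over this point too, but its real content for (ii) is the word-length bound above.
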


\begin{proof} We mainly follow the argument of \cite{jung2025}.
According to \cite{sambarino}, there exist finitely many codimension-1 subspaces $\mathfrak{h}_1,\cdots,\mathfrak{h}_n$ in $\mathfrak{a}$ such that if $\rho|\langle x,y\rangle$ is not Zariski dense then, for some $i\in \{1,2,\cdots, n\}$, we have $\lambda(\rho(z))\in \mathfrak{h}_i$ for all $z\in \langle x,y\rangle$. That is, a representation $\rho$ is strongly dense if it satisfies $\ell^{\mathbf{a}_i}_\gamma(\rho)\ne 0$ for all $\gamma\in \pi_1(S)\setminus\{1\}$ and all $i=1,2,\cdots, n$.

Choose $\mathbf{b}\in\mathfrak{a}^*$ such that $\mathbf{b}(\mathfrak{h}_i)\ne0$ for all $i=1,2,\cdots,n$.  If we realize $\mathfrak{h}_i$ as the kernel of $\mathbf{a}_i\in \mathfrak{a}^*$, the condition $\mathbf{b}(\mathfrak{h}_i)\ne0$ means that $\kf(\mathbf{a}_i^\vee,\mathbf{b}^\vee)\ne0$. Since we are assuming $w_0=-1$,  $\mathbf{a}_i$ and $\mathbf{b}$ automatically satisfy the conditions $\mathbf{a}_i^\vee=-w_0\mathbf{a}_i^\vee$ and $\mathbf{b} ^\vee = -w_0 \mathbf{b}^\vee$.

Fix a pseudo-Anosov diffeomorphism $\phi:S\to S$ with stable invariant measured lamination $\mu^{\mathrm{st}}$. Since $\supp(\mu^{\mathrm{st}})$ is filling, Lemma~\ref{lem:goodhyperbolicstr} implies that there exists a hyperbolic structure $X_0$ with the Fuchsian holonomy $\rho_0$ such that 
\[
\int_{\mathcal{G}^\times(S)} \kf^{\mathbf{a}_i,\mathbf{b}}_{\rho_0} \;\der(\dirac_\gamma\times\mu^{\mathrm{st}})\ne 0
\]
for all closed geodesics $\gamma$ and all $i$.

Let $\rho_t=\Lambda^{t (\mu^{\mathrm{st}})^\mathbf{b}}(\rho_0)$. By Lemma~\ref{lem:derivative}, 
\[
\frac{\der }{\der t}\ell^{\mathbf{a}_i}_\gamma(\rho_t)= \int_{\mathcal{G}^\times(S)} \kf^{\mathbf{a}_i,\mathbf{b}}_{\rho_0}\; \der(\dirac_\gamma\times \mu^{\mathrm{st}})\ne 0
\]
for all $i=1,2,\cdots,n$ and all closed geodesics $\gamma$. In particular,  $t\mapsto \ell^{\mathbf{a}_i}_\gamma(\rho_t)$, $i=1,2,\cdots,n$, are non-constant and real-analytic. It follows that their zero sets 
\[
\mathcal{Z}(i,\gamma) := \{t\in \bR\mid \ell^{\mathbf{a}_i}_\gamma(\rho_t)=0\}
\]
are discrete for all $i$ and all $\gamma\in\pi_1(S)\setminus\{1\}$. This also implies that 
\[
\bigcup_{\substack{\gamma\in\pi_1(S)\setminus\{1\} \\ i=1,2,\cdots,n}}\mathcal{Z}(i,\gamma)
\]
is nowhere dense. Therefore, one can find a real number $\epsilon>0$ such that $\ell^{\mathbf{a}_i}_\gamma(\rho_\epsilon) \ne 0$ for all closed geodesics $\gamma$ and all $i$. Then,  $\rho_\epsilon$ is strongly dense.

Let $c$ be a separating simple closed geodesic and let $S_1$ and $S_2$ be the completions of connected components of $S\setminus c$. We regard $c$ as a weighted simple closed curve $(c,1)\in \mathcal{S}\times \mathbb{R}_+$ belonging to $\ML(S)$. Since the projective class of $\mu^{\mathrm{st}}$ is the unique attracting fixed point of the $\phi$-action on $\mathbb{P}\ML(S)$, one can find a sequence $\{k_n\}$ of real numbers such that $\{k_n \phi ^nc\}$ converges weakly to $\epsilon\mu^{\mathrm{st}}$ in $\ML(S)$.  Lemma~\ref{lem:convergetoo} and Theorem~\ref{thm:convergence} show that the sequence $\eta_n:=\Lambda^{(k_n\phi^nc)^\mathbf{b}}(\rho_0)$ converges to the strongly dense representation $\rho_\epsilon=\Lambda^{(\epsilon\mu^{\mathrm{st}})^\mathbf{b}}(\rho_0)$. It is clear that each $\eta_n$ is not strongly dense since $\eta_n$ restricted to the subgroup $\phi^n_*\pi_1(S_1)$ is Fuchsian. Therefore, the sequence $\{\eta_n\}$ satisfies conditions (i) and (iii) of the theorem.

The above argument shows that, for each $n>0$, if $\langle x,y\rangle$ contains a non-trivial element $z$ with $i(z,\phi^n c) \ne 0$, then $\langle \eta_n(x),\eta_n(y)\rangle$ is Zariski dense. Let
\[
K_n=\{z\in \pi_1(S)\setminus\{1\}\mid i(z, \phi^n c)= 0\}.
\]
We claim that, for a fixed word metric $|\cdot|$ on $\pi_1(S)$, the quantity 
\[
\inf_{z\in K_n} |z|
\]
grows linearly in $n$. 

To this end, equip $S$ with a hyperbolic structure $X$ and let $\ell_X$ be the length function with respect to this hyperbolic structure. Isotope $\phi$ so that $\phi^n(c)$ is a closed geodesic with respect to the hyperbolic structure $X$ on $S$. Observe that the condition $z\in K_n$ implies that $z\subset \phi^n(S_1)$ or $z\subset \phi^n(S_2)$. Without loss of generality, we may assume that $z\subset\phi^n(S_1)$.  Then, we have
\[
|z|\ge \frac{1}{C_1} \ell_X(z)-C_2
\]
for constants $C_1>1$, and $C_2>0$ depending only on $X$. Let $z'\subset \phi^n(S_1)$ be a shortest closed geodesic contained in $\phi^n(S_1)$. Note that $z'$ is necessarily simple and $z'\in K_n$. We also know that
\[
\ell_X(z)\ge \ell_X(z')\ge C_3\cdot i(z', c)
\]
for some constant $C_3>0$ depending only on $X$. It follows that
\[
|z|\ge\frac{C_3}{C_1} i(z', c) -C_2.
\] 
By \cite[Lemma 2.1]{minski}, we have
\[
|z| \ge \frac{C_3}{2C_1}\der_{\mathscr{C}}(z',c)-\frac{C_3}{2C_1}-C_2
\] 
where $\der_{\mathscr{C}}$ is the distance in the curve complex of $S$. Since $\phi$ is pseudo-Anosov, \cite[Proposition 4.6]{minski} shows that 
\[
\der_\mathscr{C} (c, \phi^n (c))\ge C_4\cdot |n|
\]
for some constant $C_4>0$. Moreover, by definition of $K_n$, we know that $\der_{\mathscr{C}}(z',\phi^nc) \le 1$.  Hence,
\[
\der_{\mathscr{C}}(z',c) \ge     \der_{\mathscr{C}}(\phi^nc,c)- \der_{\mathscr{C}}(z', \phi^nc)\ge C_4\cdot n-1.
\]
Consequently, for any $z\in K_n$, we have
\[
|z|\ge \frac{C_3C_4}{2C_1}n-\frac{C_3}{C_1}-C_2.
\]
Since $C_1$, $C_2$, $C_3$, and $C_4$ do not depend on $z$, we conclude that the sequence of representations $\{\eta_n\}$ satisfies (ii). 
\end{proof}

We believe that the set of strongly dense representations has empty boundary. To achieve this, it suffice to prove that for any $\mathsf{G}_\mathrm{split}$-Hitchin representation $\rho$, any filling measured lamination $\mu$ and any closed geodesic $\gamma$, the real analytic function $t\mapsto \ell^{\mathbf{a}}_\gamma(\Lambda^{t\mu^\mathbf{b}}\rho)$ is non-constant if $\kf(\mathbf{a}^\vee,\mathbf{b}^\vee)\ne 0$. We leave this as a closing question. 
\begin{question}
    Let $\mathbf{a},\mathbf{b}\in \mathfrak{a}^*$ be linear functionals that are invariant under the opposite involution such that $\kf(\mathbf{a}^\vee,\mathbf{b}^\vee) \ne 0$. Then, for any filling measured lamination $\mu$ and any closed geodesic $\gamma$, is the function $t\mapsto \ell^{\mathbf{a}}_\gamma (\Lambda^{t\mu^\mathbf{b}}\rho)$ non-constant? 
\end{question}

    \bibliographystyle{alpha}
    \bibliography{ref.bib}

\begin{thebibliography}{GGKW17}

\bibitem[BCLS15]{BCLS2015}
Martin Bridgeman, Richard Canary, Fran\c{c}ois Labourie, and Andres Sambarino.
\newblock The pressure metric for {A}nosov representations.
\newblock {\em Geom. Funct. Anal.}, 25(4):1089--1179, 2015.

\bibitem[BD17]{BD}
Francis Bonahon and Guillaume Dreyer.
\newblock Hitchin characters and geodesic laminations.
\newblock {\em Acta Math.}, 218(2):201--295, 2017.

\bibitem[BGGT12]{tao}
Emmanuel Breuillard, Ben Green, Robert Guralnick, and Terence Tao.
\newblock Strongly dense free subgroups of semisimple algebraic groups.
\newblock {\em Israel J. Math.}, 192(1):347--379, 2012.

\bibitem[BGL24]{breuillard}
Emmanuel Breuillard, Robert Guralnick, and Michael Larsen.
\newblock Strongly dense free subgroups of semisimple algebraic groups {II}.
\newblock {\em J. Algebra}, 656:143--169, 2024.

\bibitem[BL25]{bridgeman2025}
Martin Bridgeman and François Labourie.
\newblock Ghost polygons, {P}oisson bracket and convexity.
\newblock {\em Preprint, arXiv:2307.04380}, 2025.

\bibitem[Bon88]{bonahoncurrent}
Francis Bonahon.
\newblock The geometry of {T}eichm\"uller space via geodesic currents.
\newblock {\em Invent. Math.}, 92(1):139--162, 1988.

\bibitem[Bon96]{bonahon96}
Francis Bonahon.
\newblock Shearing hyperbolic surfaces, bending pleated surfaces and {T}hurston's symplectic form.
\newblock {\em Ann. Fac. Sci. Toulouse Math. (6)}, 5(2):233--297, 1996.

\bibitem[BPS19]{BPS}
Jairo Bochi, Rafael Potrie, and Andr\'es Sambarino.
\newblock Anosov representations and dominated splittings.
\newblock {\em J. Eur. Math. Soc. (JEMS)}, 21(11):3343--3414, 2019.

\bibitem[EM06]{EM06}
D.~B.~A. Epstein and A.~Marden.
\newblock Convex hulls in hyperbolic space, a theorem of {S}ullivan, and measured pleated surfaces [mr0903852].
\newblock In {\em Fundamentals of hyperbolic geometry: selected expositions}, volume 328 of {\em London Math. Soc. Lecture Note Ser.}, pages 117--266. Cambridge Univ. Press, Cambridge, 2006.

\bibitem[GGKW17]{gueritaud}
Fran\c{c}ois Gu\'{e}ritaud, Olivier Guichard, Fanny Kassel, and Anna Wienhard.
\newblock Anosov representations and proper actions.
\newblock {\em Geom. Topol.}, 21(1):485--584, 2017.

\bibitem[GLW26]{guichard_wienhard_labourie}
Olivier Guichard, Fran\c{c}ois Labourie, and Anna Wienhard.
\newblock Positivity and representations of surface groups.
\newblock {\em Forum Math. Pi}, 14:Paper No. e6, 2026.

\bibitem[Gol86]{Goldman86}
William~M. Goldman.
\newblock Invariant functions on {L}ie groups and {H}amiltonian flows of surface group representations.
\newblock {\em Invent. Math.}, 85(2):263--302, 1986.

\bibitem[GW12]{GW12}
Olivier Guichard and Anna Wienhard.
\newblock Anosov representations: domains of discontinuity and applications.
\newblock {\em Invent. Math.}, 190(2):357--438, 2012.

\bibitem[Hit92]{hitchin92}
N.~J. Hitchin.
\newblock Lie groups and {T}eichm\"{u}ller space.
\newblock {\em Topology}, 31(3):449--473, 1992.

\bibitem[Jun25]{jung2025}
Hongtaek Jung.
\newblock Generic properties of hitchin representations.
\newblock {\em Preprint, arXiv:2407.08487}, 2025.

\bibitem[Ker83]{Kerckhoff}
Steven~P. Kerckhoff.
\newblock The {N}ielsen realization problem.
\newblock {\em Ann. of Math. (2)}, 117(2):235--265, 1983.

\bibitem[Ker85]{kerckhoff_analytic}
Steven~P. Kerckhoff.
\newblock Earthquakes are analytic.
\newblock {\em Comment. Math. Helv.}, 60(1):17--30, 1985.

\bibitem[KLP17]{KLP3}
Michael Kapovich, Bernhard Leeb, and Joan Porti.
\newblock Anosov subgroups: dynamical and geometric characterizations.
\newblock {\em Eur. J. Math.}, 3(4):808--898, 2017.

\bibitem[Kou98]{kourouniotis}
Christos Kourouniotis.
\newblock On the continuity of bending.
\newblock In {\em The {E}pstein birthday schrift}, volume~1 of {\em Geom. Topol. Monogr.}, pages 317--334. Geom. Topol. Publ., Coventry, 1998.

\bibitem[Lab06]{L06}
Fran\c{c}ois Labourie.
\newblock Anosov flows, surface groups and curves in projective space.
\newblock {\em Invent. Math.}, 165:51--114, 2006.

\bibitem[Lee26]{lee}
Ricky Lee.
\newblock Strongly dense representations of hyperbolic 3-manifold groups.
\newblock {\em Proc. Amer. Math. Soc.}, 154(3):1311--1323, 2026.

\bibitem[LRW22]{long}
D.~D. Long, A.~W. Reid, and M.~Wolff.
\newblock Most {H}itchin representations are strongly dense, 2022.
\newblock arXiv 2202.09306, To appear in Michigan Math. J.

\bibitem[McM98]{mcmullen}
Curtis~T. McMullen.
\newblock Complex earthquakes and {T}eichm\"uller theory.
\newblock {\em J. Amer. Math. Soc.}, 11(2):283--320, 1998.

\bibitem[MM99]{minski}
Howard~A. Masur and Yair~N. Minsky.
\newblock Geometry of the complex of curves. {I}. {H}yperbolicity.
\newblock {\em Invent. Math.}, 138(1):103--149, 1999.

\bibitem[Ota01]{otal}
Jean-Pierre Otal.
\newblock {\em The hyperbolization theorem for fibered 3-manifolds}, volume~7 of {\em SMF/AMS Texts and Monographs}.
\newblock American Mathematical Society, Providence, RI; Soci\'et\'e{} Math\'ematique de France, Paris, 2001.
\newblock Translated from the 1996 French original by Leslie D. Kay.

\bibitem[Pfe22]{pfeil}
Mareike Pfeil.
\newblock Cataclysms for {A}nosov representations.
\newblock {\em Geom. Dedicata}, 216(6):Paper No. 61, 31, 2022.

\bibitem[PH92]{penner}
R.~C. Penner and J.~L. Harer.
\newblock {\em Combinatorics of train tracks}, volume 125 of {\em Annals of Mathematics Studies}.
\newblock Princeton University Press, Princeton, NJ, 1992.

\bibitem[Sam24]{sambarino}
Andr\'es Sambarino.
\newblock Infinitesimal {Z}ariski closures of positive representations.
\newblock {\em J. Differential Geom.}, 128(2):861--901, 2024.

\bibitem[Wol83]{wolpert}
Scott Wolpert.
\newblock On the symplectic geometry of deformations of a hyperbolic surface.
\newblock {\em Ann. of Math. (2)}, 117(2):207--234, 1983.

\end{thebibliography}

\end{document}